\documentclass[11pt, leqno]{amsart}
\usepackage{amsfonts,amssymb}
 
\usepackage{amsmath}
\usepackage{amsthm}
\usepackage{amsrefs}
\usepackage{qsymbols}
\usepackage{latexsym}
\usepackage{chngcntr}
\usepackage{paralist}
\usepackage{mathtools}
\usepackage{esint}
\usepackage[hidelinks]{hyperref}
\usepackage{comment}
\usepackage{tikz-cd}
\usepackage{import}
\usepackage{todonotes}
\usepackage{csquotes}
\usepackage[dvipsnames]{xcolor}
\usepackage{enumitem}
\usepackage{booktabs}
\usepackage{makecell}

\newtheorem{theorem}{Theorem}[section]

\newtheorem{lemma}[theorem]{Lemma}
\newtheorem{proposition}[theorem]{Proposition}
\newtheorem{corollary}[theorem]{Corollary}
\theoremstyle{definition}
\newtheorem{definition}[theorem]{Definition}
\newtheorem{remark}[theorem]{Remark}
\newtheorem{example}[theorem]{Example}
\newtheorem{assumption}[theorem]{Assumption}
\newtheorem{observation}[theorem]{Observation}

\newcommand{\IR}{\mathbb{R}}
\newcommand{\IC}{\mathbb{C}}
\newcommand{\IN}{\mathbb{N}}
\newcommand{\IZ}{\mathbb{Z}}
\newcommand{\IP}{\mathbb{P}}

\renewcommand{\a}{\mathbf{a}}

\newcommand{\cM}{\mathcal{M}}

\newcommand{\cP}{\mathcal{P}}

\newcommand{\cA}{\mathcal{A}}
\newcommand{\cB}{\mathcal{B}}
\newcommand{\cF}{\mathcal{F}}

\newcommand{\cL}{\mathcal{L}}

\renewcommand{\L}{\mathrm{L}}
\renewcommand{\H}{\mathrm{H}}
\renewcommand{\S}{\mathrm{S}}

\newcommand{\W}{\mathrm{W}}
\newcommand{\C}{\mathrm{C}}

\newcommand{\Lloc}{\L_{\mathrm{loc}}}
\newcommand{\Llocs}{\L_{\mathrm{loc},\sigma}}

\newcommand{\esssup}{\mathrm{ess\, sup}}

\renewcommand{\d}{\mathrm{d}}

\newcommand{\loc}{\mathrm{loc}}
\renewcommand\Re{\operatorname{Re}}

\renewcommand{\div}{\operatorname{div}}
\DeclareMathOperator{\supp}{supp}
\DeclareMathOperator{\Id}{Id}

\DeclareMathOperator{\dom}{\mathcal{D}}

\numberwithin{equation}{section}

\title[$\L^p$-extrapolation via non-local decay estimates]{$\L^p$-extrapolation via non-local decay estimates: theory and applications to the generalized Stokes operator with rough coefficients}
 
\author{Luca Haardt}

\address{Karlsruhe Institute of Technology, Department of Mathematics, 76131 Karlsruhe, Germany}
\email{luca.haardt@kit.edu}

\keywords{generalized Stokes operator, Stokes semigroup, non-local decay estimates, $\H^\infty$-calculus}

\subjclass[2020]{76D07, 47F10, 47A60, 46B70, 47D06, 35Q35}

\date{\today}
 
\begin{document}
\begin{abstract}
    In this article, we establish $\L^p$-mapping properties for the generalized Stokes operator with bounded measurable coefficients on the full range of exponents that is known to be sharp for elliptic systems with rough coefficients. These include $\L^p$-estimates for the corresponding semigroup, its gradient and the $\H^\infty$-calculus, as well as $\L^q$-$\L^p$ smoothing estimates. As a key ingredient, we develop a non-local framework for $\L^p$-extrapolation that captures the relationship between hypercontractivity, non-local decay estimates and uniform $\L^p$-bounds.
\end{abstract}

\allowdisplaybreaks
\maketitle

\section{Introduction}
 
For second-order elliptic operators of the form $L=-\div(\mu \nabla \cdot )$ with rough coefficients $\mu$, the $\L^p$-theory is well-understood, see for example \cites{Auscher-Lp,Auscher-Egert, Bechtel} and references therein. In particular, there are two intervals of exponents $p$ that rule the $\L^p$-behavior of $L$. More precisely, it is known that there exists $\varepsilon > 0$ such that for all $p \in (1, \infty)$ satisfying
\begin{align}
\label{eq: elliptic range I}
    \left|\frac{1}{2}-\frac{1}{p}\right| < \frac{1}{d} + \varepsilon,
\end{align}
the resolvent, the semigroup and the functional calculus of $L$ can be successfully extrapolated from $\L^2(\IR^d)$ to $\L^p(\IR^d)$.
However, if the gradient is involved, it is known that if $p\in(1,\infty)$ satisfies
\begin{align}
\label{eq: elliptic range II}
    \frac{2d}{d+2}-\varepsilon <p<2+\varepsilon,
\end{align}
then the gradient of the resolvent family, the gradient of the semigroup, and the Riesz transform $\nabla L^{-\frac{1}{2}}$ are uniformly bounded on $\L^p$ as well. Here, $\varepsilon>0$ denotes a (possibly small) constant depending on the dimension $d$ and the coefficients $\mu$, which stems from a stability theorem of \u{S}ne\u{\i}berg (see Proposition~\ref{prop: Sneiberg}). We point out that both intervals above are known to be sharp; see \cite{HMM} and the references therein.
 
The core machinery driving these elliptic results relies heavily on so-called off-diagonal estimates, which are generalizations of pointwise kernel bounds. For the semigroup of $L$, they state that for all $p\in[2,\infty)$ satisfying \eqref{eq: elliptic range I}, there exist constants $C,c>0$ such that
\begin{align}
\label{eq: local off-diag}
    \|e^{-zL}f\|_{\L^p(B)} \leq C |z|^{-\frac{d}{2}(\frac{1}{2}-\frac{1}{p})} e^{-\frac{c4^kr^2}{|z|}}\|f\|_{\L^2(C_k(B))}
\end{align}
for appropriate $z\in\IC\setminus\{0\}$ in some open sector, all balls $B=B(x,r)\subset\IR^d$, all $k\geq 2$ and all $f\in\L^2(\IR^d)$ with $\supp(f)\subset C_k(B)=2^kB\setminus \overline{2^{k-1}B}$; see \cite{Auscher-Lp}. These estimates are the key input for the $\L^p$-theory in the ranges \eqref{eq: elliptic range I} and \eqref{eq: elliptic range II}. In particular, these form a stronger notion than uniform $\L^p$-bounds and are classically derived in two steps. First, local Caccioppoli-type inequalities, obtained by testing the resolvent equation with its solution multiplied by suitable cut-off functions, yielding the case $p=2$. Second, one uses $\L^2$-$\L^p$ bounds, also called hypercontractivity~\cite{Auscher-Lp}, which state that for all $p\in[2,\infty)$ satisfying \eqref{eq: elliptic range I} there exists $C>0$ such that
\begin{align*}
    \|e^{-zL}f\|_{\L^p} \leq C |z|^{-\frac{d}{2}(\frac{1}{2}-\frac{1}{p})}\|f\|_{\L^2}
\end{align*}
for appropriate $z\in\IC\setminus\{0\}$ and $f\in\L^2(\IR^d)$. Given $p\in(2,\infty)$ satisfying \eqref{eq: elliptic range I}, one chooses $p_1>p$ in the same range and interpolates the $\L^2$-off-diagonal estimates with the $\L^2$-$\L^{p_1}$ bounds. If $\theta\in(0,1)$ is given by $\frac{1}{p}=\frac{1-\theta}{2}+\frac{\theta}{p_1}$, this yields the $\L^2$-$\L^p$ off-diagonal estimates above with $c$ replaced by $(1-\theta)c$. Hence, only the constant in the exponential factor gets worse, and no decay is lost.
 
For operators with non-local structure a derivation of local Caccioppoli inequalities are not possible, since testing with a cut-off function produces non-local terms that cannot be estimated by local data. The main example of this paper is the generalized Stokes operator, where these terms are caused by the pressure; see below. For such operators, the solution on a ball $B$ is no longer controlled by the data on a single annulus, but only by the data on all annuli around $B$ at the same time. Moreover, the non-locality can limit the decay rate, as it happens for the Stokes operator. The first aim of this paper is to provide a notion of decay estimates that captures this situation, together with an abstract framework that turns such estimates into uniform $\L^p$-bounds. This framework is developed in Section~\ref{sec: abstract machinery}.
 
More precisely, a family $(T(z))_{z\in\Omega}$ of bounded operators between closed subspaces of $\L^2$, where $\Omega\subset\IC\setminus\{0\}$ is open, satisfies $\L^2$-$\L^p$ decay estimates of order $\nu\geq 0$ for $p\in[2,\infty]$ if there exist $M,N\geq 0$ such that
\begin{align*}
    \|T(z)f\|_{\L^p(B)} 
    \lesssim r^{-d(\frac{1}{2}-\frac{1}{p})}\max\Big\{\Big(\frac{r^2}{|z|}\Big)^{M}, \Big(\frac{r^2}{|z|}\Big)^{-N}\Big\}\bigg(\sum\limits_{n=0}^\infty 2^{-\nu n} \|f\|^2_{\L^2(C_{n}(B))}\bigg)^\frac{1}{2}
\end{align*}
for all $z\in\Omega$ and all balls $B=B(x,r)\subset \IR^d$; see Definition~\ref{def: decay est L2-Lp}. Compared with the elliptic off-diagonal estimates \eqref{eq: local off-diag}, the estimates above are non-local, since the data on all annuli enter at the same time and their decay rate is limited to the polynomial factor $2^{-\nu n}$. Nevertheless, they imply uniform $\L^p$-bounds as in the elliptic case if $\nu>d$ (Proposition~\ref{prop: L2-Lp off imply Lp bdd}). Moreover, they interpolate with hypercontractivity too (Proposition~\ref{prop: int principles}). However, interpolation reduces the decay rate, so that uniform bounds can only be concluded as long as the reduced order is still larger than $d$ (Corollary~\ref{cor: decay, hyper, unif}). We stress that the results of Section~\ref{sec: abstract machinery} do not use any structure of the underlying operator and believe that this general framework is of independent interest for non-local operators such as integrodifferential operators and parabolic operators, see \cites{Tolksdorf, Baadi_Egert_Kosmala}.
 
As the second aim of this paper, we will apply this framework to the generalized Stokes operator $A$ associated with the Stokes system
\begin{align}
\label{eq: system}
    \left\{ \begin{aligned}
    - \div(\mu\nabla u) +\nabla \phi &= f + \div(F),&& \text{in }\IR^d\\
    \div u &=0 ,&& \text{in }\IR^d,
    \end{aligned} \right.
\end{align}
with rough coefficients $\mu$. More precisely, $\mu$ is assumed to be merely essentially bounded and complex-valued, and to satisfy a G\r{a}rding-type inequality that enforces ellipticity; see Assumption~\ref{Ass: Coefficients}. Under these assumptions, the generalized Stokes operator $A$, formally given by 
\begin{align*}
    Au = -\div(\mu \nabla u) + \nabla \phi, \quad \div u = 0,
\end{align*}
can be realized on the space of divergence-free vector fields $\L^2_\sigma(\IR^d)$ via form methods as a densely defined, sectorial operator of angle $\omega_0 \in [0, \frac{\pi}{2})$, see for instance \cite{Kato}. Consequently, its spectrum $\sigma(A)$ is contained in the closed sector $\overline{\S_{\omega_0}}$ in the complex plane that is symmetric about the positive real axis with opening angle $2\omega_0$, and for every $\theta \in (0, \pi-\omega_0)$ there exists $C>0$ such that the resolvent satisfies the uniform estimate
\begin{align}
\label{eq: L2 eq resolvent}
    \|\lambda(\lambda+A)^{-1}f\|_{\L^2} \leq C \|f\|_{\L^2}
\end{align}
for all $\lambda \in \S_{\theta}$ and $f \in \L^2_\sigma(\IR^d)$.
 
The equations in \eqref{eq: system} appear, for example, in the description of flows through porous materials~\cites{periodic, Gu_Shen}, or as linearizations of non-Newtonian fluids, which generally exhibit non-constant viscosity~\cites{Diening, Pruss_Simonett}. 
In the study of such non-linear problems, $\L^p$-mapping properties of the linearization are often crucial. The question of whether the $\L^2$-bounds in \eqref{eq: L2 eq resolvent} can be extended to the $\L^p$-scale, $p\neq 2$, serves as a starting point for the investigation of further functional analytic objects of $A$, such as semigroups, maximal $\L^q$-regularity and functional calculus.
 
When one tries to transfer the elliptic $\L^p$-theory to $A$, the pressure $\phi$ is the major obstacle. Indeed, the derivation of off-diagonal estimates, and in particular Caccioppoli inequalities, relies on test the resolvent equation by its solution $u$ multiplied by a localized cut-off function. Because the cut-off inherently destroys the divergence-freeness of the testing function, the pressure $\phi$ explicitly appears in the equations. Since the pressure is non-local, it cannot be directly estimated by local data, creating a technical bottleneck. This obstacle was directly addressed by Tolksdorf in~\cite{Tolksdorf-Caccioppoli}, where he circumvented this issue by establishing a framework of non-local Caccioppoli inequalities. He used these inequalities in combination with an $\L^p$-extrapolation argument to extend the maximal regularity of $A$ from $\L^2_\sigma(\IR^d)$ to $\L^p_\sigma(\IR^d)$, and upgraded this to a bounded $\H^\infty$-calculus on $\L^p_\sigma(\IR^d)$ by means of a transference principle due to Kunstmann and Weis~\cite{Kunstmann-Weis}. In this way, he obtained $\L^p$-bounds for the resolvent, semigroup and the bounded $\H^\infty$-functional calculus provided that $p\in(1,\infty)$ satisfies the strict condition
\begin{align}
\label{eq: Tolksdorf range I}
    \left|\frac{1}{2}-\frac{1}{p}\right| < \frac{1}{d}.
\end{align}
Moreover, if $p\in(1,\infty)$ satisfies
\begin{align}
\label{eq: Tolksdorf range II}
    \frac{2d}{d+2}<p\leq 2
\end{align}
then he managed to show uniform $\L^p$-boundedness of the gradient of the resolvent and semigroup family.
However, when compared with the pure elliptic setting \eqref{eq: elliptic range I} and \eqref{eq: elliptic range II}, the intervals \eqref{eq: Tolksdorf range I} and \eqref{eq: Tolksdorf range II} stop precisely at the standard Sobolev threshold and miss the endpoint and the additional $\varepsilon$-window. This gap leaves open the question of whether the non-local nature of $A$ restricts its integrability range. In this work, we answer this question definitively by closing the gap to the optimal elliptic range; see Table~\ref{table: ranges}.
 
\begin{table}[h]
\centering
\begin{tabular}{lccc}
\toprule
& \textbf{Elliptic systems}
& \textbf{Stokes~\cite{Tolksdorf-Caccioppoli}}
& \textbf{Stokes, this paper} \\
\midrule
\makecell[l]{Resolvent, semigroup,\\ $\H^\infty$-calculus}
& $\Big|\dfrac{1}{2}-\dfrac{1}{p}\Big|<\dfrac{1}{d}+\varepsilon$
& $\Big|\dfrac{1}{2}-\dfrac{1}{p}\Big|<\dfrac{1}{d}$
& $\Big|\dfrac{1}{2}-\dfrac{1}{p}\Big|<\dfrac{1}{d}+\varepsilon$ \\
\addlinespace
\makecell[l]{Gradient of resolvent\\ and semigroup}
& $\dfrac{2d}{d+2}-\varepsilon<p<2+\varepsilon$
& $\dfrac{2d}{d+2}<p\le 2$
& $\dfrac{2d}{d+2}-\varepsilon<p<2+\varepsilon$ \\
\bottomrule
\end{tabular}
\vspace{0.2cm}
\caption{Ranges of $p\in(1,\infty)$ for which uniform $\L^p$-bounds are known.}
\label{table: ranges}
\end{table}
 
To this end, we take a different route, namely the one of the elliptic theory: we derive decay estimates and hypercontractivity for the Stokes resolvent and semigroup and combine them with the non-local abstract framework of Section~\ref{sec: abstract machinery}. This reproves the $\L^p$-bounds of~\cite{Tolksdorf-Caccioppoli} in the ranges \eqref{eq: Tolksdorf range I} and \eqref{eq: Tolksdorf range II} with different techniques and yields the extended sharp elliptic ranges \eqref{eq: elliptic range I} and \eqref{eq: elliptic range II}.
 
Decay estimates for the Stokes resolvent have been studied before. For the case $p=2$, a first attempt to establish off-diagonal-type estimates for the Stokes operator was made by Tolksdorf in~\cite{Tolksdorf-off-diagonal}, where a decay of order $0 < \nu < 2$ was achieved. Since this order is below $d$, it is not sufficient for the approach described above. A first step in this direction was taken by Tolksdorf and the author in \cite{Haardt_Tolksdorf}, where decay estimates of order $0<\nu<d+2$ were established in the special case $|\lambda| r^2 \simeq 1$, enabling the proof of the Kato square root property for the Stokes operator $A$. Our first main result, proved in Section~\ref{sec: off-diag est}, provides non-local $\L^2$-$\L^p$ decay estimates for the Stokes resolvent of every order $0<\nu<d+2$ without any constraint on $\lambda$ and $r$. We refer to Section~\ref{sec: notation} for precise notation and background.
 
\begin{theorem}[$\L^2$-$\L^p$ decay estimates for Stokes resolvents]
\label{thm: L2-Lp off diag stokes resolv}
    Let $\mu$ satisfy Assumption~\ref{Ass: Coefficients} and let $\omega_0$ be given by \eqref{eq: def omega}. Then for all $\theta \in [0,\pi-\omega_0)$, all $\nu \in (0, d+2)$ and all $p\in [2,\infty)$ satisfying $\frac{1}{2}-\frac{1}{p}\leq \frac{1}{d}$ there exists a constant $C >0$ such that for all balls $B= B(x_0,r)$, $\lambda \in \S_\theta$, $f\in \L_\sigma^2(\IR^d)$ and $F \in \L^2 (\IR^d ; \IC^{d \times d})$ the unique solution $u\in \W^{1,2}_\sigma(\IR^d)$ to
    \begin{align}
    \label{eq: Stokes system}
        \left\{ \begin{aligned}
        \lambda u- \div(\mu\nabla u) +\nabla \phi &= f + \div(F),&& \text{in }\IR^d\\
        \div u &=0 ,&& \text{in }\IR^d,
        \end{aligned} \right.
    \end{align}
    satisfies
    \begin{align*}
        \|u\|_{\L^{p}(B)}^2 &\leq  \frac{C}{|\lambda|r^{d(1-\frac{2}{p})}}\max\Big\{|\lambda|r^2, (|\lambda|r^2)^{-(d+1)}\Big\}\sum\limits_{n=0}^\infty 2^{-\nu n}\bigg(\frac{1}{|\lambda|}\int\limits_{C_{n}(B)}|f|^2\, \d x +\int\limits_{C_{n}(B)}|F|^2\,\d x\bigg).
    \end{align*}
    If, in addition, $\supp(f),\supp (F) \subset \IR^d\setminus 2B$, then the unique solution $u\in \W^{1,2}_\sigma(\IR^d)$ to \eqref{eq: Stokes system} satisfies
    \begin{align*}
        \|u\|_{\L^{p}(B)}^2 &\leq  \frac{C }{|\lambda|r^{d(1-\frac{2}{p})}}\max\Big\{1, (|\lambda|r^2)^{-(d+2)}\Big\}\sum\limits_{n=0}^\infty 2^{-\nu n}\bigg(\frac{1}{|\lambda|}\int\limits_{C_{n}(B)}|f|^2\, \d x +\int\limits_{C_{n}(B)}|F|^2\,\d x\bigg).
    \end{align*}
    In both cases the constant $C$ depends only on $\theta$, $\nu$, $d$ and $\mu_\bullet,\mu^\bullet$.
\end{theorem}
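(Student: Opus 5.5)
The proof proceeds in three steps: weighted $\L^2$ estimates with polynomial weights centred at $x_0$, localization to $B$ with a Sobolev (Caccioppoli-type) upgrade from $\L^2$ to $\L^p$, and a dyadic summation. By translation we take $x_0=0$. By scaling $x\mapsto rx$, which replaces $\mu$ by a coefficient with the same bounds $\mu_\bullet,\mu^\bullet$, and $\lambda$ by $\lambda r^2$, we may also assume $r=1$. The statement then reduces to an estimate in the single parameter $s:=|\lambda|\in(0,\infty)$.

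\textbf{Step 1: weighted energy estimates.} For $\nu\in(0,d+2)$ consider the weight $w(x)=(1+|x|)^{-\nu}$, or a truncated version $w_R$ to justify testing. We test \eqref{eq: Stokes system} with $w u$. The function $wu$ is not divergence-free, so the pressure appears in the term $\int \phi\,\nabla w\cdot \bar u$. Two facts control this term.
\begin{itemize}
\item[(i)] Since $\div u=0$, the pressure is given by $\phi=\mathcal{R}\mathcal{R}:(\mu\nabla u+F)$, up to the $f$-part, which vanishes because $f\in\L^2_\sigma$. Here $\mathcal{R}$ denotes the Riesz transforms.
\item[(ii)] Riesz transforms are bounded on $\L^2(|x|^{\alpha}\d x)$ exactly for $-d<\alpha<d$, i.e.\ for Muckenhoupt $A_2$ power weights.
\end{itemize}
The term $\nabla w$ contributes an extra factor $(1+|x|)^{-1}$, and the pressure term then requires a weight of the form $(1+|x|)^{-\nu+\ldots}$. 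This is the origin of the restriction $\nu<d+2$: the weight $(1+|x|)^{-\nu}$ with its gradient factor brings two powers, and Hardy/Muckenhoupt admissibility requires $\nu-2<d$. Using the Gårding inequality from Assumption~\ref{Ass: Coefficients}, sectoriality for $\lambda\in\S_\theta$, and absorbing terms, we aim for
\[
s\int w|u|^2+\int w|\nabla u|^2\lesssim C(s)\Big(\tfrac1s\int w|f|^2+\int w|F|^2\Big).
\]
The constant $C(s)$ blows up polynomially as $s\to0$, because the lower-order term $\int|\nabla w||u||\nabla u|$ must be absorbed by $s\int w|u|^2$. This is where the factor $\max\{s,s^{-(d+1)}\}$ comes from. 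I expect that one must use the localized weights $w_k=(1+2^{-k}|x|)^{-\nu}$ and track their dependence on $s$, choosing the scale $2^k\sim s^{-1/2}$.

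\textbf{Step 2: localization to $B$ and the Sobolev step.} On $B$ we have $w\simeq1$. A Sobolev/Gagliardo--Nirenberg inequality applied to a cutoff of $u$ on $2B$ gives
\[
\|u\|_{\L^p(B)}\lesssim \|u\|_{\L^2(2B)}^{1-\vartheta}\,\|\nabla u\|_{\L^2(2B)}^{\vartheta}+\|u\|_{\L^2(2B)},
\]
valid for $\frac12-\frac1p\le\frac1d$. The dyadic sum $\sum_n 2^{-\nu n}\|\cdot\|^2_{\L^2(C_n)}$ is comparable to $\int w|\cdot|^2$. Combining this with the Step 1 bound on $s\int w|u|^2+\int w|\nabla u|^2$ yields the factor $s^{-1}\max\{s,\ldots\}$.

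\textbf{Step 3: data supported away from $2B$.} The improved power $\max\{1,s^{-(d+2)}\}$ comes from the same argument, now exploiting that the data vanish near $B$. In the weighted estimate one can then use a weight that is large near $B$ and has no local contribution from the data, so the term $s\int w|u|^2$ on $B$ needs no $\lambda$-gain. Undoing the scaling gives the stated powers of $r$.

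The main obstacle is the pressure term in Step 1. The weighted Riesz bound together with the gradient of the weight must close exactly up to $\nu<d+2$, uniformly in $s$ with the precise power. I would first try a Hardy-type estimate, combined with subtracting the mean of $\phi$ on annuli.
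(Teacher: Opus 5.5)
\textbf{The pressure step in Step~1 fails as stated, and the content of the theorem lies exactly there.} The right-hand side decreases in $\nu$, so the estimate for $\nu$ close to $d+2$ implies all smaller ones. The applications (Proposition~\ref{prop: L2-Lp off imply Lp bdd}) need $\nu>d$.

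In $\int|\phi|\,|\nabla w|\,|u|$ the left-hand side only controls $\int w|\nabla u|^2$. So any weight $v$ you put on $\phi$ must satisfy $v\lesssim w=(1+|x|)^{-\nu}$ for the weighted Riesz bound $\int|\phi|^2v\lesssim\int(|\nabla u|^2+|F|^2)v$ to close. The extra decay of $\nabla w$, and Hardy's inequality, can only be moved onto the $u$-factor, never onto $\phi$. But no $A_2$ weight satisfies $v\lesssim(1+|x|)^{-\nu}$ with $\nu\geq d$: $A_2$ weights are reverse doubling, so $v(B(0,R))\gtrsim R^{\delta}$, while such $v$ has at most logarithmic mass growth. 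Hence your tools (i)--(ii) cap you at $\nu<d$. The remark following Lemma~\ref{lem: interpolation result} makes the same point: $\IP$ is not bounded on these weighted spaces for $\nu>d$. Your bookkeeping ``$\nu-2<d$'' does not follow from anything you wrote.

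\textbf{Missing idea: estimate the pressure modulo constants, using the smoothness of the kernel.} Take $x,x'\in C_{k+1}(B)$ and $y\in C_\ell(B)$ with $\ell\geq k+3$. The kernel $K$ of $\mathcal{R}\mathcal{R}$ satisfies $|K(x-y)-K(x'-y)|\lesssim 2^{k}r/(2^\ell r)^{d+1}$. So the part of $g=\mu\nabla u+F$ on $C_\ell(B)$ contributes at most $2^{(k-\ell)(\frac d2+1)}\|g\|_{\L^2(C_\ell(B))}$ to $\|\phi-\phi_{C_{k+1}(B)}\|_{\L^2(C_{k+1}(B))}$. Squaring gives $2^{(k-\ell)(d+2)}$, and this, not an $A_2$ condition, is where $d+2$ comes from. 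It is the estimate \cite[Lem.~2.1]{Tolksdorf-Caccioppoli} behind \eqref{eq: pressure estimate}.

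Subtracting a constant is allowed only because $\int\nabla(\eta^2)\cdot\overline{u}\,\d x=0$ for each cut-off $\eta$. This is why the paper does not test with one global weight. Its steps are:
\begin{enumerate}
\item apply a Caccioppoli inequality with a cut-off for every $B_k$, so the pressure enters only as $\int_{C_{k+1}(B)}|\phi-\phi_{C_{k+1}(B)}|^2$;
\item multiply by $2^{-\nu k}$, sum, fill the hole and absorb the $\nabla u$-series (Lemma~\ref{lem: iteration setup});
\item iterate (Lemma~\ref{lem: iteration procedure}) and sum the coefficients $\kappa_{\lambda,r}$ by a discrete Young inequality (Theorem~\ref{thm: off-diag resolvent});
\item apply Sobolev on $B$.
\end{enumerate}

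Your global-weight scheme can be rescued, but only with a weighted bound modulo one constant: $\|\phi-c\|_{\L^2(w)}\lesssim\|\mu\nabla u+F\|_{\L^2(w)}$ for $d<\nu<d+2$. This follows from the kernel-difference computation above; the constants produced by the inner annuli are summable precisely because $\nu>d$. You would combine it with $\int\nabla w\cdot\overline{u}\,\d x=0$ and your scale choice $R\simeq\max\{r,|\lambda|^{-1/2}\}$. As written, this step, which you yourself call the main obstacle, is left open.

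Two smaller points.
\begin{itemize}
\item Step~3 does not explain where the extra factor $(|\lambda|r^2)^{-1}$ for large $|\lambda|r^2$ comes from. In the paper it is Lemma~\ref{lem: local Caccioppoli}: because the data vanish on $2B$, the pressure enters only through $(|\lambda|r^2)^{-1}\int_{C_1(B)}|\phi-c|^2$, via Young's inequality against $|\lambda|\int|u\eta|^2$. Theorem~\ref{thm: off-diag resolvent} and \eqref{eq: pressure estimate} are then fed in (Proposition~\ref{prop: imp off-diag est}).
\item Assumption~\ref{Ass: Coefficients} is only an integrated G\r{a}rding inequality. Testing with $wu$ therefore requires passing to $\int\mu\nabla(\sqrt{w}\,u)\cdot\overline{\nabla(\sqrt{w}\,u)}$ plus commutator terms; there is no pointwise lower bound on $\mu$ to use directly.
\end{itemize}
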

 
The decay rate in Theorem~\ref{thm: L2-Lp off diag stokes resolv} is limited by $d+2$, but $\nu$ can be chosen larger than $d$, as required by Proposition~\ref{prop: L2-Lp off imply Lp bdd}. For the proof, we refine the iteration procedure of~\cite{Haardt_Tolksdorf} based on non-local Caccioppoli inequalities by an additional hole-filling argument. This gives sharper constants and thus quantitative decay with respect to $|\lambda|$, $r$ and the distance to the ball $B$. Since the procedure yields $\L^2$-decay estimates for $u$ and $\nabla u$ simultaneously, Sobolev's embedding then gives Theorem~\ref{thm: L2-Lp off diag stokes resolv} for $p\neq 2$.
 
In Sections~\ref{sec: Lp-theory sg} and \ref{sec: gradient est}, we transfer Theorem~\ref{thm: L2-Lp off diag stokes resolv} to the Stokes semigroup and its gradient and apply the framework of Section~\ref{sec: abstract machinery}. Together with hypercontractivity slightly beyond the Sobolev threshold, which we obtain from a stability result of \u{S}ne\u{\i}berg, this yields uniform $\L^p$-bounds for all $p \in (1, \infty)$ satisfying \eqref{eq: elliptic range I} and \eqref{eq: elliptic range II}, respectively. Our main theorem for the semigroup reads as follows.
 
\begin{theorem}[$\L^p$-extrapolation of the semigroup]
\label{thm: Lp-extrapolation sg II}
Let $\mu$ satisfy Assumption~\ref{Ass: Coefficients}, $\omega_0$ be given by \eqref{eq: def omega} and $\beta \in [0,\frac{\pi}{2}-\omega_0)$. There exists $\varepsilon>0$ depending on $\beta,d$ and $\mu_\bullet,\mu^\bullet$ such that for all $p\in (1,\infty)$ satisfying
\begin{align*}
    \Big|\frac{1}{2}-\frac{1}{p}\Big|<  \frac{1}{d}+\varepsilon
\end{align*}
there exists a constant $C>0$ such that for all $z \in \S_\beta$ and $f\in \L_\sigma^2(\IR^d)\cap \L_\sigma^{p}(\IR^d)$  we have
\begin{align*}
    \|e^{-zA}f \|_{\L^{p}}\leq C \|f\|_{\L^{p}}.
\end{align*}
The constant $C$ depends only on $\beta,d,p$ and $\mu_\bullet,\mu^\bullet$.
\end{theorem}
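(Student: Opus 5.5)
The plan is to apply the abstract machinery of Section~\ref{sec: abstract machinery} to the family $T(z)=e^{-zA}$, $z\in\S_\beta$. The case $p<2$ will follow from $p>2$ by duality. Indeed, the adjoint $A^*$ is the generalized Stokes operator with coefficients $\mu^*$, which satisfy Assumption~\ref{Ass: Coefficients} with the same constants. Moreover, $(e^{-zA})^*=e^{-\bar z A^*}$ with $\bar z\in\S_\beta$. Since the Helmholtz projection is bounded on $\L^p$, the dual of $\L^p_\sigma$ is $\L^{p'}_\sigma$. So I restrict to $p\in(2,\infty)$ with $\frac12-\frac1p<\frac1d+\varepsilon$.

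\emph{Step 1: decay estimates for the semigroup.} Fix $\beta<\beta'<\frac{\pi}{2}-\omega_0$ and write the semigroup as a Cauchy integral
\begin{align*}
e^{-zA}=\frac{1}{2\pi i}\int_\gamma e^{z\lambda}(\lambda+A)^{-1}\,\d\lambda
\end{align*}
over a contour $\gamma$ in $\S_\theta$, with $\theta$ close to $\pi-\omega_0$, such that $\Re(z\lambda)\le -c|z||\lambda|$ on $\gamma$. I insert the first estimate of Theorem~\ref{thm: L2-Lp off diag stokes resolv} with $F=0$ and some $p_1\in(2,\infty)$ with $\frac12-\frac1{p_1}\le\frac1d$. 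On the contour this gives
\begin{align*}
\|(\lambda+A)^{-1}f\|_{\L^{p_1}(B)}\lesssim |\lambda|^{-1}r^{-d(\frac12-\frac1{p_1})}\max\{(|\lambda|r^2)^{1/2},(|\lambda|r^2)^{-(d+1)/2}\}\Big(\sum_n 2^{-\nu n}\|f\|_{\L^2(C_n(B))}^2\Big)^{1/2}.
\end{align*}
Integrating against $|e^{z\lambda}|\le e^{-c|z||\lambda|}$ and substituting $s=|z||\lambda|$, I expect $\L^2$-$\L^{p_1}$ decay estimates of order $\nu$ (any $\nu<d+2$) in the sense of Definition~\ref{def: decay est L2-Lp}, with $M=\frac12$ and $N=\frac{d+1}{2}$. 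The factor $\max\{(r^2/|z|)^M,(r^2/|z|)^{-N}\}$ arises after splitting the $s$-integral according to whether $s\lessgtr |z|/r^2$.

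The main obstacle is the low-frequency part near $\lambda=0$. There the factor $(|\lambda|r^2)^{-(d+1)/2}|\lambda|^{-1}$ is not integrable at $0$. To handle it, I will deform the contour so that it passes at distance $\sim 1/|z|$ from the origin, which is admissible since $A$ is sectorial. On this arc $|\lambda|\simeq|z|^{-1}$ and $|e^{z\lambda}|\lesssim 1$, so the arc contributes exactly the factor $(r^2/|z|)^{-N}$. Treating $p_1=2$ the same way gives $\L^2$-$\L^2$ decay of every order $\nu<d+2$.

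\emph{Step 2: hypercontractivity slightly beyond the Sobolev threshold.} By Proposition~\ref{prop: Sneiberg}, the isomorphism $\lambda+A:\W^{1,2}_\sigma\to\W^{-1,2}_\sigma$ stays an isomorphism on $\W^{1,q}_\sigma\to\W^{-1,q}_\sigma$ for $|q-2|<\delta$. Combined with Sobolev's embedding, this yields
\begin{align*}
\|e^{-zA}f\|_{\L^{p_1}}\lesssim|z|^{-\frac d2(\frac12-\frac1{p_1})}\|f\|_{\L^2}
\end{align*}
for some $p_1$ with $\frac12-\frac1{p_1}=\frac1d+2\varepsilon$, uniformly in $z\in\S_\beta$. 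This can be done via a resolvent bound $\L^2\to\W^{1,q}$ and the Cauchy integral, or by iterating the semigroup.

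\emph{Step 3: conclusion.} Given $p$ with $\frac12-\frac1p<\frac1d+\varepsilon$, I interpolate, via Proposition~\ref{prop: int principles}, the $\L^2$-$\L^2$ decay estimates of order $\nu_0<d+2$ with the hypercontractive bound at $p_1$. Writing $\frac1p=\frac{1-\vartheta}2+\frac{\vartheta}{p_1}$, this gives $\L^2$-$\L^p$ decay of order $(1-\vartheta)\nu_0$. The constraint is that this order must exceed $d$. Since $\vartheta\to 2/d\cdot\frac{1}{1+2d\varepsilon}\cdot\ldots$ stays strictly below $\frac{2}{d+2}$ once $\varepsilon$ is chosen small, it suffices to check $(1-\vartheta)(d+2)>d$, i.e. $\vartheta<\frac{2}{d+2}$. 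At the threshold $\frac12-\frac1p=\frac1d$ one gets $\vartheta=\frac{1/d}{1/d+2\varepsilon}$. This is close to $1$, not small, so the right strategy is to interpolate instead between the Step 1 $\L^2$-$\L^{p_1'}$ decay estimates with $\frac12-\frac1{p_1'}=\frac1d$ (order close to $d+2$) and the hypercontractivity at $p_1$. Then $\vartheta\to0$ as $\frac12-\frac1p\downarrow\frac1d$, so the order stays above $d$ after shrinking $\varepsilon$. This is exactly Corollary~\ref{cor: decay, hyper, unif}, and Proposition~\ref{prop: L2-Lp off imply Lp bdd} then gives $\|e^{-zA}f\|_{\L^p}\le C\|f\|_{\L^p}$. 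For $p$ below the threshold, Step 1 combined with Proposition~\ref{prop: L2-Lp off imply Lp bdd} applies directly.
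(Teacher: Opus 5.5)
Your proposal is correct and is essentially the paper's proof. The paper likewise combines $\L^2$-$\L^q$ decay estimates of order $\nu\in(d,d+2)$ at the Sobolev exponent $\frac12-\frac1q=\frac1d$, obtained from Theorem~\ref{thm: L2-Lp off diag stokes resolv} via the Cauchy integral over $\partial(\S_\theta\cup B(0,1/|z|))$ (Proposition~\ref{prop: Lp off-diag sg}), with hypercontractivity slightly beyond $2^*$ from \u{S}ne\u{\i}berg plus Sobolev (done at $z=1$ with $(1+\cA)^{-1}$ and then rescaled via Lemma~\ref{lem: rescaling lemma}), Corollary~\ref{cor: decay, hyper, unif}, and duality for $p<2$.

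One bookkeeping fix is needed in Step~3. Keep the hypercontractive exponent $p_1$ fixed and take the final $\varepsilon$ to be $\vartheta_0\bigl(\frac12-\frac1{p_1}-\frac1d\bigr)$ with $\nu(1-\vartheta_0)>d$, as in the paper's choice $\varepsilon=\theta\delta'$. With $\frac12-\frac1{p_1}=\frac1d+2\varepsilon$ as you wrote it, reaching the endpoint $\frac1d+\varepsilon$ forces $\vartheta=\frac12$, and the resulting order $\nu/2$ is below $d$; shrinking $\varepsilon$ while keeping that ratio does not help.
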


In particular, for $d=2$ the full range $p\in(1,\infty)$ is covered.
The corresponding gradient estimates in $\L^p$ are obtained by a similar approach combined with a duality argument, which we summarize in the following theorem.
 
\begin{theorem}[$\W^{1,p}$-estimates of the semigroup]
\label{thm: Lp bound for grad fam}
    Let $\mu$ satisfy Assumption~\ref{Ass: Coefficients}, $\omega_0$ be given by \eqref{eq: def omega} and $\beta \in [0,\frac{\pi}{2}-\omega_0)$. There exists $\varepsilon>0$ depending on $\beta,d$ and $\mu_\bullet,\mu^\bullet$ such that for all $p\in (1,\infty)$ satisfying
    \begin{align*}
        \frac{2d}{d+2}-\varepsilon < p < 2+\varepsilon
    \end{align*}
    there exists a constant $C>0$ such that for all $z \in \S_\beta$ and $f\in \L_\sigma^2(\IR^d)\cap \L_\sigma^{p}(\IR^d)$ we have
    \begin{align*}
      \||z|^\frac{1}{2}\nabla e^{-zA}f \|_{\L^{p}}\leq C \|f\|_{\L^{p}}.
    \end{align*}
    The constant $C$ depends only on $\beta,d,p$ and $\mu_\bullet,\mu^\bullet$.
\end{theorem}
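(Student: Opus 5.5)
The plan is to split the range into $p\le 2$ and $p>2$ and treat the two halves by different mechanisms.

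\textbf{Case $p\in(\frac{2d}{d+2}-\varepsilon,2]$.} Here we use the abstract framework of Section~\ref{sec: abstract machinery} applied to the family $T(z)=|z|^{1/2}\nabla e^{-zA}$. This family acts from $\L^2_\sigma$ into $\L^2(\IR^d;\IC^{d\times d})$. The steps are as follows.

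\begin{enumerate}
\item Transfer the resolvent decay estimates to the gradient. The iteration behind Theorem~\ref{thm: L2-Lp off diag stokes resolv} controls $\nabla u$ in $\L^2$ on balls simultaneously with $u$. This gives $\L^2$-$\L^2$ decay estimates of order $\nu\in(d,d+2)$ for $\lambda^{1/2}\nabla(\lambda+A)^{-1}$.
\item Pass to the semigroup. Write $e^{-zA}$ as a Cauchy integral of the resolvent over a sector boundary. The polynomial factors $\max\{|\lambda|r^2,(|\lambda|r^2)^{-N}\}$ are integrable against $e^{\Re(\lambda z)}$. This yields $\L^2$ decay estimates of order $\nu$ for $T(z)$, $z\in\S_\beta$.
\item Work with the adjoint. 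The $\L^p$-bound for $p<2$ is equivalent to an $\L^{p'}$-bound for $T(z)^*=|z|^{1/2}e^{-\bar zA^*}\P\div$. Here $A^*$ is the Stokes operator with coefficients $\mu^*$, which also satisfy Assumption~\ref{Ass: Coefficients}, and $\P$ is the Helmholtz projection, realized by solving \eqref{eq: Stokes system} with right-hand side $\div F$. The decay estimates for the $F$-data in Theorem~\ref{thm: L2-Lp off diag stokes resolv} give $\L^2$ decay estimates for $T(z)^*$ directly.
\item Obtain hypercontractivity. The needed bound is $\L^2\to\L^{q}$ for $T(z)^*$ with $\frac12-\frac1q$ slightly larger than $\frac1d$. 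I would derive it from \u{S}ne\u{\i}berg's stability theorem (Proposition~\ref{prop: Sneiberg}). The resolvent map $\W^{1,s}_\sigma\ni u\mapsto \lambda u-\div\mu\nabla u$ into $\W^{-1,s}_\sigma$ is invertible at $s=2$ with $\lambda$-uniform constants. Hence it remains invertible for $|s-2|<\varepsilon$. Combined with Sobolev embedding, this gives $\L^2$-$\L^q$ bounds for $q$ slightly beyond the Sobolev exponent $2d/(d-2)$.
\item Conclude. Interpolate this hypercontractivity with the $\L^2$ decay estimates via Proposition~\ref{prop: int principles}. Then Corollary~\ref{cor: decay, hyper, unif} gives uniform $\L^{p'}$-bounds for $p'$ with $\frac12-\frac1{p'}<\frac1d+\varepsilon$, that is, for $p>\frac{2d}{d+2}-\varepsilon$. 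Duality finishes this case.
\end{enumerate}

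\textbf{Case $p\in(2,2+\varepsilon)$.} Here decay is not needed; I would use \u{S}ne\u{\i}berg directly. The map $u\mapsto \lambda u-\div(\mu\nabla u)+\nabla\phi$ is an isomorphism $\W^{1,p}_\sigma\to\W^{-1,p}_\sigma$, uniformly in $\lambda\in\S_\theta$ with the scaled norms $|\lambda|^{1/2}\|u\|_{\L^p}+\|\nabla u\|_{\L^p}$. This gives $\|\lambda^{1/2}\nabla(\lambda+A)^{-1}f\|_{\L^p}\lesssim\|f\|_{\L^p}$. To pass to the semigroup we need a uniform bound for $e^{-zA}$ on $\L^p$, which is Theorem~\ref{thm: Lp-extrapolation sg II}. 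We then write $\nabla e^{-zA}=\nabla(1+zA)^{-1}\,(1+zA)e^{-zA}$. The factor $(1+zA)e^{-zA}$ is $\L^p$-bounded by analyticity of the $\L^p$ semigroup, since the $\L^p$-sectoriality comes with Theorem~\ref{thm: Lp-extrapolation sg II}.

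\textbf{Main obstacle.} The hardest point is the hypercontractivity slightly beyond the Sobolev threshold and its compatibility with the interpolation loss. The order $\nu<d+2$ must stay above $d$ after interpolating with $\L^2$-$\L^q$ bounds for $q$ only $\varepsilon$-beyond $2d/(d-2)$. Choosing $\nu$ close to $d+2$ leaves room of size $2$ in the order, so a small $\varepsilon$ suffices. The bookkeeping of the powers $\max\{\cdot\}$ in Corollary~\ref{cor: decay, hyper, unif} must be checked carefully. In dimension $d=2$ the Sobolev step is replaced by arbitrary $q<\infty$.
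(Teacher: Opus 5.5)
The $p\le 2$ half of your argument follows the paper. The $p>2$ half takes a different route, which also works. One step in the first half needs an extra idea that you do not state.

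\textbf{Case $p\le 2$.} You use the same ingredients as the paper:
\begin{itemize}
\item duality with $-|z|^{\frac12}e^{-\bar z\cA^*}\cP\div$ (Lemma~\ref{lem: duality});
\item $\L^2$-$\L^{2d/(d-2)}$ decay estimates of order $\nu\in(d,d+2)$ from the $F$-part of Theorem~\ref{thm: L2-Lp off diag stokes resolv}, transferred by the Cauchy integral (Proposition~\ref{prop: Lp off-diag sg div});
\item hypercontractivity slightly beyond $2d/(d-2)$, then Corollary~\ref{cor: decay, hyper, unif}.
\end{itemize}
Your Steps~1--2 play no role in this case. As your last paragraph says, the interpolation must start from the endpoint $\L^2$-$\L^{2d/(d-2)}$ decay estimates, not from $\L^2$-$\L^2$ ones. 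Otherwise $\theta$ is close to $1$ and the order falls far below $d$.

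The gap is in Step~4. \u{S}ne\u{\i}berg plus Sobolev applied to a resolvent acting on $\cP\div(F)$ stops at $2d/(d-2)$. The reason is that for $F\in\L^2$ one only has $\cP\div(F)\in\W^{-1,2}_\sigma$, and in general not $\W^{-1,s}_\sigma$ for any $s>2$. The paper proceeds in two steps:
\begin{itemize}
\item It first proves $\L^2$-$\L^q$ bounds for the semigroup itself (Theorem~\ref{thm: L2-L2*+eps bdd semigroup}). There the data lie in $\L^2_\sigma\hookrightarrow\W^{-1,s}_\sigma$, and $e^{-A}$ is written as a power of $(1+A)^{-1}$ times an $\L^2$-bounded operator.
\item It then splits $|z|^{\frac12}e^{-z\cA}\cP\div=e^{-\frac z2 A}\,|z|^{\frac12}e^{-\frac z2\cA}\cP\div$ (Lemma~\ref{lem: L2-Lp bdd of div family}).
\end{itemize}
With this splitting your Step~4 goes through.

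\textbf{Case $p>2$.} The paper uses exactly your Steps~1--2 here. It interpolates the $\L^2$ decay estimates of order $\nu>d$ for $|z|^{\frac12}\nabla e^{-zA}$ (Proposition~\ref{prop: off-diag semigroup}) with $\L^2$-$\L^p$ bounds for $p$ slightly above $2$. Those bounds come from \u{S}ne\u{\i}berg via $\L^2_\sigma\hookrightarrow\W^{-1,p}_\sigma$ (Proposition~\ref{prop: L2-L2+eps bdd grad sg}), and Corollary~\ref{cor: decay, hyper, unif} concludes.

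You instead feed $\L^p_\sigma\hookrightarrow\W^{-1,p}_\sigma$ into \u{S}ne\u{\i}berg to get $\||z|^{\frac12}\nabla(1+zA)^{-1}f\|_{\L^p}\lesssim\|f\|_{\L^p}$. You then absorb $(1+zA)e^{-zA}$ using Cauchy's formula for $zAe^{-zA}$ on a slightly larger sector, together with Theorem~\ref{thm: Lp-extrapolation sg II}. This is correct and is the classical elliptic argument. Two points make it complete:
\begin{itemize}
\item Only $\lambda=1/z\in\S_\beta$ is needed, so uniformity in $\lambda$ follows from Theorem~\ref{thm: extension of 1+A} by rescaling and rotation (Observation~\ref{observation}).
\item As in the paper, you use that the $\W^{1,p}_\sigma$-inverse agrees with $(1+A)^{-1}$ on $\L^2_\sigma\cap\L^p_\sigma$.
\end{itemize}

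Your route needs no decay estimates for the gradient and gives $\L^p$ gradient bounds for the resolvent as a by-product. It does presuppose the $\L^p$-bounds of the semigroup on a larger sector. The paper's route uses only $\L^2$ information plus \u{S}ne\u{\i}berg, and treats both halves by the same decay/hypercontractivity mechanism of Section~\ref{sec: abstract machinery}.
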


We briefly remark that both $\L^p$-estimates for the semigroup and its gradient imply analogous $\L^p$-estimates for the resolvent and its gradient by the Laplace transform representation of the resolvent; see \cite[Prop.~3.4.4]{Haase}. In particular, the $\L^2$-bounds in \eqref{eq: L2 eq resolvent} can be extended to the $\L^p$-scale for $p \neq 2$. 
Combining the uniform estimates from Theorems~\ref{thm: Lp-extrapolation sg II} and~\ref{thm: Lp bound for grad fam} with their hypercontractivity properties (see Sections~\ref{sec: Lp-theory sg} and~\ref{sec: gradient est}), we obtain the following smoothing estimates, that are standard inputs for fixed-point arguments for the Navier--Stokes equations, see for example~\cite{Giga, Kato-Lp}.
 
\begin{theorem}[$\L^q$-$\L^p$ smoothing estimates]
\label{thm: smoothing}
Let $\mu$ satisfy Assumption~\ref{Ass: Coefficients}, $\omega_0$ be given by \eqref{eq: def omega} and $\beta \in [0,\frac{\pi}{2}-\omega_0)$. There exists $\varepsilon>0$ depending on $\beta,d$ and $\mu_\bullet,\mu^\bullet$ such that for all $1<q\leq p<\infty$ satisfying
\begin{align*}
    \Big|\frac{1}{2}-\frac{1}{q}\Big|<  \frac{1}{d}+\varepsilon \quad \text{and} \quad  \Big|\frac{1}{2}-\frac{1}{p}\Big|<  \frac{1}{d}+\varepsilon,
\end{align*}
there exists a constant $C>0$ such that for all $z \in \S_\beta$ and $f\in \L_\sigma^2(\IR^d)\cap \L_\sigma^{q}(\IR^d)$  we have
\begin{align}
\label{eq: smoothing sg}
    \|e^{-zA}f \|_{\L^{p}}\leq C |z|^{-\frac{d}{2}(\frac{1}{q}-\frac{1}{p})}\|f\|_{\L^{q}}.
\end{align}
If moreover $q$ and $p$ satisfy
\begin{align*}
    \frac{2d}{d+2}-\varepsilon < q\leq p < 2+\varepsilon,
\end{align*}
then there exists a constant $C>0$ such that for all $z \in \S_\beta$ and $f\in \L_\sigma^2(\IR^d)\cap \L_\sigma^{q}(\IR^d)$ we have
    \begin{align}
    \label{eq: smoothing grad}
      \||z|^\frac{1}{2}\nabla e^{-zA}f \|_{\L^{p}}\leq C |z|^{-\frac{d}{2}(\frac{1}{q}-\frac{1}{p})}\|f\|_{\L^{q}}.
    \end{align}
Both constants depend only on $\beta,d,p,q$ and $\mu_\bullet,\mu^\bullet$.
\end{theorem}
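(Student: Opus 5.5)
The plan is to combine the uniform bounds of Theorems~\ref{thm: Lp-extrapolation sg II} and~\ref{thm: Lp bound for grad fam} with the semigroup law, so that the decay factor comes from a hypercontractive step and the remaining factors are merely uniformly bounded. I would first record hypercontractivity: for $2\le p$ with $\frac12-\frac1p<\frac1d+\varepsilon$,
\[
\|e^{-zA}f\|_{\L^p}\lesssim |z|^{-\frac d2(\frac12-\frac1p)}\|f\|_{\L^2}\quad (z\in\S_\beta).
\]
This is the hypercontractivity established in Sections~\ref{sec: Lp-theory sg} and~\ref{sec: gradient est} via \u{S}ne\u{\i}berg. If one only has the Sobolev-range version, it follows from $\|e^{-zA}f\|_{\W^{1,2}}$-bounds and the Sobolev embedding $\dot\W^{1,2}\hookrightarrow\L^{2^*}$, interpolated with $\L^2$-boundedness and then extended slightly past $2^*$ by \u{S}ne\u{\i}berg. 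By duality, using that the adjoint $A^*$ is the Stokes operator with coefficients $\mu^*$ satisfying the same Assumption~\ref{Ass: Coefficients}, one gets the $\L^q$-$\L^2$ version for $q\le2$ with $\frac1q-\frac12<\frac1d+\varepsilon$, with factor $|z|^{-\frac d2(\frac1q-\frac12)}$.

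For \eqref{eq: smoothing sg} I would split into cases. If $q\le2\le p$, write $e^{-zA}=e^{-\frac z2A}e^{-\frac z2A}$ and apply the $\L^2$-$\L^p$ and $\L^q$-$\L^2$ bounds; the exponents add to $\frac d2(\frac1q-\frac1p)$, and $z/2\in\S_\beta$. If $2\le q\le p$, I would interpolate (Riesz--Thorin, applied to the fixed operator $e^{-zA}$ on the scale of solenoidal spaces, or via the complex interpolation of $\L^p_\sigma$) between the uniform $\L^q$-$\L^q$ bound of Theorem~\ref{thm: Lp-extrapolation sg II} and the $\L^2$-$\L^{p_1}$ bound, with $p_1$ slightly larger than $p$ but still in range. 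That yields an $\L^{q_\theta}$-$\L^p$ bound with exactly the scaling-correct power of $|z|$, since scaling determines the exponent. I would rather do this more cleanly as follows: factor as $e^{-\frac z2 A}$ from $\L^q$ to $\L^q$ (uniform), then an $\L^q$-$\L^p$ step. That step is obtained by interpolating the $\L^2$-$\L^{p_1}$ hypercontractive bound with the $\L^{p_1}$-$\L^{p_1}$ uniform bound; the family $\{(1/q,1/p)\}$ reached in this way covers all pairs $2\le q\le p$ in range. The case $q\le p\le2$ is dual to this one, using $A^*$.

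For \eqref{eq: smoothing grad} I would write $\nabla e^{-zA}f=\nabla e^{-\frac z2A}\,e^{-\frac z2A}f$. Theorem~\ref{thm: Lp bound for grad fam} gives $\||z|^{1/2}\nabla e^{-\frac z2A}g\|_{\L^p}\lesssim\|g\|_{\L^p}$ for $p$ in the gradient range. The first part, applied with $q\le p$ both in the gradient range (which is contained in the range of \eqref{eq: smoothing sg} after shrinking $\varepsilon$), gives $\|e^{-\frac z2A}f\|_{\L^p}\lesssim|z|^{-\frac d2(\frac1q-\frac1p)}\|f\|_{\L^q}$.

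The main obstacle I expect is making the interpolation rigorous on solenoidal spaces: $\L^p_\sigma$ is only defined via the Helmholtz projection, and $e^{-zA}$ is a priori only defined on $\L^2_\sigma\cap\L^q_\sigma$. I would handle this by extending $e^{-zA}\mathbb P$ to all of $\L^2\cap\L^q$, using that the Helmholtz projection is bounded on every $\L^r$, $1<r<\infty$. I would then apply Riesz--Thorin to this operator on unconstrained $\L^r$ spaces and restrict back, using density of $\L^2_\sigma\cap\L^q_\sigma$.
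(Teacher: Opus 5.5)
Your proposal is correct and follows the paper's proof essentially step by step. The paper uses the same four pieces: for $2\le q\le p$ it interpolates the $\L^2$-$\L^p$ hypercontractive bound of Theorem~\ref{thm: L2-L2*+eps bdd semigroup} with the uniform $\L^p$-bound; for $q\le p\le 2$ it argues by duality via $A^*$; for $q\le 2\le p$ it uses the splitting $e^{-zA}=e^{-\frac z2A}e^{-\frac z2A}$; and for the gradient it uses the same splitting together with Theorem~\ref{thm: Lp bound for grad fam}. Two small remarks: your extra factor $e^{-\frac z2A}$ in the case $2\le q\le p$ is unnecessary, since the interpolation alone already gives the $\L^q$-$\L^p$ bound, and your idea of applying Riesz--Thorin to $e^{-zA}\IP$ on unconstrained $\L^r$ spaces is a sensible way to make rigorous the interpolation step that the paper leaves implicit.
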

 
Finally, in Section~\ref{sec: Hinfty calc}, we employ an abstract $\L^p$-extrapolation result well-suited to the $\L^2$-$\L^p$ decay estimates of the Stokes semigroup, allowing us to extrapolate the bounded $\H^\infty$-calculus of $A$ to $\L^p_\sigma(\IR^d)$ for all $p \in (1, \infty)$ satisfying \eqref{eq: elliptic range I}.
 
\begin{theorem}[$\L^p$-extrapolation of $\H^\infty$-calculus]
\label{thm: bdd calc on Lp}
Let $\mu$ satisfy Assumption~\ref{Ass: Coefficients},  $\omega_0$ be given by \eqref{eq: def omega} and $\varrho\in (\omega_0,\pi)$. There exists $\varepsilon>0$ depending on $d,\varrho$ and $\mu_\bullet,\mu^\bullet$ such that for all $p\in(1,\infty)$ satisfying 
\begin{align*}
    \Big|\frac{1}{2}-\frac{1}{p}\Big|< \frac{1}{d}+\varepsilon
\end{align*}
there exists a constant $C>0$ such that for all $\varphi\in \H^\infty(\S_\varrho)$ and $f\in \L^2_\sigma(\IR^d)\cap \L^p_\sigma(\IR^d)$ we have
\begin{align*}
    \|\varphi(A)f\|_{\L^p} \leq C\|\varphi\|_{\infty,\varrho}\|f\|_{\L^p}.
\end{align*}
The constant $C$ depends only on $\varrho, d,p$ and $\mu_\bullet,\mu^\bullet$.
\end{theorem}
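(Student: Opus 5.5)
The plan is to use an abstract $\L^p$-extrapolation theorem for the $\H^\infty$-calculus, in the spirit of Blunck--Kunstmann and Kunstmann--Weis. Its inputs are a bounded $\H^\infty(\S_\varrho)$-calculus on $\L^2_\sigma(\IR^d)$ together with suitable $\L^2$-$\L^{p_0}$ decay estimates for the semigroup. The output is a weak-type/Calderón--Zygmund bound for $\varphi(A)$, uniform in $\|\varphi\|_{\infty,\varrho}$.

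\textbf{Step 1: the case $p=2$.} The operator $A$ is associated with a form satisfying a Gårding inequality with complex bounded coefficients. It is therefore m-accretive after the numerical-range estimate, and in fact has a bounded $\H^\infty(\S_\varrho)$-calculus on $\L^2_\sigma$ for every $\varrho>\omega_0$. I expect this to be known, e.g.\ via the Kato square root property from \cite{Haardt_Tolksdorf}, or directly since $A$ is maximal accretive up to rotation (Crouzeix--Delyon / von Neumann inequality). I take it as the starting point.

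\textbf{Step 2: the range $p\in(2,\frac{2d}{d-2}+\varepsilon')$.} Fix such a $p$ and pick $p_0>p$ in the extended range where the semigroup satisfies $\L^2$-$\L^{p_0}$ decay estimates. These decay estimates come from Theorem~\ref{thm: L2-Lp off diag stokes resolv} transferred to the semigroup via Cauchy's formula (Section~\ref{sec: Lp-theory sg}), interpolated with the hypercontractivity beyond the Sobolev threshold obtained from Šneĭberg (Corollary~\ref{cor: decay, hyper, unif}). I want them of order $\nu>d$, so I choose $\nu$ close to $d+2$ and $p_0$ close enough to $p$ that the interpolated order stays above $d$.

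Then apply the good-$\lambda$ / sharp maximal function criterion of Auscher (or Shen's $\L^p$-extrapolation theorem for operators on $\L^2$). For $T=\varphi(A)$ and $\mathcal{A}_r=I-(I-e^{-r^2A})^m$, I need two estimates:
\[
\Big(\fint_B|T(I-\mathcal{A}_r)f|^2\Big)^{1/2}\lesssim \mathcal{M}(|f|^2)^{1/2},
\]
\[
\Big(\fint_B|T\mathcal{A}_rf|^{p_0}\Big)^{1/p_0}\lesssim \mathcal{M}(|Tf|^2)^{1/2}.
\]
The second follows from the $\L^2$-$\L^{p_0}$ decay of $e^{-kr^2A}$, since $\mathcal{A}_r$ commutes with $T$. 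The non-local sum $\sum_n 2^{-\nu n}\|g\|^2_{\L^2(C_n(B))}$ is bounded by $|B|\,\mathcal{M}(|g|^2)$ precisely when $\nu>d$. For the first, I write $\varphi(A)(I-e^{-r^2A})^m$ via the Cauchy integral over $\partial\S_{\varrho'}$ of $\varphi(\zeta)(1-e^{-r^2\zeta})^m(\zeta-A)^{-1}$. I then use the $\L^2$ decay estimates for resolvents from Theorem~\ref{thm: L2-Lp off diag stokes resolv} on each annulus, with $m$ large to absorb the polynomial factors $\max\{|\lambda|r^2,(|\lambda|r^2)^{-(d+2)}\}$ at both ends of the integral.

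\textbf{Step 3: $p<2$ by duality.} The adjoint $A^*$ is the generalized Stokes operator with coefficients $\mu^*$, which satisfies the same assumptions. Hence $\varphi(A)^*=\varphi^*(A^*)$ with $\varphi^*(z)=\overline{\varphi(\bar z)}$ is bounded on $\L^{p'}_\sigma$, and the estimate transfers.

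\textbf{Main obstacle.} I expect the main difficulty to be the first estimate of Step 2. Because of the non-locality, $f$ is not split into local pieces with supports on annuli. Instead all annuli enter at once, and the decay $2^{-\nu n}$ from the resolvent bounds must survive the Cauchy integral uniformly in $r$. The worst term is $(|\lambda|r^2)^{-(d+1)}$ for small $|\lambda|$. It must be compensated by $|1-e^{-r^2\zeta}|^m\lesssim(|\zeta|r^2)^m$ with $m>d+2$. Large $|\lambda|$ is handled by $|\lambda|^{-1}$ from the resolvent together with the bounded factor, and here I may need the second, better estimate of Theorem~\ref{thm: L2-Lp off diag stokes resolv} for data away from $2B$. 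I will split $f=f\mathbf 1_{4B}+f\mathbf 1_{(4B)^c}$ and use the local $\L^2$ calculus bound for the first part.
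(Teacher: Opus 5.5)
Your architecture is the paper's. You use Theorem~\ref{thm: extrapolation p>2} with $T=\varphi(A)$ and $S_r=1-(1-e^{-r^2A})^m$. You get \eqref{eq: BK 2} from commutativity and from $\L^2$-$\L^{p_0}$ decay of order $\nu>d$, obtained by interpolating with hypercontractivity. You handle $p<2$ by duality. The gaps are in your verification of \eqref{eq: BK 1}.

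First, the splitting $f=f\mathbf{1}_{4B}+f\mathbf{1}_{(4B)^c}$ is not admissible. Neither piece is divergence-free, but $\varphi(A)$, the resolvents and all decay estimates of Section~\ref{sec: off-diag est} act only on $\L^2_\sigma(\IR^d)$, and applying $\IP$ to the pieces destroys their localization. You need the Bogovski\u{\i}-corrected truncation $f_{4r}=\eta f-\cB_{C_1(2B)}(\nabla\eta\cdot f)$ of Lemma~\ref{lem: Bogovskii lemma}. It is divergence-free, supported in $4B$, and satisfies $\|f_{4r}\|_{\L^2}\lesssim\|f\|_{\L^2(4B)}$. The remainder $f-f_{4r}$ is divergence-free, vanishes on $2B$, and is bounded by $f$ on every annulus $C_n(B)$.

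Second, and more seriously, your treatment of large $|\lambda|$ fails. The factor $(1-e^{-r^2\zeta})^m$ is merely bounded at infinity, so a large $m$ helps only near $\zeta=0$ and absorbs nothing at the upper end. There, with $\|\varphi\|_\infty=1$ and a bound $\|(\lambda+A)^{-1}g\|_{\L^2(B)}\lesssim|\lambda|^{-1}\|g\|_X$, where $\|g\|_X^2=\sum_n2^{-\nu n}\|g\|^2_{\L^2(C_n(B))}$, the contour integral behaves like $\int_{r^{-2}}^\infty s^{-1}\,\d s=\infty$. So no bound uniform in $\varphi$ comes out.

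The second estimate of Theorem~\ref{thm: L2-Lp off diag stokes resolv} does not rescue this. At $p=2$ it only gives $|\lambda|^{-1}\max\{1,(|\lambda|r^2)^{-\frac{d+2}{2}}\}\|g\|_X$, because the Sobolev step in its proof costs a factor $\max\{1,|\lambda|r^2\}$ that cancels the gain; the first estimate is even worse. The missing ingredient is Proposition~\ref{prop: imp off-diag est} itself. For data vanishing on $2B$, it combines the local Caccioppoli inequality on $B$ (Lemma~\ref{lem: local Caccioppoli}) with the non-local estimate fed into the pressure term, and yields
\[
\|(\lambda+A)^{-1}g\|_{\L^2(B)}\lesssim \frac{1}{|\lambda|}\,(|\lambda|r^2)^{-\frac12}\max\Big\{1,(|\lambda|r^2)^{-\frac{d+1}{2}}\Big\}\|g\|_X .
\]
With $t=|\lambda|r^2$, the contour integral is then bounded by $\int_0^\infty\min\{1,t^m\}\,t^{-3/2}\max\{1,t^{-\frac{d+1}{2}}\}\,\d t$. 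This is finite for $m>\frac d2+1$. Moreover $\|g\|_X\lesssim|B|^{1/2}(\cM(|f|^2)(x))^{1/2}$ because $\nu>d$.

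The paper runs the same argument in semigroup form: it uses Proposition~\ref{prop: imp off-diag semigroup}, with gain $|z|^{1/2}/r$, inside the representation of Lemma~\ref{lem: rep FC via sg}. You also omit two smaller steps. The a priori property \eqref{eq: BK 3} must be checked; the paper does this for $\varphi\in\H^\infty_0(\S_\varrho)$ via that representation and Theorem~\ref{thm: Lp-extrapolation sg II}. And Lemma~\ref{lem: reduction lemma} is needed to pass from normalized $\H^\infty_0$-functions to all of $\H^\infty(\S_\varrho)$.
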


Summarizing, these results demonstrate that the non-locality caused by the pressure does not restrict the $\L^p$-theory. In particular, the Stokes operator with rough coefficients shares the same $\L^p$-mapping properties as its elliptic counterpart. In a forthcoming joint work with Tolksdorf, we will also treat the Riesz transform associated with the Stokes operator $A$ on $\L^p$-spaces for $p \neq 2$.

\subsection{Acknowledgments}
The author would like to thank Sebastian Bechtel and Patrick Tolksdorf for enriching discussions and valuable feedback. The author was supported by \textit{Studienstiftung des deutschen Volkes}.

\section{Notation and background}
\label{sec: notation}

We use the following notation throughout the paper.
We denote by $\IN,\IN_0,\IZ, \IR$ and $\IC$ the sets of all positive integers, all non-negative integers, all integers, all real and all complex numbers, respectively. Throughout, $d\geq 2$ denotes the dimension of the underlying Euclidean space. The Euclidean norms of $\IC$, $\IC^d$ and $\IC^{d \times d}$ will be denoted by $|\cdot|$, all other norms will be labeled accordingly. 
We denote the open ball centered at $x\in \IR^d$ with radius $r>0$ by $B(x,r)$. Given an open ball $B=B(x,r)$ and $\ell \in \IN_0$, we define $B_\ell=B(x,2^\ell r)$ and the $\ell$-th dyadic annulus around $B$ by
\begin{align*}
 C_{\ell} (B) \coloneqq B_\ell \setminus \overline{B_{\ell-1}} \quad \text{if} \quad \ell \geq 1 \quad \text{and} \quad C_0 (B) \coloneqq B.
\end{align*}
The open sector in the complex plane that is symmetric about the positive real axis with opening angle $2 \omega$, $\omega\in(0,\pi)$, is defined as
\begin{align*}
    \S_\omega \coloneqq \bigl\{z\in\IC\setminus \{0\}: |\operatorname{arg}(z)|<\omega\bigr\}.
\end{align*}
In the special case $\omega = 0$ we write $\S_{\omega} \coloneqq (0 , \infty)$.

In estimates it will be convenient to write $\alpha \lesssim \beta$ or $\beta\gtrsim \alpha$ if there exists $C > 0$, depending only on parameters not at stake, such that $\alpha \leq C \beta$. We will write $\alpha \simeq \beta$ if both $\alpha \lesssim \beta$ and $\alpha \gtrsim \beta$ hold. In some situations it will be more convenient to keep the notation $\alpha \leq C \beta$ or $C\alpha \geq \beta$. In this case $C$ is a generic constant and may change from line to line. \par

The characteristic function of a set $E \subseteq \IR^d$ is denoted by $\mathbf{1}_E$. If $E$ is measurable we denote its Lebesgue measure by $\lvert E \rvert$. In the case of $E$ being a bounded set with $0 < |E| < \infty$, we define the mean value of a locally integrable function $f \in \L^1_{\loc} (\IR^d)$ over $E$ by
\begin{align*}
    f_E\coloneqq \fint_E f \, \d x =\frac{1}{\lvert E \rvert} \int_E f \, \d x.
\end{align*}

\subsection{Function spaces}
For a target space $X\in\{\IC,\IC^d,\IC^{d \times d}\}$, we denote by $\C_{c}^{\infty}(\IR^d;X)$ the vector space of all smooth, compactly supported functions. For $1<p<\infty$, we denote by $\L^p (\IR^d;X)$ and $\W^{1,p} (\IR^d;X)$ the classical Lebesgue and Sobolev spaces equipped with their usual norms $\|\cdot \|_{\L^p}$ and $\|\cdot \|_{\W^{1,p}}$, respectively. To streamline notation, we often omit the target space and simply write $\L^p$ or $\W^{1,p}$ when the context is clear. We recall that $\C_{c}^{\infty} (\IR^d;X)$ is dense in both spaces with respect to their underlying norms.

To introduce standard spaces in the theory of mathematical fluid dynamics, we define the space of smooth, compactly supported and divergence-free vector fields by
\begin{align*}
    \C_{c , \sigma}^{\infty} (\IR^d) \coloneqq \{\varphi \in \C_c^{\infty} (\IR^d ; \IC^d) : \div(\varphi) = 0 \}.
\end{align*}
For $1<p<\infty$, we define the corresponding Lebesgue and Sobolev spaces
\begin{align*}
    \L^p_{\sigma} (\IR^d) &\coloneqq\overline{\C_{c , \sigma}^{\infty} (\IR^d)}^{\|\cdot\|_{\L^p}} \\
    \W^{1,p}_{\sigma} (\IR^d) &\coloneqq\overline{\C_{c , \sigma}^{\infty} (\IR^d)}^{\|\cdot\|_{\W^{1,p}}}.
\end{align*}
These spaces are also characterized by
\begin{align*}
    \L^p_{\sigma} (\IR^d) & =\{f\in\L^p(\IR^d;\IC^d) :  \div (f) = 0\} \\
    \W^{1,p}_{\sigma} (\IR^d) &=\{f\in\W^{1,p}(\IR^d;\IC^d) : \div(f) = 0\}.
\end{align*}
Moreover, we also define
    \begin{align*}
        \Llocs^p(\IR^d) \coloneqq \{f\in \Lloc^p(\IR^d;\IC^d):  \div (f) = 0\}.
    \end{align*}
We refer to the monograph of Galdi~\cite{Galdi} for a more comprehensive treatment of these function spaces. We recall the Leray projection $\IP$, which is given on the whole space $\IR^d$ by
\begin{align*}
    \IP = \cF^{-1}\Big(\Id_{\IC^{d\times d}} - \frac{\xi\xi^\top}{|\xi|^2}\Big)\cF,
\end{align*}
where $\cF$ denotes the Fourier transform. It is known that $\IP$ is a bounded linear projection from $\L^p(\IR^d;\IC^d)$ and $\W^{1,p}(\IR^d;\IC^{d})$ onto $\L^p_{\sigma} (\IR^d)$ and $\W^{1,p}_{\sigma} (\IR^d)$, respectively. The space of all bounded antilinear functionals $\W^{1,p}_{\sigma} (\IR^d) \to \IC$ is denoted by $\W^{-1,p'}_{\sigma} (\IR^d)$, where $p'\in(1,\infty)$ is the Hölder conjugate defined by $\frac{1}{p}+\frac{1}{p'}=1$. For $p\in[1,\infty)$, we define the lower Sobolev conjugate $p_*\coloneqq \frac{dp}{d+p}$ and the upper Sobolev conjugate $p^*\coloneqq \frac{dp}{d-p}$ if $1\leq p<d$, and $p^*=\infty$ if $p\geq d$. Given a vector field $u \in \W^{1,p} (\IR^d ; \IC^d)$, we regard its gradient as the matrix given as the transpose of the Jacobian of $u$, that is $\nabla u = (\partial_{\alpha} u_i)_{\alpha , i = 1}^d$. For a matrix-valued function $F \in \L^p (\IR^d ; \IC^{d \times d})$, we define its weak divergence as an element in $\W^{-1,p} (\IR^d ; \IC^d) \coloneqq (\W^{1,p'} (\IR^d ; \IC^d))^{\prime}$ via the duality pairing
\begin{align*}
 \langle \div (F) , w \rangle \coloneqq - \sum_{\alpha , i = 1}^d \int_{\IR^d} F_{\alpha i} \overline{\partial_{\alpha} w_i} \, \d x \qquad (w \in \W^{1,p'} (\IR^d ; \IC^d)).
\end{align*}
To interpret the space $\W^{1,p}_{\sigma} (\IR^d)$ as a subspace of $\W^{1,p} (\IR^d ; \IC^d)$, we introduce the canonical inclusion $\iota : \W^{1,p}_{\sigma} (\IR^d) \to \W^{1,p} (\IR^d ; \IC^d)$.
Its adjoint operator then defines the generalized Leray projection on the dual spaces, which we denote by $\cP \coloneqq \iota' : \W^{-1,p'} (\IR^d ; \IC^d) \to \W^{-1,p'}_{\sigma} (\IR^d)$. Notice that $\cP = \IP$ on $\L^{p}$ and $\W^{1,p}$.

\subsection{Sectorial operators and semigroups}

Let $X$ be a Banach space and $\omega \in [0,\pi)$. We say that a closed linear operator $T$ on $X$ is sectorial of angle $\omega$ if its spectrum $\sigma(T)$ is contained in the closed sector $\overline{\S_\omega}$ and for every $\theta\in (0, \pi-\omega)$ there exists a constant $C>0$ such that
\begin{align*}
    \|\lambda (\lambda +T)^{-1}\|_{\cL(X)} \leq C
\end{align*}
for all $\lambda \in \S_\theta$, see \cite[Sec.~2.1]{Haase} for further details. We denote by
\begin{align*}
    \omega_T \coloneqq \inf\{\omega  : \text{ $T$ is sectorial of angle $\omega$}\}
\end{align*}
the smallest of such angles. If $\omega_T\in[0,\frac{\pi}{2})$, then $-T$ generates a bounded holomorphic semigroup $(e^{-zT})_{z\in\S_{\frac{\pi}{2}-\omega_T}}$ on $X$. For any $\beta\in[0,\frac{\pi}{2}-\omega_T)$ and $z\in \S_{\beta}$, we can represent the semigroup via the Cauchy integral formula
\begin{align}
\label{eq: rep semigroup}
    e^{-zT}= \frac{1}{2\pi i} \int\limits_{\gamma_z} e^{z\lambda} (\lambda+T)^{-1}\,\d \lambda,
\end{align}
where the path $\gamma_z$ parameterizes $\partial \big(\S_\theta \cup B(0,1/|z|) \big)$ for $\theta\in (\pi/2+|\arg(z)|,\pi - \omega_T)$. We refer to the monographs \cite{Haase} and \cite{Engel_Nagel} for further information.

\subsection{Holomorphic functional calculus}

For $\vartheta\in [0,\pi)$ define the class of holomorphic functions with some decay in zero and at infinity by
\begin{align*}
    \H^\infty_0(\S_\vartheta) \coloneqq \big\{ \varphi:\S_\vartheta\to \IC \text{ holomorphic } :\exists C,s>0 \text{ s.t.\@ } |\varphi(z)|\leq C\min\{|z|^s,|z|^{-s}\} \big\}.
\end{align*}
Let $T$ be a sectorial operator of angle $\omega\in[0,\pi)$ on a Banach space $X$. For angles satisfying $\omega<\theta<\vartheta <\pi$ and any function $\varphi\in \H^\infty_0(\S_\vartheta)$, we define the bounded linear operator $\varphi(T)$ on $X$ via
\begin{align*}
     \varphi(T) \coloneqq \frac{1}{2\pi i} \int\limits_{\partial \S_\theta} \varphi(z)(z-T)^{-1}\,\d z,
\end{align*}
where $\partial \S_{\theta}$ is parameterized such that it surrounds the spectrum of $T$ counter-clockwise in the extended complex plane; we refer to \cite[Sec.~2.3]{Haase} for details of this construction. If $T$ is additionally injective, we extend the functional calculus to the set
\begin{align*}
    \H^\infty(\S_\vartheta) \coloneqq \big\{ \varphi:\S_\vartheta\to \IC \text{ holomorphic and bounded}\big\}
\end{align*}
via regularization. More precisely, for $\varphi\in \H^\infty(\S_\vartheta)$, the operator $\varphi(T)$ is defined as a closed linear operator through
\begin{align*}
    \varphi(T) \coloneqq [T(1+T)^{-2}]^{-1} [z(1+z)^{-2}\varphi(z)](T).
\end{align*}
In this case, we say that $T$ possesses a bounded $\H^\infty(\S_\vartheta)$-calculus if there exists $C_\vartheta>0$ such that for all $\varphi \in \H^\infty(\S_\vartheta)$ we have $\varphi(T)\in \cL(X)$ and
\begin{align*}
    \|\varphi(T)\|_{\cL(X)} \leq C_\vartheta \|\varphi\|_{\infty,\vartheta},
\end{align*}
where $\|\cdot\|_{\infty,\vartheta}$ denotes the supremum norm over the sector $\S_\vartheta$.
Finally, if $T$ has a bounded $\H^\infty$-calculus of some angle $\vartheta$, so does its adjoint $T^*$ and $(\varphi(T))^* = \varphi^*(T^*)$ holds for all $\varphi\in\H^\infty(\S_\vartheta)$, where $\varphi^*(z) = \overline{\varphi(\overline{z})}$.

\subsection{Generalized Stokes operator}

In this section we introduce generalized Stokes operators via the form method, following the standard procedure for elliptic operators in divergence form.

To simplify our expressions, given $x \in \IR^d$ and a matrix $G \in \IC^{d \times d}$, we define the action of our coefficients $\mu = (\mu_{\alpha \beta}^{i j})_{\alpha , \beta , i , j = 1}^d$ on matrices by
\begin{align*}
    \mu(x) G \coloneqq \Big( \sum_{j , \beta = 1}^d \mu_{\alpha \beta}^{i j} (x) G_{\beta j} \Big)_{\alpha, i = 1}^d
\end{align*}
and we denote the matrix inner product by $G \cdot \overline{H} = \sum_{\alpha , i = 1}^d G_{\alpha i} \overline{H_{\alpha i}}$ for $G , H \in \IC^{d \times d}$.

\begin{assumption}
\label{Ass: Coefficients}
The coefficients $\mu = (\mu_{\alpha \beta}^{i j})_{\alpha , \beta , i , j = 1}^d$ with $\mu_{\alpha \beta}^{i j} \in \L^{\infty} (\IR^d ; \IC)$ for all $1 \leq \alpha , \beta , i , j \leq d$ satisfy for some $\mu_{\bullet} , \mu^{\bullet} > 0$ the inequalities
\begin{align*}
 \Re \int_{\IR^d} \mu \nabla u \cdot \overline{\nabla u} \, \d x \geq \mu_{\bullet} \| \nabla u \|_{\L^2}^2 \qquad (u \in \W^{1,2} (\IR^d ; \IC^d))
\end{align*}
and
\begin{align*}
    \esssup_{x \in \IR^d} \|\mu(x)\|_{\cL(\IC^{d \times d})} \leq \mu^{\bullet}.
\end{align*}
\end{assumption} 

These assumptions imply that the sesquilinear form
\begin{align*}
    \a: \W^{1,2}_\sigma(\IR^d)\times \W^{1,2}_\sigma(\IR^d) \to \IC, \qquad \a(u,v) \coloneqq \int_{\IR^d}  \mu \nabla u \cdot \overline{\nabla v} \, \d x
\end{align*}
is bounded and coercive. Its numerical range
\begin{align*}
    N(\a) \coloneqq \big\{ \a (u,u) : u\in \W^{1,2}_\sigma(\IR^d), \|u\|_{\W^{1,2}}=1 \big\}
\end{align*}
is contained in the closed sector $\overline{\S_{\omega_0 }}$, where we set
\begin{align}
    \label{eq: def omega}
        \omega_0 \coloneqq \arctan\Big(\frac{\mu^{\bullet}}{\mu_{\bullet}}\Big) \in[0,\frac{\pi}{2}).
\end{align}

By the Lax–Milgram lemma, the form $\a$ induces a bounded linear isomorphism $\cA$, which we call the \textit{weak generalized Stokes operator} 
\begin{align*}
    \mathcal{A}:\W^{1,2}_\sigma(\IR^d)\subset\W^{-1,2}_\sigma(\IR^d)\to  \W^{-1,2}_\sigma(\IR^d), \quad (\mathcal{A}u)(v)\coloneqq \a (u,v).
\end{align*}
With the natural inclusion $\iota$ and its adjoint $\cP$ introduced above, we can rewrite the weak generalized Stokes operator via
\begin{align*}
    \langle \cA u , v \rangle_{\W^{-1,2}_{\sigma},\W^{1,2}_{\sigma}} &= \int\limits_{\IR^d} \mu \nabla u \cdot \overline{\nabla \iota(v)} \, \d x \\
    &= \langle -\div(\mu \nabla u) , \iota (v) \rangle_{\W^{-1,2} , \W^{1,2}} = \langle -\cP \div(\mu \nabla u) , v \rangle_{\W^{-1,2}_{\sigma} , \W^{1,2}_{\sigma}}
\end{align*}
for all $u , v \in \W^{1,2}_{\sigma} (\IR^d)$.

We realize the generalized Stokes operator $A$ in $\L^2_\sigma(\IR^d)$ as the part of $\cA$ in $\L^2_\sigma(\IR^d)$. Equivalently, $A$ can be directly defined by
\begin{align*}
    \begin{cases}
        \dom(A) &\coloneqq \big\{u \in \W^{1,2}_{\sigma} (\IR^d) : \, \exists f \in \L^2_{\sigma} (\IR^d) \text{ s.t. } \a (u , v) = \langle f, v\rangle_{\L^2},  \forall v \in \W^{1,2}_{\sigma} (\IR^d)\big\},\\
        \hfill A u &\coloneqq f.
    \end{cases}
\end{align*}

For $x\in\IR^d$ let $\mu^*(x)$ denote the adjoint of $\mu(x)$ with respect to the matrix inner product defined above. Then, for all $u,v\in\W^{1,2}(\IR^d;\IC^d)$ we have
\begin{align*}
    \int\limits_{\IR^d} \mu^*\nabla u\cdot\overline{\nabla v} \,\d x
    = \overline{\int\limits_{\IR^d} \mu\nabla v\cdot\overline{\nabla u }\, \d x},
\end{align*}
so that $\mu^*$ again satisfies Assumption~\ref{Ass: Coefficients} with the same constants $\mu_\bullet, \mu^\bullet$. Restricting to $\W^{1,2}_\sigma(\IR^d)$ induces the adjoint form
\begin{align*}
    \a^*(u,v) \coloneqq \overline{\a(v,u)}
    = \int\limits_{\IR^d} \mu^*\nabla u\cdot\overline{\nabla v}\, \d x
    \qquad (u,v\in\W^{1,2}_\sigma(\IR^d)),
\end{align*}
and we denote by $\cA^*$ and $A^*$ the (weak) generalized Stokes operators associated with $\a^*$. These are the adjoints of $\cA$ and $A$, see \cite[Sec.~6.2]{Bechtel} for elliptic systems. In particular, $\cA^*$ and $A^*$ have the same sectoriality angle $\omega_0$, and every result of this paper applies verbatim to them. We use this in duality arguments without further mention.

The operator $A$ is linked to the generalized Stokes system~\eqref{eq: system} containing the pressure by a standard argument (see \cite[Lem.~II.2.2.1]{Sohr}), which implies that a function $u\in \W^{1,2}_\sigma(\IR^d)$ satisfies
\begin{align*}
     \int\limits_{\IR^d} \mu \nabla u \cdot \overline{\nabla v} \, \d x = \langle f, v\rangle_{\L^2} , \quad (\forall v\in \C^\infty_{c,\sigma}(\IR^d))
\end{align*}
for a given $f\in\L^2_\sigma(\IR^d)$ if and only if there exists $\phi\in \L^2_\loc(\IR^d)$, unique up to an additive constant, such that $(u,\phi)$ satisfies
\begin{align*}
     \int\limits_{\IR^d} \mu \nabla u \cdot \overline{\nabla v} \, \d x + \int\limits_{\IR^d} \phi\, \overline{\div (v)}\,\d x = \langle f, v\rangle_{\L^2} , \quad (\forall v\in \C_c^\infty(\IR^d;\IC^d)).
\end{align*}

Despite the roughness of the coefficients in Assumption~\ref{Ass: Coefficients}, both $A$ and $\cA$ inherit structural properties. In particular, they are sectorial operators of angle $ \omega_0<\frac{\pi}{2}$, admitting uniform resolvent estimates. We recall the following quantitative bounds from \cite[Prop.~2.4]{Haardt_Tolksdorf}.

\begin{proposition}
\label{Prop: L2 resolvent bounds}
    Let $\mu$ satisfy Assumption~\ref{Ass: Coefficients} and let $\omega_0$ be given by \eqref{eq: def omega}. Then, we have $\sigma(A) \cup \sigma( \cA)\subset \overline{\S_{\omega_0}}$ and for all $\theta\in [0,\pi-\omega_0)$ there exists $C>0$ such that for all $\lambda \in \S_\theta$ and all $f\in \L^2_\sigma(\IR^d)$ we have
    \begin{align*}
        \|\lambda(\lambda + A)^{-1}f\|_{\L^2} + \||\lambda|^\frac{1}{2}\nabla(\lambda + A)^{-1}f\|_{\L^2} \leq C\|f\|_{\L^2}.
    \end{align*}
    Moreover, there exists $C>0$ such that for all $\lambda \in  \S_\theta$ and all $F\in \L^2(\IR^d;\IC^{d\times d})$ we have
    \begin{align*}
        \||\lambda|^\frac{1}{2}(\lambda + \cA)^{-1}\cP\div(F)\|_{\L^2} + \|\nabla(\lambda + \cA)^{-1}\cP\div(F)\|_{\L^2} \leq C\|F\|_{\L^2}.
    \end{align*}
    The constant $C$ only depends on $\theta$, $d$, $\mu^{\bullet}$ and $\mu_{\bullet}$.
\end{proposition}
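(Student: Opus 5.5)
The plan is to follow the standard form-method argument for elliptic systems, working in $\W^{1,2}_\sigma(\IR^d)$ and using Assumption~\ref{Ass: Coefficients}.

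\emph{Spectrum and first resolvent bound.} For $\lambda\in\S_\theta$ and $u\in\W^{1,2}_\sigma(\IR^d)$, consider the form $\lambda\langle u,u\rangle+\a(u,u)$. The value $\a(u,u)$ lies in $\overline{\S_{\omega_0}}$ because of the Gårding inequality and boundedness, and $\lambda\|u\|_{\L^2}^2$ lies on the ray $\arg\lambda$. Since $|\arg\lambda|\le\theta<\pi-\omega_0$, these two points are separated by an angle of at most $\pi-\delta$ for some $\delta>0$ depending on $\theta,\omega_0$. An elementary geometric lemma then gives
\begin{align*}
|\lambda\|u\|_{\L^2}^2+\a(u,u)|\gtrsim|\lambda|\|u\|_{\L^2}^2+|\a(u,u)|\ge|\lambda|\|u\|_{\L^2}^2+\mu_\bullet\|\nabla u\|_{\L^2}^2,
\end{align*}
where the last step uses $|\a(u,u)|\ge\Re\a(u,u)$. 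So the form $\a_\lambda(u,v)=\lambda\langle u,v\rangle+\a(u,v)$ is coercive on $\W^{1,2}_\sigma(\IR^d)$ (after multiplying by a suitable unimodular constant). Lax–Milgram then shows that $\lambda+\cA$ is an isomorphism $\W^{1,2}_\sigma\to\W^{-1,2}_\sigma$, hence $-\lambda\notin\sigma(\cA)$. Taking parts, $-\lambda\notin\sigma(A)$ as well. As $\theta$ is arbitrary, $\sigma(A)\cup\sigma(\cA)\subset\overline{\S_{\omega_0}}$.

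For $f\in\L^2_\sigma$, let $u=(\lambda+A)^{-1}f$. Testing with $v=u$ gives
\begin{align*}
|\lambda|\|u\|_{\L^2}^2+\|\nabla u\|_{\L^2}^2\lesssim|\langle f,u\rangle|\le\|f\|_{\L^2}\|u\|_{\L^2}.
\end{align*}
This yields $|\lambda|\|u\|_{\L^2}\lesssim\|f\|_{\L^2}$, and then $|\lambda|^{1/2}\|\nabla u\|_{\L^2}\lesssim\|f\|_{\L^2}$.

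\emph{Second resolvent bound.} Let $u=(\lambda+\cA)^{-1}\cP\div(F)$. Testing with $u$ gives
\begin{align*}
|\lambda|\|u\|_{\L^2}^2+\|\nabla u\|_{\L^2}^2\lesssim|\langle F,\nabla u\rangle|\le\|F\|_{\L^2}\|\nabla u\|_{\L^2},
\end{align*}
since $\langle\cP\div F,u\rangle=\langle\div F,\iota u\rangle$. Hence $\|\nabla u\|_{\L^2}\lesssim\|F\|_{\L^2}$, and then $|\lambda|^{1/2}\|u\|_{\L^2}\lesssim\|F\|_{\L^2}$.

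In every step the constants depend only on $\delta$ (hence on $\theta$ and $\omega_0$), $\mu_\bullet$ and $\mu^\bullet$. The only real obstacle is the geometric sector lemma: if $|\arg a-\arg b|\le\pi-\delta$, then $|a+b|\ge\sin(\delta/2)(|a|+|b|)$. It is proved by rotating so that the two arguments are symmetric about the real axis and comparing real parts.
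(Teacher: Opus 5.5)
Your argument is correct. The sector-separation estimate with $\delta=\pi-\theta-\omega_0$, Lax--Milgram for $\lambda+\cA$, and testing the equation with the solution itself give the spectral inclusion and both resolvent bounds, with constants depending only on $\theta,\mu_\bullet,\mu^\bullet$.

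The paper does not prove this proposition itself but recalls it from \cite[Prop.~2.4]{Haardt_Tolksdorf}. Your proof is the standard form-method argument for such bounds.
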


Because the sectoriality angle is strictly less than $\frac{\pi}{2}$, both operators $-A$ and $-\cA$ generate bounded holomorphic semigroups. Applying the standard Cauchy integral formula~\eqref{eq: rep semigroup} for semigroups along with the resolvent bounds of Proposition~\ref{Prop: L2 resolvent bounds}, we obtain the following uniform bounds.

\begin{proposition}
    \label{prop: L2 semigroup bounds}
    Let $\mu$ satisfy Assumption~\ref{Ass: Coefficients}, $\omega_0$ be given by \eqref{eq: def omega} and $\beta\in [0,\frac{\pi}{2}-\omega_0)$. There exists $C>0$ such that for all $z \in \S_\beta$ and all $f\in \L^2_\sigma(\IR^d)$ we have
    \begin{align*}
        \|e^{-zA}f\|_{\L^2}+\|zAe^{-zA}f\|_{\L^2} + \||z|^\frac{1}{2}\nabla e^{-zA}f\|_{\L^2} \leq C\|f\|_{\L^2}.
    \end{align*}
    Moreover, there exists $C>0$ such that for all $z \in  \S_\beta$ and all $F\in \L^2(\IR^d;\IC^{d\times d})$ we have
    \begin{align*}
        \||z|^\frac{1}{2}e^{-z\cA}\cP\div(F)\|_{\L^2} + \|z\nabla e^{-z\cA}\cP\div(F)\|_{\L^2} \leq C\|F\|_{\L^2}.
    \end{align*}
    The constant $C$ only depends on $\beta$, $d$, $\mu^{\bullet}$ and $\mu_{\bullet}$.
\end{proposition}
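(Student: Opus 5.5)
We derive the statement from the representation formula \eqref{eq: rep semigroup} together with the resolvent bounds of Proposition~\ref{Prop: L2 resolvent bounds}, and treat both parts separately.

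\textbf{Sector and contour.} Fix $\beta\in[0,\frac{\pi}{2}-\omega_0)$ and choose $\theta\in(\frac{\pi}{2}+\beta,\pi-\omega_0)$, which is possible since $\beta<\frac{\pi}{2}-\omega_0$. For $z\in\S_\beta$ the contour $\gamma_z$ then consists of two rays $\{\rho e^{\pm i\theta}:\rho\ge 1/|z|\}$ and the arc $\{|z|^{-1}e^{i\varphi}:|\varphi|\le\theta\}$. On the rays,
\[
\Re(z\lambda)=\rho|z|\cos(\arg z\pm\theta)\le -c\rho|z| \quad\text{with } c=-\cos(\beta+\theta)>0,
\]
because $\theta+|\arg z|\in(\frac{\pi}{2},\pi)$. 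On the arc, $|e^{z\lambda}|\le e$.

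\textbf{First part.} Apply $A$, respectively $\nabla$, to the integral in \eqref{eq: rep semigroup}. Closedness of $A$ and of $\nabla$ from $\W^{1,2}$ to $\L^2$ allows the operator to be moved under the Bochner integral. We use the following consequences of Proposition~\ref{Prop: L2 resolvent bounds}:
\begin{align*}
\|(\lambda+A)^{-1}\|&\lesssim|\lambda|^{-1},\\
\|A(\lambda+A)^{-1}\|&=\|1-\lambda(\lambda+A)^{-1}\|\lesssim1,\\
\|\nabla(\lambda+A)^{-1}\|_{\L^2\to\L^2}&\lesssim|\lambda|^{-1/2}.
\end{align*}

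For the semigroup term, the rays contribute $\int_{1/|z|}^\infty e^{-c\rho|z|}\rho^{-1}\,\d\rho$. Substituting $s=\rho|z|$ gives $\int_1^\infty e^{-cs}s^{-1}\,\d s$, a constant. The arc contributes at most length times bound, that is $\lesssim |z|^{-1}\cdot|z|\lesssim 1$.

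For $zAe^{-zA}$, the rays give $|z|\int_{1/|z|}^\infty e^{-c\rho|z|}\,\d\rho\lesssim1$, and the arc gives $|z|\cdot|z|^{-1}$.

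For the gradient, the rays give $|z|^{1/2}\int_{1/|z|}^\infty e^{-c\rho|z|}\rho^{-1/2}\,\d\rho=\int_1^\infty e^{-cs}s^{-1/2}\,\d s$. The arc gives $|z|^{1/2}\cdot|z|^{-1}\cdot|z|^{1/2}=1$.

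\textbf{Second part.} Work with $\cA$ on $\W^{-1,2}_\sigma$. Since $\cA$ is sectorial of the same angle on $\W^{-1,2}_\sigma$, the same representation formula holds there. We use the bounds
\begin{align*}
\|(\lambda+\cA)^{-1}\cP\div(F)\|_{\L^2}&\lesssim|\lambda|^{-1/2}\|F\|_{\L^2},\\
\|\nabla(\lambda+\cA)^{-1}\cP\div(F)\|_{\L^2}&\lesssim\|F\|_{\L^2}.
\end{align*}
Both factors $|z|^{1/2}$ and $z$ then lead exactly to the integrals just computed: $\int_1^\infty e^{-cs}s^{-1/2}\,\d s$, and $\int_1^\infty e^{-cs}\,\d s$ plus bounded arc terms.

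\textbf{Main obstacle.} The only delicate point is justifying the formula for $\cA$ acting on $\cP\div(F)\in\W^{-1,2}_\sigma$, together with the interchange of $\nabla$ with the integral. The integral converges absolutely in $\W^{1,2}_\sigma$ on the contour: the $\W^{1,2}$-norm of the integrand is $\lesssim(|\lambda|^{-1/2}+1)e^{-c\rho|z|}$. Hence the integral may be taken in $\W^{1,2}_\sigma$, and the bounded operator $\nabla$ commutes with it.

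All constants depend only on $\theta(\beta)$, $d$, $\mu_\bullet$ and $\mu^\bullet$.
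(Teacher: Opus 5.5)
Your proposal is correct and follows the paper's route: the paper proves this in one line by inserting the resolvent bounds of Proposition~\ref{Prop: L2 resolvent bounds} into the Cauchy integral \eqref{eq: rep semigroup}. Your estimates on the rays and on the arc supply the omitted details, as do your justifications for moving $A$ and $\nabla$ under the integral and for running the formula for $\cA$ in $\W^{-1,2}_\sigma$ with absolute convergence in $\W^{1,2}_\sigma$. One small slip: on the two rays, $\arg(z\lambda)=\theta\pm\arg z$ ranges over $(\theta-\beta,\theta+\beta)\subset(\frac{\pi}{2},\frac{3\pi}{2})$, so the uniform decay constant is $c=-\max\{\cos(\theta-\beta),\cos(\theta+\beta)\}>0$ rather than $-\cos(\theta+\beta)$ (for instance, when $\theta+\beta\le\pi$ the binding endpoint is $\theta-\beta$), but nothing else in your argument changes.
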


Finally, we remark that Assumption~\ref{Ass: Coefficients} yields even deeper functional analytic properties. Specifically, $A$ is a maximal $\omega$-accretive operator for any $\omega \in (\omega_0, \frac{\pi}{2})$, a condition that is strictly stronger than mere sectoriality; see \cite[Sec.~7.1.1]{Haase}. By virtue of this property, $A$ automatically possesses a bounded $\H^\infty$-calculus on $\L^2_\sigma$ (see for example \cite[Cor.~7.1.17]{Haase}).

\begin{proposition}
\label{prop: bdd Hinf on L2}
    Let $\mu$ satisfy Assumption~\ref{Ass: Coefficients} and let $\omega_0$ be given by \eqref{eq: def omega}.
    For any $\vartheta\in (\omega_0,\pi)$ the operator $A$ has a bounded $\H^\infty(\S_\vartheta)$-calculus on $\L^2_\sigma(\IR^d)$.
\end{proposition}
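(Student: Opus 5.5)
The statement to prove is Proposition~\ref{prop: bdd Hinf on L2}: the bounded $\H^\infty(\S_\vartheta)$-calculus of $A$ on $\L^2_\sigma(\IR^d)$ for every $\vartheta\in(\omega_0,\pi)$. The plan is to place $A$ in the class of maximal accretive operators on a Hilbert space and then invoke the classical theory.

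\textbf{Step 1: numerical range.} For $u\in\dom(A)$ we have $\langle Au,u\rangle_{\L^2}=\a(u,u)$. By Assumption~\ref{Ass: Coefficients},
\begin{align*}
\Re\a(u,u)\ge\mu_\bullet\|\nabla u\|_{\L^2}^2 \quad\text{and}\quad |\operatorname{Im}\a(u,u)|\le\mu^\bullet\|\nabla u\|_{\L^2}^2 .
\end{align*}
Hence $|\arg\langle Au,u\rangle|\le\arctan(\mu^\bullet/\mu_\bullet)=\omega_0$ whenever the pairing is nonzero. So the numerical range of $A$ lies in $\overline{\S_{\omega_0}}$.

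\textbf{Step 2: maximality.} We show $\lambda+A$ is surjective onto $\L^2_\sigma(\IR^d)$ for, say, $\lambda=1$. The form $(u,v)\mapsto\langle u,v\rangle_{\L^2}+\a(u,v)$ is bounded and coercive on $\W^{1,2}_\sigma(\IR^d)$. Lax--Milgram therefore gives, for each $f\in\L^2_\sigma(\IR^d)$, some $u\in\W^{1,2}_\sigma(\IR^d)$ with
\begin{align*}
\langle u,v\rangle_{\L^2}+\a(u,v)=\langle f,v\rangle_{\L^2}\quad\text{for all } v\in\W^{1,2}_\sigma(\IR^d).
\end{align*}
Then $\a(u,v)=\langle f-u,v\rangle_{\L^2}$ with $f-u\in\L^2_\sigma$, so by the definition of $\dom(A)$ we get $u\in\dom(A)$ and $(1+A)u=f$. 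Density of $\dom(A)$ follows from the density of the form domain $\W^{1,2}_\sigma$ in $\L^2_\sigma$, as in \cite{Kato}. Together with Step 1, $A$ is maximal $\omega_0$-accretive, and hence maximal $\omega$-accretive for every $\omega\in(\omega_0,\frac{\pi}{2})$. Injectivity also holds: $Au=0$ forces $\nabla u=0$, so $u=0$ in $\W^{1,2}_\sigma$. This matters because the $\H^\infty$-calculus was set up above for injective operators.

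\textbf{Step 3: conclusion.} On a Hilbert space, a maximal $\omega$-accretive operator has a bounded $\H^\infty(\S_\vartheta)$-calculus for every $\vartheta>\omega$. This is \cite[Cor.~7.1.17]{Haase}, proved either via the Crouzeix--Delyon numerical range theorem or via von Neumann's inequality for the Cayley transform of a suitable fractional power. Given $\vartheta\in(\omega_0,\frac{\pi}{2}]$, choose $\omega\in(\omega_0,\vartheta)$ and apply it. For $\vartheta\ge\frac{\pi}{2}$ the claim is immediate, since $\H^\infty(\S_\vartheta)\subset\H^\infty(\S_{\vartheta'})$ by restriction, with norm nonincreasing, for any smaller $\vartheta'$.

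The only step with any content is Step 2, namely that the operator defined through the form is maximal. This is routine Lax--Milgram, so I expect no real obstacle: the proposition reduces to a citation once accretivity is checked.
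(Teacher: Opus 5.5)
Your proposal is correct and follows the same route as the paper, which simply notes that $A$ is maximal $\omega$-accretive for every $\omega\in(\omega_0,\frac{\pi}{2})$ and then cites \cite[Cor.~7.1.17]{Haase}. Your Steps 1 and 2 (numerical range in $\overline{\S_{\omega_0}}$, surjectivity of $1+A$ via Lax--Milgram, injectivity) and your handling of the angle range make explicit the verifications that the paper leaves implicit.
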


\subsection{Transformed Stokes operator}
\label{sec: rescaling}

In this section we introduce generalized Stokes operators associated with rescaled and rotated coefficients. In addition, we establish representation formulas for their corresponding resolvents and semigroups. Since Assumption~\ref{Ass: Coefficients} is preserved under these transformations, norm estimates will remain uniform. This will be advantageous in rescaling arguments. \par

For $s > 0$ let $\Phi_sf$ be defined by $(\Phi_sf)(x) = f(sx)$ for all $x\in \IR^d$. We regard $\Phi_s$ as a map on both $\L^2_\sigma(\IR^d)$ and $\W^{1,2}_\sigma(\IR^d)$ noting that it preserves the divergence-freeness of vector fields. Furthermore, we extend $\Phi_s$ to the space $\W^{-1,2}_\sigma(\IR^d)$ via the identity $\langle\Phi_s u, v\rangle = \langle u, s^{-d}\Phi_{s^{-1}}v\rangle$. The rescaled operators are then associated to the following rescaled sesquilinear forms
\begin{align*}
    \a_s: \W^{1,2}_\sigma(\IR^d)\times \W^{1,2}_\sigma(\IR^d) \to \IC, \qquad \a_s(u,v) = \int\limits_{\IR^d}(\Phi_s\mu)\nabla u\cdot \overline{\nabla v}\,\d x.
\end{align*}
The following lemma shows that all these forms share the same quantitative bounds.

\begin{lemma}
\label{lem: scaling}
    For all $s>0$ we have
    \begin{align*}
        \a_s(u,v) = s^{2-d}\a_1(\Phi_{s^{-1}}u, \Phi_{s^{-1}}v) \qquad (u,v \in  \W^{1,2}_\sigma(\IR^d))
    \end{align*}
    as well as
    \begin{align*}
        |\a_s(u,v)|\leq \mu^\bullet \|\nabla u\|_{\L^2} \|\nabla v\|_{\L^2} \quad \text{and} \quad \Re (\a_s(u,u))\geq \mu_\bullet\|\nabla u\|^2_{\L^2} \qquad (u,v \in  \W^{1,2}_\sigma(\IR^d)).
    \end{align*}
\end{lemma}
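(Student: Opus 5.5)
The identity follows from the chain rule and a change of variables, and the two estimates then follow from the identity together with Assumption~\ref{Ass: Coefficients}.

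\emph{Identity.} Fix $u,v\in\W^{1,2}_\sigma(\IR^d)$ and set $\tilde u\coloneqq\Phi_{s^{-1}}u$, $\tilde v\coloneqq\Phi_{s^{-1}}v$. Then $u=\Phi_s\tilde u$, and the chain rule gives $\nabla u(x)=s\,(\nabla\tilde u)(sx)$, and likewise for $v$. Substituting into the definition of $\a_s$ yields
\begin{align*}
    \a_s(u,v)=s^2\int\limits_{\IR^d}\mu(sx)(\nabla\tilde u)(sx)\cdot\overline{(\nabla\tilde v)(sx)}\,\d x .
\end{align*}
The substitution $y=sx$, $\d x=s^{-d}\,\d y$, turns this into $s^{2-d}\a_1(\tilde u,\tilde v)$. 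Here $\a_1=\a$, since $\Phi_1\mu=\mu$. Note that $\tilde u,\tilde v$ again lie in $\W^{1,2}_\sigma(\IR^d)$, because dilations preserve $\W^{1,2}$ and divergence-freeness.

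\emph{Boundedness.} This estimate does not need the identity. Since $\Phi_s\mu$ obeys the same pointwise bound $\esssup_x\|(\Phi_s\mu)(x)\|_{\cL(\IC^{d\times d})}\le\mu^\bullet$, the Cauchy--Schwarz inequality gives $|\a_s(u,v)|\le\mu^\bullet\|\nabla u\|_{\L^2}\|\nabla v\|_{\L^2}$ directly.

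\emph{Coercivity.} Here the identity is essential, because the Gårding inequality is an integrated condition and does not transfer to $\Phi_s\mu$ pointwise. Using the identity and then Assumption~\ref{Ass: Coefficients} for $\tilde u\in\W^{1,2}_\sigma\subset\W^{1,2}(\IR^d;\IC^d)$,
\begin{align*}
    \Re\a_s(u,u)=s^{2-d}\Re\a(\tilde u,\tilde u)\ge s^{2-d}\mu_\bullet\|\nabla\tilde u\|_{\L^2}^2 .
\end{align*}
The same chain rule and change of variables computation gives $\|\nabla\tilde u\|_{\L^2}^2=s^{-2}\int|\nabla u(y/s)|^2\,\d y=s^{d-2}\|\nabla u\|_{\L^2}^2$. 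Hence $\Re\a_s(u,u)\ge\mu_\bullet\|\nabla u\|^2_{\L^2}$.

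No step is a real obstacle. The only care needed is the Jacobian bookkeeping, namely checking that the powers $s^{2-d}$ and $s^{d-2}$ cancel in the coercivity estimate.
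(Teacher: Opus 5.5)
Your proof is correct and follows essentially the same route as the paper. The upper bound comes from the pointwise bound $\mu^\bullet$ together with Hölder/Cauchy--Schwarz, and the identity from the substitution $y=sx$. The lower bound comes from applying the Gårding inequality to $\Phi_{s^{-1}}u$ and using $s^{2-d}\|\nabla(\Phi_{s^{-1}}u)\|_{\L^2}^2=\|\nabla u\|_{\L^2}^2$.
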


\begin{proof}
    The upper bound readily follows from H\"older's inequality via
    \begin{align*}
        |\a_s(u,v)| \leq  \int\limits_{\IR^d}|\mu(sx)\nabla u(x)\cdot \overline{\nabla v(x)}|\,\d x \leq \mu^\bullet\int\limits_{\IR^d}|\nabla u(x)| |\nabla v(x)|\,\d x \leq \mu^\bullet\|\nabla u\|_{\L^2} \|\nabla v\|_{\L^2}.
    \end{align*}
    To establish the lower bound, observe first that for $u,v \in  \W^{1,2}_\sigma(\IR^d)$ the substitution $sx=y$ yields
    \begin{align*}
         \a_s(u,v) = \int\limits_{\IR^d}\mu(sx)\nabla u(x)\cdot \overline{\nabla v(x)}\,\d x
         = s^{2-d}\a_1(\Phi_{s^{-1}}u, \Phi_{s^{-1}}v).
    \end{align*}
    Invoking Assumption~\ref{Ass: Coefficients}, we conclude
    \begin{align*}
        \Re\bigl(\a_s(u,u)\bigr) = s^{2-d}\Re\bigl(\a_1(\Phi_{s^{-1}}u, \Phi_{s^{-1}}u)\bigr) &\geq \mu_\bullet s^{2-d}\|\nabla (\Phi_{s^{-1}}u)\|^2_{\L^2} = \mu_\bullet \|\nabla u\|^2_{\L^2},
    \end{align*}
    completing the proof.
\end{proof}

Let $\cA_s$ and $A_s$ denote the (weak) generalized Stokes operators associated to the form $\a_s$. Then the following lemma provides a connection between the rescaled operators and the original ones.

\begin{lemma}
\label{lem: rescaled resolvent and semigroup}
    For all $s > 0$ we have
    \begin{align*}
        (1+A_s)^{-1} = \Phi_s (1+s^2A)^{-1}\Phi_{s^{-1}} \quad \text{and} \quad (1+\cA_s)^{-1} = \Phi_s (1+s^2\cA)^{-1}\Phi_{s^{-1}},
    \end{align*}
    as well as
    \begin{align*}
        e^{-A_s} = \Phi_s e^{-s^2A}\Phi_{s^{-1}} \quad \text{and} \quad e^{-\cA_s} = \Phi_s e^{-s^2\cA}\Phi_{s^{-1}}.
    \end{align*}
\end{lemma}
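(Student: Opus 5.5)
The plan is to prove the resolvent identity first and then get the semigroup identity from it. For the resolvent of $A_s$, fix $f\in\L^2_\sigma(\IR^d)$ and set $w\coloneqq(1+s^2A)^{-1}\Phi_{s^{-1}}f\in\dom(A)\subset\W^{1,2}_\sigma(\IR^d)$. This is well defined because $\Phi_{s^{-1}}$ preserves $\L^2_\sigma$. By definition of $A$,
\begin{align*}
    \langle w,\varphi\rangle_{\L^2}+s^2\a(w,\varphi)=\langle \Phi_{s^{-1}}f,\varphi\rangle_{\L^2}\qquad(\varphi\in\W^{1,2}_\sigma(\IR^d)).
\end{align*}
The candidate is $u\coloneqq\Phi_s w$, which lies in $\W^{1,2}_\sigma(\IR^d)$. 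We need to check $u+A_su=f$, which amounts to
\begin{align*}
    \langle u,v\rangle_{\L^2}+\a_s(u,v)=\langle f,v\rangle_{\L^2}\qquad(v\in\W^{1,2}_\sigma(\IR^d)).
\end{align*}
Once this holds, $u\in\dom(A_s)$ with $A_su=f-u\in\L^2_\sigma$, and the identity follows by uniqueness, since $-1\in\rho(A_s)$ by Proposition~\ref{Prop: L2 resolvent bounds} applied to $\a_s$, whose constants are uniform in $s$ by Lemma~\ref{lem: scaling}.

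To verify the variational identity, test the equation for $w$ with $\varphi\coloneqq\Phi_{s^{-1}}v$ and apply the change of variables $y=sx$ to each term:
\begin{align*}
    \langle \Phi_s w,v\rangle_{\L^2}=s^{-d}\langle w,\Phi_{s^{-1}}v\rangle_{\L^2},\qquad \langle f,v\rangle_{\L^2}=s^{-d}\langle \Phi_{s^{-1}}f,\Phi_{s^{-1}}v\rangle_{\L^2}.
\end{align*}
For the form term, Lemma~\ref{lem: scaling} gives
\begin{align*}
    \a_s(\Phi_s w,v)=s^{2-d}\a(w,\Phi_{s^{-1}}v).
\end{align*}
Multiplying the equation for $w$ by $s^{-d}$ then yields exactly the desired identity. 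The weak operator $\cA_s$ is handled the same way, with $f\in\W^{-1,2}_\sigma$. Here the pairing identity $\langle\Phi_s F,v\rangle=\langle F,s^{-d}\Phi_{s^{-1}}v\rangle$, which is how $\Phi_s$ was extended, replaces the $\L^2$ substitution, and the form computation is identical.

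For the semigroups, the same computation with $1$ replaced by a general $\lambda$ gives
\begin{align*}
    (\lambda+A_s)^{-1}=\Phi_s(\lambda+s^2A)^{-1}\Phi_{s^{-1}}=s^{-2}\,\Phi_s(s^{-2}\lambda+A)^{-1}\Phi_{s^{-1}}.
\end{align*}
Insert this into the Cauchy integral \eqref{eq: rep semigroup} with $z=1$. The maps $\Phi_s$ and $\Phi_{s^{-1}}$ are bounded and can be pulled out of the integral. Substituting $\mu=s^{-2}\lambda$ gives
\begin{align*}
    \frac{1}{2\pi i}\int\limits_{\gamma_1} e^{\lambda}s^{-2}(s^{-2}\lambda+A)^{-1}\,\d\lambda=\frac{1}{2\pi i}\int\limits_{s^{-2}\gamma_1}e^{s^2\mu}(\mu+A)^{-1}\,\d\mu=e^{-s^2A}.
\end{align*}
The rescaled path $s^{-2}\gamma_1$ is admissible, and we may deform it to $\gamma_{s^2}$ by Cauchy's theorem. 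Alternatively, one can note that $t\mapsto\Phi_s e^{-ts^2A}\Phi_{s^{-1}}$ is a $C_0$-semigroup whose generator is $-A_s$, by the resolvent identity, and conclude by uniqueness of the semigroup.

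The only mildly delicate point is the bookkeeping of powers of $s$ and making sure $\Phi_s$ maps each space ($\L^2_\sigma$, $\W^{1,2}_\sigma$, $\W^{-1,2}_\sigma$) into itself. Everything else is routine.
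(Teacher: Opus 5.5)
Your proof is correct and takes essentially the same route as the paper. Both rest on the scaling identity $\a_s(u,v)=s^{2-d}\a(\Phi_{s^{-1}}u,\Phi_{s^{-1}}v)$ from Lemma~\ref{lem: scaling} together with the change of variables in the pairing, and both obtain the semigroup identity from the Cauchy integral \eqref{eq: rep semigroup}. The paper packages this as the operator identity $\cA_s=s^2\Phi_s\cA\Phi_{s^{-1}}$ and inverts it, whereas you verify the resolvent equation directly for the candidate $\Phi_s(1+s^2A)^{-1}\Phi_{s^{-1}}f$. Also, $s^{-2}\gamma_1$ coincides exactly with $\gamma_{s^2}$, so no contour deformation is needed.
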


\begin{proof}
    Because the corresponding identities for $A$ follow directly by restriction, it suffices to establish them for the operator $\cA$. Applying the previous lemma, we obtain for every $u,v\in\W^{1,2}_\sigma(\IR^d)$
    \begin{align*}
        \langle \cA_su, v\rangle = s^{2-d}\a_1(\Phi_{s^{-1}}u, \Phi_{s^{-1}}v) = s^{2-d}\langle \cA\Phi_{s^{-1}}u, \Phi_{s^{-1}}v\rangle =\langle s^2 \Phi_{s}\cA\Phi_{s^{-1}}u, v\rangle.
    \end{align*}
    Since $\Phi_s$ is an automorphism on both $\W^{1,2}_\sigma(\IR^d)$ and $ \W^{-1,2}_\sigma(\IR^d)$, we have $\dom(\cA_s) = \dom(\cA)$ and
    \begin{align*}
        (1+\cA_s)^{-1} = (1+s^{2} \Phi_{s} \cA \Phi_{s^{-1}})^{-1} =\Phi_{s}(1+s^{2}  \cA )^{-1}\Phi_{s^{-1}}.
    \end{align*}
    The corresponding identity for the semigroup follows now from the representation~\eqref{eq: rep semigroup}.
\end{proof}

Next, we study the behavior of the form $\a$ under rotation. Given some angle $\theta\in (-(\frac{\pi}{2}-\omega_0),\frac{\pi}{2}-\omega_0)$, we define the rotated sesquilinear form
\begin{align*}
    e^{i\theta}\a: \W^{1,2}_\sigma(\IR^d)\times \W^{1,2}_\sigma(\IR^d) \to \IC, \qquad e^{i\theta}\a(u,v) \coloneqq \int\limits_{\IR^d}( e^{i\theta}\mu)(x)\nabla u(x)\cdot \overline{\nabla v(x)}\,\d x.
\end{align*}

\begin{lemma}
\label{lem: rotation}
    Let $\theta\in (-(\frac{\pi}{2}-\omega_0),\frac{\pi}{2}-\omega_0)$. Then, the rotated form $e^{i\theta}\a$ satisfies
    \begin{align*}
        |e^{i\theta}\a(u,v)|\leq \mu^\bullet \|\nabla u \|_{\L^2} \|\nabla v\|_{\L^2}
    \end{align*}
    and 
    \begin{align*}
        \Re\big( e^{i\theta}\a(u,u)\big) \geq\mu_\bullet \cos(|\theta|+\omega_0) \|\nabla u\|_{\L^2}^2
    \end{align*}
    for all $u,v\in \W^{1,2}_\sigma(\IR^d)$.
\end{lemma}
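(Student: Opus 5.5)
The upper bound follows exactly as in Lemma~\ref{lem: scaling}. Since $|e^{i\theta}|=1$, the pointwise bound $\|e^{i\theta}\mu(x)\|_{\cL(\IC^{d\times d})}\le\mu^\bullet$ holds almost everywhere. Then H\"older's inequality gives
\begin{align*}
|e^{i\theta}\a(u,v)|\le \mu^\bullet\|\nabla u\|_{\L^2}\|\nabla v\|_{\L^2}.
\end{align*}

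For the lower bound, fix $u\in\W^{1,2}_\sigma(\IR^d)$ and write $\a(u,u)=\rho e^{i\psi}$ with $\rho=|\a(u,u)|$. The main step is to control the argument $\psi$ via the numerical range. By Assumption~\ref{Ass: Coefficients} and the upper bound we have
\begin{align*}
\Re\a(u,u)\ge\mu_\bullet\|\nabla u\|_{\L^2}^2, \qquad |\Im\a(u,u)|\le|\a(u,u)|\le\mu^\bullet\|\nabla u\|_{\L^2}^2.
\end{align*}
Hence $|\tan\psi|\le \mu^\bullet/\mu_\bullet$, so $|\psi|\le\omega_0$ by \eqref{eq: def omega}, which is exactly the inclusion $N(\a)\subset\overline{\S_{\omega_0}}$. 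Since $|\theta|<\frac{\pi}{2}-\omega_0$, we get $|\theta+\psi|\le|\theta|+\omega_0<\frac{\pi}{2}$. As cosine is decreasing on $[0,\frac{\pi}{2}]$, it follows that
\begin{align*}
\Re\bigl(e^{i\theta}\a(u,u)\bigr)=\rho\cos(\theta+\psi)\ge\rho\cos(|\theta|+\omega_0).
\end{align*}

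It remains to bound $\rho$ from below. We use $\rho\ge\Re\a(u,u)\ge\mu_\bullet\|\nabla u\|_{\L^2}^2$. Since $\cos(|\theta|+\omega_0)>0$, this yields
\begin{align*}
\Re\bigl(e^{i\theta}\a(u,u)\bigr)\ge\mu_\bullet\cos(|\theta|+\omega_0)\|\nabla u\|_{\L^2}^2,
\end{align*}
which is the claimed estimate.

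The only point requiring care is the angle control $|\psi|\le\omega_0$. The crude bound $|\Im|\le|\a(u,u)|\le\mu^\bullet\|\nabla u\|_{\L^2}^2$ suffices for this. The case $\nabla u=0$ is trivial, since then both sides vanish.
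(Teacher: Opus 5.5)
Your proof is correct and follows essentially the same route as the paper: you bound the argument of $\a(u,u)$ by $\omega_0$ and then estimate $\Re\bigl(e^{i\theta}\a(u,u)\bigr)=|\a(u,u)|\cos\bigl(\theta+\arg\a(u,u)\bigr)\geq \Re\a(u,u)\cos(|\theta|+\omega_0)$. Compared with the paper, you additionally derive the angle bound $|\arg\a(u,u)|\le\omega_0$ from $|\Im\a(u,u)|\le\mu^\bullet\|\nabla u\|_{\L^2}^2$, whereas the paper only asserts it, and you handle negative $\theta$ directly instead of assuming $\theta>0$ without loss of generality.
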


\begin{proof}
    The first inequality is clear. For the second, assume without loss of generality that $\theta> 0$ and $u\in \W^{1,2}_\sigma(\IR^d)$ satisfies $\|\nabla u\|_{\L^2}=1$. By Assumption~\ref{Ass: Coefficients}, we have
    \begin{align*}
        e^{i\theta}\a(u,u)\in \overline{\S_{\theta + \omega_0}} \quad \text{and} \quad \Re (\a(u,u))\geq \mu_\bullet.
    \end{align*}
    Consequently, we calculate
    \begin{align*}
        \Re(e^{i\theta}\a(u,u)) &= |e^{i\theta}\a(u,u)|\cos(\theta+\arg(\a(u,u)))\\
    &\geq \Re(\a(u,u) ) \cos(\theta+\omega_0)\\
    &\geq \mu_\bullet \cos(\theta+\omega_0),
    \end{align*}
    which proves the claim.
\end{proof}

We conclude this section with the following important observation, which will be used in Lemma~\ref{lem: rescaling lemma} to reduce arguments to the case $z = 1$.

\begin{observation}
\label{observation}
    By Lemma~\ref{lem: scaling} and~\ref{lem: rotation}, the rescaled and rotated coefficients $e^{i\theta}\Phi_s\mu$ again satisfy Assumption~\ref{Ass: Coefficients}, with the same upper bound $\mu^\bullet$ and with $\mu_\bullet$ replaced by $\mu_\bullet\cos(|\theta|+\omega_0)$. Consequently, all operators $e^{i\theta}A_s$, for $s>0$ and $\theta\in (-(\frac{\pi}{2}-\omega_0),\frac{\pi}{2}-\omega_0)$, belong to the same class of maximal $\omega$-accretive operators as $A$, and every estimate whose constant depends only on $d$ and the ellipticity constants therefore holds for all of them, uniformly in $s>0$ and locally uniformly in $\theta$. We use this in rescaling and rotation arguments without further mention.    
\end{observation}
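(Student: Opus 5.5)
The plan is to verify the two inequalities of Assumption~\ref{Ass: Coefficients} directly for the coefficient field $e^{i\theta}\Phi_s\mu$. One point needs care: the assumption is formulated for all $u\in\W^{1,2}(\IR^d;\IC^d)$, not only for divergence-free fields. Lemmas~\ref{lem: scaling} and~\ref{lem: rotation} are stated on $\W^{1,2}_\sigma(\IR^d)$, so I will rerun their proofs on the larger space. Neither proof uses $\div u=0$.

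\textbf{Upper bound.} Pointwise we have $\|(e^{i\theta}\Phi_s\mu)(x)\|_{\cL(\IC^{d\times d})}=\|\mu(sx)\|_{\cL(\IC^{d\times d})}\le\mu^\bullet$ for almost every $x$. This uses only that $|e^{i\theta}|=1$ and that $x\mapsto sx$ maps null sets to null sets. So the constant $\mu^\bullet$ is unchanged.

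\textbf{Scaling of the lower bound.} Let $u\in\W^{1,2}(\IR^d;\IC^d)$. The substitution $y=sx$ from the proof of Lemma~\ref{lem: scaling} gives
\begin{align*}
\int_{\IR^d}(\Phi_s\mu)\nabla u\cdot\overline{\nabla u}\,\d x=s^{2-d}\int_{\IR^d}\mu\nabla(\Phi_{s^{-1}}u)\cdot\overline{\nabla(\Phi_{s^{-1}}u)}\,\d y .
\end{align*}
Applying Assumption~\ref{Ass: Coefficients} to $\Phi_{s^{-1}}u\in\W^{1,2}(\IR^d;\IC^d)$ and undoing the scaling yields $\Re\int(\Phi_s\mu)\nabla u\cdot\overline{\nabla u}\,\d x\ge\mu_\bullet\|\nabla u\|_{\L^2}^2$. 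Thus $\Phi_s\mu$ satisfies Assumption~\ref{Ass: Coefficients} with the same constants.

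\textbf{Rotation.} Write $\a'(u,u)\coloneqq\int(\Phi_s\mu)\nabla u\cdot\overline{\nabla u}\,\d x$ for $u\in\W^{1,2}(\IR^d;\IC^d)$ with $\|\nabla u\|_{\L^2}=1$. From the previous two steps, $\Re\a'(u,u)\ge\mu_\bullet$ and $|\a'(u,u)|\le\mu^\bullet$. Hence
\begin{align*}
|\tan\arg\a'(u,u)|\le\frac{|\a'(u,u)|}{\Re\a'(u,u)}\le\frac{\mu^\bullet}{\mu_\bullet}.
\end{align*}
This shows that $\alpha\coloneqq\arg\a'(u,u)$ satisfies $|\alpha|\le\omega_0$. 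Now consider $|\theta|<\frac{\pi}{2}-\omega_0$. Then
\begin{align*}
\Re\bigl(e^{i\theta}\a'(u,u)\bigr)=\Re\a'(u,u)\,\frac{\cos(\theta+\alpha)}{\cos\alpha}=\Re\a'(u,u)\,(\cos\theta-\sin\theta\tan\alpha).
\end{align*}
The factor $\cos\theta-\sin\theta\tan\alpha$ is at least $\cos(|\theta|+\omega_0)/\cos\omega_0\ge\cos(|\theta|+\omega_0)$. Therefore $\Re\bigl(e^{i\theta}\a'(u,u)\bigr)\ge\mu_\bullet\cos(|\theta|+\omega_0)$, which is the claimed lower constant. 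It is positive precisely because $|\theta|+\omega_0<\frac{\pi}{2}$.

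\textbf{Identification of the operator and uniformity.} The form built from $e^{i\theta}\Phi_s\mu$ and restricted to $\W^{1,2}_\sigma(\IR^d)$ equals $e^{i\theta}\a_s$. Its associated operators are therefore $e^{i\theta}\cA_s$ and $e^{i\theta}A_s$, and the general theory of Section~\ref{sec: notation} applies to them. In particular they are maximal accretive of angle
\begin{align*}
\omega_0'=\arctan\Bigl(\frac{\mu^\bullet}{\mu_\bullet\cos(|\theta|+\omega_0)}\Bigr)<\frac{\pi}{2}.
\end{align*}
Any constant depending only on $d,\mu_\bullet,\mu^\bullet$ (together with angles chosen inside the admissible range) is now evaluated at $(\mu_\bullet\cos(|\theta|+\omega_0),\mu^\bullet)$, with no dependence on $s$. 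The lower constant is bounded below on compact subsets of $(-(\frac{\pi}{2}-\omega_0),\frac{\pi}{2}-\omega_0)$, and $\omega_0'$ stays bounded away from $\frac{\pi}{2}$ there. This gives uniformity in $s>0$ and local uniformity in $\theta$.

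\textbf{Main obstacle.} The only genuinely delicate point is the angle bookkeeping. Sector parameters such as $\theta<\pi-\omega_0'$ in Proposition~\ref{Prop: L2 resolvent bounds}, or $\beta<\frac{\pi}{2}-\omega_0'$ in Proposition~\ref{prop: L2 semigroup bounds}, must be applied with the enlarged angle $\omega_0'$. I would handle this by restricting $\theta$ to a compact set small enough that the sector parameters needed later remain admissible for $\omega_0'$.
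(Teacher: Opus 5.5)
Your proposal is correct and follows the paper's route of combining Lemma~\ref{lem: scaling} and Lemma~\ref{lem: rotation}. You add a worthwhile piece of care: you rerun both proofs on all of $\W^{1,2}(\IR^d;\IC^d)$, which is what Assumption~\ref{Ass: Coefficients} actually requires, whereas those lemmas are only stated on $\W^{1,2}_\sigma(\IR^d)$.

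Your angle-bookkeeping caveat is accurate. However, no further shrinking of $\theta$ is needed where the paper uses the observation. In Lemma~\ref{lem: rescaling lemma} the rotated operators are only evaluated at the real point $z=1$, which is admissible for any angle $\omega_0'<\frac{\pi}{2}$.
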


\subsection{Bogovski\u{\i} operator}
\label{sec: Bogovskii correction}

In this section, we introduce so-called Bogovski\u{\i} operators. They will be used to restore divergence-freeness of functions in a localized sense.

Fix the ball $B_0=B(0,1)$ and recall $C_1(B_0) = 2B_0\setminus\overline{B_0}$. We define the average free $\L^2$-space over $C_1(B_0)$ by
\begin{align*}
    \L^2_0(C_1(B_0)) \coloneqq  \bigg\{g\in \L^2(C_1(B_0)) : \int\limits_{C_1(B_0)} g\,\d x =0 \bigg\}.
\end{align*}
A Bogovski\u{\i} operator $\cB_{C_1(B_0)} :\L^2_0 (C_1(B_0)) \to \W^{1,2}_0 (C_1(B_0) ; \IC^d)$ denotes a solution operator to the divergence equation
\begin{align*}
    \begin{cases}
        \div(u) = f & \text{in } C_1(B_0), \\
    \hfill u = 0 & \text{on } \partial C_1(B_0),
    \end{cases}
\end{align*}
where $f \in \L^2_0 (C_1(B_0))$. Here, we use the linear operator constructed in \cite[Sec.~III.3]{Galdi}, which satisfies
\begin{align*}
    \div(\cB_{C_1(B_0)} f) = f \quad \text{and} \quad \| \cB_{C_1(B_0)} f \|_{\W^{1,2} (C_1(B_0))} \leq C_{\mathrm{Bog}} \| f \|_{\L^2(C_1(B_0))}
\end{align*}
for all $f \in \L^2_0 (C_1(B_0))$ and some constant depending only on $d$. By scaling, we construct Bogovski\u{\i} operators on $\alpha C_1(B_0) = C_1(\alpha B_0)$ for $\alpha > 0$ as follows: if $f \in \L^2_0 (C_1(\alpha B_0))$, then $f_{\alpha} (x) \coloneqq \alpha f (\alpha x)$ belongs to $\L^2_0 (C_1(B_0))$. For $x \in C_1(\alpha B_0)$ and $f \in \L^2_0 (C_1(\alpha B_0))$ set
\begin{align*}
    [\cB_{C_1(\alpha B_0)} f] (x) \coloneqq [\cB_{C_1(B_0)} f_{\alpha}] (\alpha^{-1} x).
\end{align*}
Clearly, $\cB_{C_1(\alpha B_0)}$ is bounded from $\L^2_0 (C_1(\alpha B_0))$ into $\W^{1,2}_0 (C_1(\alpha B_0) ; \IC^d)$ and satisfies $\div (\cB_{C_1(\alpha B_0)} f) = f$. Furthermore, we have
\begin{align}
\label{Eq: Inhomogeneous estimate Bogovskii}
    \| \cB_{C_1(\alpha B_0)} f \|_{\L^2 (C_1(\alpha B_0))} \leq \alpha C_{\mathrm{Bog}} \| f \|_{\L^2 (C_1(\alpha B_0))}
\end{align}
and
\begin{align}
\label{Eq: Homogeneous estimate Bogovskii}
    \| \nabla \cB_{C_1(\alpha B_0)} f \|_{\L^2 (C_1(\alpha B_0))} \leq C_{\mathrm{Bog}} \| f \|_{\L^2 ( C_1(\alpha B_0))} 
\end{align}
for all $f \in \L^2_0(C_1(\alpha B_0))$ and the same constant $C_{\mathrm{Bog}} > 0$ as above. Finally, by translation we may define Bogovski\u{\i} operators for translated annuli as well, and these will again satisfy~\eqref{Eq: Inhomogeneous estimate Bogovskii} and~\eqref{Eq: Homogeneous estimate Bogovskii} with the same constant. 

The following lemma details how the Bogovski\u{\i} operator can be used to localize functions without destroying their divergence-freeness.

\begin{lemma}
\label{lem: Bogovskii lemma}
    Let $B=B(x_0,r)\subset \IR^d$ be a ball, $f\in \Llocs^2(\IR^d)$ and $\eta\in \C_c^\infty(\IR^d)$ with $\supp \eta \subset 2B$, $\eta \equiv 1$ on $B$ and $\|\eta\|_{\L^\infty}+r\|\nabla \eta\|_{\L^\infty}\lesssim 1$. Then, $\nabla\eta \cdot f \in \L^2_0 (C_1(B))$ and we have
    \begin{align}
    \label{ineq: grad Bogovskii}
        \|r\nabla \cB_{C_1(B)}(\nabla\eta\cdot f)\|_{\L^2(\IR^d)}\lesssim  \|f\|_{\L^2(C_1(B))},
    \end{align}
    as well as
    \begin{align}
    \label{ineq: Bogovskii}
        \| \cB_{C_1(B)}(\nabla\eta\cdot f)\|_{\L^2(\IR^d)}\lesssim \|f\|_{\L^2(C_1(B))},
    \end{align}
    where $\cB_{C_1(B)}(\nabla\eta\cdot f)$ is understood as a vector field in $\W^{1,2}(\IR^d;\IC^d)$ via extension by zero. Moreover, if we define
    \begin{align*}
        f_{2r} \coloneqq \eta f -  \cB_{C_1(B)}(\nabla\eta\cdot f),
    \end{align*}
    then $f_{2r} \in \L^2_\sigma(\IR^d)$ with $\supp f_{2r}\subset 2B$, $f_{2r}=f$ on $B$ and satisfies $\|f_{2r}\|_{\L^2(\IR^d)} \lesssim \|f\|_{\L^2(2B)}$.
\end{lemma}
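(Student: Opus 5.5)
The argument has three parts: the mean-zero property of $\nabla\eta\cdot f$, the Bogovski\u{\i} bounds obtained from the scaled estimates \eqref{Eq: Inhomogeneous estimate Bogovskii} and \eqref{Eq: Homogeneous estimate Bogovskii}, and the direct verification of the properties of $f_{2r}$.

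\textbf{Mean zero.} First I check that $\nabla\eta\cdot f$ lies in $\L^2_0(C_1(B))$. Support and integrability are immediate. Since $\eta\equiv 1$ on $B$ and $\supp\eta\subset 2B$, the gradient $\nabla\eta$ is supported in $\overline{2B}\setminus B$. Because $\supp\eta$ is a compact subset of the open ball $2B$, we also have $\nabla\eta=0$ near $\partial(2B)$. On $\partial B$, $\nabla\eta$ vanishes, since $\eta-1$ vanishes on the open set $B$ and $\eta$ is smooth. Hence $\nabla\eta\cdot f\in\L^2(C_1(B))$ with $\|\nabla\eta\cdot f\|_{\L^2}\lesssim r^{-1}\|f\|_{\L^2(C_1(B))}$.

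For the vanishing mean, I use $\div f=0$ in the distributional sense. Take $\psi\in\C_c^\infty(\IR^d)$ with $\psi\equiv1$ on $2B$. Then $\eta-\psi$ is smooth, compactly supported, and equal to $0$ on $B$ and to $-1$ on a neighborhood of $\IR^d\setminus 2B$ within $\supp\psi$. This choice is slightly awkward, so I argue more simply. Since $\nabla\eta$ vanishes outside $C_1(B)$ up to a null set,
\begin{align*}
\int_{C_1(B)}\nabla\eta\cdot f\,\d x=\int_{\IR^d}\nabla\eta\cdot f\,\d x=-\langle\div f,\eta\rangle=0,
\end{align*}
where the last pairing is legitimate because $\eta\in\C_c^\infty$ and $f\in\L^2_{\loc}$ is divergence-free.

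\textbf{Bogovski\u{\i} bounds.} Next I apply the translated and scaled Bogovski\u{\i} operator with $\alpha=r$. Estimate \eqref{Eq: Homogeneous estimate Bogovskii} gives
\begin{align*}
\|\nabla\cB_{C_1(B)}(\nabla\eta\cdot f)\|_{\L^2}\le C_{\mathrm{Bog}}\|\nabla\eta\cdot f\|_{\L^2}\lesssim r^{-1}\|f\|_{\L^2(C_1(B))},
\end{align*}
which is \eqref{ineq: grad Bogovskii}. Estimate \eqref{Eq: Inhomogeneous estimate Bogovskii} gives the bound $rC_{\mathrm{Bog}}\,r^{-1}\|f\|_{\L^2(C_1(B))}$, which is \eqref{ineq: Bogovskii}. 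Extension by zero preserves membership in $\W^{1,2}(\IR^d;\IC^d)$ because the Bogovski\u{\i} operator maps into $\W^{1,2}_0(C_1(B))$. The only point needing care here is that the scaling really produces these powers of $r$. This I take from the displayed scaled estimates, noting that the factor $\alpha$ in $f_\alpha$ is chosen exactly so that the gradient estimate is scale invariant.

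\textbf{Properties of $f_{2r}$.} For $f_{2r}$, I compute distributionally
\begin{align*}
\div(\eta f)=\nabla\eta\cdot f+\eta\,\div f=\nabla\eta\cdot f,
\end{align*}
which is justified by testing against $\varphi\in\C_c^\infty$ and using that $\eta\varphi$ is an admissible test function for $\div f=0$. Since $\div\cB_{C_1(B)}(\nabla\eta\cdot f)=\nabla\eta\cdot f$ on $C_1(B)$, and its zero extension has divergence equal to the zero extension of $\nabla\eta\cdot f$ (traces vanish, so no boundary terms arise), we get $\div f_{2r}=0$ on $\IR^d$. The field $f_{2r}$ is in $\L^2$ with support in $2B$, so it belongs to $\L^2_\sigma(\IR^d)$ by the stated characterization. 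On $B$ we have $\eta=1$ and the Bogovski\u{\i} term vanishes, so $f_{2r}=f$ there. Finally,
\begin{align*}
\|f_{2r}\|_{\L^2}\le\|\eta\|_{\L^\infty}\|f\|_{\L^2(2B)}+\|\cB_{C_1(B)}(\nabla\eta\cdot f)\|_{\L^2}\lesssim\|f\|_{\L^2(2B)},
\end{align*}
using \eqref{ineq: Bogovskii}. I expect no step to be a real obstacle; the main care is the distributional justification of the mean-zero condition and of the divergence identity for the zero extension.
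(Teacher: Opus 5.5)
Your proposal is correct and follows the paper's proof essentially line by line. Both arguments get the mean-zero property from $\int_{C_1(B)}\nabla\eta\cdot f\,\d x=-\langle\div f,\eta\rangle=0$, both derive the two Bogovski\u{\i} bounds from the scaled estimates \eqref{Eq: Inhomogeneous estimate Bogovskii} and \eqref{Eq: Homogeneous estimate Bogovskii} together with $r\|\nabla\eta\|_{\L^\infty}\lesssim 1$, and both verify $\div f_{2r}=\nabla\eta\cdot f-\nabla\eta\cdot f=0$ and the support, restriction and norm properties directly; the abandoned detour with the auxiliary cut-off $\psi$ is unnecessary and can be deleted.
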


\begin{proof}
    It is clear that $\nabla \eta \cdot f \in \L^2(2B)$. Because $\supp \eta \subset 2B$ and $\eta=1$ on $B$, we have $\supp (\nabla\eta\cdot f) \subset 2B\setminus B$. Since $\div f = 0$ holds in the sense of distributions, it follows
    \begin{align*}
        \int\limits_{C_1(B)} (\nabla\eta)(x)\cdot f(x)\, \d x = -\langle \div (f), \eta \rangle=0.
    \end{align*}
    Thus, $\nabla \eta \cdot f \in \L^2_0(C_1(B))$. Applying \eqref{Eq: Inhomogeneous estimate Bogovskii} and \eqref{Eq: Homogeneous estimate Bogovskii}, as well as the estimate on $\eta$, we obtain
    \begin{align*}
        \|r\nabla \cB_{C_1(B)}(\nabla\eta\cdot f)\|_{\L^2(\IR^d)} \lesssim r \|\nabla\eta \cdot f\|_{\L^2(C_1(B))}\lesssim  \|f\|_{\L^2(C_1(B))},
    \end{align*}
    as well as
    \begin{align*}
        \| \cB_{C_1(B)}(\nabla\eta\cdot f)\|_{\L^2(\IR^d)}\lesssim  r\|\nabla\eta \cdot f\|_{\L^2(C_1(B))} \lesssim\|f\|_{\L^2(C_1(B))}.
    \end{align*}
    Finally, we see that $f_{2r}\in \L^2(\IR^d;\IC^d)$ is divergence-free since
    \begin{align*}
        \div f_{2r} = \div (\eta f) -\div \cB_{C_1(B)}(\nabla\eta\cdot f)=\nabla \eta \cdot f - \nabla \eta \cdot f=0
    \end{align*}
    holds in the sense of distributions. Moreover, we observe that $f_{2r} =f$ on $B$ and
    \begin{align*}
        \supp f_{2r} \subset \supp (\eta f) \cup \supp (\cB_{C_1(B)}(\nabla\eta\cdot f)) \subset 2B.
    \end{align*}
    Finally, we have
    \begin{align*}
        \|f_{2r}\|_{\L^2} \leq \|\eta f\|_{\L^2} + \|\cB_{C_1(B)}(\nabla\eta\cdot f)\|_{\L^2}\lesssim \|f\|_{\L^2(2B)}
    \end{align*}
    by the estimate on $\eta$ and \eqref{ineq: Bogovskii}.
\end{proof}

\section{Decay estimates for the Stokes resolvent}
\label{sec: off-diag est}

In this section, we investigate decay properties of the generalized Stokes resolvent. More precisely, given a ball $B\subset \IR^d$ and its $k$-th annulus $C_k(B)$, we aim to quantify how much the size of $f$ and/or $F$ on $C_k(B)$ affects the size of the solution $u$ and/or its gradient on $B$. In the elliptic setting $L=-\div(\mu\nabla \cdot)$, analogous estimates are provided by so-called off-diagonal estimates, see for example \cite[Lem.~2.1]{Auscher-Kato}: there exist constants $C , c > 0$ such that for all functions $f,F \in \L^2$ with $\supp(f), \supp(F) \subset C_k(B)$ one has
\begin{align*}
 \|t (1 + t^2 \cL)^{-1} \div(F)\|_{\L^2(B)} + \| (1 + t^2 L)^{-1} f \|_{\L^2 (B)} \leq C e^{- \frac{c 2^k r}{t}} (\| f \|_{\L^2 (C_k(B))}+ \| F\|_{\L^2 (C_k(B))}).
\end{align*}
Recalling the explicit formulas for Bessel kernels~\cite[Sec.~V.3]{Stein_singular}, one sees that the estimates above are optimal already for the Laplacian.

So far, analogous estimates for the generalized Stokes resolvent remain largely open; see the discussion in the introduction.

In \cite[Prop.~2.5]{Haardt_Tolksdorf} decay estimates were obtained in the regime $|\lambda|r^2\simeq 1$, which sufficed for the Kato square root property. However, for the $\L^p$-theory, we need estimates that are uniform in $|\lambda|r^2$ and that can be integrated along the Cauchy contour. We therefore proceed in three steps. First, we prove an $\L^2$-decay estimate that also covers non-zero $f$ and has simpler decay coefficients, streamlining subsequent calculations in applications (Theorem~\ref{thm: off-diag resolvent}). Second, we improve the behavior in $|\lambda|$ and $r$ when the data are supported away from $2B$ (Proposition~\ref{prop: imp off-diag est}). Third, we upgrade to $\L^2$-$\L^p$ estimates by Sobolev's embedding (Theorem~\ref{thm: L2-Lp off diag stokes resolv}).

\begin{theorem}[$\L^2$-decay estimates]
\label{thm: off-diag resolvent}
    Let $\mu$ satisfy Assumption~\ref{Ass: Coefficients} and let $\omega_0$ be given by \eqref{eq: def omega}. For all $\theta \in [0,\pi-\omega_0)$ and all $\nu \in (0, d+2)$ there exist constants $C_1 , C_2 >0$ such that for all balls $B= B(x_0,r)$, $\lambda \in \S_\theta$, $f\in \L_\sigma^2(\IR^d)$ and $F \in \L^2 (\IR^d ; \IC^{d \times d})$ the solution $u\in \W^{1,2}_\sigma(\IR^d)$ to the Stokes system \eqref{eq: Stokes system} satisfies
    \begin{align}
    \label{eq: off-diag resolvent}
        &\sum\limits_{k=0}^\infty 2^{-\nu k } \int\limits_{B_k}|u|^2\,\d x +  \sum\limits_{k=0}^\infty 2^{-\nu k }\frac{2^{2k}r^2}{C_1+ |\lambda| r^2 2^{2k}} \int\limits_{B_k}|\nabla u|^2\,\d x \nonumber\\
        &\leq \frac{C_2}{|\lambda|} \max\Big\{1, (|\lambda|r^2)^{-(d+1)}\Big\}\sum\limits_{n=0}^\infty 2^{-\nu n}\bigg(\frac{1}{|\lambda|}\int\limits_{C_n(B)}|f|^2\, \d x +\int\limits_{C_n(B)}|F|^2\,\d x\bigg).
    \end{align}
    The constants $C_1 , C_2$ depend only on $\theta$, $\nu$, $d$ and $\mu_\bullet,\mu^\bullet$.
\end{theorem}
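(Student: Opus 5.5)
The plan is to first reduce to the normalized situation and then combine a non-local Caccioppoli inequality with a hole-filling iteration, summing over the annuli with weights $2^{-\nu k}$. By Observation~\ref{observation} and Lemma~\ref{lem: rescaled resolvent and semigroup}, rescaling by $r$ lets us assume $r=1$, so $|\lambda|r^2=|\lambda|$. For fixed $k$, let $\eta_k$ be a cut-off with $\eta_k\equiv 1$ on $B_k$, support in $B_{k+1}$ and $|\nabla\eta_k|\lesssim 2^{-k}$. We test \eqref{eq: Stokes system} with $\eta_k^2 u$, corrected to be divergence-free via Lemma~\ref{lem: Bogovskii lemma}: $v_k=\eta_k^2u-\cB_{C_{k+1}(B)}(\nabla\eta_k^2\cdot u)$. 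Taking real parts after rotating by $\arg\lambda$ (Lemma~\ref{lem: rotation}) should give the local energy inequality
\begin{align*}
|\lambda|\int_{B_k}|u|^2+\int_{B_k}|\nabla u|^2 \lesssim{}& 2^{-2k}\int_{B_{k+1}}|u|^2 + 2^{-k}\int_{C_{k+1}}|\nabla u||u| \\
&+ \frac{1}{|\lambda|}\int_{B_{k+1}}|f|^2+\int_{B_{k+1}}|F|^2 + |\lambda|\,\|u\|_{\L^2(B_{k+1})}\|\cB(\ldots)\|_{\L^2}.
\end{align*}
The Bogovski\u{\i} correction removes the pressure entirely, since the test function is divergence-free. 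Its price is the terms $\lambda\int u\cdot\overline{\cB(\ldots)}$ and $\int\mu\nabla u\cdot\overline{\nabla\cB(\ldots)}$. By \eqref{Eq: Inhomogeneous estimate Bogovskii}, the first is only bounded by $|\lambda|\,\|u\|_{\L^2(C_{k+1})}^2$, with no gain. This is where the non-locality enters.

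To deal with it, I would not use the crude bound on the $\lambda$-term. Instead I estimate $\|\cB(\nabla\eta_k^2\cdot u)\|$ in a negative-norm fashion: write $\lambda u = -Au+\IP f+\ldots$, or equivalently use that $\lambda u$ restricted to annuli is controlled through the equation by $\nabla u$ and the data, as in \cite{Haardt_Tolksdorf}. The result is a recursive inequality of the form
\begin{align*}
a_k\le \gamma\, a_{k+1} + c\,(|\lambda|^{-1}+2^{-2k})(\text{lower order}) + \text{data}_k,
\end{align*}
where $a_k=|\lambda|\int_{B_k}|u|^2+\int_{B_k}|\nabla u|^2$. The hole-filling step is then to add $\gamma\,(a_{k+1}-a_k)$-type terms to both sides, so that the annular contribution sits on the left. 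This yields $a_k\le \tilde\gamma\, a_{k+1}+\ldots$ with $\tilde\gamma=\gamma/(1+\gamma)<1$.

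Next I multiply by $2^{-\nu k}$ and sum over $k$. The right side becomes $\tilde\gamma\, 2^{\nu}\sum 2^{-\nu(k+1)}a_{k+1}$, which is absorbed into the left whenever $\tilde\gamma 2^{\nu}<1$. The admissible range $\nu<d+2$ should come from the fact that the Bogovski\u{\i} and pressure-like terms decay like $2^{-(d+2)k}$ relative to volume growth $2^{dk}$. I expect to need the annular decomposition with a Poincaré/Sobolev step on each annulus to reach exactly that threshold. I also need the sums to be finite a priori, which holds since $u\in\W^{1,2}$ and $\nu>0$. The data terms $\sum_k 2^{-\nu k}\int_{B_{k+1}}(\ldots)$ are rearranged via $\sum_{k\ge n}2^{-\nu k}\lesssim 2^{-\nu n}$ into the right-hand side of \eqref{eq: off-diag resolvent}.

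Finally I track the $|\lambda|$-dependence. When $|\lambda|\ge 1$ the lower-order terms $2^{-2k}\int|u|^2$ are absorbed by $|\lambda|\int|u|^2$ directly. When $|\lambda|<1$ this fails on the first $K\simeq\log_2|\lambda|^{-1/2}$ annuli, where $2^{-2k}>|\lambda|$. There I start the iteration only at scale $2^K\simeq|\lambda|^{-1/2}$ and bound the inner balls trivially by the ball $B_K$. This costs a factor $2^{\nu K}\le|\lambda|^{-(d+2)/2}$, and comparing with the gradient weight $\frac{2^{2k}}{C_1+|\lambda|2^{2k}}$ produces the factor $(|\lambda| r^2)^{-(d+1)}$.

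The main obstacle is the absorption constant: making the hole-filling produce $\tilde\gamma\,2^{\nu}<1$ for every $\nu<d+2$, rather than only for small $\nu$. This requires the sharp treatment of the $\lambda$-Bogovski\u{\i} term. I would try first to reuse the iteration of \cite[Prop.~2.5]{Haardt_Tolksdorf} with the $|\lambda|r^2\simeq1$ constants, and then extend it to general $\lambda$ by the splitting at $K$.
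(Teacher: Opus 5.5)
There is a genuine gap, and it sits exactly at the step you call the ``main obstacle''; the route you propose cannot close it. With the Bogovski\u{\i}-corrected test function $v_k$ the pressure disappears. The price is that $\lambda\int u\cdot\overline{\cB_{C_{k+1}(B)}(\nabla\eta_k^2\cdot u)}$ is only bounded by $C|\lambda|\,\|u\|^2_{\L^2(C_{k+1}(B))}$ with $C$ of order one. So for every $k$ the right-hand side contains a term of the same size as $|\lambda|\int_{B_k}|u|^2$ on the left. Hole-filling then gives a purely local recursion $a_k\le\tilde\gamma\,a_{k+1}+\text{data}_k$, with a fixed $\tilde\gamma<1$ depending only on $\mu_\bullet,\mu^\bullet$ and $C_{\mathrm{Bog}}$. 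The weighted summation needs $\tilde\gamma\,2^\nu<1$, which gives only $\nu<\log_2(1/\tilde\gamma)$, a small order unrelated to $d+2$. Nothing in your scheme produces a factor $2^{-(d+2)k}$, since the correction lives on a single annulus. Your ``negative-norm'' idea does not help either. Rewriting $\lambda\int u\cdot\overline{\cB(\ldots)}$ through the equation returns $\int(\phi-c)\,\overline{\nabla\eta_k^2\cdot u}$, which is precisely the pressure you removed. The splitting at scale $K$ only handles the $\lambda$-bookkeeping, not this problem.

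The missing ideas are the following.
\begin{enumerate}
\item Keep the pressure. The Caccioppoli inequality \eqref{Eq: First Caccioppoli} from \cite[Lem.~2.2]{Tolksdorf-Caccioppoli} has no $|\lambda|\|u\|^2$ term on the right. It contains only the lower-order term $\frac{C(1+\varepsilon^{-1})}{2^{2k}r^2}\int_{C_{k+1}(B)}|u|^2$ and $\varepsilon\,\|\phi-\phi_{C_{k+1}(B)}\|^2_{\L^2(C_{k+1}(B))}$.
\item Use the non-local pressure estimate \cite[Lem.~2.1]{Tolksdorf-Caccioppoli}. Its outer weights $2^{(\frac d2+1)(k-\ell)}$ reflect the kernel decay and give \eqref{eq: pressure estimate} for any $\nu'\in(\nu,d+2)$. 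This is the actual source of the threshold $d+2$.
\item Absorb the gradients via a small $\varepsilon$. After weighting by $2^{-\nu k}$ and summing, the double sum of gradient terms is at most $C\,C_{\nu',\nu}\,\varepsilon$ times the weighted gradient sum on the left. This uses $\nu'>\nu$ and the monotonicity of $\frac{2^{2k}r^2}{C\varepsilon'+|\lambda|r^22^{2k}}$ in $k$. Choosing $\varepsilon$ small absorbs it, so the smallness comes from $\varepsilon$, not from a hole-filling constant.
\item Do not absorb the $u$-term at all. Hole-filling gives it the coefficient $\frac{C_1}{C_1+|\lambda|r^22^{2k}}$, which tends to $0$ as $k\to\infty$. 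Iterating outward (Lemma~\ref{lem: iteration procedure}) produces the products $\kappa_{\lambda,r}(m)$, which decay superexponentially in $m$.
\item Finish with the discrete Young inequality and the bound $\sum_n\kappa_{\lambda,r}(n)2^{\nu n}\lesssim\max\{1,(|\lambda|r^2)^{-(d+1)}\}$. This yields the full range $\nu<d+2$ and the stated $\lambda$-dependence, with no splitting at a scale $K$.
\end{enumerate}
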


 We start with a non-local Caccioppoli-type inequality (Lemma~\ref{lem: iteration setup}) that will subsequently be iterated (Lemma~\ref{lem: iteration procedure}).

\begin{lemma}[Non-local Caccioppoli-type inequality]
\label{lem: iteration setup}
    Let $\mu$ satisfy Assumption~\ref{Ass: Coefficients} and let $\omega_0$ be given by \eqref{eq: def omega}. For all $\theta \in [0,\pi-\omega_0)$ and all $\nu \in (0, d+2)$ there exist constants $C_1 , C_2 >0$ such that for all balls $B= B(x_0,r)$, $\lambda \in \S_\theta$, $f\in \L_\sigma^2(\IR^d)$ and $F \in \L^2 (\IR^d ; \IC^{d \times d})$ the solution $u\in \W^{1,2}_\sigma(\IR^d)$ to the Stokes system \eqref{eq: Stokes system} satisfies
    \begin{align}
    \label{eq: iteration equation}
        &\sum\limits_{k=0}^\infty 2^{-\nu k}\int\limits_{B_k}|u|^2\,\d x + \frac{1}{2}\sum\limits_{k=0}^\infty 2^{-\nu k}\frac{2^{2k}r^2}{C_1 + |\lambda|r^2 2^{2k}}\int\limits_{B_k}|\nabla u|^2\,\d x  \nonumber\\
        & \leq \sum\limits_{k=0}^\infty 2^{-\nu k}\frac{C_1}{C_1 + |\lambda|r^2 2^{2k}}\int\limits_{B_{k+1}}|u|^2\,\d x  + \frac{C_2}{|\lambda|}\sum\limits_{k=0}^\infty  2^{-\nu k} \bigg(\frac{1}{|\lambda|}\int\limits_{B_{k+1}}|f|^2\, \d x +\int\limits_{B_{k+1}}|F|^2\,\d x\bigg).
    \end{align}
    The constants $C_1 , C_2$ depend only on $\theta$, $\nu$, $d$ and $\mu_\bullet,\mu^\bullet$.
\end{lemma}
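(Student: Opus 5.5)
We prove, for each fixed $k\ge 0$, a local Caccioppoli-type inequality on $B_k$ with a non-local pressure correction. Then we multiply by $2^{-\nu k}$ and sum, absorbing the non-local contributions thanks to $\nu<d+2$.

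Fix $k$ and a cut-off $\eta_k\in \C_c^\infty(B_{k+1})$ with $\eta_k\equiv1$ on $B_k$ and $2^kr\|\nabla\eta_k\|_\infty\lesssim1$. Since the test function must be divergence-free, we use Lemma~\ref{lem: Bogovskii lemma}, applied to the ball $B_k$ and the field $u$ (and $\eta_k^2$ in place of $\eta$). This gives the test function $v_k\coloneqq\eta_k^2u-\cB_{C_1(B_k)}(\nabla\eta_k^2\cdot u)\in\W^{1,2}_\sigma$. Testing \eqref{eq: Stokes system} with $v_k$ and rotating so that Lemma~\ref{lem: rotation} gives coercivity uniformly for $\lambda\in\S_\theta$ (for $|\arg\lambda|$ close to $\pi-\omega_0$ one multiplies by a suitable $e^{i\psi}$), we obtain
\begin{align*}
|\lambda|\|\eta_ku\|_{\L^2}^2+\|\eta_k\nabla u\|_{\L^2}^2
\lesssim{}&\|\nabla u\|_{\L^2(B_{k+1})}\,\|u\|_{\L^2(B_{k+1})}/(2^kr)\\
&+\big(|\lambda|\|u\|_{\L^2(B_{k+1})}+\|\nabla u\|_{\L^2(B_{k+1})}\big)\|\cB(\cdot)\|_{\W^{1,2}}+\text{data terms}.
\end{align*}
The Bogovski\u{\i} term is the non-local part. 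Its $\L^2$ norm is $\lesssim\|u\|_{\L^2(C_1(B_k))}$ and its gradient norm is $\lesssim (2^kr)^{-1}\|u\|_{\L^2(C_1(B_k))}$. The trouble is the term $|\lambda|\|u\|\,\|\cB\|$. It is of the same size as the left-hand side, so it cannot be absorbed by Young's inequality alone. Its factor $|\lambda|\int_{C_1(B_k)}|u|^2$ has no small constant, and this is what forces a non-local structure.

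To handle this we use that $\cB(\nabla\eta_k^2\cdot u)$ is divergence-free-corrected and supported in the annulus. We test the equation on $C_1(B_k)$ itself, i.e.\ with the same cut-off construction at scale $B_{k+1}$. In the resulting chain, the annulus term at level $k$ is controlled by the left-hand side at level $k+1$. This produces an inequality of the form $X_k\le c_0X_{k+1}+\ldots$, where $X_k$ denotes the left-hand side at level $k$ and $c_0$ is some absolute constant.

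Multiplying by $2^{-\nu k}$ and summing turns this into $\sum_k2^{-\nu k}X_k\le c_02^{\nu}\sum_k2^{-\nu (k+1)}X_{k+1}+\dots$. This cannot be absorbed directly. Instead we follow the structure of \eqref{eq: iteration equation}: we divide the level-$k$ inequality by $C_1+|\lambda|r^22^{2k}$ and keep the $\int_{B_{k+1}}|u|^2$ term on the right with coefficient $C_1/(C_1+|\lambda|r^22^{2k})$. Only the gradient terms on the annuli need to be absorbed. After Young's inequality, these appear with a small factor $\varepsilon$ times $\frac{2^{2(k+1)}r^2}{C_1+|\lambda|r^22^{2(k+1)}}\int_{B_{k+1}}|\nabla u|^2$. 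Shifting the index costs $2^{\nu}$, and summing gives $\varepsilon 2^{\nu}\sum_k2^{-\nu k}(\cdots)$. This is absorbed by choosing $\varepsilon\le 2^{-\nu-1}$, which explains the factor $\tfrac12$ in \eqref{eq: iteration equation}. Making this choice fixes $C_1$ large.

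The restriction $\nu<d+2$ enters here. The pressure-type contribution from the Bogovski\u{\i} correction at level $k$ gains only a factor $2^{-(d+2)}$ relative to level $k+1$, coming from scaling of the volume and of $(2^kr)^{-2}$. The weighted sum is therefore absorbable only if $2^{\nu-(d+2)}<1$. I would try first to exhibit this through the scaled Bogovski\u{\i} estimates \eqref{Eq: Inhomogeneous estimate Bogovskii}, \eqref{Eq: Homogeneous estimate Bogovskii}.

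The data terms $\langle f,v_k\rangle$ and $\langle F,\nabla v_k\rangle$ are handled by Young's inequality. We put $|\lambda|^{-1}\|f\|^2_{\L^2(B_{k+1})}$ and $\|F\|^2_{\L^2(B_{k+1})}$ on the right, which yields the $C_2/|\lambda|$ term. The main obstacle is the bookkeeping of the $|\lambda|$-weighted Bogovski\u{\i} term so that the constant in front of the shifted sum is genuinely $<1$.
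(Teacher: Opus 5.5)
\textbf{There is a genuine gap, at exactly the point you flag as the main obstacle, and it cannot be closed within the Bogovski\u{\i} framework.}

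The term $\lambda\langle u,\cB_{C_1(B_k)}(\nabla\eta_k^2\cdot u)\rangle$ has size $|\lambda|\|u\|^2_{\L^2(C_1(B_k))}$. After you divide the level-$k$ inequality by $|\lambda|+C_1(2^kr)^{-2}$, it carries the coefficient $\frac{|\lambda|r^22^{2k}}{C_1+|\lambda|r^22^{2k}}$ times the Bogovski\u{\i} constant. This tends to a fixed constant as $|\lambda|r^22^{2k}\to\infty$. The lemma instead needs the coefficient $\frac{C_1}{C_1+|\lambda|r^22^{2k}}\to 0$ in front of $\int_{B_{k+1}}|u|^2$, and this smallness is exactly what later makes $\kappa_{\lambda,r}(m)$ decay. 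Your passage from ``$X_k\le c_0X_{k+1}$'' to that coefficient is asserted, not derived.

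The Bogovski\u{\i} correction does not avoid the pressure; it only hides it. Testing the equation with $w=\cB_{C_1(B_k)}(\nabla\eta_k^2\cdot u)$ rewrites $\lambda\langle u,w\rangle$, up to local terms and signs, as $\langle\phi-c,\div w\rangle=\langle \phi-c,\nabla\eta_k^2\cdot u\rangle$.

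No inequality coupling only $B_k$ and $B_{k+1}$ with the coefficient required by the lemma can hold either. Iterating it would give faster-than-polynomial decay. That already fails for $\mu=\Id$ with data $\div F$: there $u=(\lambda-\Delta)^{-1}\IP\div F$, and the kernel of $\IP\div$ decays only like $|x|^{-d-1}$, which caps the order at $d+2$. For the same reason, your explanation of where $\nu<d+2$ enters is not right. The estimates for $\cB$ are local and scale invariant and produce no factor $2^{-(d+2)}$.

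\textbf{The missing ingredient is a non-local estimate for the pressure.} The paper tests with the non-solenoidal $\eta^2u$ and keeps $\phi$, using \cite[Lem.~2.2]{Tolksdorf-Caccioppoli}:
\[
|\lambda|\int_{B_k}|u|^2\,\d x+\int_{B_k}|\nabla u|^2\,\d x\le \frac{C(1+\varepsilon^{-1})}{2^{2k}r^2}\int_{C_{k+1}(B)}|u|^2\,\d x+\varepsilon\int_{C_{k+1}(B)}|\phi-\phi_{C_{k+1}(B)}|^2\,\d x+\text{data}.
\]
Here $u$ enters only with the factor $(2^kr)^{-2}$, and the pressure carries a free small parameter $\varepsilon$.

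Since $u$ and $f$ are solenoidal, $\Delta\phi=\div\div(\mu\nabla u+F)$ contains no $\lambda u$. Accordingly, \cite[Lem.~2.1]{Tolksdorf-Caccioppoli} bounds the oscillation of $\phi$ on $C_{k+1}(B)$ by $\|\nabla u\|_{\L^2(C_\ell(B))}+\|F\|_{\L^2(C_\ell(B))}$ on all annuli. The weights are $2^{(\frac d2+1)(k-\ell)}$ for $\ell>k$, plus harmless ones for $\ell\le k$. Cauchy--Schwarz turns this into $\sum_{\ell\ge k}2^{\nu'(k-\ell)}(\|\nabla u\|^2_{\L^2(B_\ell)}+\|F\|^2_{\L^2(B_\ell)})$ with $\nu<\nu'=\frac{d+2+\nu}{2}<d+2$. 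This is where $\nu<d+2$ really enters.

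The remaining steps are as follows:
\begin{enumerate}
\item Fill the hole by adding $\frac{C(1+\varepsilon^{-1})}{2^{2k}r^2}\int_{B_k}|u|^2$ to both sides.
\item Divide by $|\lambda|+C(1+\varepsilon^{-1})(2^kr)^{-2}$, weight by $2^{-\nu k}$ and sum over $k$.
\item Absorb the double sum of gradient terms, which runs over all levels $\ell\ge k$ and not only $k+1$, for $\varepsilon$ small. This uses Fubini, $\nu'>\nu$, and the monotonicity of $k\mapsto\frac{2^{2k}r^2}{C_1+|\lambda|r^22^{2k}}$, with $C_1=C(1+\varepsilon^{-1})$.
\end{enumerate}
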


\begin{proof}
    Let $u\in \W^{1,2}_\sigma(\IR^d)$ be the unique solution to the Stokes resolvent problem \eqref{eq: Stokes system} and $\phi\in \Lloc^2(\IR^d)$ its associated pressure. Then, \cite[Lem.~2.2]{Tolksdorf-Caccioppoli} states that there exists $C > 0$ depending only on $\theta$, $d$ and $\mu_\bullet,\mu^\bullet$ such that
    \begin{align}
    \label{Eq: First Caccioppoli}
        |\lambda|\int\limits_{B_k}|u|^2\,\d x + \int\limits_{B_k}|\nabla u|^2\,\d x
        & \leq \frac{C(1+\frac{1}{\varepsilon})}{2^{2k}r^2}\int\limits_{C_{k+1}(B)}|u|^2\,\d x + \varepsilon\int\limits_{C_{k+1}(B)}|\phi - \phi_{C_{k+1}(B)}|^2\, \d x \notag\\
        &\quad+ \frac{C}{|\lambda|}\int\limits_{B_{k+1}}|f|^2\, \d x + C \int\limits_{B_{k+1}}|F|^2\, \d x 
    \end{align}
    for all $k\in\IN_0$ and $\varepsilon>0$ to be chosen later. Using \cite[Lem.~2.1]{Tolksdorf-Caccioppoli} and $C_{\ell}(B) \subseteq B_{\ell}$ allows us to control the pressure by
    \begin{align*}
        \int\limits_{C_{k+1}(B)}|\phi - \phi_{C_{k+1}(B)}|^2\, \d x 
        &\leq C\bigg(\sum\limits_{\ell=0}^{k} 2^{\frac{d}{2}(\ell-k)} \bigl( \|\nabla u\|_{\L^2(C_\ell(B))} +
        \| F \|_{\L^2(C_\ell(B))} \bigr)\\
        &\quad +\sum\limits_{\ell=k+1}^{\infty} 2^{(\frac{d}{2}+1)(k-\ell)} \bigl( \|\nabla u\|_{\L^2(C_\ell(B))} + \| F \|_{\L^2(C_\ell(B))} \bigr) \bigg)^2 \\
        & \leq C\bigg( \sum\limits_{\ell=k+1}^{\infty} 2^{(\frac{d}{2}+1)(k-\ell)} \bigl( \|\nabla u\|_{\L^2(B_\ell)} + \| F \|_{\L^2(B_\ell)} \bigr) \bigg)^2.
    \end{align*}
    Applying the Cauchy--Schwarz inequality, we arrive at
    \begin{align}
    \label{eq: pressure estimate}
        \int\limits_{C_{k+1}(B)}|\phi - \phi_{C_{k+1}(B)}|^2\, \d x \leq C\sum\limits_{\ell=k}^{\infty} 2^{\nu'(k-\ell)} \bigl(\|\nabla u\|_{\L^2(B_\ell)}^2 + \| F \|_{\L^2 (B_{\ell})}^2 \bigr),
    \end{align}
    where we choose $\nu'= \frac{d+2+\nu}{2}\in (\nu,d+2)$. Inserting this into~\eqref{Eq: First Caccioppoli} leads to
    \begin{align*}
        |\lambda|\int\limits_{B_k}|u|^2\,\d x + \int\limits_{B_k}|\nabla u|^2\,\d x &
         \leq \frac{C(1+\frac{1}{\varepsilon})}{2^{2k}r^2}\int\limits_{C_{k+1}(B)}|u|^2\,\d x + C\varepsilon\sum\limits_{\ell=k}^{\infty} 2^{\nu'(k-\ell)} \int\limits_{B_{\ell}} \lvert \nabla u \rvert^2 \, \d x \\
        &\quad+ \frac{C}{|\lambda|}\int\limits_{B_{k+1}}|f|^2\, \d x + C(1 + \varepsilon) \sum\limits_{\ell=k}^{\infty} 2^{\nu'(k-\ell)} \int\limits_{B_{\ell}} \lvert F \rvert^2 \, \d x.
    \end{align*}
    Setting $\varepsilon^{\prime} \coloneqq 1+\frac{1}{\varepsilon}$, we add $\frac{C\varepsilon'}{2^{2k}r^2}\int_{B_{k}}|u|^2\,\d x$ on both sides and thereby `fill the hole' on the right-hand side to get
    \begin{align*}
        &\Big(|\lambda|+\frac{C\varepsilon'}{2^{2k}r^2}\Big) \int\limits_{B_k}|u|^2\,\d x + \int\limits_{B_k}|\nabla u|^2\,\d x\\
        & \leq \frac{C\varepsilon'}{2^{2k}r^2}\int\limits_{B_{k+1}}|u|^2\,\d x  + C\varepsilon\sum\limits_{\ell=k}^{\infty} 2^{\nu'(k-\ell)} \int_{B_\ell} \lvert \nabla u \rvert^2 \, \d x + \frac{C}{|\lambda|}\int\limits_{B_{k+1}}|f|^2\, \d x\\
        &\quad + C(1 + \varepsilon) \sum\limits_{\ell=k}^{\infty} 2^{\nu'(k-\ell)} \int\limits_{B_{\ell}} \lvert F \rvert^2 \, \d x.
    \end{align*}
    From now on, $C$ denotes one fixed constant for which the previous inequality holds. Its precise value is not important; however, it depends only on $\theta,\nu, d$ and $\mu_\bullet,\mu^\bullet$. Dividing the inequality above by the factor $(|\lambda|+\frac{C\varepsilon'}{2^{2k}r^2})$, multiplying all terms by $2^{-\nu k}$ and summing over all $k\in \IN_0$ leads to
    \begin{align*}
        &\sum\limits_{k=0}^\infty  2^{-\nu k} \int\limits_{B_k}|u|^2\,\d x + \sum\limits_{k=0}^\infty 2^{-\nu k}\frac{2^{2k}r^2}{C\varepsilon' + |\lambda|r^2 2^{2k}}\int\limits_{B_k}|\nabla u|^2\,\d x \\
        & \leq \sum\limits_{k=0}^\infty  2^{-\nu k}\frac{C\varepsilon'}{C\varepsilon' + |\lambda|r^2 2^{2k}}\int\limits_{B_{k+1}}|u|^2\,\d x + C\varepsilon \sum\limits_{k=0}^\infty  2^{-\nu k}\frac{2^{2k}r^2}{C\varepsilon' + |\lambda|r^2 2^{2k}}\sum\limits_{\ell=k}^{\infty} 2^{\nu'(k-\ell)}\int\limits_{B_\ell}|\nabla u|^2\,\d x\\
        &\quad+ \frac{C}{|\lambda|}\sum\limits_{k=0}^\infty  2^{-\nu k}\frac{2^{2k}r^2}{C\varepsilon' + |\lambda|r^2 2^{2k}}\int\limits_{B_{k+1}}|f|^2\, \d x\\
        &\quad + C(1 + \varepsilon) \sum\limits_{k=0}^\infty  2^{-\nu k}\frac{2^{2k}r^2}{C\varepsilon' + |\lambda|r^2 2^{2k}}\sum\limits_{\ell=k}^{\infty} 2^{\nu'(k-\ell)}\int\limits_{B_\ell}|F|^2\,\d x.
    \end{align*}
    To calculate the terms including a double series, we first observe that
    \begin{align*}
        \Big( \frac{2^{2k}r^2}{C\varepsilon' + |\lambda|r^2 2^{2k}} \Big)_{k \in \IN_0}
    \end{align*}
    is monotonically increasing in $k$. Since $\nu^{\prime} > \nu$, it follows
    \begin{align*}
        &C\varepsilon \sum\limits_{k=0}^\infty  2^{-\nu k}\frac{2^{2k}r^2}{C\varepsilon' + |\lambda|r^2 2^{2k}}\sum\limits_{\ell=k}^{\infty} 2^{\nu'(k-\ell)}\int\limits_{B_\ell}|\nabla u|^2\,\d x\\
        &\leq C\varepsilon \sum\limits_{\ell=0}^\infty  \bigg(\sum\limits_{k=0}^{\ell} 2^{(\nu'-\nu)k}\bigg) 2^{-\nu' \ell}\frac{2^{2\ell}r^2}{C\varepsilon' + |\lambda|r^2 2^{2\ell}} \int\limits_{B_\ell}|\nabla u|^2\,\d x\\
        &\leq  C C_{\nu',\nu} \varepsilon \sum\limits_{\ell=0}^\infty  2^{-\nu \ell}\frac{2^{2\ell}r^2}{C\varepsilon' + |\lambda|r^2 2^{2\ell}} \int\limits_{B_\ell}|\nabla u|^2\,\d x,
    \end{align*}
    where $C_{\nu',\nu} = \frac{2^{\nu^{\prime} - \nu}}{2^{\nu'-\nu}-1}$. The double series involving $\L^2$-norms of $F$ is treated analogously. Choosing $\varepsilon= \frac{1}{2C\cdot C_{\nu',\nu}}$, we can absorb the series involving $\L^2$-norms of $\nabla u$ onto the left-hand side. In addition, we set $C_1=C \varepsilon^{\prime}$ for the remaining terms to find
    \begin{align*}
        &\sum\limits_{k=0}^\infty 2^{-\nu k}\int\limits_{B_k}|u|^2\,\d x + \frac{1}{2}\sum\limits_{k=0}^\infty 2^{-\nu k}\frac{2^{2k}r^2}{C_1 + |\lambda|r^2 2^{2k}}\int\limits_{B_k}|\nabla u|^2\,\d x  \\
        & \leq \sum\limits_{k=0}^\infty  2^{-\nu k}\frac{C_1}{C_1 + |\lambda|r^2 2^{2k}}\int\limits_{B_{k+1}}|u|^2\,\d x\\
        &\quad + C_2\sum\limits_{k=0}^\infty  2^{-\nu k}\frac{2^{2k}r^2}{C_1 + |\lambda|r^2 2^{2k}} \bigg(\frac{1}{|\lambda|}\int\limits_{B_{k+1}}|f|^2\, \d x +\int\limits_{B_{k+1}}|F|^2\,\d x\bigg)\\
        & \leq \sum\limits_{k=0}^\infty  2^{-\nu k}\frac{C_1}{C_1 + |\lambda|r^2 2^{2k}}\int\limits_{B_{k+1}}|u|^2\,\d x + \frac{C_2}{|\lambda|}\sum\limits_{k=0}^\infty  2^{-\nu k} \bigg(\frac{1}{|\lambda|}\int\limits_{B_{k+1}}|f|^2\, \d x +\int\limits_{B_{k+1}}|F|^2\,\d x\bigg)
    \end{align*}
    for some constant $C_2>0$ depending only on $\theta,d,\nu,\mu_\bullet,\mu^\bullet$.
\end{proof}

To provide a lean proof of the iteration procedure, we introduce the following coefficients.
Let $C_1>0$ be the constant from Lemma~\ref{lem: iteration setup}. Then for  $\lambda\in \IC$, $r>0$ and $m\in \IN_0$ we define
\begin{align}
\label{def: coefficients}
    \kappa_{\lambda,r}(0) \coloneqq 1 \quad \text{and} \quad \kappa_{\lambda,r}(m)\coloneqq \prod\limits_{s= 0}^{m-1} \frac{C_1}{C_1+ |\lambda|r^2 2^{2s}}.
\end{align}
Notice that for every $k\in \IN_0$, we have
\begin{align}
\label{rem: A(n-k) coefficients}
    \kappa_{\lambda,r}(m)  \frac{C_1}{C_1+ |\lambda|r^2 2^{2(k+m)}} &\leq  \kappa_{\lambda,r}(m+1).
\end{align}

\begin{lemma}[Iteration procedure]
\label{lem: iteration procedure}
    Let $\mu$ satisfy Assumption~\ref{Ass: Coefficients} and let $\omega_0$ be given by \eqref{eq: def omega}. For all $\theta \in [0,\pi-\omega_0)$ and all $\nu \in (0, d+2)$ there exist constants $C_1 , C_2>0$ such that for all balls $B= B(x_0,r)$, $\lambda \in \S_\theta$, $f\in \L_\sigma^2(\IR^d)$ and $F \in \L^2 (\IR^d ; \IC^{d \times d})$ the solution $u\in \W^{1,2}_\sigma(\IR^d)$ to the Stokes system \eqref{eq: Stokes system} satisfies
    \begin{align*}
        \sum\limits_{k=0}^\infty 2^{-\nu k}\int\limits_{B_k}|u|^2\,\d x &+ \frac{1}{2}\sum\limits_{k=0}^\infty 2^{-\nu k}\frac{2^{2k}r^2}{C_1 + |\lambda|r^2 2^{2k}}\int\limits_{B_k}|\nabla u|^2\,\d x \nonumber\\
        &\leq \frac{C_2}{|\lambda|} \sum\limits_{k=0}^\infty\sum\limits_{n=k}^\infty  \kappa_{\lambda,r}(n-k) 2^{-\nu k}\bigg(\frac{1}{|\lambda|}\int\limits_{B_{n+1}}|f|^2\, \d x +\int\limits_{B_{n+1}}|F|^2\,\d x\bigg).
    \end{align*}
    The constants $C_1 , C_2$ depend only on $\theta$, $\nu$, $d$ and $\mu_\bullet,\mu^\bullet$.
\end{lemma}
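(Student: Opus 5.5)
The plan is to iterate the non-local Caccioppoli inequality of Lemma~\ref{lem: iteration setup}, always with the same $C_1$, $C_2$. Write
\begin{align*}
    X \coloneqq \sum_{k=0}^\infty 2^{-\nu k}\int_{B_k}|u|^2\,\d x,
\end{align*}
and let $g_n \coloneqq \frac{1}{|\lambda|}\int_{B_{n+1}}|f|^2\,\d x + \int_{B_{n+1}}|F|^2\,\d x$. Lemma~\ref{lem: iteration setup} bounds $X$ plus the gradient term by
\begin{align*}
    \sum_k 2^{-\nu k}\frac{C_1}{C_1+|\lambda|r^2 2^{2k}}\int_{B_{k+1}}|u|^2\,\d x \;+\; \frac{C_2}{|\lambda|}\sum_k 2^{-\nu k} g_k .
\end{align*}
In the first sum we cannot simply bound the factor by $1$. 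That would give $2^{\nu}X$, which cannot be absorbed.

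Instead, I would apply the same mechanism to the balls $B_{k+1}$ themselves. From the hole-filled estimate in the proof of Lemma~\ref{lem: iteration setup}, each $\int_{B_{m}}|u|^2$ is controlled by $\frac{C_1}{C_1+|\lambda|r^22^{2m}}\int_{B_{m+1}}|u|^2$ plus pressure and data terms. For fixed $k$, iterating $j$ times along $B_{k+1}\to B_{k+2}\to\cdots$ produces the products
\begin{align*}
    \prod_{s=0}^{j-1}\frac{C_1}{C_1+|\lambda|r^22^{2(k+s)}}\le \kappa_{\lambda,r}(j).
\end{align*}
This is exactly the role of \eqref{rem: A(n-k) coefficients}: inductively, $\kappa_{\lambda,r}(m)\cdot\frac{C_1}{C_1+|\lambda|r^22^{2(k+m)}}\le\kappa_{\lambda,r}(m+1)$. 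After reindexing $n=k+j$, the data appear as $\sum_k\sum_{n\ge k}\kappa_{\lambda,r}(n-k)2^{-\nu k}g_n$, which is the claimed right-hand side.

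Cleaner is to do the iteration at the level of the weighted sums. Define
\begin{align*}
    S_m \coloneqq \sum_k 2^{-\nu k}\kappa_{\lambda,r}(m)\int_{B_{k+m}}|u|^2\,\d x .
\end{align*}
Apply Lemma~\ref{lem: iteration setup}, or rather its shifted version for the ball $B_m = 2^mB$ with weights aligned so that $2^{-\nu k}$ multiplies $B_{k+m}$. This gives
\begin{align*}
    S_m+(\text{gradient terms}) \le S_{m+1}+\frac{C_2}{|\lambda|}\kappa_{\lambda,r}(m)\sum_k 2^{-\nu k}g_{k+m}.
\end{align*}
The shifted lemma has factors $\frac{C_1}{C_1+|\lambda|(2^mr)^22^{2k}}$. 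These are bounded by $\frac{C_1}{C_1+|\lambda|r^22^{2m}}$ uniformly in $k$ (the factor decreases in $k$), so multiplying by $\kappa_{\lambda,r}(m)$ yields $\kappa_{\lambda,r}(m+1)$ by \eqref{rem: A(n-k) coefficients} with $k=0$. Telescoping from $m=0$ to $M$ gives $X$ plus the gradient sum, bounded by $S_{M+1}$ plus the accumulated data terms. The gradient terms for $m\ge1$ are nonnegative and can be dropped.

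The main obstacle is showing $S_{M}\to0$. We have $S_M\le\kappa_{\lambda,r}(M)\,2^{\nu M}\sum_j 2^{-\nu j}\|u\|^2_{\L^2(B_j)}\lesssim \kappa_{\lambda,r}(M)2^{\nu M}\|u\|_{\L^2}^2$. Here $\kappa_{\lambda,r}(M)$ decays superexponentially, like $\prod_s (C_1/(|\lambda|r^2 4^s))$, hence like $2^{-M^2}$ times constants, for $\lambda\neq0$. This beats $2^{\nu M}$ since $u\in\L^2$. Finally, I would exchange the order of summation in the data terms, $\sum_m\kappa_{\lambda,r}(m)\sum_k 2^{-\nu k}g_{k+m}$, and rewrite it with $n=k+m$ to match the stated form. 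I would double-check that the $2^{-\nu k}$ weight stays attached to the inner index $k$ and is not shifted to $2^{-\nu n}$.
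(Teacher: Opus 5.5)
Your proposal is correct and follows the paper's proof essentially verbatim. Like the paper, you apply Lemma~\ref{lem: iteration setup} on the dilated balls $2^mB$, multiply by $\kappa_{\lambda,r}(m)$, and use \eqref{rem: A(n-k) coefficients} to pass from $S_m$ to $S_{m+1}$; you then drop the gradient terms for $m\geq 1$, telescope, and reindex with $n=k+m$ via Fubini. Your check that the remainder $S_M\lesssim \kappa_{\lambda,r}(M)2^{\nu M}\|u\|_{\L^2}^2\to 0$, by the superexponential decay of $\kappa_{\lambda,r}(M)$, is a welcome addition, since the paper passes to the infinite sum without justifying that the tail vanishes.
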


\begin{proof}
    For $m\in \IN_0$, $\lambda\in\S_\theta$ and $r>0$, denote by $\kappa_{\lambda,r}(m)$ the coefficients introduced in~\eqref{def: coefficients} with respect to the constant $C_1>0$ from Lemma~\ref{lem: iteration setup}. Take \eqref{eq: iteration equation} but with radius $2^mr$ instead of $r$, and multiply it with $\kappa_{\lambda,r}(m)$ to get
    \begin{align}
    \label{eq: iteration equation 1}
        &\sum\limits_{k=0}^\infty \kappa_{\lambda,r}(m)2^{-\nu k}\int\limits_{B_{k+m}}|u|^2\,\d x + \frac{1}{2}\sum\limits_{k=0}^\infty  \kappa_{\lambda,r}(m)2^{-\nu k}\frac{2^{2(k+m)}r^2}{C_1 + |\lambda|r^2 2^{2(k+m)}}\int\limits_{B_{k+m}}|\nabla u|^2\,\d x  \nonumber\\
        & \leq \sum\limits_{k=0}^\infty \kappa_{\lambda,r}(m)2^{-\nu k}\frac{C_1}{C_1 + |\lambda|r^2 2^{2(k+m)}}\int\limits_{B_{k+m+1}}|u|^2\,\d x \\
        &\quad+ \frac{C_2}{|\lambda|}\sum\limits_{k=0}^\infty   \kappa_{\lambda,r}(m)2^{-\nu k} \bigg(\frac{1}{|\lambda|}\int\limits_{B_{k+m+1}}|f|^2\, \d x +\int\limits_{B_{k+m+1}}|F|^2\,\d x\bigg).\notag
    \end{align}
    Starting with $m=0$, we estimate with~\eqref{rem: A(n-k) coefficients}
    \begin{align}
    \label{eq: iteration 0}
        &\sum\limits_{k=0}^\infty 2^{-\nu k}\int\limits_{B_k}|u|^2\,\d x + \frac{1}{2}\sum\limits_{k=0}^\infty 2^{-\nu k}\frac{2^{2k}r^2}{C_1 + |\lambda|r^2 2^{2k}}\int\limits_{B_k}|\nabla u|^2\,\d x  \nonumber\\
        &\leq \sum\limits_{k=0}^\infty \kappa_{\lambda,r}(0)2^{-\nu k}\frac{C_1}{C_1 + |\lambda|r^2 2^{2k}}\int\limits_{B_{k+1}}|u|^2\,\d x \notag\\
        &\quad+ \frac{C_2}{|\lambda|}\sum\limits_{k=0}^\infty  \kappa_{\lambda,r}(0)2^{-\nu k} \bigg(\frac{1}{|\lambda|}\int\limits_{B_{k+1}}|f|^2\, \d x +\int\limits_{B_{k+1}}|F|^2\,\d x\bigg)\\
        &\leq \sum\limits_{k=0}^\infty\kappa_{\lambda,r}(1)2^{-\nu k}\int\limits_{B_{k+1}}|u|^2\,\d x  + \frac{C_2}{|\lambda|}\sum\limits_{k=0}^\infty  \kappa_{\lambda,r}(0)2^{-\nu k} \bigg(\frac{1}{|\lambda|}\int\limits_{B_{k+1}}|f|^2\, \d x +\int\limits_{B_{k+1}}|F|^2\,\d x\bigg). \nonumber
    \end{align}
    We repeat this step by taking inequality~\eqref{eq: iteration equation 1} (but we abandon the term involving $\nabla u$) with $m=1$ instead of $m=0$, so that with~\eqref{rem: A(n-k) coefficients} it follows
    \begin{align}
    \label{eq: iteration 1}
         \sum\limits_{k=0}^\infty  \kappa_{\lambda,r}(1) 2^{-\nu k}\int\limits_{B_{k+1}}|u|^2\,\d x 
        &\leq \sum\limits_{k=0}^\infty   \kappa_{\lambda,r}(1) 2^{-\nu k}\frac{C_1}{C_1 + |\lambda|r^2 2^{2(k+1)}}\int\limits_{B_{k+2}}|u|^2\,\d x   \nonumber\\
        &\quad+\frac{C_2}{|\lambda|}\sum\limits_{k=0}^\infty  \kappa_{\lambda,r}(1)2^{-\nu k} \bigg(\frac{1}{|\lambda|}\int\limits_{B_{k+2}}|f|^2\, \d x +\int\limits_{B_{k+2}}|F|^2\,\d x\bigg)  \nonumber\\
        &\leq\sum\limits_{k=0}^\infty   \kappa_{\lambda,r}(2) 2^{-\nu k}\int\limits_{B_{k+2}}|u|^2\,\d x  \\
        &\quad+  \frac{C_2}{|\lambda|}\sum\limits_{k=0}^\infty  \kappa_{\lambda,r}(1)2^{-\nu k} \bigg(\frac{1}{|\lambda|}\int\limits_{B_{k+2}}|f|^2\, \d x +\int\limits_{B_{k+2}}|F|^2\,\d x\bigg). \nonumber
    \end{align}
    Plugging~\eqref{eq: iteration 1} into~\eqref{eq: iteration 0}, and repeating this procedure by increasing $m$ in each step, we will end up with
    \begin{align*}
        &\sum\limits_{k=0}^\infty 2^{-\nu k}\int\limits_{B_k}|u|^2\,\d x + \frac{1}{2}\sum\limits_{k=0}^\infty 2^{-\nu k}\frac{2^{2k}r^2}{C_1 + |\lambda|r^2 2^{2k}}\int\limits_{B_k}|\nabla u|^2\,\d x \\
        &\leq \frac{C_2}{|\lambda|}\sum\limits_{m=0}^\infty \sum\limits_{k=0}^\infty \kappa_{\lambda,r}(m)2^{-\nu k}\bigg(\frac{1}{|\lambda|}\int\limits_{B_{k+m+1}}|f|^2\, \d x +\int\limits_{B_{k+m+1}}|F|^2\,\d x\bigg)\\
        &=\frac{C_2}{|\lambda|} \sum\limits_{k=0}^\infty\sum\limits_{m=0}^\infty  \kappa_{\lambda,r}(m)2^{-\nu k}\bigg(\frac{1}{|\lambda|}\int\limits_{B_{k+m+1}}|f|^2\, \d x +\int\limits_{B_{k+m+1}}|F|^2\,\d x\bigg) \\
        &= \frac{C_2}{|\lambda|} \sum\limits_{k=0}^\infty\sum\limits_{n=k}^\infty  \kappa_{\lambda,r}(n-k) 2^{-\nu k}\bigg(\frac{1}{|\lambda|}\int\limits_{B_{n+1}}|f|^2\, \d x +\int\limits_{B_{n+1}}|F|^2\,\d x\bigg),
    \end{align*}
    where we set $n=k+m$ and used Fubini's theorem.
\end{proof}

We are now in a position to prove Theorem~\ref{thm: off-diag resolvent}.

\begin{proof}[Proof of Theorem~\ref{thm: off-diag resolvent}]
    By Lemma~\ref{lem: iteration procedure}, it suffices to estimate
    \begin{align}
    \label{eq: 187}
         &\sum\limits_{k=0}^\infty\sum\limits_{n=k}^\infty  \kappa_{\lambda,r}(n-k)2^{-\nu k}\bigg(\frac{1}{|\lambda|}\int\limits_{B_{n+1}}|f|^2\, \d x +\int\limits_{B_{n+1}}|F|^2\,\d x\bigg) \nonumber\\
         &\lesssim \frac{1}{|\lambda|} \max\Big\{1, (|\lambda|r^2)^{-(d+1)}\Big\}\sum\limits_{n=0}^\infty 2^{-\nu n}\bigg(\frac{1}{|\lambda|}\int\limits_{C_n(B)}|f|^2\, \d x +\int\limits_{C_n(B)}|F|^2\,\d x\bigg).
    \end{align}
    For this purpose, we define the functions
    \begin{align*}
        G(j) \coloneqq \kappa_{\lambda,r}(-j) 2^{-\nu j}\mathbf{1}_{\IN_0}(-j) \qquad (j\in\IZ),
    \end{align*}
    where we set $\kappa_{\lambda,r}(-j) = 0$ for $j> 0$, and
    \begin{align*}
        H(j) \coloneqq 2^{-\nu j}\bigg(\frac{1}{|\lambda|}\int\limits_{B_{j+1}}|f|^2\, \d x +\int\limits_{B_{j+1}}|F|^2\,\d x\bigg)\mathbf{1}_{\IN_0}(j) \qquad (j\in\IZ).
    \end{align*}
    Then, we can rewrite the left-hand side of \eqref{eq: 187} as a discrete convolution of the form
    \begin{align*}
         \sum\limits_{k=0}^\infty\sum\limits_{n=k}^\infty  \kappa_{\lambda,r}(n-k) 2^{-\nu k}\bigg(\frac{1}{|\lambda|}\int\limits_{B_{n+1}}|f|^2\, \d x +\int\limits_{B_{n+1}}|F|^2\,\d x\bigg) = \sum\limits_{k\in\IN_0} (G\ast H)(k).
    \end{align*}
    Applying the discrete Young inequality gives us the estimate
    \begin{align}
    \label{eq: B}
        \sum\limits_{k=0}^\infty\sum\limits_{n=k}^\infty  \kappa_{\lambda,r}(n-k) 2^{-\nu k}\bigg(\frac{1}{|\lambda|}\int\limits_{B_{n+1}}|f|^2\, \d x +\int\limits_{B_{n+1}}|F|^2\,\d x\bigg)\leq  \|G\|_{\ell^1(\IZ)}\|H\|_{\ell^1(\IZ)}.
    \end{align}
    For $\|H\|_{\ell^1(\IZ)}$, we calculate with Fubini's theorem
    \begin{align}
    \label{eq: G}
        \|H\|_{\ell^1(\IZ)} &= \sum\limits_{n=0}^\infty 2^{-\nu n}\bigg(\frac{1}{|\lambda|}\int\limits_{B_{n+1}}|f|^2\, \d x +\int\limits_{B_{n+1}}|F|^2\,\d x\bigg)\nonumber\\
        &=\sum\limits_{n=0}^\infty 2^{-\nu n}\sum\limits_{k=0}^{n+1}\bigg(\frac{1}{|\lambda|}\int\limits_{C_{k}(B)}|f|^2\, \d x +\int\limits_{C_{k}(B)}|F|^2\,\d x\bigg) \nonumber\\
        &= \sum\limits_{k=0}^\infty\sum\limits_{n=\max\{0,k-1\}}^{\infty} 2^{-\nu n}\bigg(\frac{1}{|\lambda|}\int\limits_{C_{k}(B)}|f|^2\, \d x +\int\limits_{C_{k}(B)}|F|^2\,\d x\bigg) \nonumber\\
        &\lesssim\sum\limits_{k=0}^\infty  2^{-\nu k}\bigg(\frac{1}{|\lambda|}\int\limits_{C_{k}(B)}|f|^2\, \d x +\int\limits_{C_{k}(B)}|F|^2\,\d x\bigg).
    \end{align}
    For $\|G\|_{\ell^1(\IZ)}$, we split the sum into the two parts
    \begin{align}
    \label{eq: F}
        \|G\|_{\ell^1(\IZ)} =\sum\limits_{n=0}^\infty  \kappa_{\lambda,r}(n)2^{\nu n}  =  \sum\limits_{n=0}^d   \kappa_{\lambda,r}(n) 2^{\nu n} +\sum\limits_{n=d+1}^\infty  \kappa_{\lambda,r}(n) 2^{\nu n}
    \end{align}
    and estimate both sums separately. For the finite sum, we use
    \begin{align*}
        \kappa_{\lambda,r}(n)= \prod\limits_{s= 0}^{n-1} \frac{C_1}{C_1+ |\lambda|r^22^{2s}} \leq 1
    \end{align*}
    to deduce the bound
    \begin{align*}
        \sum\limits_{n=0}^d   \kappa_{\lambda,r}(n)2^{\nu n}  \leq  \sum\limits_{n=0}^d   2^{\nu n} \lesssim 1.
    \end{align*}
    For the infinite sum, we use
    \begin{align*}
        \kappa_{\lambda,r}(n) =\prod\limits_{s= 0}^{n-1} \frac{C_1}{C_1+ |\lambda|r^22^{2s}}
        \leq \prod\limits_{s= n-1-d}^{n-1} \frac{C_1}{|\lambda|r^22^{2s}}
        \lesssim 2^{-2d n}(|\lambda|r^2)^{-(d+1)},
    \end{align*}
    with an implicit constant depending only on $C_1, \nu, d$, such that
    \begin{align*}
        \sum\limits_{n=d+1}^\infty   \kappa_{\lambda,r}(n) 2^{\nu n} \lesssim\sum\limits_{n=d+1}^\infty   2^{(\nu-2d) n}(|\lambda|r^2)^{-(d+1)} \lesssim(|\lambda|r^2)^{-(d+1)}
    \end{align*}
    since $\nu<d+2\leq 2d$. Both estimates combined in \eqref{eq: F} yield the estimate
    \begin{align*}
        \|G\|_{\ell^1(\IZ)} \lesssim \max\Big\{1, (|\lambda|r^2)^{-(d+1)}\Big\}.
    \end{align*}
    Plugging this estimate and \eqref{eq: G} into \eqref{eq: B} yields
    \begin{align*}
        &\sum\limits_{k=0}^\infty\sum\limits_{n=k}^\infty \kappa_{\lambda,r}(n-k) 2^{-\nu k}\bigg(\frac{1}{|\lambda|}\int\limits_{B_{n+1}}|f|^2\, \d x +\int\limits_{B_{n+1}}|F|^2\,\d x\bigg)\\
        &\lesssim \max\Big\{1, (|\lambda|r^2)^{-(d+1)}\Big\}\sum\limits_{k=0}^\infty 2^{-\nu k}\bigg(\frac{1}{|\lambda|}\int\limits_{C_k(B)}|f|^2\, \d x +\int\limits_{C_k(B)}|F|^2\,\d x\bigg),
    \end{align*}
    which proves \eqref{eq: 187} and, thus, the claim. 
\end{proof}

Next, we improve the decay of the prefactor in \eqref{eq: off-diag resolvent} with respect to $|\lambda|$, which is what we need in Section~\ref{sec: Hinfty calc}. If the data vanish on $2B$, we can gain this decay by first running a local Caccioppoli inequality on $B$ and then feeding \eqref{eq: off-diag resolvent} into the resulting pressure term.

\begin{lemma}[Local Caccioppoli inequality]
\label{lem: local Caccioppoli}
    Let $\mu$ satisfy Assumption~\ref{Ass: Coefficients} and let $\omega_0$ be given by \eqref{eq: def omega}. For all $\theta \in [0,\pi-\omega_0)$ there exists a constant $C>0$ such that for all balls $B=B(x_0,r)$, $c\in\IC$, $\lambda \in \S_\theta$, $f\in \L_\sigma^2(\IR^d)$ and $F\in\L^2(\IR^d;\IC^{d\times d})$ with $\supp(f),\supp (F) \subset \IR^d\setminus 2B$ the solutions $u\in \W^{1,2}_\sigma(\IR^d)$ and $\phi\in\Lloc^2(\IR^d)$ to \eqref{eq: Stokes system} satisfy
    \begin{align*}
        |\lambda|\int\limits_{B} |u|^2\,\d x + \int\limits_{B} |\nabla u|^2\,\d x \leq \frac{C}{r^2}\int\limits_{C_1(B)} |u|^2\,\d x + \frac{C}{|\lambda|r^2}\int\limits_{C_1(B)} |\phi - c|^2\,\d x.
    \end{align*}
    The constant $C$ depends only on $\theta,d$ and $\mu_\bullet,\mu^\bullet$.
\end{lemma}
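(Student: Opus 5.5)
We test the weak formulation of \eqref{eq: Stokes system} with $v=\eta^2 u$. Here $\eta\in\C_c^\infty(\IR^d)$ satisfies $\supp\eta\subset 2B$, $\eta\equiv1$ on $B$ and $\|\eta\|_{\L^\infty}+r\|\nabla\eta\|_{\L^\infty}\lesssim 1$. Since $\supp(f),\supp(F)\subset\IR^d\setminus 2B$, the data terms vanish against $v$. We use the pressure formulation recalled from \cite[Lem.~II.2.2.1]{Sohr}, extended by density to test functions in $\W^{1,2}$ with compact support. Because $\div u=0$, we have $\div(\eta^2u)=2\eta\nabla\eta\cdot u$, and $\int\nabla\eta\cdot u\,\eta\,\d x$ is unaffected by subtracting constants in the pressure term. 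This gives the identity
\begin{align*}
    \lambda\int \eta^2|u|^2\,\d x + \int \mu\nabla u\cdot\overline{\nabla(\eta^2u)}\,\d x + \int (\phi-c)\,2\eta\,\overline{\nabla\eta\cdot u}\,\d x = 0 .
\end{align*}
We write $\nabla(\eta^2 u)=\eta^2\nabla u+2\eta\nabla\eta\otimes u$. Using $\mu$ acting on $\eta\nabla u$, the principal term becomes $\int\mu(\eta\nabla u)\cdot\overline{\eta\nabla u}$ plus commutator terms bounded by $\mu^\bullet\int|\eta\nabla u||\nabla\eta||u|$.

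The main obstacle is coercivity. Assumption~\ref{Ass: Coefficients} is only a G\r{a}rding inequality on $\W^{1,2}$, not pointwise ellipticity, so we apply it to $\eta u\in\W^{1,2}(\IR^d;\IC^d)$. This is allowed because the inequality holds for all of $\W^{1,2}$, not just divergence-free fields. We have $\nabla(\eta u)=\eta\nabla u+\nabla\eta\otimes u$, so $\int\mu(\eta\nabla u)\cdot\overline{\eta\nabla u}$ differs from $\a(\eta u,\eta u)$ by terms of the form $\int|\eta\nabla u||\nabla\eta||u|+\int|\nabla\eta|^2|u|^2$. To combine this with $\lambda\in\S_\theta$, where $\theta$ may exceed $\pi/2$, we multiply the identity by a suitable unimodular $e^{i\psi}$, as in the proof of Lemma~\ref{lem: rotation} and in the standard argument behind Proposition~\ref{Prop: L2 resolvent bounds}. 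This is the argument of \cite[Lem.~2.2]{Tolksdorf-Caccioppoli}. The real part then satisfies
\begin{align*}
    \Re\Big(e^{i\psi}\big(\lambda\|\eta u\|_{\L^2}^2+\a(\eta u,\eta u)\big)\Big)\gtrsim |\lambda|\|\eta u\|_{\L^2}^2+\|\nabla(\eta u)\|_{\L^2}^2,
\end{align*}
with constants depending on $\theta$, $\mu_\bullet$ and $\mu^\bullet$.

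All error terms are supported where $\nabla\eta\neq0$, that is, in $C_1(B)$. The commutator terms are handled by Young's inequality: $\int|\eta\nabla u||\nabla\eta||u|\le\delta\|\eta\nabla u\|^2+C_\delta r^{-2}\|u\|_{\L^2(C_1(B))}^2$. We then absorb the $\delta$-term, using that $\|\eta\nabla u\|_{\L^2}\le\|\nabla(\eta u)\|_{\L^2}+\|\nabla\eta\otimes u\|_{\L^2}$. For the pressure term, Young's inequality weighted by $|\lambda|$ gives
\begin{align*}
    2\int|\phi-c||\eta||\nabla\eta||u|\,\d x\le \delta|\lambda|\|\eta u\|_{\L^2}^2+\frac{C_\delta}{|\lambda|r^2}\|\phi-c\|_{\L^2(C_1(B))}^2 .
\end{align*}
The first term on the right is absorbed into $|\lambda|\|\eta u\|^2$. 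This weighting is exactly where the factor $\frac{1}{|\lambda|r^2}$ in the statement comes from. Finally, since $\eta\equiv1$ on $B$, the left-hand side dominates $|\lambda|\int_B|u|^2+\int_B|\nabla u|^2$, and all constants depend only on $\theta$, $d$, $\mu_\bullet$ and $\mu^\bullet$. This yields the claim.
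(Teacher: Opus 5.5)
Your proof is correct and follows the paper's route. The paper takes the intermediate energy bound $|\lambda|\int_{2B}|u|^2\eta^2\,\d x+\int_{2B}|\nabla(u\eta)|^2\,\d x\le \frac{C}{r^2}\int_{C_1(B)}|u|^2\,\d x+\frac{C}{r}\|\phi-c\|_{\L^2(C_1(B))}\|u\eta\|_{\L^2(C_1(B))}$ directly from \cite[Lem.~5.1]{Tolksdorf-Caccioppoli}, whereas you rederive it by testing with $\eta^2u$, applying the G\r{a}rding inequality to $\eta u$ and rotating by $e^{i\psi}$; the concluding step (Young's inequality weighted by $|\lambda|$, then absorbing $\frac{|\lambda|}{2}\|u\eta\|^2_{\L^2}$) is identical to the paper's.
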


\begin{proof}
    Let $\eta\in \C_c^\infty(\IR^d)$ be as in Lemma~\ref{lem: Bogovskii lemma} with respect to the ball $B$. Then, by \cite[Lem.~5.1]{Tolksdorf-Caccioppoli} we have
    \begin{align*}
        &|\lambda| \int\limits_{2B}|u|^2\eta^2 \,\d x + \int\limits_{2B}|\nabla(u\eta)|^2\,\d x \\
        &\leq \frac{C}{r^2}\int\limits_{C_1(B)}|u|^2\,\d x\quad+ \frac{C}{r}\bigg(\int\limits_{C_1(B)}|\phi-c|^2\,\d x \bigg)^\frac{1}{2}\bigg(\int\limits_{C_1(B)} |u\eta|^2\,\d x\bigg)^\frac{1}{2},
    \end{align*}
    where $C>0$ depends only on $ d, \theta$ and $\mu_\bullet,\mu^\bullet$. Finally, by Young's inequality, we have
    \begin{align*}
        \frac{C}{r}\bigg(\int\limits_{C_1(B)}|\phi-c|^2\,\d x \bigg)^\frac{1}{2}\bigg(\int\limits_{C_1(B)} |u\eta|^2\,\d x\bigg)^\frac{1}{2}
        &\leq  \frac{C^2}{2|\lambda|r^2}\int\limits_{C_1(B)}|\phi-c|^2\,\d x + \frac{|\lambda|}{2}\int\limits_{C_1(B)} |u\eta|^2\,\d x
    \end{align*}
    such that
    \begin{align*}
        |\lambda| \int\limits_{2B}|u|^2\eta^2 \,\d x + \int\limits_{2B}|\nabla(u\eta)|^2\,\d x &\leq \frac{C}{r^2}\int\limits_{C_1(B)}|u|^2\,\d x + \frac{C^2}{2|\lambda|r^2}\int\limits_{C_1(B)}|\phi-c|^2\,\d x\\
        &\quad + \frac{|\lambda|}{2}\int\limits_{C_1(B)} |u\eta|^2\,\d x.
    \end{align*}
    Absorbing the $u\eta$ term onto the left-hand side yields the claim.
\end{proof}

\begin{proposition}[Improved $\L^2$-decay estimates]
\label{prop: imp off-diag est}
Let $\mu$ satisfy Assumption~\ref{Ass: Coefficients} and let $\omega_0$ be given by \eqref{eq: def omega}. For all $\theta \in [0,\pi-\omega_0)$ and for all $\nu \in (0, d+2)$ there exist $C_1,C_2>0$ such that for all balls $B=B(x_0,r)$, $\lambda \in \S_\theta$, $f\in \L_\sigma^2(\IR^d)$ and $F\in\L^2(\IR^d;\IC^{d\times d})$ with $\supp(f),\supp (F) \subset \IR^d\setminus 2B$ the solution $u\in \W^{1,2}_\sigma(\IR^d)$ to \eqref{eq: Stokes system} satisfies
\begin{align*}
    \int\limits_{B}|u|^2\,\d x &+  \frac{r^2}{C_1+|\lambda|r^2}\int\limits_{B}|\nabla u|^2\,\d x \\
    &\leq \frac{C_2}{|\lambda|^2r^2} \max\Big\{1, (|\lambda|r^2)^{-(d+1)}\Big\}\sum\limits_{n=0}^\infty 2^{-\nu n}\bigg(\frac{1}{|\lambda|}\int\limits_{C_n(B)}|f|^2\, \d x +\int\limits_{C_n(B)}|F|^2\,\d x\bigg).
\end{align*}
The constants $C_1,C_2$ depend only on $\theta,\nu,d$ and $\mu_\bullet,\mu^\bullet$.
\end{proposition}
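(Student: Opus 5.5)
The proof splits into the two regimes $|\lambda|r^2\leq 1$ and $|\lambda|r^2\geq 1$. The regime $|\lambda|r^2\le 1$ will follow directly from Theorem~\ref{thm: off-diag resolvent}. In the regime $|\lambda|r^2\ge1$ I would run the local Caccioppoli inequality of Lemma~\ref{lem: local Caccioppoli} and control the pressure by the non-local pressure estimate combined with Theorem~\ref{thm: off-diag resolvent}. Throughout, write
\begin{align*}
S\coloneqq\sum_{n=0}^\infty 2^{-\nu n}\bigg(\frac{1}{|\lambda|}\int_{C_n(B)}|f|^2\,\d x+\int_{C_n(B)}|F|^2\,\d x\bigg).
\end{align*}

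\emph{Case $|\lambda|r^2\le 1$.} Keep only the $k=0$ terms on the left-hand side of \eqref{eq: off-diag resolvent}. This gives
\begin{align*}
\int_B|u|^2\,\d x+\frac{r^2}{C_1+|\lambda|r^2}\int_B|\nabla u|^2\,\d x\le \frac{C_2}{|\lambda|}(|\lambda|r^2)^{-(d+1)}S.
\end{align*}
Since $\frac{1}{|\lambda|}=\frac{|\lambda|r^2}{|\lambda|^2r^2}\le\frac{1}{|\lambda|^2r^2}$ in this regime, this is already the claimed bound. Here the factor $(|\lambda|r^2)^{-1}$ in the desired prefactor is simply not needed, so the exponent $d+1$ is preserved.

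\emph{Case $|\lambda|r^2\ge 1$.} Here $\max\{1,(|\lambda|r^2)^{-(d+1)}\}=1$. I would apply Lemma~\ref{lem: local Caccioppoli} with $c=\phi_{C_1(B)}$ and divide by $|\lambda|$. Using $\frac{r^2}{C_1+|\lambda|r^2}\le\frac{1}{|\lambda|}$, the left-hand side of the proposition is then bounded by
\begin{align*}
\frac{C}{|\lambda|r^2}\int_{C_1(B)}|u|^2\,\d x+\frac{C}{|\lambda|^2r^2}\int_{C_1(B)}|\phi-\phi_{C_1(B)}|^2\,\d x .
\end{align*}
The $u$-term is handled by the $k=1$ term of \eqref{eq: off-diag resolvent}: $\int_{B_1}|u|^2\,\d x\le 2^{\nu}\frac{C_2}{|\lambda|}S$, which yields $\lesssim \frac{1}{|\lambda|^2r^2}S$.

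For the pressure I would use \eqref{eq: pressure estimate} with $k=0$. It holds for any solution, with some $\nu'\in(\nu,d+2)$, or one may simply apply \cite[Lem.~2.1]{Tolksdorf-Caccioppoli} again. This gives
\begin{align*}
\int_{C_1(B)}|\phi-\phi_{C_1(B)}|^2\,\d x\lesssim\sum_{\ell\ge0}2^{-\nu\ell}\big(\|\nabla u\|^2_{\L^2(B_\ell)}+\|F\|^2_{\L^2(B_\ell)}\big).
\end{align*}
The $F$-part is bounded by $\lesssim |\lambda|S$, exactly as in \eqref{eq: G}. For the gradient part, write
\begin{align*}
\int_{B_\ell}|\nabla u|^2\,\d x=\Big(\frac{C_1}{2^{2\ell}r^2}+|\lambda|\Big)\frac{2^{2\ell}r^2}{C_1+|\lambda|r^22^{2\ell}}\int_{B_\ell}|\nabla u|^2\,\d x,
\end{align*}
and note that the first factor is at most $C_1r^{-2}+|\lambda|\le(C_1+1)|\lambda|$ because $|\lambda|r^2\ge1$. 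Theorem~\ref{thm: off-diag resolvent} then gives
\begin{align*}
\sum_\ell 2^{-\nu\ell}\int_{B_\ell}|\nabla u|^2\,\d x\lesssim |\lambda|\cdot\frac{1}{|\lambda|}S=S.
\end{align*}
Hence the pressure contribution is at most $\frac{C}{|\lambda|^2r^2}(1+|\lambda|)S$.

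\emph{Main obstacle.} The last bound is not yet right: the $F$-part of the pressure gives $|\lambda|S/(|\lambda|^2r^2)=S/(|\lambda| r^2)$, which is too large by a factor $|\lambda|$ compared with the gradient part. This is the step where I expect the real work. What I would try first is to exploit $\supp F\subset\IR^d\setminus 2B$. Then $\|F\|^2_{\L^2(B_\ell)}$ vanishes for $\ell\le1$, and for $\ell\ge 2$ it sits on annuli at distance $\gtrsim 2^\ell r$. I would bound the $F$-part of the pressure directly by $\sum_{n\ge2}2^{-\nu n}\|F\|^2_{\L^2(C_n(B))}$. This is at most $S$ if one keeps track of the weights $2^{-\nu n}$ and does not use the crude comparison $\|F\|^2\le|\lambda|\cdot(\text{data})$.

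In fact the definition of $S$ already has $\int|F|^2$ without a $|\lambda|$ factor. So the $F$-part is $\lesssim S$, not $|\lambda|S$, and the apparent loss disappears. With that, all three contributions are $\lesssim \frac{1}{|\lambda|^2r^2}S$, which is the claim in this regime. Combining both cases proves the proposition.
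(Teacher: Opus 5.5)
Your argument is correct. It uses the same three ingredients as the paper: Lemma~\ref{lem: local Caccioppoli}, the pressure bound \eqref{eq: pressure estimate} with $k=0$, and Theorem~\ref{thm: off-diag resolvent}. What differs is the bookkeeping.

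The paper makes no case distinction. After the local Caccioppoli inequality it adds $\frac{C_1}{r^2}\int_B|u|^2\,\d x$ to both sides and divides by $|\lambda|+\frac{C_1}{r^2}$. This produces the prefactor $\frac{C+C_1}{C_1+|\lambda|r^2}\le\frac{C+C_1}{|\lambda|r^2}$ and leaves the weight $\frac{r^2}{C_1+|\lambda|r^2}$ in front of the pressure. Since $n\mapsto\frac{2^{2n}r^2}{C_1+|\lambda|r^22^{2n}}$ is increasing, this weight is dominated by the gradient weights of Theorem~\ref{thm: off-diag resolvent}. So a single chain of inequalities covers every value of $|\lambda|r^2$.

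You split into two regimes instead:
\begin{itemize}
\item For $|\lambda|r^2\le 1$ you observe that the claim is weaker than Theorem~\ref{thm: off-diag resolvent} itself, because $\frac{1}{|\lambda|}\le\frac{1}{|\lambda|^2r^2}$ there. This makes explicit that the improvement only matters when $|\lambda|r^2\geq 1$.
\item For $|\lambda|r^2\ge 1$ you divide by $|\lambda|$ and use $C_1r^{-2}+|\lambda|\le(C_1+1)|\lambda|$ to pass from the unweighted gradient sum to the weighted one.
\end{itemize}
Both routes are fine. The paper's is shorter; yours is more transparent about where the gain in $|\lambda|r^2$ comes from.

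One correction to your write-up: the ``main obstacle'' is an artifact of a slip. By \eqref{eq: G} you have $\sum_\ell 2^{-\nu\ell}\|F\|_{\L^2(B_\ell)}^2\lesssim\sum_n 2^{-\nu n}\|F\|^2_{\L^2(C_n(B))}\le S$ directly. So you should delete the intermediate claim ``$\lesssim|\lambda|S$'' and the resulting bound $\frac{C}{|\lambda|^2r^2}(1+|\lambda|)S$, not repair them. The condition $\supp(F)\subset\IR^d\setminus 2B$ plays no role in the pressure term. It is needed only to invoke Lemma~\ref{lem: local Caccioppoli}.
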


\begin{proof}
    Since $\supp (f),\supp (F)\subset \IR^d\setminus 2B$, we can apply Lemma~\ref{lem: local Caccioppoli} to obtain
    \begin{align*}
        |\lambda|\int\limits_{B}|u|^2\,\d x +  \int\limits_{B}|\nabla u|^2\,\d x\leq \frac{C}{r^2}\int\limits_{C_1(B)}|u|^2\,\d x + \frac{C}{|\lambda|r^2} \int\limits_{C_1(B)}|\phi-\phi_{C_1(B)}|^2\,\d x
    \end{align*}
    for some generic constant $C>0$. Add the term
    \begin{align*}
        \frac{C_1}{r^2}\int\limits_{B}|u|^2\,\d x
    \end{align*}
    on both sides of the inequality, where $C_1>0$ is the constant from Theorem~\ref{thm: off-diag resolvent}, to get
    \begin{align*}
        &\Big(|\lambda|+\frac{C_1}{r^2}\Big)\int\limits_{B}|u|^2\,\d x +  \int\limits_{B}|\nabla u|^2\,\d x \\
        &\leq \frac{C+C_1}{r^2}\int\limits_{2B}|u|^2\,\d x + \frac{C}{|\lambda|r^2} \int\limits_{C_1(B)}|\phi-\phi_{C_1(B)}|^2\,\d x.
    \end{align*}
    Dividing both sides by $(|\lambda|+\frac{C_1}{r^2})$ yields
    \begin{align*}
        &\int\limits_{B}|u|^2\,\d x +  \frac{r^2}{C_1+|\lambda|r^2}\int\limits_{B}|\nabla u|^2\,\d x \\
        &\leq \frac{C+C_1}{C_1+|\lambda|r^2}\int\limits_{2B}|u|^2\,\d x + \frac{C}{|\lambda|r^2} \frac{r^2}{C_1+|\lambda|r^2}\int\limits_{C_1(B)}|\phi-\phi_{C_1(B)}|^2\,\d x\\
        &\leq \frac{C+C_1}{|\lambda|r^2}\bigg(\int\limits_{2B}|u|^2\,\d x +  \frac{r^2}{C_1+|\lambda|r^2}\int\limits_{C_1(B)}|\phi-\phi_{C_1(B)}|^2\,\d x\bigg).
    \end{align*}
    Applying the pressure estimate from \eqref{eq: pressure estimate} yields
    \begin{align*}
        &\int\limits_{B}|u|^2\,\d x +  \frac{r^2}{C_1+|\lambda|r^2}\int\limits_{B}|\nabla u|^2\,\d x \\
        &\leq \frac{C+C_1}{|\lambda|r^2}\bigg(\int\limits_{2B}|u|^2\,\d x +  \frac{r^2}{C_1+|\lambda|r^2}\sum\limits_{n=0}^\infty 2^{-\nu n} \bigg(\int\limits_{B_n} |\nabla u|^2\,\d x + \int\limits_{B_n} |F|^2\,\d x\bigg) \bigg)\\
        &\leq \frac{C+C_1}{|\lambda|r^2}\bigg( \int\limits_{2B}|u|^2\,\d x +  \sum\limits_{n=0}^\infty2^{-\nu n} \frac{2^{2n}r^2}{C_1+|\lambda|r^22^{2n}}\int\limits_{B_n} |\nabla u|^2\,\d x\bigg)\\
        &\quad + \frac{C+C_1}{|\lambda|^2r^2}\sum\limits_{n=0}^\infty 2^{-\nu n}  \int\limits_{C_n(B)} |F|^2\,\d x,
    \end{align*}
    where we used \eqref{eq: G} and the monotonicity of $\frac{2^{2n}r^2}{C_1+|\lambda|r^22^{2n}}$ in $n\in\IN_0$ in the last step. Finally, using Theorem~\ref{thm: off-diag resolvent} yields
    \begin{align*}
        &\int\limits_{B}|u|^2\,\d x +  \frac{r^2}{C_1+|\lambda|r^2}\int\limits_{B}|\nabla u|^2\,\d x \\
        &\leq \frac{C_2}{|\lambda|^2r^2} \max\Big\{1, (|\lambda|r^2)^{-(d+1)}\Big\}\sum\limits_{n=0}^\infty 2^{-\nu n}\bigg(\frac{1}{|\lambda|}\int\limits_{C_n(B)}|f|^2\, \d x +\int\limits_{C_n(B)}|F|^2\,\d x\bigg)
    \end{align*}
    for some new generic constant $C_2>0$.
\end{proof}

We conclude this section by presenting the proof of Theorem~\ref{thm: L2-Lp off diag stokes resolv}. It is based on the previous decay estimates combined with Sobolev's embedding theorem.

\begin{proof}[Proof of Theorem~\ref{thm: L2-Lp off diag stokes resolv}]
    For simplicity, we present only the proof of the first assertion since the second follows by the same line of reasoning with  Proposition~\ref{prop: imp off-diag est}. Moreover, we assume $d\geq 3$. The case $d=2$ follows analogously, using the Sobolev embedding $\W^{1,2}\hookrightarrow \L^p$ for $2\leq p<\infty$. 
    
    We start by applying Theorem~\ref{thm: off-diag resolvent} to get the estimate
    \begin{align}
    \label{eq: 287}
         &\int\limits_{B}|u|^2\,\d x +  \frac{r^2}{C_1+ |\lambda| r^2 }\int\limits_{B}|\nabla u|^2\,\d x \notag\\
        &\lesssim \frac{1}{|\lambda|} \max\Big\{1, (|\lambda|r^2)^{-(d+1)}\Big\}\sum\limits_{n=0}^\infty 2^{-\nu n}\bigg(\frac{1}{|\lambda|}\int\limits_{C_n(B)}|f|^2\, \d x +\int\limits_{C_n(B)}|F|^2\,\d x\bigg).
    \end{align}
    We estimate the left-hand side by
    \begin{align*}
        \int\limits_{B}|u|^2\,\d x + \frac{r^2}{C_1+ |\lambda| r^2 } \int\limits_{B}|\nabla u|^2\,\d x &\gtrsim \frac{1}{\max\{1, |\lambda| r^2\} }\bigg(\int\limits_{B}|u|^2\,\d x +r^2\int\limits_{B}|\nabla u|^2\,\d x\bigg),
    \end{align*}
    where the implicit constant depends only on $\theta$, $\nu$, $d$ and $\mu_\bullet,\mu^\bullet$. Using Sobolev's embedding on the right-hand side, we obtain the estimate
    \begin{align*}
        \int\limits_{B}|u|^2\,\d x + \frac{r^2}{C_1+ |\lambda| r^2 } \int\limits_{B}|\nabla u|^2\,\d x&\gtrsim \frac{r^{d(1-\frac{2}{p})}}{\max\{1, |\lambda| r^2\} }\bigg(\int\limits_{B}|u|^{p}\,\d x\bigg)^\frac{2}{p}.
    \end{align*}
    Plugging this estimate into \eqref{eq: 287} leads to the bound
    \begin{align*}
        &\|u\|_{\L^{p}(B)}^2 \\
        &\lesssim \frac{1}{|\lambda|r^{d(1-\frac{2}{p})}}\max\Big\{|\lambda|r^2, (|\lambda|r^2)^{-(d+1)}\Big\}\sum\limits_{n=0}^\infty 2^{-\nu n}\bigg(\frac{1}{|\lambda|}\int\limits_{C_{n}(B)}|f|^2\, \d x +\int\limits_{C_{n}(B)}|F|^2\,\d x\bigg),
    \end{align*}
    which proves the claim.
\end{proof}

\section{Abstract principles}
\label{sec: abstract machinery}

The purpose of this section is to develop an abstract toolbox connecting uniform $\L^p$-bounds, $\L^2$-$\L^p$ bounds (or hypercontractivity), and $\L^2$-$\L^p$ decay estimates for families of operators acting on $\L^2$-spaces similar to the one for elliptic operators as discussed in \cite[Chap.~3]{Auscher-Lp}. In particular, no differential structure is used, and the exponent $2$ is not essential; everything below carries over to $\L^q$-based spaces for $q\leq p$. We fix $q=2$ to spare notation, since this is the form in which the decay estimates of Section~\ref{sec: off-diag est} are established and applied in Sections~\ref{sec: Lp-theory sg} and~\ref{sec: gradient est}.
We begin by generalizing the concept of $\L^2$-$\L^p$ decay estimates. Here, the specific decay estimates of the Stokes resolvent established in Section~\ref{sec: off-diag est} serve as our motivating example. 
We then demonstrate how these generalized estimates, in turn, imply uniform $\L^p$-boundedness.
Finally, we introduce the concept of $\L^2$-$\L^p$ bounds and establish their interpolation relations with decay estimates.

\subsection{$\L^2$-$\L^p$ decay estimates}

We start with the following definition.
\begin{definition}
\label{def: decay est L2-Lp}
    Let $\Omega\subset \IC\setminus\{0\}$ be open, $V,W\subset \L^2$ be closed subspaces and $(T(z))_{z\in\Omega}\subset \mathcal{L}(V,W)$. We say that the family $(T(z))_{z\in\Omega}$ satisfies $\L^2$-$\L^p$ decay estimates of order $\nu\geq 0$ for $p\in[2,\infty]$ if there exist $M,N\geq 0$ such that
    \begin{align*}
        \|T(z)f\|_{\L^p(B)} 
        \lesssim r^{-d(\frac{1}{2}-\frac{1}{p})}\max\Big\{\Big(\frac{r^2}{|z|}\Big)^{M}, \Big(\frac{r^2}{|z|}\Big)^{-N}\Big\}\bigg(\sum\limits_{n=0}^\infty 2^{-\nu n} \|f\|^2_{\L^2(C_{n}(B))}\bigg)^\frac{1}{2}
    \end{align*}
    holds for all $z\in\Omega$, all balls $B=B(x,r)\subset \IR^d$ and all $f\in V$.
\end{definition}

\begin{remark}
    \begin{enumerate}
        \item Compared to the decay estimates of the Stokes resolvent of Theorem~\ref{thm: off-diag resolvent}, we have formulated the definition here in terms of the operator family $((1+zA)^{-1})_{z\in \S_{\pi-\omega_0}}$ rather than $(\lambda(\lambda+A)^{-1})_{\lambda\in \S_{\pi-\omega_0}}$. This scaling also aligns with the semigroup generated by $-A$ (see Proposition~\ref{prop: off-diag semigroup} below).

        \item In applications, we have $V,W \in\{ \L^2(\IR^d;\IC^{d\times d}), \L^2_\sigma(\IR^d)\}$ in mind.
    \end{enumerate}
\end{remark}

The next result shows that $\L^2$-$\L^p$ decay estimates are a stronger notion than uniform $\L^p$-boundedness provided the decay rate is fast enough.

\begin{proposition}
\label{prop: L2-Lp off imply Lp bdd}
    Let $\Omega\subset \IC\setminus\{0\}$ be open, $V,W\subset \L^2$ be closed subspaces and $(T(z))_{z\in\Omega}\subset \cL(V,W)$ satisfy $\L^2$-$\L^p$ decay estimates of order $\nu>d$ for $p\in[2,\infty]$. 
    Then, there exists $C>0$ such that for all $z\in\Omega$ and $f\in V\cap \L^p$ we have
    \begin{align*}
        \|T(z)f\|_{\L^p} \leq C\|f\|_{\L^p}.
    \end{align*}
\end{proposition}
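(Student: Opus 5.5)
Fix $z\in\Omega$ and choose the radius $r>0$ with $r^2=|z|$. Then the factor $\max\{(r^2/|z|)^M,(r^2/|z|)^{-N}\}$ equals $1$, so $M$ and $N$ play no role. We cover $\IR^d$ by a lattice of balls $B_j=B(x_j,r)$ with $x_j\in r d^{-1/2}\IZ^d$, or any family with bounded overlap whose union is $\IR^d$. The goal is
\begin{align*}
\|T(z)f\|_{\L^p}^p\le \sum_j \|T(z)f\|_{\L^p(B_j)}^p .
\end{align*}
We treat $p<\infty$ first; the case $p=\infty$ is the same argument with suprema in place of sums.

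On each ball we apply the decay estimate, and then estimate $\|f\|_{\L^2(C_n(B_j))}$ by Hölder's inequality:
\begin{align*}
\|f\|_{\L^2(C_n(B_j))}\le |2^nB_j|^{\frac12-\frac1p}\|f\|_{\L^p(C_n(B_j))}\simeq (2^nr)^{d(\frac12-\frac1p)}\|f\|_{\L^p(C_n(B_j))}.
\end{align*}
The powers of $r$ cancel against the prefactor $r^{-d(\frac12-\frac1p)}$, which gives
\begin{align*}
\|T(z)f\|_{\L^p(B_j)}\lesssim \Big(\sum_{n\ge0}2^{-(\nu-d(1-\frac2p))n}\|f\|^2_{\L^p(C_n(B_j))}\Big)^{\frac12}.
\end{align*}
Set $\delta:=\nu-d(1-\frac2p)$. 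Since $\nu>d$, we have $\delta>2d/p$ (for $p=\infty$ this reads $\delta>0$). Next we pass from the $\ell^2$-sum to an $\ell^p$-sum. Split $2^{-\delta n}=2^{-\varepsilon n}2^{-(\delta-\varepsilon)n}$ and apply Hölder in $n$ with exponents $p/2$ and $(p/2)'$. This yields
\begin{align*}
\|T(z)f\|_{\L^p(B_j)}^p\lesssim_\varepsilon \sum_n 2^{-\frac p2(\delta-\varepsilon)n}\|f\|^p_{\L^p(C_n(B_j))}.
\end{align*}
The case $p=2$ is trivial.

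Now sum over $j$ and exchange the order of summation. The key counting fact is that a fixed point $y\in\IR^d$ lies in $C_n(B_j)\subset 2^nB_j$ for at most $\lesssim 2^{nd}$ indices $j$, by the bounded overlap of the lattice at scale $r$. Hence
\begin{align*}
\sum_j\|f\|^p_{\L^p(C_n(B_j))}\lesssim 2^{nd}\|f\|_{\L^p}^p,
\end{align*}
and therefore
\begin{align*}
\|T(z)f\|_{\L^p}^p\lesssim \sum_n 2^{(d-\frac p2(\delta-\varepsilon))n}\|f\|_{\L^p}^p.
\end{align*}
This series converges exactly when $\frac p2\delta>d$, after choosing $\varepsilon$ small. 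That condition is $\frac p2\nu-\frac{d(p-2)}{2}>d$, which is equivalent to $\nu>d$.

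For $p=\infty$ the argument is simpler. We bound $\|f\|_{\L^2(C_n(B))}\lesssim(2^nr)^{d/2}\|f\|_{\L^\infty}$, so the decay estimate gives
\begin{align*}
\|T(z)f\|_{\L^\infty(B)}\lesssim \Big(\sum_n 2^{-(\nu-d)n}\Big)^{1/2}\|f\|_{\L^\infty},
\end{align*}
which is finite since $\nu>d$. Taking the supremum over balls $B$ finishes this case.

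The main obstacle is the bookkeeping of exponents in the $\ell^2\to\ell^p$ step combined with the overlap count $2^{nd}$. One has to check that the threshold comes out as exactly $\nu>d$ and not something worse. If the Hölder split loses too much, I would instead use the pointwise inequality $\big(\sum_n a_n^2\big)^{p/2}\le\big(\sum_n a_n\big)^{p}$ together with a weighted Jensen inequality to redistribute the decay.
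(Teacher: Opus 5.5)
Your proof is correct and follows the paper's argument essentially step by step. Both cover $\IR^d$ by a lattice of balls at scale $|z|^{1/2}$, so that the $M,N$ factor is a constant, and then apply Hölder on each annulus. Both then use Hölder in $n$, giving up an arbitrarily small amount of decay (the paper's $d<\tilde\nu<\nu$ plays the role of your $\nu-\varepsilon$), and finish with the $2^{nd}$ overlap count. Your separate treatment of $p=\infty$, and your remark on $p=2$, also cover endpoint cases that the paper's write-up (which implicitly takes $2<p<\infty$) does not spell out.
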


\begin{proof}
    Fix $f\in V\cap \L^p$ and $z\in \Omega$. We cover $\IR^d$ by the balls $B^x=B(x|z|^{\frac{1}{2}}, (d|z|)^{\frac{1}{2}})$ for $x\in\IZ^d$. Using the $\L^2$-$\L^p$ decay estimates of $T(z)$ yields
    \begin{align*}
        \|T(z)f\|^p_{\L^p} &\leq  \sum\limits_{x\in\IZ^d}\|T(z)f\|^p_{\L^p(B^x)}\\
        &\lesssim  \sum\limits_{x\in\IZ^d}\bigg( |z|^{-\frac{d}{2}(1-\frac{2}{p})}\sum\limits_{n=0}^\infty 2^{-\nu n} \|f\|^2_{\L^2(C_{n}(B^x))}\bigg)^\frac{p}{2}.
    \intertext{Since $p> 2$, we can apply Hölder's inequality to get}
        &\lesssim  \sum\limits_{x\in\IZ^d}\bigg( |z|^{-\frac{d}{2}(1-\frac{2}{p})}\sum\limits_{n=0}^\infty 2^{-\nu n} |C_{n}(B^x)|^{1-\frac{2}{p}}\|f\|^2_{\L^p(C_{n}(B^x))}\bigg)^\frac{p}{2}\\
        &\simeq  \sum\limits_{x\in\IZ^d}\bigg( \sum\limits_{n=0}^\infty 2^{(d(1-\frac{2}{p})-\nu) n} \|f\|^2_{\L^p(C_{n}(B^x))}\bigg)^\frac{p}{2}.
    \intertext{Next, applying Hölder's inequality on the inner sum yields}
        &\lesssim \sum\limits_{x\in\IZ^d} \sum\limits_{n=0}^\infty 2^{\frac{p}{2}(d(1-\frac{2}{p})-\tilde \nu) n} \|f\|^p_{\L^p(C_{n}(B^x))}
    \intertext{for some $d<\tilde \nu<\nu$. Finally, Fubini's theorem followed by a counting argument implies}
        &= \sum\limits_{n=0}^\infty 2^{\frac{p}{2}(d -\tilde \nu) n - dn} \int\limits_{\IR^d}\sum\limits_{x\in\IZ^d}\mathbf{1}_{C_{n}(B^x)}(y) |f(y)|^p \, \d y\\
        &\lesssim \sum\limits_{n=0}^\infty 2^{\frac{p}{2}(d -\tilde \nu) n - dn} \int\limits_{\IR^d}2^{dn} |f(y)|^p \, \d y\\
        &\lesssim \|f\|_{\L^p}^p\sum\limits_{n=0}^\infty 2^{\frac{p}{2}(d -\tilde \nu)  n} .
    \end{align*}
    Since $d<\tilde \nu<\nu$, the value of the remaining sum is finite. Hence, we proved
    \begin{align*}
         \|T(z)f\|_{\L^p}\lesssim \|f\|_{\L^p}
    \end{align*}
    for all $f\in V\cap \L^p$ and $z\in \Omega$.
\end{proof}

\subsection{Hypercontractivity and interpolation principles}
\label{subsec: interpolation principles}

In this subsection, we introduce the notion of $\L^2$-$\L^p$ bounds, which we call hypercontractivity, following~\cite{Auscher-Lp}. This property can be regarded as a special case of $\L^2$-$\L^p$ decay estimates corresponding to the order $\nu=0$ (see Remark~\ref{rem: hypercontractivity and decay estimates}), making it a structurally weaker condition. Nevertheless, we will demonstrate how hypercontractivity interacts with decay estimates via interpolation.

\begin{definition}
\label{def: L2-Lp bounds}
    Let $\Omega\subset \IC\setminus\{0\}$ be open and $V,W\subset \L^2$ be closed subspaces. For $2\leq p\leq \infty$, we say that the family $(T(z))_{z\in\Omega}\subset \mathcal{L}(V,W)$ satisfies $\L^2$-$\L^p$ bounds if
    \begin{align*}
        \|T(z)f\|_{\L^p}\lesssim |z|^{-\frac{d}{2}(\frac{1}{2}-\frac{1}{p})}\|f\|_{\L^2}
    \end{align*}
    for every $z\in \Omega$ and $f\in V$.
\end{definition}

\begin{remark}
\label{rem: hypercontractivity and decay estimates}
In the situation of Definition~\ref{def: L2-Lp bounds}, we have
    \begin{align*}
        \|T(z)f\|_{\L^p(B)}\lesssim |z|^{-\frac{d}{2}(\frac{1}{2}-\frac{1}{p})}\|f\|_{\L^2} = r^{-d(\frac{1}{2}-\frac{1}{p})}\Big(\frac{r^2}{|z|}\Big)^{\frac{d}{2}(\frac{1}{2}-\frac{1}{p})}\bigg(\sum\limits_{n=0}^\infty \|f\|^2_{\L^2(C_n(B))}\bigg)^\frac{1}{2}
    \end{align*}
    for every $z\in \Omega$, balls $B=B(x,r)\subset \IR^d$ and $f\in V$. Hence, $\L^2$-$\L^p$ bounds imply $\L^2$-$\L^p$ decay estimates of order $\nu =0$ with $M=\frac{d}{2}(\frac{1}{2}-\frac{1}{p})$ and $N=0$.
\end{remark}

To establish an interpolation relation between decay estimates and hypercontractivity, we define the following auxiliary function spaces.

\begin{definition}
    For $B=B(x,r)\subset \IR^d$, $0\leq \theta\leq 1$ and $\nu\geq 0$ we define
    \begin{align*}
        X^\nu_\theta(B) \coloneqq \Big\{ f\in\Llocs^2(\IR^d) :  \|f\|_{X^\nu_\theta(B) }\coloneqq \bigg(\sum\limits_{n=0}^\infty 2^{-\nu(1-\theta) n}\|f\|^2_{\L^2(C_n(B))}\bigg)^\frac{1}{2}<\infty\Big\} 
    \end{align*}
    and
    \begin{align*}
        Y^\nu_\theta(B)  \coloneqq \Big\{ F\in\Lloc^2(\IR^d;\IC^{d\times d}) :\|F\|_{Y^\nu_\theta(B) }\coloneqq \bigg(\sum\limits_{n=0}^\infty 2^{-\nu(1-\theta) n}\|F\|^2_{\L^2(C_n(B))}\bigg)^\frac{1}{2}<\infty\Big\}.
    \end{align*}
\end{definition}
This notation allows us to formulate $\L^2$-$\L^p$ decay estimates from Definition~\ref{def: decay est L2-Lp} and $\L^2$-$\L^p$ bounds from Definition~\ref{def: L2-Lp bounds} on a common scale of spaces.

\begin{lemma}
\label{lem: facts of X and Y}
    For $B=B(x,r)\subset \IR^d$, $0\leq \theta\leq 1$ and $\nu\geq 0$, the spaces $X^\nu_\theta(B) $ and $Y^\nu_\theta(B) $ are Banach spaces. Moreover, we have the continuous and dense embeddings
    \begin{align*}
        \L^2_\sigma(\IR^d) \hookrightarrow X^\nu_\theta(B)  \quad \text{and} \quad \L^2(\IR^d;\IC^{d\times d}) \hookrightarrow Y^\nu_\theta(B) .
    \end{align*}
\end{lemma}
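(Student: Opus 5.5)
The plan is to identify $X^\nu_\theta(B)$ with a closed subspace of a weighted $\L^2$-space. Set $s\coloneqq\nu(1-\theta)\geq 0$ and define the weight $w\coloneqq\sum_{n\geq 0}2^{-sn}\mathbf{1}_{C_n(B)}$. This weight is strictly positive and bounded by $1$, and since the annuli $C_n(B)$ partition $\IR^d$ up to a null set, we get $\|f\|_{X^\nu_\theta(B)}=\|f\|_{\L^2(w\,\d x)}$. Hence $Y^\nu_\theta(B)=\L^2(\IR^d,w\,\d x;\IC^{d\times d})$ (every such $F$ is locally square integrable because $w$ is bounded below on compacts), which is a Banach space. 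For $X^\nu_\theta(B)$ we need in addition that the divergence-free condition survives limits. If $f_k\to f$ in $\L^2(w\,\d x)$, then $f_k\to f$ in $\L^2(K)$ for every compact $K$, because $w\geq 2^{-sN}$ on $B_N$. Therefore $\langle\div f_k,\varphi\rangle\to\langle \div f,\varphi\rangle$ for every test function $\varphi$, so $\div f=0$ and $X^\nu_\theta(B)$ is a closed subspace of a Hilbert space.

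The continuity of the embeddings is immediate from $w\leq 1$, which gives $\|f\|_{X^\nu_\theta(B)}\leq\|f\|_{\L^2}$, and likewise for $Y$.

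For density in $Y^\nu_\theta(B)$, truncation suffices: given $F$, the functions $\mathbf{1}_{B_N}F$ lie in $\L^2$ and converge to $F$ by dominated convergence, because the tail $\sum_{n>N}2^{-sn}\|F\|^2_{\L^2(C_n(B))}$ tends to $0$.

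The main obstacle is density in $X^\nu_\theta(B)$, since sharp truncation destroys the divergence-free condition. Here I would use Lemma~\ref{lem: Bogovskii lemma} applied to the ball $B_N$ in place of $B$. For $f\in X^\nu_\theta(B)\subset\Llocs^2$ it yields $f_N\coloneqq f_{2^{N+1}r}\in\L^2_\sigma(\IR^d)$ with $f_N=f$ on $B_N$, $\supp f_N\subset B_{N+1}$, and the bound $\|f_N\|_{\L^2(C_{N+1}(B))}\lesssim\|f\|_{\L^2(B_{N+1})}$. This bound is uniform in $N$, since the constants in Lemma~\ref{lem: Bogovskii lemma} are scale invariant. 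It follows that
\begin{align*}
\|f-f_N\|^2_{X^\nu_\theta(B)}\lesssim \sum_{n\geq N+1}2^{-sn}\|f\|^2_{\L^2(C_n(B))}+2^{-s(N+1)}\|f\|^2_{\L^2(C_{N+1}(B))},
\end{align*}
where the second term collects the contribution of the Bogovski\u{\i} correction on $C_{N+1}(B)$ together with $f$ itself there. Both terms are tails of the convergent series defining $\|f\|_{X^\nu_\theta(B)}$, so $f_N\to f$. A further subtlety is that the correction bound involves $\|f\|_{\L^2(C_{N+1}(B))}$ (the annulus of $B_N$), not all of $B_{N+1}$, which is exactly what \eqref{ineq: Bogovskii} provides. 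This is why the tail estimate closes.
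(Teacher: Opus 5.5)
Your proposal is correct and follows the paper's proof essentially step by step. The steps match: identification with a weighted $\L^2$-space (your exact dyadic step weight versus the paper's equivalent weight $(1+|\cdot-x|/r)^{-\nu(1-\theta)}$), continuity from the weight being at most $1$, sharp truncation for $Y^\nu_\theta(B)$, and the Bogovski\u{\i} truncation of Lemma~\ref{lem: Bogovskii lemma} on $B_N$ with the annulus bound \eqref{ineq: Bogovskii} for $X^\nu_\theta(B)$. You additionally check that the divergence-free constraint survives limits via $\L^2_{\loc}$-convergence, which the paper leaves implicit, and you rightly rely in the end on the annulus bound rather than the cruder $\|f_N\|_{\L^2(C_{N+1}(B))}\lesssim\|f\|_{\L^2(B_{N+1})}$ you first state, since the latter would not produce a tail.
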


\begin{proof}
    First, observe that the norms $\|\cdot \|_{X_\theta^\nu(B) }$ and $\|\cdot \|_{Y_\theta^\nu(B) }$ are equivalent to the weighted $\L^2$-norm $\|\cdot (1+|\cdot-x|/r)^{-\frac{\nu(1-\theta)}{2}}\|_{\L^2}$. Thus, the completeness of $X_\theta^\nu$ and $Y_\theta^\nu$ follows from the completeness of weighted $\L^2$-spaces. Moreover, we see that
    \begin{align*}
        \|f \|_{X_\theta^\nu(B) }\leq  \|f\|_{\L^2} \quad \text{and}\quad \|F \|_{Y_\theta^\nu(B) }\leq \|F\|_{\L^2}.
    \end{align*}
    Thus, the canonical embeddings $\L^2_\sigma(\IR^d) \hookrightarrow X^\nu_\theta(B)  $ and $\L^2(\IR^d;\IC^{d\times d}) \hookrightarrow Y^\nu_\theta(B)  $ are continuous. Finally, for $Y_\theta^\nu(B) $, fix $F\in Y_\theta^\nu(B) $ and define $F_k\coloneqq \mathbf{1}_{2^kB} F\in \L^2(\IR^d;\IC^{d\times d})$. Then, we have
    \begin{align*}
        \|F-F_k\|_{Y_\theta^\nu(B) } &= \bigg(\sum\limits_{n=0}^\infty 2^{-\nu(1-\theta) n}\|\mathbf{1}_{\IR^d\setminus 2^kB}F\|^2_{\L^2(C_n(B))}\bigg)^\frac{1}{2}\\
        &=\bigg(\sum\limits_{n=k+1}^\infty 2^{-\nu(1-\theta) n}\|F\|^2_{\L^2(C_n(B))}\bigg)^\frac{1}{2}\to 0
    \end{align*}
    as $k\to \infty$ since $F\in Y_\theta^\nu(B) $. For $f\in X_\theta^\nu(B) $ we define $f_k \coloneqq f_{2^{k+1}r} \in\L^2_\sigma(\IR^d)$, where 
    \begin{align*}
        f_{2^{k+1}r} =  f_{2(2^{k}r)}  = \eta f - \cB_{C_1(2^kB)}(\nabla \eta \cdot f)
    \end{align*}
    is the divergence-free truncation of $f$ on the ball $2^{k}B$ from Lemma~\ref{lem: Bogovskii lemma} with $\eta \in \C_c^\infty(2^{k+1}B)$ satisfying $\eta = 1$ on $2^kB$ and $\|\eta\|_{\L^\infty} + 2^kr\|\nabla \eta\|_{\L^\infty}\lesssim 1$. Using \eqref{ineq: Bogovskii} yields the bound
    \begin{align*}
          \|f-f_k\|_{X_\theta^\nu(B)  } &=  \bigg(\sum\limits_{n=k}^\infty 2^{-\nu(1-\theta) n}\|(1-\eta)f+\cB_{C_1(2^kB)}(\nabla \eta \cdot f) \|^2_{\L^2(C_n(B))} \bigg)^\frac{1}{2} \\
          &\lesssim \bigg(\sum\limits_{n=k}^\infty 2^{-\nu(1-\theta) n}\|f \|^2_{\L^2(C_n(B))}\bigg)^\frac{1}{2}\to 0
    \end{align*}
    as $k\to \infty$ since $f\in X_\theta^\nu(B) $. This finishes the proof.
\end{proof}

Our goal is to show that $(X^\nu_\theta(B) )_{0\leq \theta\leq 1}$ and $(Y^\nu_\theta(B) )_{0\leq \theta\leq 1}$ form real interpolation scales, thereby enabling us to interpolate decay estimates and hypercontractivity. 
To this end, we provide a brief overview of real interpolation theory to fix the relevant notation. For a more comprehensive treatment, we refer the reader to the monograph \cite{Lunardi}.

Let $(X_0,X_1)$ be an interpolation couple, that is, a pair of Banach spaces $X_0$ and $X_1$ that are continuously embedded into a Hausdorff topological vector space $Z$. Then we define the Banach space $X_0+X_1 = \{x_0+x_1 \in Z: x_0\in X_0, x_1\in X_1\}$  equipped with the norm
\begin{align*}
    \|x\|_{X_0+X_1} \coloneqq \inf\limits_{\substack{x_0\in X_0, x_1\in X_1\\ x=x_0+x_1}} \|x_0\|_{X_0} + \|x_1\|_{X_1}.
\end{align*}
For every $x\in X_0 + X_1$ and $t>0$ we define the $K$-functional
\begin{align*}
    K(x,t,X_0,X_1) \coloneqq \inf\limits_{\substack{x_0\in X_0, x_1\in X_1\\ x=x_0+x_1}} \|x_0\|_{X_0} + t\|x_1\|_{X_1}.
\end{align*}
Finally, for $0<\theta<1$ and $1\leq q< \infty$, we define the real interpolation space $(X_0,X_1)_{\theta,q}$ as the set of all $x\in X_0+X_1$ such that
\begin{align*}
    \|x\|_{(X_0,X_1)_{\theta,q}} \coloneqq \bigg(\int\limits_0^\infty t^{-\theta q}  \big(K(x,t,X_0,X_1)\big)^q \, \frac{\d t}{t}\bigg)^\frac{1}{q}<\infty.
\end{align*}
Moreover, we define $(X_0,X_1)_{0,q} \coloneqq X_0$ and $(X_0,X_1)_{1,q} \coloneqq X_1$. It is known that $(X_0,X_1)_{\theta,q}$ is a Banach space.

\begin{lemma}
\label{lem: interpolation result}
    Let $B=B(x,r)\subset \IR^d$, $0\leq \theta\leq 1$ and $\nu\geq 0$. Then, we have
    \begin{align*}
        X^\nu_\theta (B) = (X^\nu_0(B) ,X^\nu_1(B) )_{\theta,2}  \quad \text{and} \quad Y^\nu_\theta(B)  = (Y^\nu_0(B) ,Y^\nu_1(B) )_{\theta,2} 
    \end{align*}
    with equivalence of norms. The implicit constants are independent of the underlying ball.
\end{lemma}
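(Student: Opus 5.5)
The plan is to identify $X^\nu_\theta(B)$ with a weighted $\L^2$-space and invoke the Stein--Weiss interpolation theorem for weighted $\L^2$-spaces under the real method. By translation and dilation, which are isometric up to the factor $r^{d/2}$ applied to all norms simultaneously, we may assume $B=B(0,1)$; the dilation commutes with forming $K$-functionals. This is why the constants will not depend on the ball. Set $w(y)\coloneqq 1+|y|$. On $C_n(B)$ we have $w\simeq 2^n$ with constants depending only on $d$. Hence
\begin{align*}
\|f\|_{X^\nu_\theta(B)}\simeq \big\|w^{-\nu(1-\theta)/2}f\big\|_{\L^2}.
\end{align*}
So $X^\nu_\theta(B)$ is the divergence-free subspace of $\L^2(w_\theta)$, where $w_\theta\coloneqq w^{-\nu(1-\theta)}=w_0^{1-\theta}w_1^{\theta}$ with $w_1\equiv 1$. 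The same identification holds for $Y^\nu_\theta(B)$ without any divergence constraint.

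For $Y$ the claim is then the classical statement $(\L^2(w_0),\L^2(w_1))_{\theta,2}=\L^2(w_0^{1-\theta}w_1^\theta)$ with equivalent norms. I would give the short direct proof rather than cite it. The $K$-functional of the couple $(\L^2(w_0),\L^2)$ is computed pointwise: the optimal splitting is $F=F\mathbf{1}_{\{t^2\ge w_0\}}+F\mathbf{1}_{\{t^2<w_0\}}$, or equivalently one uses the $K_2$-variant
\begin{align*}
K_2(F,t)^2=\int |F|^2\,\frac{w_0t^2}{w_0+t^2}\,\d y .
\end{align*}
Then Fubini gives
\begin{align*}
\int_0^\infty t^{-2\theta}K_2(F,t)^2\,\frac{\d t}{t}=c_\theta\int |F|^2 w_0^{1-\theta}\,\d y ,
\end{align*}
via the substitution $t^2=w_0 s$. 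This holds for $0<\theta<1$, and the endpoints are true by definition.

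For $X$ the difficulty is the divergence-free constraint: $X^\nu_\theta$ is a subspace, and interpolation of subspaces is not automatic. The inclusion $(X_0,X_1)_{\theta,2}\hookrightarrow X_\theta$ is immediate, because any decomposition in the $X$-couple is admissible in the larger unconstrained couple, so the $K$-functional only grows. For the reverse inclusion I would show that $X_j$ is complemented in the unconstrained weighted space $\tilde X_j\coloneqq\L^2(w_j;\IC^d)$ by a single projection $Q$ bounded on both $\tilde X_0$ and $\tilde X_1$ and equal to the identity on divergence-free fields. Given such a $Q$, any decomposition $f=g_0+g_1$ in $\tilde X_0+\tilde X_1$ yields $f=Qg_0+Qg_1$ with divergence-free pieces, so $K(f,t;X_0,X_1)\lesssim K(f,t;\tilde X_0,\tilde X_1)$ and we are done.

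The natural candidate is the Leray projection $\IP$. It is bounded on $\L^2=\tilde X_1$. On $\tilde X_0=\L^2((1+|y|)^{-\nu})$ it is a Calderón--Zygmund operator, so it is bounded exactly when the weight lies in $A_2$. The weight $|y|^{-\nu}$ is in $A_2$ iff $\nu<d$, and $(1+|y|)^{-\nu}$ is in $A_2$ under the same condition. This is the main obstacle, since the later application needs $\nu>d$. What I would try first is to work only with $\L^2$-data: by the density in Lemma~\ref{lem: facts of X and Y} it suffices to estimate $K$ for $f\in\L^2_\sigma$. For such $f$, I would construct near-optimal divergence-free splittings by hand. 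Take the pointwise splitting at the level sets of $w_0$, which are the balls $2^kB$, and make the pieces divergence-free with the truncation $f_{2^{k+1}r}$ of Lemma~\ref{lem: Bogovskii lemma}. This means choosing $g_1=f_{2^{k+1}r}$ with $2^{k}\approx t^{-2/\nu}$ and $g_0=f-g_1$. The estimate \eqref{ineq: Bogovskii} confines the Bogovski\u{\i} error to one annulus, where the weights are comparable to the cutoff level. This gives
\begin{align*}
K(f,t;X_0,X_1)\lesssim \Big(\sum_n \min\{2^{-\nu n},t^2\}\|f\|^2_{\L^2(C_n(B))}\Big)^{1/2},
\end{align*}
which is exactly the discretized unconstrained $K$-functional. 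The Fubini computation above then finishes both inclusions with constants depending only on $d,\nu,\theta$.
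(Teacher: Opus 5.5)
Your proposal is correct and is essentially the paper's proof. The nontrivial inclusion $X^\nu_\theta(B)\subset (X^\nu_0(B),X^\nu_1(B))_{\theta,2}$ is obtained exactly as you describe, by splitting $f=(f-f_{2^{k+1}r})+f_{2^{k+1}r}$ with the Bogovski\u{\i} truncation for $\tau\in[2^{-\frac{\nu}{2}(k+1)},2^{-\frac{\nu}{2}k}]$; the paper phrases this with the equivalent quadratic $L$-method functional instead of $K$. The reverse inclusion is the same annulus-wise comparison with unconstrained splittings via $\min\{2^{-\nu n},\tau^2\}$.

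You should make explicit that for $t\geq 1$ you must use the trivial splitting $g_0=f$, $g_1=0$, since the choice $2^k\approx t^{-2/\nu}$ no longer makes sense there, and that the case $\nu=0$ is trivial. Your density reduction to $\L^2_\sigma(\IR^d)$ is harmless but unnecessary, since Lemma~\ref{lem: Bogovskii lemma} applies to all $f\in\Llocs^2(\IR^d)$.
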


\begin{remark}
    One might be tempted to prove the result first for the $Y $-spaces and deduce the corresponding result for the $X$-spaces by a retraction-coretraction argument using the Leray projection $\IP$. However, as we saw in the proof of Lemma~\ref{lem: facts of X and Y}, the spaces are equivalent to weighted $\L^2$-spaces with weight $(1+|\cdot-x|/r)^{-\nu(1-\theta)}$. Because $\IP$ is a Calderón--Zygmund operator, its boundedness on the weighted $\L^2$-space above is only guaranteed if the weight belongs to the Muckenhoupt class $A_2$, which is only true if $-d<\nu(1-\theta)<d$, see for example \cite[Chap.~7]{Grafakos}. Consequently, if $\nu(1-\theta) > d$, the projection $\IP$ might fail to act as a bounded operator on these spaces because it does not preserve the required decay rate.
\end{remark}

\begin{proof}[Proof of Lemma~\ref{lem: interpolation result}]
    We only provide a proof for the $X$-spaces since the analogous result for $Y$-spaces follows by the same reasoning or classical interpolation theorems for weighted $\L^2$-spaces. Moreover, the claim holds in the endpoints $\theta=0,1$ by definition; thus we assume $\theta\in(0,1)$. Furthermore, for $\nu =0$ the spaces coincide with $\L^2_\sigma(\IR^d)$ and the claim is trivial; thus we assume $\nu>0$. 
    Now, we use the so-called $L$-method for real interpolation spaces, which is equivalent to the above construction, see \cite[Chap.~1]{Triebel}. In particular, we choose $p_0 = p_1 = 2$, the parameter $\theta $ and $q=1$ in \cite[Sec.~1.4.2]{Triebel} to get the equivalent description of the interpolation norm
    \begin{align}
    \label{eq: int 0}
        \|f\|^2_{(X^\nu_0(B) ,X^\nu_1(B) )_{\theta,2}} &\simeq  \int\limits_{0}^\infty \tau^{-2\theta}\inf\limits_{f=f_0+f_1} \sum\limits_{n=0}^\infty 2^{-\nu n}\|f_0\|^2_{\L^2(C_n(B))}+ \tau^2\|f_1\|^2_{\L^2(C_n(B))}\,\frac{\d \tau}{\tau}.
    \end{align}
    From here, we divide the proof into two steps.\\

    \noindent \textbf{Step 1: $X^\nu_\theta(B)  \subset (X^\nu_0(B) ,X^\nu_1(B) )_{\theta,2} $}.
    We split the integral in \eqref{eq: int 0} into
    \begin{align}
    \label{eq: split int}
        &\int\limits_{0}^1 \tau^{-2\theta}\inf\limits_{f=f_0+f_1} \sum\limits_{n=0}^\infty 2^{-\nu n}\|f_0\|^2_{\L^2(C_n(B))}+ \tau ^2\|f_1\|^2_{\L^2(C_n(B))}\,\frac{\d \tau}{\tau} \nonumber\\
        &+ \int\limits_{1}^\infty \tau^{-2\theta}\inf\limits_{f=f_0+f_1} \sum\limits_{n=0}^\infty 2^{-\nu n}\|f_0\|^2_{\L^2(C_n(B))}+ \tau ^2\|f_1\|^2_{\L^2(C_n(B))}\,\frac{\d \tau}{\tau}
    \end{align}
    and estimate both parts separately. For the second integral, we choose the decomposition $f_0 = f$ and $f_1=0$ such that
    \begin{align}
    \label{eq: int 3}
        &\int\limits_{1}^\infty \tau^{-2\theta}\inf\limits_{f=f_0+f_1} \sum\limits_{n=0}^\infty 2^{-\nu n}\|f_0\|^2_{\L^2(C_n(B))}+ \tau ^2\|f_1\|^2_{\L^2(C_n(B))}\,\frac{\d \tau}{\tau} \notag\\
        &\leq \int\limits_{1}^\infty \tau^{-2\theta} \sum\limits_{n=0}^\infty 2^{-\nu n}\|f\|^2_{\L^2(C_n(B))}\,\frac{\d \tau}{\tau} \\
        &\lesssim\sum\limits_{n=0}^\infty  2^{ -\nu(1-\theta)n}\|f\|^2_{\L^2(C_n(B))}.\notag
    \end{align}
    For the first integral in \eqref{eq: split int}, we divide the integral into
    \begin{align}
    \label{eq: int 2}
        &\int\limits_{0}^1 \tau^{-2\theta}\inf\limits_{f=f_0+f_1} \sum\limits_{n=0}^\infty 2^{-\nu n}\|f_0\|^2_{\L^2(C_n(B))}+ \tau ^2\|f_1\|^2_{\L^2(C_n(B))}\,\frac{\d \tau}{\tau}\nonumber\\
        &= \sum\limits_{k=0}^\infty \int\limits_{2^{-\frac{\nu}{2} (k+1)}}^{2^{-\frac{\nu}{2} k}} \tau^{-2\theta}\inf\limits_{f=f_0+f_1} \sum\limits_{n=0}^\infty 2^{-\nu n}\|f_0\|^2_{\L^2(C_n(B))}+ \tau ^2\|f_1\|^2_{\L^2(C_n(B))}\,\frac{\d \tau}{\tau}.
    \end{align}
    Now, for each $k\in\IN_0$, we choose the decomposition $f_1 = f_{2^{k+1}r}$ and $f_0 = f-f_{2^{k+1}r}$, where
    \begin{align*}
        f_{2^{k+1}r} = \eta f - \cB_{C_1(B_k)}(\nabla\eta\cdot f)
    \end{align*}
    from Lemma~\ref{lem: Bogovskii lemma} associated to the ball $B_k = 2^{k}B$. Since we have
    \begin{align*}
        &\supp(f_{2^{k+1}r}) \subset \overline{B(x,2^{k+1}r)},\\
        &\supp(f-f_{2^{k+1}r})\subset \IR^d\setminus  B(x,2^{k}r),
    \end{align*}
    and 
    \begin{align*}
        \|f_{2^{k+1}r}\|_{\L^2(C_n(B))}+\|f-f_{2^{k+1}r}\|_{\L^2(C_n(B))}\lesssim\|f\|_{\L^2(C_n(B))}
    \end{align*}
    for all $n\in\IN_0$, it follows
    \begin{align*}
        &\inf\limits_{f=f_0+f_1} \sum\limits_{n=0}^\infty 2^{-\nu n}\|f_0\|^2_{\L^2(C_n(B))}+ \tau ^2\|f_1\|^2_{\L^2(C_n(B))}\\
        &\leq \sum\limits_{n=0}^{k+1} \tau ^2\|f_{2^{k+1}r}\|^2_{\L^2(C_n(B))} +  \sum\limits_{n=k+1}^\infty 2^{-\nu n}\|f-f_{2^{k+1}r}\|^2_{\L^2(C_n(B))}\\
        &\lesssim \sum\limits_{n=0}^{k+1} \tau ^2\|f\|^2_{\L^2(C_n(B))} +  \sum\limits_{n=k+1}^\infty 2^{-\nu n}\|f\|^2_{\L^2(C_n(B))}.
    \end{align*}
    Plugging this estimate into \eqref{eq: int 2} yields
    \begin{align*}
        &\int\limits_{0}^1 \tau^{-2\theta}\inf\limits_{f=f_0+f_1} \sum\limits_{n=0}^\infty 2^{-\nu n}\|f_0\|^2_{\L^2(C_n(B))}+ \tau ^2\|f_1\|^2_{\L^2(C_n(B))}\,\frac{\d \tau}{\tau}\\
        &\lesssim\sum\limits_{k=0}^\infty \int\limits_{2^{-\frac{\nu}{2} (k+1)}}^{2^{-\frac{\nu}{2} k}}  \tau^{-2\theta}\sum\limits_{n=0}^{k+1} \tau ^2\|f\|^2_{\L^2(C_n(B))} +  \sum\limits_{n=k+1}^\infty 2^{-\nu n}\|f\|^2_{\L^2(C_n(B))}\,\frac{\d \tau}{\tau}\\
        &=\sum\limits_{k=0}^\infty\sum\limits_{n=0}^{k+1} \int\limits_{2^{-\frac{\nu}{2} (k+1)}}^{2^{-\frac{\nu}{2} k}} \tau^{1-2\theta}\,\d \tau \|f\|^2_{\L^2(C_n(B))} +  \sum\limits_{k=0}^\infty\sum\limits_{n=k+1}^{\infty} 2^{-\nu n}\int\limits_{2^{-\frac{\nu}{2} (k+1)}}^{2^{-\frac{\nu}{2} k}} \tau^{-1-2\theta}\,\d \tau \|f\|^2_{\L^2(C_n(B))}\\
        &\lesssim\sum\limits_{k=0}^\infty\sum\limits_{n=0}^{k+1} 2^{-\nu (1-\theta)k}\|f\|^2_{\L^2(C_n(B))} +  \sum\limits_{k=0}^\infty  \sum\limits_{n=k+1}^{\infty} 2^{-\nu n}2^{\nu \theta (k+1)} \|f\|^2_{\L^2(C_n(B))}.
    \intertext{Using Fubini's theorem and the structure of the coefficients $2^{-\nu k}$, we further estimate}
        &=\sum\limits_{n=0}^\infty\sum\limits_{k=\max\{0,n-1\}}^{\infty} 2^{-\nu(1-\theta)k}\|f\|^2_{\L^2(C_n(B))} + \sum\limits_{n=1}^\infty 2^{-\nu n} \sum\limits_{k=0}^{n-1} 2^{\nu \theta(k+1)} \|f\|^2_{\L^2(C_n(B))}\\
        &\lesssim\sum\limits_{n=0}^\infty 2^{-\nu (1-\theta)n}\|f\|^2_{\L^2(C_n(B))} + \sum\limits_{n=0}^\infty 2^{-\nu n}  2^{\nu\theta  n} \|f\|^2_{\L^2(C_n(B))}\\
        &= \sum\limits_{n=0}^\infty 2^{-\nu (1-\theta)n}\|f\|^2_{\L^2(C_n(B))}.
    \end{align*}
    Combining this estimate with \eqref{eq: int 3} yields
    \begin{align*}
        \int\limits_{0}^\infty \tau^{-2\theta}\inf\limits_{f=f_0+f_1} \sum\limits_{n=0}^\infty 2^{-\nu n}\|f_0\|^2_{\L^2(C_n(B))}+ \tau ^2\|f_1\|^2_{\L^2(C_n(B))}\,\frac{\d \tau}{\tau}\lesssim \sum\limits_{n=0}^\infty 2^{-\nu(1-\theta)n}\|f\|^2_{\L^2(C_n(B))}.
    \end{align*}
    Finally, using this estimate in \eqref{eq: int 0}, we conclude
    \begin{align*}
         \|f\|^2_{(X^\nu_0(B),X^\nu_1(B))_{\theta,2}}&\lesssim \sum\limits_{n=0}^\infty  2^{-\nu(1-\theta)n}\|f\|^2_{\L^2(C_n(B))}
    \end{align*}
    where the implicit constant depends only on $d,\theta$ and $\nu$.\\

    \noindent \textbf{Step 2: $X^\nu_\theta(B)  \supset (X^\nu_0(B) ,X^\nu_1(B) )_{\theta,2} $}.
    We observe that
    \begin{align}
    \label{eq: 1}
        \|f\|^2_{X_\theta^\nu(B)} &=\sum\limits_{n=0}^\infty  2^{-\nu (1-\theta)n}\|f\|^2_{\L^2(C_n(B))} \nonumber\\
        &\simeq \sum\limits_{n=0}^\infty \int\limits_{0}^{2^{-\frac{\nu}{2} n}} \tau^{-2\theta} \min\{2^{-\nu n},\tau ^2\}\|f\|^2_{\L^2(C_n(B))}\,\frac{\d \tau}{\tau}\\
        &\leq \int\limits_{0}^{\infty} \tau^{-2\theta} \sum\limits_{n=0}^\infty\min\{2^{-\nu n},\tau ^2\}\|f\|^2_{\L^2(C_n(B))}\,\frac{\d \tau}{\tau}.\nonumber
    \end{align}
    Since we have
    \begin{align*}
        \|f\|_{\L^2(C_n(B))}^2 \lesssim \|f_0\|^2_{\L^2(C_n(B))} + \|f_1\|^2_{\L^2(C_n(B))}
    \end{align*}
    for all $n\in\IN_0$, $f_0\in X^\nu_0(B) $ and $f_1 \in  X^\nu_1(B) $ with $f= f_0 + f_1$, we obtain
    \begin{align*}
        \|f\|^2_{\L^2(C_n(B))}\lesssim  \inf\limits_{f=f_0+f_1}\|f_0\|_{\L^2(C_n(B))}^2 + \|f_1\|_{\L^2(C_n(B))}^2.
    \end{align*}
    Plugging this estimate into \eqref{eq: 1}, we conclude
    \begin{align*}
        \|f\|^2_{X^\nu_\theta(B) } &\lesssim  \int\limits_{0}^{\infty} \tau^{-2\theta} \sum\limits_{n=0}^\infty\min\{2^{-\nu n},\tau ^2\}\inf\limits_{f=f_0+f_1}\|f_0\|_{\L^2(C_n(B))}^2 + \|f_1\|_{\L^2(C_n(B))}^2\,\frac{\d \tau}{\tau}\\
        &\leq  \int\limits_{0}^{\infty} \tau^{-2\theta} \inf\limits_{f=f_0+f_1}\sum\limits_{n=0}^\infty 2^{-\nu n}\|f_0\|^2_{\L^2(C_n(B))} + \tau ^2\|f_1\|^2_{\L^2(C_n(B))}\,\frac{\d \tau}{\tau}\\
        &= \int\limits_{0}^\infty \tau^{-2\theta}\inf\limits_{f=f_0+f_1} \|f_0\|^2_{X^\nu_0(B)} + \tau^2\|f_1\|^2_{X^\nu_1(B)}\,\frac{\d \tau}{\tau}\\
        &\simeq \|f\|^2_{(X^\nu_0(B) ,X^\nu_1(B) )_{\theta,2}},
    \end{align*}
    where we used \eqref{eq: int 0} in the last step. This proves the claim.
\end{proof}

Using a reiteration theorem for real interpolation, see for example \cite[Cor.~1.24]{Lunardi}, we obtain the following result.

\begin{lemma}
\label{lem: reiterated int lem}
     Let $B=B(x,r)\subset \IR^d$, $0\leq \theta_0,\theta_1,\theta\leq 1$ and $\nu\geq 0$. Then, we have
    \begin{align*}
        X^\nu_{(1-\theta)\theta_0 + \theta \theta_1}(B)  = (X^\nu_{\theta_0}(B) ,X^\nu_{\theta_1}(B) )_{\theta,2}   \quad \text{and} \quad Y^\nu_{(1-\theta)\theta_0 + \theta \theta_1} (B) = (Y^\nu_{\theta_0}(B) ,Y^\nu_{\theta_1}(B) )_{\theta,2}
    \end{align*}
    with equivalence of norms. The implicit constants are independent of the underlying ball.
\end{lemma}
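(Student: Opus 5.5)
The plan is to derive Lemma~\ref{lem: reiterated int lem} from Lemma~\ref{lem: interpolation result} via the reiteration theorem \cite[Cor.~1.24]{Lunardi}. We fix the ball $B$ and $\nu\geq 0$ and regard $(X^\nu_0(B),X^\nu_1(B))$ as an interpolation couple. Both spaces embed continuously into $\Lloc^2$, which serves as the ambient Hausdorff space; the $Y$-spaces are handled identically.

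First I would settle the degenerate cases. If $\theta\in\{0,1\}$, the identity holds by the convention $(Z_0,Z_1)_{0,2}=Z_0$ and $(Z_0,Z_1)_{1,2}=Z_1$. If $\theta_0=\theta_1$, both sides equal $X^\nu_{\theta_0}(B)$, since real interpolation of a space with itself returns the same space with equivalent norms. So I may assume $\theta\in(0,1)$ and $\theta_0\neq\theta_1$.

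The key input of the reiteration theorem is that each $X^\nu_{\theta_i}(B)$ belongs to the class $J_{\theta_i}\cap K_{\theta_i}$ relative to the couple $(X^\nu_0(B),X^\nu_1(B))$. For $\theta_i\in(0,1)$ this holds because $X^\nu_{\theta_i}(B)=(X^\nu_0(B),X^\nu_1(B))_{\theta_i,2}$ by Lemma~\ref{lem: interpolation result}, and every real interpolation space $(\cdot,\cdot)_{\theta_i,q}$ lies in this class. For the endpoints $\theta_i\in\{0,1\}$ the space is $X^\nu_0(B)$ or $X^\nu_1(B)$ itself, which lies in $J_0\cap K_0$ or $J_1\cap K_1$ trivially, and Lunardi's formulation of reiteration allows these endpoint values. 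The reiteration theorem then yields
\begin{align*}
    (X^\nu_{\theta_0}(B),X^\nu_{\theta_1}(B))_{\theta,2} = (X^\nu_0(B),X^\nu_1(B))_{(1-\theta)\theta_0+\theta\theta_1,2}.
\end{align*}
Applying Lemma~\ref{lem: interpolation result} once more identifies the right-hand side with $X^\nu_{(1-\theta)\theta_0+\theta\theta_1}(B)$.

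The main point to watch is that the constants are independent of $B$. The constants in the reiteration theorem depend only on $\theta,\theta_0,\theta_1$ and on the $J$/$K$-constants of the intermediate spaces. Those $J$/$K$-constants are controlled by the norm-equivalence constants of Lemma~\ref{lem: interpolation result}, which are independent of the ball. Hence the final equivalence constants depend only on $d,\nu,\theta_0,\theta_1,\theta$. A cleaner way to see this, if preferred, is to note that translation and dilation map $X^\nu_\theta(B)$ isometrically onto $X^\nu_\theta(B(0,1))$ up to the common factor $r^{d/2}$, which commutes with the interpolation functors; this reduces everything to the unit ball.
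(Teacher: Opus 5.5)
Your proposal is correct and follows the paper's argument, which simply invokes the reiteration theorem \cite[Cor.~1.24]{Lunardi} on top of Lemma~\ref{lem: interpolation result}. Your remarks on the degenerate cases and on ball-independence add detail the paper leaves implicit. The latter uses the translation--dilation $f\mapsto f(x+r\,\cdot)$, which rescales every $X^\nu_\theta(B)$-norm by the same factor $r^{d/2}$.
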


We use this abstract result to interpolate the $\L^2$-$\L^{q}$ decay estimates with the $\L^2$-$\L^{p}$ boundedness.

\begin{proposition}
\label{prop: int principles}
     Let $2\leq p_0,p_1\leq \infty$ and $\Omega\subset \IC\setminus\{0\}$ be open. Furthermore, let $V,W\in\{\L^2_\sigma(\IR^d), \L^2(\IR^d;\IC^{d\times d})\}$ and $(T(z))_{z\in\Omega}\subset \mathcal{L}(V,W)$. For $0\le \theta\leq 1$ define
     \begin{align*}
         \frac{1}{p_\theta} \coloneqq \frac{1-\theta}{p_0} + \frac{\theta}{p_1}.
     \end{align*}
     Then we have the following assertions.
     \begin{enumerate}
        \item\label{item: (1)} If $(T(z))_{z\in\Omega}$ satisfies $\L^2$-$\L^{p_0}$ and $\L^2$-$\L^{p_1}$ decay estimates of order $\nu_0\geq0$ and $\nu_1\geq 0$ respectively, then $(T(z))_{z\in\Omega}$ satisfies $\L^2$-$\L^{p_\theta}$ decay estimates of order $\nu_0(1-\theta) +\nu_1\theta\geq 0$.
        
        \item \label{item: (2)} If $(T(z))_{z\in\Omega}$ satisfies $\L^2$-$\L^{p_0}$ decay estimates of order $\nu\geq 0$ and $\L^2$-$\L^{p_1}$ bounds, then $(T(z))_{z\in\Omega}$ satisfies $\L^2$-$\L^{p_\theta}$ decay estimates of order $\nu(1-\theta)\geq 0$.
     \end{enumerate}
     
\end{proposition}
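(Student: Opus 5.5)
Fix $z\in\Omega$ and a ball $B=B(x,r)$. The plan is to read each hypothesis as a bounded operator from a weighted space on $B$ into $\L^{p_j}(B)$, and then apply real interpolation on both sides. We write $X$ for the $X$- or $Y$-scale, according to whether $V=\L^2_\sigma(\IR^d)$ or $V=\L^2(\IR^d;\IC^{d\times d})$.

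\textbf{Reformulation.} The decay estimate of order $\nu_j$ says that
\begin{align*}
\|T(z)f\|_{\L^{p_j}(B)}\le c_j\|f\|_{X^{\nu_j}_0(B)}, \qquad c_j = C\,r^{-d(\frac12-\frac1{p_j})}\max\Big\{\Big(\tfrac{r^2}{|z|}\Big)^{M_j},\Big(\tfrac{r^2}{|z|}\Big)^{-N_j}\Big\}.
\end{align*}
Since $V$ is dense in $X^{\nu_j}_0(B)$ by Lemma~\ref{lem: facts of X and Y}, the map $f\mapsto \mathbf 1_B T(z)f$ extends to a bounded operator $X^{\nu_j}_0(B)\to \L^{p_j}(B)$ with norm at most $c_j$. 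To place both estimates on a common scale, we use that $X^{\nu}_\theta(B)$ carries the weight $2^{-\nu(1-\theta)n}$. For part \eqref{item: (2)} we set $\nu_0=\nu$ and take the common parameter $\nu$. By Remark~\ref{rem: hypercontractivity and decay estimates}, the $\L^2$-$\L^{p_1}$ bounds give a bounded map from $X^{\nu}_1(B)=\L^2$ (the weight is $1$) into $\L^{p_1}(B)$, with constant $r^{-d(\frac12-\frac1{p_1})}(r^2/|z|)^{\frac d2(\frac12-\frac1{p_1})}$. For part \eqref{item: (1)} we assume without loss of generality $\nu_0\ge\nu_1$ and take the common parameter $\nu_0$. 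Then $X^{\nu_1}_0(B)=X^{\nu_0}_{\theta_1}(B)$ with $\theta_1=1-\nu_1/\nu_0$ (if $\nu_0=0$ everything equals $\L^2$ and the argument is trivial). So in both cases we have bounded maps $X^{\nu_0}_{\theta_0}(B)\to\L^{p_0}(B)$ and $X^{\nu_0}_{\theta_1}(B)\to\L^{p_1}(B)$.

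\textbf{Interpolation.} Real interpolation with parameters $(\theta,2)$ gives the bound
\begin{align*}
(X^{\nu_0}_{\theta_0}(B),X^{\nu_0}_{\theta_1}(B))_{\theta,2}\to(\L^{p_0}(B),\L^{p_1}(B))_{\theta,2},
\end{align*}
with norm at most $c_0^{1-\theta}c_1^{\theta}$. By Lemma~\ref{lem: reiterated int lem}, the left space equals $X^{\nu_0}_{(1-\theta)\theta_0+\theta\theta_1}(B)$. Its weight is $2^{-\nu_0(1-\theta)(1-\theta_0)n-\nu_0\theta(1-\theta_1)n}=2^{-((1-\theta)\nu_0+\theta\nu_1)n}$, which is exactly the claimed order; in case \eqref{item: (2)} this is $2^{-\nu(1-\theta)n}$. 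All constants are independent of $B$, as stated in Lemma~\ref{lem: reiterated int lem}.

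\textbf{Target space and constants.} The interpolation of the target uses $(\L^{p_0},\L^{p_1})_{\theta,2}=\L^{p_\theta,2}$, and this is the step I expect to be the main obstacle. For $p_\theta\ge2$ the Lorentz space $\L^{p_\theta,2}$ embeds into $\L^{p_\theta}$, because $2\le p_\theta$ and Lorentz spaces increase in the second index. The embedding constant does not depend on $B$, since it depends only on the exponents. The endpoint $p_1=\infty$ needs the usual care, but this inclusion still holds. Alternatively, one could use complex interpolation on the target and note that the $X$-scale is a weighted $\L^2$-scale, whose complex and real $(\cdot,\cdot)_{\theta,2}$ methods agree. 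Finally, we combine the constants:
\begin{align*}
c_0^{1-\theta}c_1^{\theta}\lesssim r^{-d(\frac12-\frac1{p_\theta})}\max\Big\{\Big(\tfrac{r^2}{|z|}\Big)^{M},\Big(\tfrac{r^2}{|z|}\Big)^{-N}\Big\},
\end{align*}
with $M=(1-\theta)M_0+\theta M_1$ and $N=(1-\theta)N_0+\theta N_1$. This holds because $\max\{a^{M_0},a^{-N_0}\}^{1-\theta}\max\{a^{M_1},a^{-N_1}\}^{\theta}\le\max\{a^{M},a^{-N}\}$: for $a\ge1$ the left side is at most $a^{M}$, and for $a<1$ it is at most $a^{-N}$. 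Restricting to $f\in V$ then gives the desired decay estimate.
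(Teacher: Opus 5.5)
Your proposal is correct and follows essentially the same route as the paper. Both arguments turn the two hypotheses into bounded maps from the weighted spaces $X^{\nu}_{\vartheta}(B)$, extended by density, into $\L^{p_j}(B)$. Both then interpolate with the real $(\theta,2)$-method, using the reiteration lemma on the domain side, and embed $(\L^{p_0}(B),\L^{p_1}(B))_{\theta,2}$ into $\L^{p_\theta}(B)$ using $p_\theta\geq 2$.

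The differences are only cosmetic:
\begin{itemize}
\item You normalize with the larger order instead of $\nu_1$.
\item You treat part (2) directly rather than as the case $\nu_1=0$ of part (1).
\item You phrase the target embedding as $\L^{p_\theta,2}\hookrightarrow\L^{p_\theta}$ rather than via monotonicity in the second index together with $(\L^{p_0},\L^{p_1})_{\theta,p_\theta}=\L^{p_\theta}$.
\end{itemize}
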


\begin{proof}
    For simplicity, we will only show assertion~\eqref{item: (1)} for $V=W =\L^2_\sigma(\IR^d)$. The other cases follow the same line of reasoning. Assertion~\eqref{item: (2)} is a direct consequence of~\eqref{item: (1)}, since by Remark~\ref{rem: hypercontractivity and decay estimates} the $\L^2$-$\L^{p_1}$ bounds are $\L^2$-$\L^{p_1}$ decay estimates of order $\nu_1=0$.
    
    For \eqref{item: (1)}, by symmetry, we may assume that $\nu_1\geq \nu_0$. Then, there exists $\rho\in[0,1)$ such that $\nu_0=\nu_1(1-\rho)$. Fix a ball $B=B(x_0,r)\subset \IR^d$ and $z\in\Omega$. For $f\in \L^2_\sigma(\IR^d)$, the assumptions imply
    \begin{align*}
        \|T(z)f\|_{\L^{p_0}(B)} 
        &\lesssim r^{-d(\frac{1}{2}-\frac{1}{{p_0}})}\max\Big\{\Big(\frac{r^2}{|z|}\Big)^{M_0}, \Big(\frac{r^2}{|z|}\Big)^{-N_0}\Big\}\|f\|_{X^{\nu_1}_\rho(B) }
    \end{align*}
    as well as
    \begin{align*}
        \|T(z)f\|_{\L^{p_1}(B)} 
        \lesssim r^{-d(\frac{1}{2}-\frac{1}{{p_1}})}\max\Big\{\Big(\frac{r^2}{|z|}\Big)^{M_1}, \Big(\frac{r^2}{|z|}\Big)^{-N_1}\Big\} \|f\|_{X^{\nu_1}_0(B) }
    \end{align*}
    for some $N_0,N_1,M_0,M_1\geq 0$.
    Since $\L^2_\sigma(\IR^d)$ is a dense subspace of $X^{\nu_1}_\rho(B)$ and $X^{\nu_1}_0(B)$ by Lemma~\ref{lem: facts of X and Y}, we see that $T(z)\in\cL(X^{\nu_1}_\rho(B),\L^{p_0}(B))\cap \cL(X^{\nu_1}_0(B),\L^{p_1}(B))$. Applying \cite[Thm.~1.6]{Lunardi} and Lemma~\ref{lem: reiterated int lem}, we obtain the bound
    \begin{align*}
        &\|T(z)f\|_{(\L^{p_0}(B),\L^{p_1}(B))_{\theta,2}}\\
        &\lesssim r^{-d(1-\theta)(\frac{1}{2}-\frac{1}{{p_0}})-d\theta(\frac{1}{2}-\frac{1}{p_1})}\max\Big\{\Big(\frac{r^2}{|z|}\Big)^{M_\theta}, \Big(\frac{r^2}{|z|}\Big)^{-N_\theta}\Big\}\|f\|_{X^{\nu_1}_{(1-\theta)\rho}(B) }\\
        &=r^{-d(\frac{1}{2}-\frac{1}{{p_\theta}})}\max\Big\{\Big(\frac{r^2}{|z|}\Big)^{M_\theta}, \Big(\frac{r^2}{|z|}\Big)^{-N_\theta}\Big\}\|f\|_{X^{\nu_0(1-\theta) +\nu_1\theta}_{0}(B) },
    \end{align*}
    where we set $M_\theta = (1-\theta)M_0+\theta M_1$ and $N_\theta =(1-\theta)N_0+\theta N_1$, and used the relation
    \begin{align*}
        \nu_1(1-(1-\theta)\rho)= \nu_1(1-\rho)+\nu_1\theta \rho = \nu_0+(\nu_1-\nu_0)\theta = \nu_0(1-\theta) +\nu_1\theta.
    \end{align*}
    Finally, since $p_\theta \geq 2$, applying \cite[Thm.~3.4.1 and 5.2.1]{Berg_Lofstrom} yields the continuous embedding
    \begin{align*}
        (\L^{p_0}(B), \L^{p_1}(B))_{\theta,2}  \hookrightarrow (\L^{p_0}(B), \L^{p_1}(B))_{\theta,p_\theta } = \L^{p_\theta}(B).
    \end{align*}
    Hence, we conclude
    \begin{align*}
        &\|T(z)f\|_{\L^{p_\theta}(B)}\\
        &\lesssim r^{-d(\frac{1}{2}-\frac{1}{{p_\theta}})}\max\Big\{\Big(\frac{r^2}{|z|}\Big)^{M_\theta}, \Big(\frac{r^2}{|z|}\Big)^{-N_\theta}\Big\}\bigg(\sum\limits_{n=0}^\infty 2^{-(\nu_0(1-\theta) +\nu_1\theta) n} \|f\|^2_{\L^2(C_{n}(B))}\bigg)^\frac{1}{2},
    \end{align*}
    proving the assertion~\eqref{item: (1)}.
\end{proof}

As a consequence, we obtain the following interplay of $\L^2$-$\L^p$ decay estimates, hypercontractivity and uniform $\L^p$-boundedness.

\begin{corollary}
\label{cor: decay, hyper, unif}
     Let $\Omega\subset \IC\setminus\{0\}$ be open, $V,W\in\{\L^2_\sigma(\IR^d), \L^2(\IR^d;\IC^{d\times d})\}$ and $(T(z))_{z\in\Omega}\subset \mathcal{L}(V,W)$.
     Assume $(T(z))_{z\in\Omega}$ satisfies $\L^2$-$\L^{q}$ decay estimates of order $\nu>d$ for some $q\in[2,\infty]$ and $\L^2$-$\L^{p}$ bounds for some $p>q$. Then $(T(z))_{z\in\Omega}$ is uniformly $\L^r$-bounded for all $r\in [q, p_\theta)$, where
     \begin{align*}
         \frac{1}{p_\theta} \coloneqq \frac{1-\theta}{q} + \frac{\theta}{p}
     \end{align*}
     and $\theta\coloneqq \sup\{\vartheta\in(0,1): \nu(1-\vartheta)>d\}$.
\end{corollary}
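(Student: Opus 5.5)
The plan is to combine Proposition~\ref{prop: int principles}\eqref{item: (2)} with Proposition~\ref{prop: L2-Lp off imply Lp bdd}. Fix $r\in[q,p_\theta)$. If $r=q$, the family satisfies $\L^2$-$\L^q$ decay estimates of order $\nu>d$ by hypothesis, and Proposition~\ref{prop: L2-Lp off imply Lp bdd} yields uniform $\L^q$-bounds directly. So assume $q<r<p_\theta$.

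First, write $r=p_\vartheta$ for a unique $\vartheta\in(0,1)$ with $\frac{1}{r}=\frac{1-\vartheta}{q}+\frac{\vartheta}{p}$. Since $q<p$, the map $\vartheta\mapsto \frac{1}{p_\vartheta}$ is strictly decreasing, so $r<p_\theta$ forces $\vartheta<\theta$. The set $\{\vartheta\in(0,1):\nu(1-\vartheta)>d\}$ equals $(0,1-\frac{d}{\nu})$, so $\theta=1-\frac{d}{\nu}$, and every $\vartheta<\theta$ satisfies $\nu(1-\vartheta)>d$. Because $\nu>d$ we have $\theta>0$, so the interval $(q,p_\theta)$ is nonempty.

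Next, apply Proposition~\ref{prop: int principles}\eqref{item: (2)} with $p_0=q$, $p_1=p$ and interpolation parameter $\vartheta$. Combining the $\L^2$-$\L^q$ decay estimates of order $\nu$ with the $\L^2$-$\L^p$ bounds gives $\L^2$-$\L^{r}$ decay estimates of order $\nu(1-\vartheta)$. Since $\nu(1-\vartheta)>d$ and $r\in[2,\infty]$, Proposition~\ref{prop: L2-Lp off imply Lp bdd} then gives $\|T(z)f\|_{\L^r}\lesssim\|f\|_{\L^r}$ for all $z\in\Omega$ and $f\in V\cap\L^r$, uniformly in $z$.

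I do not expect any step to be a serious obstacle. The only point needing care is that the strict inequality $r<p_\theta$ is exactly what keeps the interpolated order strictly above $d$. At $\vartheta=\theta$ the order equals $d$, and the series $\sum_n 2^{\frac{p}{2}(d-\tilde\nu)n}$ in Proposition~\ref{prop: L2-Lp off imply Lp bdd} would diverge. This is why the endpoint $p_\theta$ is excluded from the range.
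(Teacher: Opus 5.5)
Your proposal is correct and matches the paper's proof: interpolate the $\L^2$-$\L^q$ decay estimates with the $\L^2$-$\L^p$ bounds via Proposition~\ref{prop: int principles} to obtain $\L^2$-$\L^{p_\vartheta}$ decay estimates of order $\nu(1-\vartheta)>d$ for every $\vartheta<\theta$, and then invoke Proposition~\ref{prop: L2-Lp off imply Lp bdd}. Your separate treatment of $r=q$ is just the case $\vartheta=0$, which the paper includes directly.
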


\begin{proof}
    Let $p_\theta\in (q,p)$ and $\theta\in (0,1)$ be as defined in the assertion. By Proposition~\ref{prop: int principles}, for all $0\leq \vartheta<\theta<1$, we obtain $\L^2$-$\L^{p_\vartheta}$ decay estimates of order $\nu(1-\vartheta)>d$, where
    \begin{align*}
        \frac{1}{p_\vartheta} =  \frac{1-\vartheta}{q} + \frac{\vartheta}{p}.
    \end{align*}
    Applying Proposition~\ref{prop: L2-Lp off imply Lp bdd} yields uniform $\L^{p_\vartheta}$-boundedness of $(T(z))_{z\in\Omega}$ for all  $0\leq \vartheta<\theta$, which proves the claim.
\end{proof}

\section{$\L^p$-theory for the Stokes semigroup}
\label{sec: Lp-theory sg}
The purpose of this section is to apply the results of the preceding sections to the Stokes semigroup and establish its corresponding $\L^p$-theory. First, using Cauchy's integral formula \eqref{eq: rep semigroup}, we transfer the decay estimates from the Stokes resolvent to the analytic semigroup. Subsequently, we establish hypercontractivity properties that extend beyond the classical Sobolev range $[2_*, 2^*]$ for dimensions $d \geq 3$, while for $d = 2$ we recover boundedness for the full range $p \in (1, \infty)$. Finally, we employ the interpolation principles derived in Section~\ref{subsec: interpolation principles} to deduce $\L^2$-$\L^p$ decay estimates and uniform $\L^p$-boundedness of the Stokes semigroup also outside the Sobolev range, see Theorem~\ref{thm: Lp-extrapolation sg II}.

\subsection{Decay estimates for the semigroup}

We start by transferring $\L^2$-decay estimates.

\begin{proposition}
\label{prop: off-diag semigroup}
    Let $\mu$ satisfy Assumption~\ref{Ass: Coefficients}, $\omega_0$ be given by \eqref{eq: def omega} and $\beta\in [0,\frac{\pi}{2}-\omega_0)$.  Then for all $\nu \in (0,d+2)$ there exists $C>0$ such that for all balls $B=B(x_0,r)$, $z\in \S_{\beta}$ and $f\in\L^2_\sigma(\IR^d)$ we have
    \begin{align*}
        \|e^{-zA}f\|_{\L^2(B)} \leq C\max\Big\{1, \Big(\frac{r^2}{|z|}\Big)^{-\frac{d+1}{2}}\Big\}\bigg(\sum\limits_{n=0}^\infty 2^{-\nu n}\|f\|_{\L^2(C_n(B))}^2\bigg)^\frac{1}{2},
    \end{align*}
    as well as
    \begin{align*}
        \||z|^\frac{1}{2}\nabla e^{-zA}f\|_{\L^2(B)} \leq C \max\Big\{1, \Big(\frac{r^2}{|z|}\Big)^{-\frac{d+2}{2}}\Big\}\bigg(\sum\limits_{n=0}^\infty 2^{-\nu n}\|f\|_{\L^2(C_n(B))}^2\bigg)^\frac{1}{2}.
    \end{align*}
    The constant $C$ depends only on $\beta,\nu,d$ and $\mu_\bullet,\mu^\bullet$.
\end{proposition}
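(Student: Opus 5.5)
The plan is to insert the resolvent decay estimates of Theorem~\ref{thm: off-diag resolvent} into the Cauchy integral formula \eqref{eq: rep semigroup} and integrate along the contour. By Observation~\ref{observation} (rescaling and rotation) one could reduce to $|z|=1$, but a direct estimate is just as simple. Fix $z\in\S_\beta$ and choose $\theta\in(\pi/2+\beta,\pi-\omega_0)$. Let $\gamma_z$ parametrize $\partial(\S_\theta\cup B(0,1/|z|))$. Then
\begin{align*}
    e^{-zA}f=\frac{1}{2\pi i}\int_{\gamma_z}e^{z\lambda}(\lambda+A)^{-1}f\,\d\lambda ,
\end{align*}
and by Minkowski's inequality
\begin{align*}
    \|e^{-zA}f\|_{\L^2(B)}\lesssim\int_{\gamma_z}|e^{z\lambda}|\,\|(\lambda+A)^{-1}f\|_{\L^2(B)}\,|\d\lambda| .
\end{align*}
The same holds with $\nabla$ inside, since $\nabla(\lambda+A)^{-1}f\in\L^2$ and the integral converges absolutely in $\W^{1,2}$.

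Theorem~\ref{thm: off-diag resolvent} with $F=0$, keeping only the $k=0$ terms on its left-hand side, gives
\begin{align*}
    \|(\lambda+A)^{-1}f\|_{\L^2(B)}^2\lesssim |\lambda|^{-2}\max\{1,(|\lambda|r^2)^{-(d+1)}\}\,S ,
    \qquad S\coloneqq\sum_{n}2^{-\nu n}\|f\|^2_{\L^2(C_n(B))} .
\end{align*}
For the gradient it gives
\begin{align*}
    \|\nabla(\lambda+A)^{-1}f\|_{\L^2(B)}^2\lesssim \frac{C_1+|\lambda|r^2}{r^2}\,|\lambda|^{-2}\max\{1,(|\lambda|r^2)^{-(d+1)}\}\,S .
\end{align*}
The gradient bound reads $\lesssim |\lambda|^{-1}\max\{1,(|\lambda|r^2)^{-(d+2)}\}S$ in both regimes. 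When $|\lambda|r^2\ge1$ the prefactor $(C_1+|\lambda|r^2)/r^2$ is comparable to $|\lambda|$. When $|\lambda|r^2\le1$ it is comparable to $r^{-2}=|\lambda|(|\lambda|r^2)^{-1}$, which adds one power of $(|\lambda|r^2)^{-1}$. Taking square roots leaves integrals of the form
\begin{align*}
    \int_{\gamma_z}|e^{z\lambda}|\,|\lambda|^{-a}\max\{1,(|\lambda|r^2)^{-b}\}\,|\d\lambda| .
\end{align*}
For the semigroup itself $a=1$ and $b=\frac{d+1}{2}$. For the gradient $a=\frac12$ and $b=\frac{d+2}{2}$; here the factor $|z|^{1/2}$ must be included.

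To evaluate these, substitute $\lambda=w/|z|$. The contour $\gamma_z$ becomes the fixed contour $\gamma$ bounding $\S_\theta\cup B(0,1)$, rotated according to $\arg z$. On it $|e^{z\lambda}|\le e^{-c|w|}$ along the rays and $|e^{z\lambda}|\le e$ on the arc. The integral becomes
\begin{align*}
    |z|^{a-1}\int_\gamma e^{-c|w|}|w|^{-a}\max\{1,(|w|\,r^2/|z|)^{-b}\}\,|\d w| .
\end{align*}
Since $|w|\ge1$ on $\gamma$, we have $\max\{1,(|w|\rho)^{-b}\}\le\max\{1,\rho^{-b}\}$ with $\rho=r^2/|z|$. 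The remaining integral $\int_\gamma e^{-c|w|}|w|^{-a}|\d w|$ is finite and depends only on $\beta,\theta$. For $a=1$ the power of $|z|$ is $|z|^0$. For $a=\frac12$ the factor $|z|^{-1/2}$ is cancelled by the prefactor $|z|^{1/2}$. This yields both claimed bounds with exponents $-\frac{d+1}{2}$ and $-\frac{d+2}{2}$.

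The only real bookkeeping obstacle is the gradient term. Theorem~\ref{thm: off-diag resolvent} controls $\nabla u$ only with the weight $r^2/(C_1+|\lambda|r^2)$, and this weight has to be converted correctly into the extra power $(|\lambda|r^2)^{-1}$ when $|\lambda|r^2$ is small. Everything else is the standard contour estimate. Uniformity in $\arg z$ follows because $\theta$ is chosen once for all $z\in\S_\beta$.
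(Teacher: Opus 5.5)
Your proposal is correct and is essentially the paper's proof: Cauchy's formula, then Theorem~\ref{thm: off-diag resolvent} with $F=0$ keeping only the $k=0$ terms, then the substitution $\lambda=w/|z|$ together with $|w|\geq 1$ on the contour; your conversion of the weight $r^2/(C_1+|\lambda|r^2)$ into an extra factor $(|\lambda|r^2)^{-1}$ when $|\lambda|r^2\leq 1$ is exactly the gradient bookkeeping that the paper leaves to the reader. One small point, inherited from the paper's notation: the arc of the contour should pass through $+1/|z|$, i.e.\ take $\gamma_z=\partial(\S_\theta\setminus\overline{B(0,1/|z|)})$, so that every $\lambda$ on it lies in $\overline{\S_\theta}$, where Theorem~\ref{thm: off-diag resolvent} (applied with an angle slightly larger than $\theta$) is available; an arc through $-1/|z|$ may meet $-\sigma(A)$.
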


\begin{proof}
    We only provide a proof for the semigroup, since the estimate for its gradient follows by similar reasoning. By Cauchy's formula, we have
    \begin{align*}
        \|e^{-zA}f\|_{\L^2(B)}
        &\lesssim \int\limits_{\gamma_z} e^{\Re (z\lambda)}\| (\lambda +A)^{-1}f\|_{\L^2(B)}\,|\d \lambda|.
    \intertext{Applying Theorem~\ref{thm: off-diag resolvent} yields}
        &\lesssim\int\limits_{\gamma_z} e^{\Re (z\lambda)} \frac{1}{|\lambda|} \max\Big\{1, (|\lambda|r^2)^{-\frac{d+1}{2}}\Big\}\bigg(\sum\limits_{n=0}^\infty 2^{-\nu n}\|f\|_{\L^2(C_n(B))}^2\bigg)^\frac{1}{2}\,|\d \lambda|.
    \intertext{By the substitution $|z|\lambda = \zeta$, we obtain}
        &=\int\limits_{\gamma_1} e^{\Re (\frac{\zeta z}{|z|})} \frac{|z|}{|\zeta|} \max\Big\{1, \Big(\frac{|\zeta|r^2}{|z|}\Big)^{-\frac{d+1}{2}}\Big\}\bigg(\sum\limits_{n=0}^\infty 2^{-\nu n}\|f\|_{\L^2(C_n(B))}^2\bigg)^\frac{1}{2}\,\frac{|\d \zeta|}{|z|}.
    \intertext{Finally, since $|\zeta|\geq 1$, we estimate}
        &\lesssim   \max\Big\{1, \Big(\frac{r^2}{|z|}\Big)^{-\frac{d+1}{2}}\Big\}\bigg(\sum\limits_{n=0}^\infty 2^{-\nu n}\|f\|_{\L^2(C_n(B))}^2\bigg)^\frac{1}{2}\int\limits_{\gamma_1} e^{\Re (\frac{\zeta z}{|z|})}\,\frac{|\d \zeta|}{|\zeta|}\\
        &\lesssim \max\Big\{1, \Big(\frac{r^2}{|z|}\Big)^{-\frac{d+1}{2}}\Big\}\bigg(\sum\limits_{n=0}^\infty 2^{-\nu n}\|f\|_{\L^2(C_n(B))}^2\bigg)^\frac{1}{2},
    \end{align*}
    proving the claim.
\end{proof}

Using the improved decay estimates from Proposition~\ref{prop: imp off-diag est}, the same argument gives the following result.

\begin{proposition}
\label{prop: imp off-diag semigroup}
    Let $\mu$ satisfy Assumption~\ref{Ass: Coefficients}, $\omega_0$ be given by \eqref{eq: def omega} and $\beta\in [0,\frac{\pi}{2}-\omega_0)$.  Then for all $\nu \in (0,d+2)$, there exists $C>0$ such that for all balls $B=B(x_0,r)$, $z\in \S_{\beta}$ and $f\in\L^2_\sigma(\IR^d)$ with $\supp f \subset \IR^d\setminus 2B$ we have
    \begin{align*}
        \|e^{-zA}f\|_{\L^2(B)} \leq \frac{C|z|^\frac{1}{2}}{r} \max\Big\{1, \Big(\frac{r^2}{|z|}\Big)^{-\frac{d+1}{2}}\Big\}\bigg(\sum\limits_{n=0}^\infty 2^{-\nu n}\|f\|_{\L^2(C_n(B))}^2\bigg)^\frac{1}{2}.
    \end{align*}
    The constant $C$ depends only on $\beta,\nu,d$ and $\mu_\bullet,\mu^\bullet$.
\end{proposition}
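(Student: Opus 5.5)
The argument is the one used for Proposition~\ref{prop: off-diag semigroup}, with the improved resolvent estimate of Proposition~\ref{prop: imp off-diag est} in place of Theorem~\ref{thm: off-diag resolvent}. Fix $z\in\S_\beta$ and $f$ with $\supp f\subset\IR^d\setminus 2B$. Represent $e^{-zA}f$ by the Cauchy integral \eqref{eq: rep semigroup} over $\gamma_z$, where $\theta\in(\pi/2+\beta,\pi-\omega_0)$ is chosen so that all $\lambda$ on $\gamma_z$ lie in a fixed sector $\overline{\S_\theta}$. Taking $\L^2(B)$-norms inside the integral gives
\begin{align*}
\|e^{-zA}f\|_{\L^2(B)}\lesssim \int_{\gamma_z} e^{\Re(z\lambda)}\|(\lambda+A)^{-1}f\|_{\L^2(B)}\,|\d\lambda|.
\end{align*}

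For each $\lambda$ on the contour, apply Proposition~\ref{prop: imp off-diag est} with $F=0$. Since $f$ vanishes on $2B$, it yields
\begin{align*}
\|(\lambda+A)^{-1}f\|_{\L^2(B)}\lesssim \frac{1}{|\lambda|^{3/2}r}\max\Big\{1,(|\lambda|r^2)^{-\frac{d+1}{2}}\Big\}\Big(\sum_{n}2^{-\nu n}\|f\|^2_{\L^2(C_n(B))}\Big)^{1/2}.
\end{align*}
Compared with the estimate used for Proposition~\ref{prop: off-diag semigroup}, the only change is the extra factor $(|\lambda|^{1/2}r)^{-1}$.

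Next, substitute $\zeta=|z|\lambda$, so the contour becomes $\gamma_1$ with $|\zeta|\ge1$. The prefactor then transforms as
\begin{align*}
|\lambda|^{-3/2}r^{-1}\,|\d\lambda| = |z|^{1/2}r^{-1}|\zeta|^{-3/2}\,|\d\zeta|.
\end{align*}
Since $|\zeta|\ge1$, we have $(|\zeta|r^2/|z|)^{-(d+1)/2}\le (r^2/|z|)^{-(d+1)/2}$, so the $\max$-term is bounded by $\max\{1,(r^2/|z|)^{-(d+1)/2}\}$. What remains is $\int_{\gamma_1}e^{\Re(\zeta z/|z|)}|\zeta|^{-3/2}\,|\d\zeta|$. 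On the rays of $\gamma_1$, $\Re(\zeta z/|z|)\le -c|\zeta|$ because $\theta>\pi/2+\beta$, and the arc $|\zeta|=1$ is compact. Hence this integral is finite, with a bound depending only on $\beta$. Collecting the factors gives the claimed estimate with prefactor $C|z|^{1/2}r^{-1}$.

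The only delicate point is making sure that Proposition~\ref{prop: imp off-diag est} is applied with one fixed angle $\theta$ for the whole contour, so that its constants are uniform, and checking the powers of $|\lambda|$ in the substitution. Beyond that, the proof is routine bookkeeping identical to Proposition~\ref{prop: off-diag semigroup}.
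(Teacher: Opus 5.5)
Your proposal is correct and is essentially the paper's own argument. The paper proves this proposition by rerunning the Cauchy-integral argument of Proposition~\ref{prop: off-diag semigroup}, with the improved resolvent bound of Proposition~\ref{prop: imp off-diag est} (here with $F=0$) in place of Theorem~\ref{thm: off-diag resolvent}; your substitution $\zeta=|z|\lambda$ then turns the extra factor $(|\lambda|^{1/2}r)^{-1}$ into $|z|^{1/2}r^{-1}$ exactly as you computed. You were also right to fix one angle $\theta\in(\frac{\pi}{2}+\beta,\pi-\omega_0)$ for all $z\in\S_\beta$ and to require the whole contour, small arc included, to lie in $\overline{\S_\theta}$. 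This lets the resolvent estimate be applied with a slightly larger angle and constants uniform in $z$, and it is the correct reading of the contour in \eqref{eq: rep semigroup}.
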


Alternatively, using the $\L^2$-$\L^p$ decay estimates from Theorem~\ref{thm: L2-Lp off diag stokes resolv}, we obtain the following result.

\begin{proposition}
\label{prop: Lp off-diag sg}
    Let $\mu$ satisfy Assumption~\ref{Ass: Coefficients}, $\omega_0$ be given by \eqref{eq: def omega} and $\beta\in [0,\frac{\pi}{2}-\omega_0)$.  Then for all $\nu \in (0,d+2)$ and $p\in [2,\infty)$ satisfying $\frac{1}{2}-\frac{1}{p}\leq \frac{1}{d}$ there exists a constant $C>0$ such that for all balls $B= B(x_0,r)$, $z\in \S_\beta$ and $f\in \L_\sigma^2(\IR^d)$ we have
    \begin{align*}
        \|e^{-zA}f\|_{\L^{p}(B)} \leq \frac{C}{r^{d(\frac{1}{2}-\frac{1}{p})}}\max\Big\{\Big(\frac{r^2}{|z|}\Big)^\frac{1}{2}, \Big(\frac{r^2}{|z|}\Big)^{-\frac{d+1}{2}}\Big\}\bigg(\sum\limits_{n=0}^\infty 2^{-\nu n}\|f\|_{\L^2(C_n(B))}^2 \bigg)^\frac{1}{2}.
     \end{align*} 
     The constant $C$ depends only on $\beta,\nu,d,p$ and $\mu_\bullet,\mu^\bullet$.
\end{proposition}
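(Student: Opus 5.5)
We follow the proof of Proposition~\ref{prop: off-diag semigroup} verbatim, now feeding in the $\L^2$-$\L^p$ resolvent estimate of Theorem~\ref{thm: L2-Lp off diag stokes resolv} instead of the $\L^2$-estimate of Theorem~\ref{thm: off-diag resolvent}. Fix $\beta$, $z\in\S_\beta$, and choose $\theta\in(\frac{\pi}{2}+\beta,\pi-\omega_0)$ so that the contour $\gamma_z$ from \eqref{eq: rep semigroup} lies in $\overline{\S_\theta}$. Cauchy's formula together with Minkowski's inequality for $\L^p(B)$-valued integrals gives
\begin{align*}
    \|e^{-zA}f\|_{\L^p(B)} \lesssim \int\limits_{\gamma_z} e^{\Re(z\lambda)}\|(\lambda+A)^{-1}f\|_{\L^p(B)}\,|\d\lambda|.
\end{align*}

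For each $\lambda\in\gamma_z$, the function $u=(\lambda+A)^{-1}f$ solves \eqref{eq: Stokes system} with $F=0$. Taking square roots in the first estimate of Theorem~\ref{thm: L2-Lp off diag stokes resolv} gives
\begin{align*}
    \|(\lambda+A)^{-1}f\|_{\L^p(B)} \lesssim \frac{1}{|\lambda|\,r^{d(\frac12-\frac1p)}}\max\Big\{(|\lambda|r^2)^{\frac12},(|\lambda|r^2)^{-\frac{d+1}{2}}\Big\}\bigg(\sum_{n=0}^\infty 2^{-\nu n}\|f\|^2_{\L^2(C_n(B))}\bigg)^{\frac12},
\end{align*}
uniformly on $\S_\theta$. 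Next we substitute $\zeta=|z|\lambda$, so that $\zeta$ runs over $\gamma_1$ with $|\zeta|\ge 1$ and $|\lambda|r^2=|\zeta|\,r^2/|z|$. The factor $|\lambda|^{-1}|\d\lambda|$ becomes $|\zeta|^{-1}|\d\zeta|$, and the $\max$-term is bounded by
\begin{align*}
    |\zeta|^{\frac12}\max\Big\{\Big(\frac{r^2}{|z|}\Big)^{\frac12},\Big(\frac{r^2}{|z|}\Big)^{-\frac{d+1}{2}}\Big\},
\end{align*}
because $|\zeta|^{-\frac{d+1}{2}}\le 1\le |\zeta|^{\frac12}$ for $|\zeta|\ge1$.

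It therefore remains to check that
\begin{align*}
    \int_{\gamma_1} e^{\Re(\zeta z/|z|)}\,|\zeta|^{-\frac12}\,|\d\zeta|<\infty
\end{align*}
uniformly in $z\in\S_\beta$. This is the only point that differs from Proposition~\ref{prop: off-diag semigroup}, where the extra factor $|\zeta|^{\frac12}$ did not occur. On the rays of $\gamma_1$ we have $\Re(\zeta z/|z|)\le -c|\zeta|$ with some $c>0$ depending only on $\theta-\frac{\pi}{2}-\beta$. Hence the exponential kills any polynomial growth, and the arc of radius one contributes a bounded amount. So no real obstacle is expected; the only care needed is choosing $\theta$ depending on $\beta$ alone, so that $C$ depends only on $\beta,\nu,d,p,\mu_\bullet,\mu^\bullet$.
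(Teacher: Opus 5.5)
Your proposal is correct and follows essentially the paper's own route: the paper obtains this proposition by rerunning the Cauchy-integral argument of Proposition~\ref{prop: off-diag semigroup}, with the $\L^2$-$\L^p$ resolvent bound of Theorem~\ref{thm: L2-Lp off diag stokes resolv} in place of Theorem~\ref{thm: off-diag resolvent}. Your handling of the extra factor $|\zeta|^{1/2}$, absorbed by the exponential decay on the rays of $\gamma_1$, is the only additional check this requires.
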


\subsection{$\L^p$-bounds for the semigroup}

We start with an auxiliary lemma that reduces the $\L^2$-$\L^p$ bounds for the whole semigroup family or its gradient to only one member.

\begin{lemma}
\label{lem: rescaling lemma}
    Let $\mu$ satisfy Assumption~\ref{Ass: Coefficients}, $\omega_0$ be given by \eqref{eq: def omega} and $p\in[2,\infty)$. Furthermore, let $T(z)\in \{e^{-zA},|z|^\frac{1}{2}\nabla e^{-zA}\}$ for $z\in\S_{\frac{\pi}{2}-\omega_0}$ and assume there exists a constant $C>0$ depending only on $d,p$, $\mu_\bullet$ and $\mu^\bullet$ such that
    \begin{align}
    \label{eq: T1}
        \|T(1)f\|_{\L^{p}} \leq C\|f\|_{\L^2}
    \end{align}
    holds for every $f\in\L^2_\sigma(\IR^d)$. Then, for every $\beta\in[0,\frac{\pi}{2}-\omega_0)$ there exists a constant $C'>0$ depending only on $d,p$, $\beta$ and $\mu_\bullet,\mu^\bullet$ such that for all $z\in\S_\beta$ and $f\in \L^2_\sigma(\IR^d)$ we have
    \begin{align*}
        \|T(z)f\|_{\L^{p}} \leq C'|z|^{-\frac{d}{2}(\frac{1}{2}-\frac{1}{p})}\|f\|_{\L^2}.
    \end{align*}
\end{lemma}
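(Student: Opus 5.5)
The plan is to reduce the general $z\in\S_\beta$ to $z=1$ by combining the rescaling identity of Lemma~\ref{lem: rescaled resolvent and semigroup} with a rotation of the coefficients, as anticipated in Observation~\ref{observation}. Write $z=se^{i\varphi}$ with $s=|z|$ and $|\varphi|<\beta$. Set $\tilde\mu\coloneqq e^{i\varphi}\mu$. The form of $\tilde\mu$ is $e^{i\varphi}\a$, so the associated Stokes operator is $\tilde A=e^{i\varphi}A$, and hence $e^{-zA}=e^{-s\tilde A}$. Because $|\varphi|<\beta<\frac{\pi}{2}-\omega_0$, Lemma~\ref{lem: rotation} shows that $\tilde\mu$ satisfies Assumption~\ref{Ass: Coefficients} with constants $\mu^\bullet$ and $\mu_\bullet\cos(\beta+\omega_0)$, uniformly in $\varphi$.

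Next I rescale with $\Phi_{s^{1/2}}$. By Lemma~\ref{lem: rescaled resolvent and semigroup}, applied to $\tilde\mu$,
\begin{align*}
e^{-s\tilde A}=\Phi_{s^{-1/2}}\, e^{-\tilde A_{s^{1/2}}}\,\Phi_{s^{1/2}},
\end{align*}
where $\tilde A_{s^{1/2}}$ is the Stokes operator with coefficients $\Phi_{s^{1/2}}\tilde\mu$. By Observation~\ref{observation} these coefficients again satisfy Assumption~\ref{Ass: Coefficients} with constants depending only on $\beta,\mu_\bullet,\mu^\bullet$. The hypothesis \eqref{eq: T1} has a constant depending only on $d,p$ and the ellipticity constants, so it applies to $\tilde A_{s^{1/2}}$ with a constant $C'$ depending only on $d,p,\beta,\mu_\bullet,\mu^\bullet$.

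It remains to track the powers of $s$. The scaling norms are $\|\Phi_t g\|_{\L^q}=t^{-d/q}\|g\|_{\L^q}$. For $T(z)=e^{-zA}$ this gives
\begin{align*}
\|e^{-zA}f\|_{\L^p}=s^{\frac{d}{2p}}\|e^{-\tilde A_{s^{1/2}}}\Phi_{s^{1/2}}f\|_{\L^p}\le C' s^{\frac{d}{2p}}\|\Phi_{s^{1/2}}f\|_{\L^2}=C' s^{\frac{d}{2p}-\frac d4}\|f\|_{\L^2},
\end{align*}
which is exactly $|z|^{-\frac d2(\frac12-\frac1p)}$. For the gradient, the chain rule gives $\nabla\Phi_{s^{-1/2}}g=s^{-1/2}\Phi_{s^{-1/2}}\nabla g$. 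This produces an extra factor $s^{-1/2}$, which cancels the prefactor $|z|^{1/2}$, and the rest of the computation is the same as before.

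The main obstacle is the bookkeeping that makes the constants uniform. The hypothesis has to be read as a statement about the whole class of admissible coefficients: the constant in \eqref{eq: T1} depends only on $d,p$ and the ellipticity constants, which is why it transfers to $\tilde A_{s^{1/2}}$. The rotation then degrades $\mu_\bullet$ only by the factor $\cos(\beta+\omega_0)$, and this is harmless because $\beta$ is fixed. One also has to check that the rotated and rescaled operator is indeed the operator associated with the transformed coefficients, that $\Phi_t$ preserves $\L^2_\sigma$, and that the semigroup identity $e^{-se^{i\varphi}A}=e^{-s\tilde A}$ holds. The last point follows from the Cauchy representation \eqref{eq: rep semigroup} together with $(\lambda+e^{i\varphi}A)^{-1}=e^{-i\varphi}(e^{-i\varphi}\lambda+A)^{-1}$.
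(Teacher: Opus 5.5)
Your proposal is correct and follows the paper's proof: you write $z$ in polar form, use Observation~\ref{observation} to see that the rotated and rescaled operator $e^{i\varphi}A_{s^{1/2}}$ lies in the same coefficient class, apply the hypothesis \eqref{eq: T1} to it, and then track powers of $s$ through $\Phi_{s^{\pm1/2}}$ via Lemma~\ref{lem: rescaled resolvent and semigroup}. The only differences are cosmetic: you parametrize $z=se^{i\varphi}$ where the paper uses $z=s^2e^{i\theta}$, and you spell out the gradient case, including the chain-rule factor that cancels $|z|^{1/2}$, which the paper dismisses as similar.
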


\begin{proof}
    We only prove the assertion for $T(z)=e^{-zA}$, the other case being similar. Assume $z\in \S_{\beta}$ and write $z=s^2e^{i\theta}$ for a suitable $s>0$ and $\theta\in (-\beta,\beta)$. Notice that $e^{i\theta}A_s$ belongs to the same class as $A$, see Observation~\ref{observation}. Hence, we can apply inequality~\eqref{eq: T1} to $T(1)=e^{-e^{i\theta}A_s}$ to get with Lemma~\ref{lem: rescaled resolvent and semigroup} the estimate
    \begin{align*}
        \| e^{-zA}f\|_{\L^p} =\|\Phi_{s^{-1}}e^{-e^{i\theta}A_s}\Phi_{s}f\|_{\L^p}\leq C' s^{\frac{d}{p}-\frac{d}{2}}\|f\|_{\L^2}=C'|z|^{-\frac{d}{2}(\frac{1}{2}-\frac{1}{p})}\|f\|_{\L^2},
    \end{align*}
    for some constant $C'>0$ depending only on $d,p$, $\beta$ and $\mu_\bullet,\mu^\bullet$.
\end{proof}

Combining Sobolev's theorem with the uniform $\L^2$-boundedness of the gradient of the semigroup, we immediately obtain the following hypercontractivity property.

\begin{lemma}
    Let $\mu$ satisfy Assumption~\ref{Ass: Coefficients}, $\omega_0$ be given by \eqref{eq: def omega} and $\beta\in [0,\frac{\pi}{2}-\omega_0)$. For all $p\in[2,\infty)$ satisfying $\frac{1}{2}-\frac{1}{p}\leq \frac{1}{d}$ the semigroup $(e^{-zA})_{z\in\S_\beta}$ satisfies $\L^2$-$\L^p$ bounds.
\end{lemma}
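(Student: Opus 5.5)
By Lemma~\ref{lem: rescaling lemma}, it suffices to prove the single bound $\|e^{-A}f\|_{\L^p}\leq C\|f\|_{\L^2}$ for all $f\in\L^2_\sigma(\IR^d)$. The constant $C$ must depend only on $d,p,\mu_\bullet,\mu^\bullet$, and the bound must hold uniformly over the whole class of admissible coefficients. Uniformity over the class is automatic here, since every estimate below uses only the ellipticity constants. By Observation~\ref{observation}, this then propagates to all $z\in\S_\beta$ by rescaling and rotation, which yields the factor $|z|^{-\frac{d}{2}(\frac{1}{2}-\frac{1}{p})}$.

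For the single bound at $z=1$, I would combine Sobolev's embedding with Proposition~\ref{prop: L2 semigroup bounds}. Take $d\geq 3$ first. Since $2\leq p$ and $\frac{1}{2}-\frac{1}{p}\leq\frac{1}{d}$, we have $p\in[2,2^*]$. The inhomogeneous Sobolev embedding $\W^{1,2}(\IR^d)\hookrightarrow\L^p(\IR^d)$ then gives
\begin{align*}
\|e^{-A}f\|_{\L^p}\lesssim \|e^{-A}f\|_{\L^2}+\|\nabla e^{-A}f\|_{\L^2}\lesssim \|f\|_{\L^2}.
\end{align*}
The last step is Proposition~\ref{prop: L2 semigroup bounds} with $z=1$ (note $1\in\S_\beta$ for every $\beta\geq 0$). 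Alternatively, Gagliardo--Nirenberg interpolation between $\L^2$ and the homogeneous endpoint $\L^{2^*}$ gives the same conclusion.

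For $d=2$, the condition reads $p\in[2,\infty)$. Here I would use $\W^{1,2}(\IR^2)\hookrightarrow\L^p(\IR^2)$ for every finite $p$, with an embedding constant depending on $p$, and the argument is otherwise identical.

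No step is a genuine obstacle. The only point needing care is that Lemma~\ref{lem: rescaling lemma} requires the constant in \eqref{eq: T1} to depend only on $d,p,\mu_\bullet,\mu^\bullet$. This holds because the constant of Proposition~\ref{prop: L2 semigroup bounds} at $z=1$ depends only on these quantities, and the Sobolev constant depends only on $d$ and $p$.
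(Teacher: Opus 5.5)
Your proposal is correct and follows the paper's proof essentially verbatim. The paper likewise combines Sobolev's embedding with Proposition~\ref{prop: L2 semigroup bounds} to obtain $\|e^{-A}f\|_{\L^p}\lesssim\|f\|_{\L^2}$, and then invokes Lemma~\ref{lem: rescaling lemma} to pass to all $z\in\S_\beta$.
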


\begin{proof}
    Combining Sobolev's embedding with Proposition~\ref{prop: L2 semigroup bounds} yields
    \begin{align*}
        \|e^{-A}f\|_{\L^{p}} \lesssim \|\nabla e^{-A}f\|_{\L^{2}}+\|f\|_{\L^2} \lesssim\|f\|_{\L^2}
    \end{align*}
    for every $f\in\L^2_\sigma(\IR^d)$. Now, the claim follows from Lemma~\ref{lem: rescaling lemma}.
\end{proof}

While the previous result establishes the $\L^2$-$\L^p$ bounds of the semigroup within the upper Sobolev range $2 \leq p \leq 2^*$, our goal is to extrapolate this bound beyond these classical limits. Following the established framework for elliptic operators, see \cite{Auscher-Lp}, we employ an abstract extrapolation result of \u{S}ne\u{\i}berg~\cite{Sneiberg} to extend the hypercontractivity of the Stokes semigroup outside the Sobolev range, as in the classical elliptic theory. The result of \u{S}ne\u{\i}berg reads as follows.

\begin{proposition}  
\label{prop: Sneiberg}
    Let $(X_0,X_1)$ and $(Y_0,Y_1)$ be two interpolation couples and let $T\in \cL(X_0,Y_0)\cap \cL(X_1,Y_1)$. Then
    \begin{align*}
        \big\{\vartheta\in (0,1) : T:[X_0,X_1]_\vartheta\to [Y_0,Y_1]_\vartheta \text{ is an isomorphism} \big\}
    \end{align*}
    is an open set.
\end{proposition}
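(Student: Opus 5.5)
The plan is the classical perturbation argument of Šneĭberg, carried out for the complex interpolation scale. Fix $\vartheta_0\in(0,1)$ with $T:X_{\vartheta_0}\coloneqq[X_0,X_1]_{\vartheta_0}\to Y_{\vartheta_0}\coloneqq[Y_0,Y_1]_{\vartheta_0}$ an isomorphism. Then there is $c>0$ with $\|Tx\|_{Y_{\vartheta_0}}\ge c\|x\|_{X_{\vartheta_0}}$. We must show two things for all $\vartheta$ with $|\vartheta-\vartheta_0|<\delta$, where $\delta$ depends only on $c$, on $M\coloneqq\max\{\|T\|_{\cL(X_0,Y_0)},\|T\|_{\cL(X_1,Y_1)}\}$ and on $\min\{\vartheta_0,1-\vartheta_0\}$:
\begin{itemize}
\item[(a)] a lower bound $\|Tx\|_{Y_\vartheta}\ge \tfrac c2\|x\|_{X_\vartheta}$;
\item[(b)] surjectivity of $T:X_\vartheta\to Y_\vartheta$.
\end{itemize}
By the interpolation inequality, $T\in\cL(X_\vartheta,Y_\vartheta)$ with norm at most $M$ for every $\vartheta$.

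\textbf{Step 1: a Schwarz-type estimate.} Work with the Calderón space $\mathcal F(X_0,X_1)$ of bounded continuous functions on the strip $\overline S=\{0\le\Re\zeta\le1\}$ that are holomorphic inside, with boundary traces in $X_0$ and $X_1$. The key lemma is a quantitative estimate for functions vanishing at $\vartheta_0$. If $g\in\mathcal F$ and $g(\vartheta_0)=0$, then $g=b\,h$, where $b$ is a conformal Blaschke factor of the strip vanishing at $\vartheta_0$ with $|b|=1$ on $\partial S$, and $\|h\|_{\mathcal F}=\|g\|_{\mathcal F}$. Hence
\begin{align*}
\|g(\vartheta)\|_{X_\vartheta}\le |b(\vartheta)|\,\|g\|_{\mathcal F}\lesssim |\vartheta-\vartheta_0|\,\|g\|_{\mathcal F}.
\end{align*}
Combined with the Cauchy estimate, this gives
\begin{align*}
\big|\,\|x\|_{X_\vartheta}-\|x\|_{X_{\vartheta_0}}\text{-type quantities}\,\big|\lesssim |\vartheta-\vartheta_0|.
\end{align*}
Concretely, for $f\in\mathcal F$ with $f(\vartheta)=x$, the element $f(\vartheta_0)-f'(\vartheta_0)(\vartheta_0-\vartheta)$ is controlled; the derivative $f'(\vartheta_0)$ lies in $X_{\vartheta_0}$ with norm $\lesssim\|f\|_{\mathcal F}$.

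\textbf{Step 2: the lower bound (a).} Take $x\in X_\vartheta$ and $f\in\mathcal F(X_0,X_1)$ with $f(\vartheta)=x$ and $\|f\|_{\mathcal F}\le 2\|x\|_{X_\vartheta}$. Choose $g\in\mathcal F(Y_0,Y_1)$ nearly optimal for $Tx$ at $\vartheta$. Compare $Tf$ with $g$: the difference vanishes at $\vartheta$, so by Step 1 its value at $\vartheta_0$ is $O(|\vartheta-\vartheta_0|)(\|x\|+\|Tx\|)$. Using the isomorphism at $\vartheta_0$, we invert $T$ on $g(\vartheta_0)$. We then build $\tilde f=f+\Phi$, where $\Phi$ is a correction function constructed from $T^{-1}_{\vartheta_0}$, and compare norms. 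This step is the main obstacle: $T_{\vartheta_0}^{-1}$ is defined only at $\vartheta_0$, not on the whole strip, so the correction must be made through elements of $\mathcal F$ with small norm. I would follow Kalton–Mitrea. Set $\Phi(\zeta)=T^{-1}_{\vartheta_0}$ applied to the $\vartheta_0$-value, multiplied by scalar holomorphic factors such as $e^{(\zeta-\vartheta_0)^2}$ to keep it in $\mathcal F$. Then iterate, obtaining geometric series with ratio $C|\vartheta-\vartheta_0|<\tfrac12$.

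\textbf{Step 3: surjectivity (b) and conclusion.} Given $y\in Y_\vartheta$, take $g$ representing $y$ and set $x_0=T_{\vartheta_0}^{-1}g(\vartheta_0)$. Lift $x_0$ to some $f_0\in\mathcal F$ and put $x^{(1)}=f_0(\vartheta)$. Then
\begin{align*}
\|y-Tx^{(1)}\|_{Y_\vartheta}\le C|\vartheta-\vartheta_0|\,\|y\|_{Y_\vartheta},
\end{align*}
because $g-Tf_0$ vanishes at $\vartheta_0$ and Step 1 applies. Iterating on the residual gives a Neumann series converging in $X_\vartheta$ once $C|\vartheta-\vartheta_0|<1$, which yields a preimage. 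Together with (a), $T$ is an isomorphism on $(\vartheta_0-\delta,\vartheta_0+\delta)$, so the set is open.
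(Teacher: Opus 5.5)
The paper does not prove this proposition; it quotes it from \u{S}ne\u{\i}berg and points to Egert's thesis. So your argument has to stand on its own. Its architecture (Blaschke-factor Schwarz lemma, an a priori lower bound, successive approximation for surjectivity) is the standard one. The Schwarz lemma is correct. So is Step 3: the map $y\mapsto x^{(1)}$ is not linear, but iterating on the residuals still gives an absolutely convergent series in $X_\vartheta$ whose image under $T$ telescopes to $y$.

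The genuine gap is Step 2, the lower bound (a). You call it the main obstacle and defer to Kalton--Mitrea. The correction you sketch is not admissible: $\zeta\mapsto e^{(\zeta-\vartheta_0)^2}v$ with a fixed $v=T_{\vartheta_0}^{-1}(\cdots)\in X_{\vartheta_0}$ belongs to $\mathcal{F}(X_0,X_1)$ only if $v\in X_0\cap X_1$.

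There is also a false claim in Step 1: that $\|f'(\vartheta_0)\|_{X_{\vartheta_0}}\lesssim\|f\|_{\mathcal{F}}$. Take the couple $(\L^{p_0},\L^{p_1})$ and $f(\zeta)=\frac{h}{|h|}|h|^{p(\frac{1-\zeta}{p_0}+\frac{\zeta}{p_1})}$ with $\|h\|_{\L^p}=1$. Then $\|f\|_{\mathcal{F}}=1$, but $f'(\vartheta_0)=p(\frac{1}{p_1}-\frac{1}{p_0})h\log|h|$, whose $\L^p$-norm is unbounded, e.g.\ for $h=N^{1/p}\mathbf{1}_E$ with $|E|=1/N$. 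Nothing later uses this claim. The ``norm comparison'' built on it is meaningless for $x\in X_\vartheta\setminus X_{\vartheta_0}$, so drop it.

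For (a) you need neither a correction function nor an iteration. The missing idea is a second use of your Schwarz lemma, on the way back from $\vartheta_0$ to $\vartheta$. Take $f,g$ as in your Step 2 with $\|f\|_{\mathcal{F}}\le 2\|x\|_{X_\vartheta}$ and $\|g\|_{\mathcal{F}}\le 2\|Tx\|_{Y_\vartheta}$, and set $\delta=|\vartheta-\vartheta_0|$. Your first observation correctly gives
\[
\|f(\vartheta_0)\|_{X_{\vartheta_0}}\le c^{-1}\|Tf(\vartheta_0)\|_{Y_{\vartheta_0}}\le c^{-1}\bigl(\|g\|_{\mathcal{F}}+C\delta(M\|f\|_{\mathcal{F}}+\|g\|_{\mathcal{F}})\bigr).
\]
Now lift $f(\vartheta_0)$ to some $f_1\in\mathcal{F}(X_0,X_1)$ with $\|f_1\|_{\mathcal{F}}\le 2\|f(\vartheta_0)\|_{X_{\vartheta_0}}$. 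Apply Step 1 to $f-f_1$, which vanishes at $\vartheta_0$:
\[
\|x\|_{X_\vartheta}\le \|f_1(\vartheta)\|_{X_\vartheta}+\|(f-f_1)(\vartheta)\|_{X_\vartheta}\le \|f_1\|_{\mathcal{F}}+C\delta\bigl(\|f\|_{\mathcal{F}}+\|f_1\|_{\mathcal{F}}\bigr).
\]
Inserting the first estimate, every term on the right that contains $\|x\|_{X_\vartheta}$ carries a factor $\delta$. For $\delta$ small in terms of $c$, $M$ and $\min\{\vartheta_0,1-\vartheta_0\}$ these terms can be absorbed, giving $\|x\|_{X_\vartheta}\lesssim\|Tx\|_{Y_\vartheta}$. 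Together with your Step 3 this proves that the set is open.
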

We refer to \cite[Chap.~2]{Lunardi} for an introduction to complex interpolation spaces and to \cite[Sec.~1.3.5]{Egert-Dissertation} for an extensive treatment of \u{S}ne\u{\i}berg's theorem. Our goal is to apply this result to the weak Stokes resolvent $1+\cA$. Since this requires a priori boundedness of the operator on $\W^{1,p}_\sigma(\IR^d)$ for $p \neq 2$, we first extend its definition to the full range of $p$ as follows.

\begin{definition}
    Let $1<p<\infty$. For $u\in \W^{1,p}_\sigma(\IR^d)$ define the weak generalized Stokes operator $\mathcal{A}u \in \W^{-1,p}_\sigma(\IR^d)$ by
    \begin{align*}
        (\mathcal{A}u)(v)\coloneqq \int\limits_{\IR^d}\mu \nabla u\cdot \overline{\nabla v}\,\d x
    \end{align*}
    for $v\in \W^{1,p'}_\sigma(\IR^d)$.
\end{definition}

We observe, by Hölder's inequality and Assumption~\ref{Ass: Coefficients}, that $\cA$ is a bounded operator from $\W^{1,p}_\sigma(\IR^d)$ to $\W^{-1,p}_\sigma(\IR^d)$. Moreover, we know from Proposition~\ref{Prop: L2 resolvent bounds} that $1+\cA$ is an isomorphism from $\W^{1,2}_\sigma(\IR^d)$ to $\W^{-1,2}_\sigma(\IR^d)$. Hence, to be able to apply \u{S}ne\u{\i}berg's stability result, we have to show that the underlying function spaces form a complex interpolation scale.

\begin{lemma}
\label{lem: complex family}
    Let $0<\theta<1$, $1<p_0\leq p_1 <\infty$ and define
    \begin{align*}
        \frac{1}{p_\theta}\coloneqq \frac{1-\theta}{p_0}+\frac{\theta}{p_1}.
    \end{align*}
    Then we have
    \begin{align*}
        [\W^{1,p_0}_\sigma(\IR^d), \W^{1,p_1}_\sigma(\IR^d)]_\theta = \W^{1,p_\theta}_\sigma(\IR^d)
    \end{align*}
    and
    \begin{align*}
        [\W^{-1,p_0}_\sigma(\IR^d), \W^{-1,p_1}_\sigma(\IR^d)]_\theta = \W^{-1,p_\theta}_\sigma(\IR^d)
    \end{align*}
    with equivalence of norms.
\end{lemma}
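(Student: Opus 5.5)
The plan is to realize $\W^{1,p}_\sigma(\IR^d)$ as a complemented subspace of $\W^{1,p}(\IR^d;\IC^d)$ via a projection that does not depend on $p$. We then invoke the retraction–coretraction principle for complex interpolation, together with the classical identity $[\W^{1,p_0},\W^{1,p_1}]_\theta=\W^{1,p_\theta}$ for full Sobolev spaces on $\IR^d$ (e.g. via Bessel potentials $\W^{1,p}=H^{1,p}$, and the fact that $(1-\Delta)^{1/2}$ is an isomorphism from $H^{1,p}$ onto $\L^p$, so that Riesz–Thorin/Calderón gives the claim).

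\textbf{Step 1 (positive order).} The Leray projection $\IP$ is a Fourier multiplier of order zero. It commutes with derivatives and with $(1-\Delta)^{1/2}$, so by Mikhlin's theorem it is bounded on $\W^{1,p}(\IR^d;\IC^d)$ for every $1<p<\infty$. Moreover, it acts consistently on the sum space $\W^{1,p_0}+\W^{1,p_1}$ and has range exactly $\W^{1,p}_\sigma(\IR^d)$, as recalled in Section~\ref{sec: notation}. Thus $\iota$ is a coretraction and $\IP$ a retraction, uniformly for the couple. The abstract result \cite[Thm.~1.2.4]{Triebel} then yields
\begin{align*}
 [\W^{1,p_0}_\sigma,\W^{1,p_1}_\sigma]_\theta=\IP[\W^{1,p_0},\W^{1,p_1}]_\theta=\IP\W^{1,p_\theta}=\W^{1,p_\theta}_\sigma
\end{align*}
with equivalent norms.

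\textbf{Step 2 (negative order).} For the dual scale we use the same principle with the pair $\cP=\iota'$ and a right inverse. The natural choice for the coretraction $S$ is $S\coloneqq\IP'$, namely
\begin{align*}
 \langle S g, w\rangle\coloneqq g(\IP w) \qquad (g\in\W^{-1,p}_\sigma,\ w\in\W^{1,p'}(\IR^d;\IC^d)).
\end{align*}
This $S$ maps $\W^{-1,p}_\sigma$ boundedly into $\W^{-1,p}(\IR^d;\IC^d)$ for all $p$ simultaneously. It satisfies $\cP S g = g$ because $\IP\iota v=v$ for $v\in\W^{1,p'}_\sigma$. Both $S$ and $\cP$ are defined consistently across exponents, since $\IP$ is. Combining this with $[\W^{-1,p_0},\W^{-1,p_1}]_\theta=\W^{-1,p_\theta}$, which follows by duality (the duality theorem for complex interpolation of reflexive spaces) or again via Bessel potentials, the retraction argument gives the second identity.

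\textbf{Main obstacle.} The delicate point is the compatibility in Step 2. The spaces $\W^{-1,p}_\sigma$ are defined as antidual spaces and must be embedded into a common ambient Hausdorff space; we take $\mathcal{S}'$ or the dual of $\C^\infty_{c,\sigma}$. We must then verify that $S$ and $\cP$ agree on intersections, so that they are morphisms of interpolation couples. To settle this, I would first check everything on the dense subspace $\cP\div(\C^\infty_c)$ and extend by density, using that $\C_{c,\sigma}^\infty$ is dense in each $\W^{1,p'}_\sigma$.
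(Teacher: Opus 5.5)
Your Step 1 is the paper's argument. $\IP$ is a bounded projection of $\W^{1,p}(\IR^d;\IC^d)$ onto $\W^{1,p}_\sigma(\IR^d)$ for every $1<p<\infty$ and acts consistently on the sum space, so the retraction--coretraction principle transfers $[\W^{1,p_0},\W^{1,p_1}]_\theta=\W^{1,p_\theta}$ to the solenoidal scale.

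For the negative-order identity you take a different route. The paper builds no second retraction. It applies Step 1 to the conjugate exponents and invokes the duality theorem for complex interpolation \cite[Sec.~1.11.3]{Triebel}. This theorem applies because the spaces $\W^{1,q}_\sigma$ are reflexive and share the dense subspace $\C^\infty_{c,\sigma}(\IR^d)$, and it gives $[\W^{-1,p_0}_\sigma,\W^{-1,p_1}_\sigma]_\theta=([\W^{1,p_0'}_\sigma,\W^{1,p_1'}_\sigma]_\theta)'=(\W^{1,p_\theta'}_\sigma)'=\W^{-1,p_\theta}_\sigma$.

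You instead realize $\W^{-1,p}_\sigma$ as a retract of $\W^{-1,p}(\IR^d;\IC^d)$ via $\cP=\iota'$ and $S=\IP'$; indeed $\cP S=\Id$ because $\IP\iota=\Id$. You then import the full-space identity $[\W^{-1,p_0},\W^{-1,p_1}]_\theta=\W^{-1,p_\theta}$. This is valid and makes the complemented structure of the negative scale explicit. If you get the full-space identity by Bessel potential lifting, you also avoid the duality theorem altogether. The paper's route is shorter because it simply recycles Step 1.

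On the compatibility issue you flag, your stated fix is not quite the right density statement. For $S$ to be well defined on $\W^{-1,p_0}_\sigma+\W^{-1,p_1}_\sigma$, you need the following: for $g$ in the intersection, its extensions to $\W^{1,p_0'}_\sigma$ and to $\W^{1,p_1'}_\sigma$ must agree at $\IP w$ for $w\in\C_c^\infty$.
\begin{itemize}
\item Density of $\C^\infty_{c,\sigma}$ in each $\W^{1,p'}_\sigma$ separately does not give this.
\item Checking on $\cP\div(\C_c^\infty)$ does not give it either, unless that set is dense in the intersection of the negative spaces.
\end{itemize}
What you need is density of $\C^\infty_{c,\sigma}$ in $\W^{1,p_0'}_\sigma\cap\W^{1,p_1'}_\sigma$ for the intersection norm. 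This holds: cut off and correct the divergence with a Bogovski\u{\i} operator as in Lemma~\ref{lem: Bogovskii lemma}, since Galdi's operator is bounded on $\L^q$ for all $1<q<\infty$ at once. The paper's duality argument tacitly uses the same fact to identify the abstract dual couple with $(\W^{-1,p_0}_\sigma,\W^{-1,p_1}_\sigma)$, so this is a detail to fill in rather than a gap in your approach.
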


\begin{proof}
    First, recall that $\IP\in \cL(\W^{1,p}(\IR^d;\IC^d))$ and $\IP(\W^{1,p}(\IR^d;\IC^d)) = \W^{1,p}_\sigma(\IR^d)$ for all $1<p<\infty$.
    Hence, we can extend the projection $\IP$ canonically to the sum space $\W^{1,p_0}(\IR^d;\IC^d) +  \W^{1,p_1}(\IR^d;\IC^d)$ whose range is characterized by $\mathcal{Z} \coloneqq   \W^{1,p_0}_\sigma(\IR^d) +  \W^{1,p_1}_\sigma(\IR^d)$. By the retraction-coretraction principle (see for example \cite[Cor.~1.3.6]{Egert-Dissertation} and \cite[Sec.~2.4.2]{Triebel}), we calculate the interpolation spaces as follows
    \begin{align*}
        [\W^{1,p_0}_\sigma(\IR^d), \W^{1,p_1}_\sigma(\IR^d)]_\theta &= [ \mathcal{Z}\cap \W^{1,p_0}(\IR^d;\IC^d) ,\mathcal{Z}\cap \W^{1,p_1}(\IR^d;\IC^d)]_\theta\\
        &=  \mathcal{Z}\cap[ \W^{1,p_0}(\IR^d;\IC^d) , \W^{1,p_1}(\IR^d;\IC^d)]_\theta\\
        &=  \mathcal{Z}\cap \W^{1,p_\theta}(\IR^d;\IC^d)\\
        &= \W^{1,p_\theta}_\sigma(\IR^d).
    \end{align*}
    Finally, the identity for the dual spaces $\W^{-1,p}_\sigma(\IR^d)$ follows from the first result combined with the duality theorem for complex interpolation \cite[Sec.~1.11.3]{Triebel}.
\end{proof}

\begin{theorem}[$\W^{1,p}$-extrapolation of $1+\mathcal{A}$]
\label{thm: extension of 1+A}
    Let $\mu$ satisfy Assumption~\ref{Ass: Coefficients}. There exists $\delta>0 $ depending on $d$ and $\mu_\bullet,\mu^\bullet$ such that the operator $1+\cA$ is an isomorphism between $\W^{1,p}_\sigma(\IR^d) $ and $\W^{-1,p}_\sigma(\IR^d)$ for any $p\in (1,\infty)$ with $|\frac{1}{2}-\frac{1}{p}|<\delta$.
\end{theorem}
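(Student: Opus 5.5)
The plan is to apply Šneĭberg's stability result (Proposition~\ref{prop: Sneiberg}) to the single operator $T \coloneqq 1+\cA$ on the complex interpolation scales $X_\vartheta \coloneqq \W^{1,p_\vartheta}_\sigma(\IR^d)$ and $Y_\vartheta \coloneqq \W^{-1,p_\vartheta}_\sigma(\IR^d)$. Here $\frac{1}{p_\vartheta}=\frac{1-\vartheta}{p_0}+\frac{\vartheta}{p_1}$, with fixed endpoints, for instance $p_0=\frac{3}{2}$ and $p_1=3$, so that $p_{1/2}=2$.

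\textbf{Step 1: the interpolation couples.} By Lemma~\ref{lem: complex family}, $[X_0,X_1]_\vartheta=\W^{1,p_\vartheta}_\sigma(\IR^d)$ and $[Y_0,Y_1]_\vartheta=\W^{-1,p_\vartheta}_\sigma(\IR^d)$ for all $\vartheta\in(0,1)$, with equivalent norms. Both couples are compatible, since they embed continuously into $\W^{1,p_0}_\sigma+\W^{1,p_1}_\sigma$, respectively into the sum of the duals.

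\textbf{Step 2: boundedness at the endpoints.} As observed after the definition of $\cA$ on $\W^{1,p}_\sigma$, Hölder's inequality and the bound $\mu^\bullet$ give $\cA\in\cL(\W^{1,p}_\sigma,\W^{-1,p}_\sigma)$. The identity map $\W^{1,p}_\sigma\hookrightarrow\W^{-1,p}_\sigma$ (via $v\mapsto\langle u,v\rangle_{\L^2}$) is bounded as well. Hence $T\in\cL(X_0,Y_0)\cap\cL(X_1,Y_1)$. One must check that these realizations are consistent, i.e.\ they agree on $X_0\cap X_1$. This holds because all of them are given by the same integral formula, tested on $\C^\infty_{c,\sigma}$, which is dense in every $\W^{1,p'}_\sigma$.

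\textbf{Step 3: isomorphism at $\vartheta=\frac12$.} Proposition~\ref{Prop: L2 resolvent bounds} and Lax--Milgram give that $1+\cA:\W^{1,2}_\sigma\to\W^{-1,2}_\sigma$ is an isomorphism. Indeed, $\Re\big((1+\cA)u\big)(u)\ge \|u\|_{\L^2}^2+\mu_\bullet\|\nabla u\|_{\L^2}^2$, so the form is coercive on the full $\W^{1,2}$ norm.

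\textbf{Step 4: openness and uniformity of $\delta$.} Šneĭberg's theorem yields an open neighbourhood of $\frac12$ on which $T$ is an isomorphism. Since $\vartheta\mapsto\frac{1}{p_\vartheta}$ is affine, this neighbourhood translates into $|\frac12-\frac1p|<\delta$.

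The main obstacle is to show that $\delta$ depends only on $d,\mu_\bullet,\mu^\bullet$. For this I would invoke the quantitative version of Šneĭberg's theorem (\cite[Sec.~1.3.5]{Egert-Dissertation}). It states that the size of the neighbourhood is controlled by the endpoint norms $\|T\|_{\cL(X_j,Y_j)}$ ($\lesssim 1+\mu^\bullet$), by the lower bound of $T$ at $\vartheta=\frac12$ ($\gtrsim\min\{1,\mu_\bullet\}$), and by the interpolation constants from Lemma~\ref{lem: complex family}. Those interpolation constants depend only on $d$ and the chosen $p_0,p_1$, through the boundedness of $\IP$.

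A minor secondary point is that the isomorphism property at $p_\vartheta$ must be stated as invertibility of the bounded operator on $\W^{1,p_\vartheta}_\sigma$ itself. Šneĭberg's theorem provides exactly this, because the interpolated operator coincides with the $\W^{1,p_\vartheta}_\sigma$-realization of $1+\cA$ on the dense set $X_0\cap X_1$.
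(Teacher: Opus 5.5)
Your proposal is correct and matches the paper's proof essentially step for step: the same endpoints $p_0=\frac32$, $p_1=3$ with $\vartheta=\frac12$, Lemma~\ref{lem: complex family} to identify the interpolation spaces, Proposition~\ref{Prop: L2 resolvent bounds} for invertibility at $p=2$, and \u{S}ne\u{\i}berg's theorem (Proposition~\ref{prop: Sneiberg}) for openness. Your additional remarks, on the consistency of the realizations and on the quantitative \u{S}ne\u{\i}berg theorem controlling how $\delta$ depends on $d,\mu_\bullet,\mu^\bullet$, only make explicit what the paper leaves implicit.
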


\begin{proof}
    Set $p_0= \frac{3}{2}$,  $p_1 = 3$ and $\vartheta = \frac{1}{2}$ such that
    \begin{align*}
        \frac{1}{2} = \frac{1-\vartheta}{p_0} + \frac{\vartheta}{p_1}.
    \end{align*}
    Note that $(X_0,X_1)=(\W^{1,p_0}_\sigma, \W^{1,p_1}_\sigma)$ and $(Y_0,Y_1)=(\W^{-1,p_0}_\sigma, \W^{-1,p_1}_\sigma)$ form interpolation couples and $1 +\cA \in \cL(X_0,Y_0)\cap \cL(X_1,Y_1)$. Moreover, by Proposition~\ref{Prop: L2 resolvent bounds} and  Lemma~\ref{lem: complex family}, we know that $1 +\cA:\W^{1,2}_\sigma(\IR^d) \to \W^{-1,2}_\sigma(\IR^d)$ is a bounded isomorphism and
    \begin{align*}
        [\W^{1,p_0}_\sigma(\IR^d), \W^{1,p_1}_\sigma(\IR^d)]_\vartheta = \W^{1,2}_\sigma(\IR^d)\quad \text{and}\quad  [\W^{-1,p_0}_\sigma(\IR^d), \W^{-1,p_1}_\sigma(\IR^d)]_\vartheta = \W^{-1,2}_\sigma(\IR^d).
    \end{align*}
    Hence, we can apply Proposition~\ref{prop: Sneiberg} to complete the proof.
\end{proof}

We are now in a position to extrapolate the $\L^2$-$\L^p$ bounds of the semigroup to some $p$ beyond the upper Sobolev exponent $2^*$.

\begin{theorem}
\label{thm: L2-L2*+eps bdd semigroup}
    Let $\mu$ satisfy Assumption~\ref{Ass: Coefficients}, $\omega_0$ be given by \eqref{eq: def omega} and $\beta\in [0,\frac{\pi}{2}-\omega_0)$. There exists a constant $\delta>0$ depending on $\beta,d$ and $\mu_\bullet,\mu^\bullet$ such that for all $p\in[2,\infty)$ satisfying $\frac{1}{2}-\frac{1}{p}<\frac{1}{d}+\delta$ the semigroup $(e^{-zA})_{z\in\S_\beta}$ satisfies $\L^2$-$\L^p$ bounds.
\end{theorem}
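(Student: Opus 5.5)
By Lemma~\ref{lem: rescaling lemma}, it suffices to show $\|e^{-A}f\|_{\L^p}\lesssim\|f\|_{\L^2}$ for every $f\in\L^2_\sigma(\IR^d)$, with a constant depending only on $d,p,\mu_\bullet,\mu^\bullet$ and, through the rotation in Observation~\ref{observation}, on $\beta$. The idea is to gain a little more than one derivative of integrability for $e^{-A}f$ by going through $\W^{1,q}_\sigma$ with $q>2$ slightly. Since $|\frac12-\frac1q|<\delta$ with $\delta$ from Theorem~\ref{thm: extension of 1+A} gives, via Sobolev's embedding, $\W^{1,q}\hookrightarrow\L^p$ for $\frac1p=\frac1q-\frac1d$, we reach every $p$ with $\frac12-\frac1p<\frac1d+\delta$. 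If $q\ge d$ (for instance $d=2$), we instead use $\W^{1,q}\hookrightarrow \L^p$ for all $p\in[q,\infty)$. Exponents $p$ inside the Sobolev range are already covered by the preceding lemma.

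\textbf{Step 1.} Write
\[
e^{-A}f=(1+A)^{-1}(1+A)e^{-A}f=(1+\cA)^{-1}g,\qquad g:=(1+A)e^{-A}f=e^{-A}f+Ae^{-A}f .
\]
By Proposition~\ref{prop: L2 semigroup bounds}, $\|g\|_{\L^2}\lesssim\|f\|_{\L^2}$. The resolvent of the part $A$ agrees with that of $\cA$ on $\L^2_\sigma$.

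\textbf{Step 2.} We need $g\in\W^{-1,q}_\sigma$ with norm controlled by $\|f\|_{\L^2}$. For this it suffices that $g\in\L^2_\sigma\cap\L^{q_*}_\sigma$ with $q_*=\frac{dq}{d+q}$: by duality and Sobolev, $\W^{1,q'}_\sigma\hookrightarrow\L^{(q_*)'}$ since $(q_*)'=(q')^*$, so $\L^{q_*}$ embeds into $\W^{-1,q}_\sigma$. The obstacle is that $q_*<2$ and $g$ is only known to lie in $\L^2$, since hypercontractivity below $2$ is not yet available. To get around this, I would factor differently so that every piece acts on $\L^2$-type data. Namely, the functional $v\mapsto\langle g,v\rangle$ on $\W^{1,q'}_\sigma$ is bounded provided $g$ is controlled in $\L^{2}$ and $v$ lies in $\L^2$, which is not automatic.

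The cleaner route I would actually take is to invert in the other direction. Apply Theorem~\ref{thm: extension of 1+A} to the adjoint $1+\cA^*$ on $\W^{1,q'}_\sigma\to\W^{-1,q'}_\sigma$, dualize, and use the semigroup property $e^{-A}=e^{-A/2}e^{-A/2}$. The first factor $e^{-A/2}$ maps $\L^2\to\W^{1,2}_\sigma\hookrightarrow\L^{2^*}$ boundedly, and then $\L^{2^*}\cap\L^2\subset\L^s$ for all $s\in[2,2^*]$. Hence $h:=(1+A)e^{-A/2}f$, which equals $(1+A)e^{-A/4}\bigl(e^{-A/4}f\bigr)$, lies in $\L^2\cap\L^{s}$.

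\textbf{Step 3.} I would then run Šneĭberg on the scale between $\L^2$-bounds and the already established $\L^2$-$\L^{2^*}$ bounds. Concretely, $e^{-A}f=(1+\cA)^{-1}h'$, where $h'=(1+A)e^{-A}f$ can be represented as $(1+A)e^{-A/2}$ applied to $e^{-A/2}f\in\L^2\cap\L^{2^*}$. I would try to show $(1+A)e^{-A/2}$ is bounded on $\L^s_\sigma$ for $s$ near $2$ by interpolating the $\L^2$ bound with the Tolksdorf-range bounds. For the uniform $\L^s$-bounds when $s$ is in the Sobolev range, this follows from the decay estimates of Proposition~\ref{prop: Lp off-diag sg} with $\nu>d$ and Corollary~\ref{cor: decay, hyper, unif}. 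Then $h'\in\L^{s}_\sigma$ with $s$ slightly above $2$. Taking $s=q_*$ with $q$ slightly above $2$ requires $s<2$, so I would use instead the dual version: estimate $\langle e^{-A}f,\varphi\rangle$ for $\varphi\in\L^{p'}$ via $e^{-A^*}\varphi$, with $p'<2$, reducing to $\L^{p'}\to\W^{-1,q'}$ and applying $(1+\cA^*)^{-1}$ on $\W^{-1,q'}_\sigma\to\W^{1,q'}_\sigma\hookrightarrow\L^{2}$. I expect this bookkeeping of which side receives the non-$\L^2$ integrability to be the main obstacle. Uniformity in $\beta$ comes from Observation~\ref{observation}, since $\delta$ in Theorem~\ref{thm: extension of 1+A} depends only on the ellipticity constants.
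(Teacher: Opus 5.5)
Your basic strategy (the \u{S}ne\u{\i}berg isomorphism $1+\cA\colon\W^{1,q}_\sigma\to\W^{-1,q}_\sigma$ for $q$ slightly above $2$, followed by Sobolev's embedding) is the right one, but the proposal never closes because Step~2 rests on a false obstacle. You assert that $g\in\L^2_\sigma$ does not suffice to place $g$ in $\W^{-1,q}_\sigma$, and that one would need $g\in\L^{q_*}$. That would be true for the dual of a homogeneous space. Here, however, $\W^{1,q'}_\sigma$ carries the full $\W^{1,q'}$-norm and therefore embeds into $\L^s$ for every $s\in[q',(q')^*]$. Since $2$ lies in this interval whenever $q\geq 2$ and $\frac12-\frac1q\leq\frac1d$, duality gives $\L^2_\sigma(\IR^d)\hookrightarrow\W^{-1,q}_\sigma(\IR^d)$. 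This is exactly the embedding used in the proof of Proposition~\ref{prop: L2-L2+eps bdd grad sg}. More generally, $\L^s_\sigma\hookrightarrow\W^{-1,q}_\sigma$ whenever $\frac1q\leq\frac1s\leq\frac1q+\frac1d$.

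With this embedding, your Step~1 is already a complete proof. Shrink $\delta\leq\frac1d$ and choose $q$ with $\frac12-\delta<\frac1q\leq\frac12$ and $\frac1p\leq\frac1q\leq\frac1p+\frac1d$. Then $\|e^{-A}f\|_{\W^{1,q}}\lesssim\|(1+A)e^{-A}f\|_{\W^{-1,q}_\sigma}\lesssim\|(1+A)e^{-A}f\|_{\L^2}\lesssim\|f\|_{\L^2}$. Here you use that the inverse on $\W^{-1,q}_\sigma$ is consistent with $(1+A)^{-1}$ on $\L^2_\sigma$, together with Proposition~\ref{prop: L2 semigroup bounds}. Finally apply $\W^{1,q}\hookrightarrow\L^p$. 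This is the mechanism of the paper's proof. The paper chains the same two embeddings through $\W^{1,q_k}_\sigma$ along exponents $p_k$ with $\frac1{p_{k+1}}-\frac1{p_k}\leq\frac1d$, and then writes $e^{-A}=(1+A)^{-k_0}[(1+A)e^{-A/k_0}]^{k_0}$.

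Having discarded this route, what you put in its place does not amount to a proof. As written, the claim $h=(1+A)e^{-A/4}(e^{-A/4}f)\in\L^s$ needs $\L^s$-bounds for $(1+A)e^{-A/4}$; the factors must instead be ordered as $e^{-A/4}(1+A)e^{-A/4}f$. Even then, integrability of $h$ beyond $\L^2$ cannot be exploited. The operator $(1+\cA)^{-1}$ is only known to be invertible on $\W^{1,q}_\sigma$ for $q$ close to $2$, and there $\L^2$-data already suffice.

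The phrase ``running \u{S}ne\u{\i}berg between $\L^2$-bounds and $\L^2$-$\L^{2^*}$ bounds'' has no meaning. The theorem concerns a single operator bounded on two interpolation couples, and it propagates invertibility from one interpolation parameter to nearby ones.

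In the dual variant, suppose $(1+A^*)e^{-A^*}$ has to act on $\L^{p'}$ with $\frac1{p'}-\frac12>\frac1d$. Then you are using uniform bounds that the paper obtains only from the present theorem (via Theorem~\ref{thm: Lp-extrapolation sg II}), which is circular. The variant works only if you apply $(1+\cA^*)^{-1}$ directly to $\varphi$ and then use $\W^{1,q'}_\sigma\hookrightarrow\L^2$. That is precisely the embedding you called ``not automatic'' in Step~2, and its dual is what repairs Step~1.
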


\begin{proof}
    Since $p=2$ is trivial, we fix $p\in (2,\infty)$ satisfying
    \begin{align*}
        \frac{1}{2}-\frac{1}{d}-\delta <\frac{1}{p}< \frac{1}{2},
    \end{align*}
    where $\delta>0$ is the same as in Theorem~\ref{thm: extension of 1+A}. By Lemma~\ref{lem: rescaling lemma}, it suffices to show
    \begin{align*}
        \|e^{-A}f\|_{\L^p}\lesssim \|f\|_{\L^2}
    \end{align*}
    for every $f\in \L^2_\sigma(\IR^d)$. Define recursively the sequence $(p_k)\subset [2,\infty)$ as follows: set $p_0 = p$ and define for $k\in\IN_0$
    \begin{align*}
        \frac{1}{p_{k+1}} \coloneqq
        \begin{cases}
        \frac{1}{p_k} + \frac{1}{d}, \quad &\text{if}\quad \frac{1}{p_k} + \frac{1}{d}<\frac{1}{2},\\
        \frac{1}{2}, \hfill &\text{else.}
        \end{cases}
    \end{align*}
    Then, we have 
    \begin{align*}
        \frac{1}{p_k}\leq\frac{1}{p_{k+1}} \quad \text{and} \quad \frac{1}{p_{k+1}} - \frac{1}{p_k} \leq \frac{1}{d}
    \end{align*}
    for all $k\geq 0$ and
    \begin{align*}
        \frac{1}{2}-\delta \leq  \frac{1}{p_k}\leq \frac{1}{2}
    \end{align*}
    for all $k>0$. Denote by $k_0\in\IN_0$ the minimal index such that $\frac{1}{p_{k_0}}=\frac{1}{2}$. Choosing $(q_k)_{k=0}^{k_0}$ such that the conditions
    \begin{align*}
        \frac{1}{p_k}\leq \frac{1}{q_k}\leq \frac{1}{p_{k+1}} \quad \text{and} \quad \frac{1}{2}-\delta <  \frac{1}{q_k} \leq \frac{1}{2}
    \end{align*}
    hold for all $0\leq k\leq k_0$, we can calculate
    \begin{align*}
        \frac{1}{q_k}-\frac{1}{d} \leq \frac{1}{p_{k+1}} - \frac{1}{d} \leq \frac{1}{p_k},
    \end{align*}
    as well as
    \begin{align*}
        \frac{1}{p_{k+1}}  \leq \frac{1}{p_k}+\frac{1}{d}  \leq \frac{1}{q_k}+\frac{1}{d},
    \end{align*}
    where the last one is, by definition, equivalent to
    \begin{align*}
        \frac{1}{q_k'}\leq  \frac{1}{p_{k+1}'} +\frac{1}{d}.
    \end{align*}
    Applying Sobolev's embeddings with respect to these relations, we obtain 
    \begin{align*}
        \W^{1,q_k}_\sigma \hookrightarrow \L^{p_{k}}_\sigma,
    \end{align*}
    as well as
    \begin{align*}
        \W^{1,q_k'}_\sigma \hookrightarrow \L^{p_{k+1}'}_\sigma,
    \end{align*}
    which implies by duality
    \begin{align*}
        \L^{p_{k+1}}_\sigma \hookrightarrow \W^{-1,q_k}_\sigma.
    \end{align*}
    These embeddings, in conjunction with Theorem~\ref{thm: extension of 1+A}, imply the boundedness of the map
    \begin{align}
    \label{eq: it sobolev}
        (1+\cA)^{-1} : (\L_\sigma^{p_{k+1}}\hookrightarrow \W^{-1,q_k}_\sigma) \to (\W^{1,q_k}_\sigma\hookrightarrow\L_\sigma^{p_{k}})
    \end{align}
    for all $0\leq k \leq k_0$. Since $(1+\cA)^{-1}f = (1+A)^{-1}f$ for $f\in\L^2_\sigma(\IR^d)$, we can iterate \eqref{eq: it sobolev} to conclude
    \begin{align*}
        \|(1+A)^{-k_0}f\|_{\L^{p_0}} \lesssim \|(1+A)^{-k_0+1}f\|_{\L^{p_1}} \lesssim \dots \lesssim \|f\|_{\L^{p_{k_0}}} = \|f\|_{\L^{2}}.
    \end{align*}
    Finally, by Proposition~\ref{prop: L2 semigroup bounds}, we have
    \begin{align*}
        \|e^{-A}f\|_{\L^{p}} = \Big\|(1+A)^{-k_0}\Big[(1+A)e^{-\frac{1}{k_0}A}\Big]^{k_0}f\Big\|_{\L^{p_0}}\lesssim  \Big\|\Big[(1+A)e^{-\frac{1}{k_0}A}\Big]^{k_0}f\Big\|_{\L^{2}} \lesssim \|f\|_{\L^2},
    \end{align*}
    which finishes the proof.
\end{proof}

We have now everything at hand to prove Theorem~\ref{thm: Lp-extrapolation sg II}.

\begin{proof}[Proof of Theorem~\ref{thm: Lp-extrapolation sg II}]
    We first prove the claim for $p\geq 2$.
    If $d=2$ the claim follows directly from Propositions~\ref{prop: Lp off-diag sg} and~\ref{prop: L2-Lp off imply Lp bdd}. Thus, assume $d\geq 3$.
    Let $\delta>0$ be defined via Theorem~\ref{thm: L2-L2*+eps bdd semigroup} and fix some $0<\delta'<\delta$. Then, the semigroup $(e^{-zA})_{z\in\S_\beta}$ satisfies $\L^2$-$\L^p$ bounds for $p\in[2,\infty)$ defined by $\frac{1}{2}-\frac{1}{p}=\frac{1}{d}+\delta'$.
    Furthermore, by Proposition~\ref{prop: Lp off-diag sg}, we know that $(e^{-zA})_{z\in\S_\beta}$ satisfies $\L^2$-$\L^q$ decay estimates of order $\nu>d$ for $q\in[2,\infty)$ defined by $\frac{1}{2}-\frac{1}{q} = \frac{1}{d}$. Applying Corollary~\ref{cor: decay, hyper, unif} yields uniform $\L^r$-boundedness of the semigroup for all $r\in[q,p_\theta)$ where
    \begin{align*}
        \frac{1}{p_\theta} = \frac{1-\theta}{q} + \frac{\theta}{p} = \frac{1}{2}-\frac{1}{d}-\theta\delta'
    \end{align*}
    for some $\theta\in (0,1)$. Interpolation with the $\L^2$-bounds from Proposition~\ref{prop: L2 semigroup bounds} implies $\L^r$-boundedness for all $r\in[2,\infty)$ satisfying $\frac{1}{2}-\frac{1}{r}<  \frac{1}{d}+ \theta\delta'$.
    To obtain the result for $p<2$, we apply the result to the adjoint operator $(e^{-zA})^*=e^{-\bar z A^*}$ to conclude the $\L^{r'}$-boundedness of $e^{-zA}$ by duality for all $r'\in(1,2]$ satisfying $\frac{1}{r'}-\frac{1}{2} < \frac{1}{d}+\theta\delta'$. Setting $\varepsilon =\theta\delta'$ proves the claim.
\end{proof}

\section{Gradient estimates}
\label{sec: gradient est}
In this section, we establish the corresponding $\L^p$-theory for the gradient of the semigroup. Our approach relies fundamentally on the following duality lemma.

\begin{lemma}
\label{lem: duality}
    Let $\mu$ satisfy Assumption~\ref{Ass: Coefficients}, $\omega_0$ be given by \eqref{eq: def omega} and $\beta\in [0,\frac{\pi}{2}-\omega_0)$. For $z\in\S_\beta$ the adjoint of $\nabla e^{-\bar z A^*}$ is given by $-e^{-z\cA} \cP\div$.
\end{lemma}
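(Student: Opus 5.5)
We prove the identity $\langle \nabla e^{-\bar z A^*}g, F\rangle = -\langle g, e^{-z\cA}\cP\div(F)\rangle$ for all $g\in\L^2_\sigma(\IR^d)$ and $F\in\L^2(\IR^d;\IC^{d\times d})$. Here $\langle\cdot,\cdot\rangle$ is the sesquilinear $\L^2$-pairing. Both sides are bounded in $g$ and $F$: the left side by Proposition~\ref{prop: L2 semigroup bounds} applied to $A^*$, the right side by the second part of the same proposition, since $e^{-z\cA}\cP\div(F)\in\W^{1,2}_\sigma\subset\L^2_\sigma$ for $z\neq 0$. So the identity determines the adjoint.

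The strategy is to establish the identity first at the resolvent level and then transfer it to the semigroup via the Cauchy integral \eqref{eq: rep semigroup}. Fix $\lambda$ on the contour used for $e^{-zA}$, i.e.\ in a sector $\S_\theta$ with $\theta<\pi-\omega_0$. Set $u\coloneqq(\bar\lambda+A^*)^{-1}g\in\W^{1,2}_\sigma$ and $w\coloneqq(\lambda+\cA)^{-1}\cP\div(F)\in\W^{1,2}_\sigma$. On one side,
\begin{align*}
  \langle\nabla u,F\rangle=-\overline{\langle\div F,u\rangle}=-\overline{\langle\cP\div F,u\rangle}_{\W^{-1,2}_\sigma,\W^{1,2}_\sigma}.
\end{align*}
We then write $\cP\div F=(\lambda+\cA)w$ and use the form identity
\begin{align*}
  \langle(\lambda+\cA)w,u\rangle=\lambda\langle w,u\rangle_{\L^2}+\a(w,u)=\overline{\bar\lambda\langle u,w\rangle+\a^*(u,w)}=\overline{\langle(\bar\lambda+A^*)u,w\rangle_{\L^2}}=\overline{\langle g,w\rangle}.
\end{align*}
Combining the two displays gives $\langle\nabla(\bar\lambda+A^*)^{-1}g,F\rangle=-\langle g,(\lambda+\cA)^{-1}\cP\div F\rangle$. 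The only care needed is keeping track of the antilinear conventions in the pairings, and using that $\cA^*$ is the adjoint of $\cA$ as stated in Section~\ref{sec: notation}.

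To pass to the semigroup, we insert the resolvent identity into \eqref{eq: rep semigroup}. For $e^{-\bar zA^*}$ we use the contour $\gamma_{\bar z}$, which is the complex conjugate of $\gamma_z$, with factor $e^{\bar z\mu}$. Taking conjugates, we substitute $\mu=\bar\lambda$. The orientation reverses under conjugation, and the prefactor $\frac{1}{2\pi i}$ conjugates to $-\frac{1}{2\pi i}$, so the two sign changes cancel and we obtain
\begin{align*}
  \langle\nabla e^{-\bar zA^*}g,F\rangle=-\Big\langle g,\frac{1}{2\pi i}\int_{\gamma_z}e^{z\lambda}(\lambda+\cA)^{-1}\cP\div F\,\d\lambda\Big\rangle=-\langle g,e^{-z\cA}\cP\div F\rangle.
\end{align*}
Moving the gradient and the pairing inside the integral is justified by the absolute convergence of the contour integrals in $\W^{1,2}$, which follows from the $\W^{1,2}$-resolvent bounds of Proposition~\ref{Prop: L2 resolvent bounds} together with the exponential factor. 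The main obstacle is this bookkeeping of conjugations and contour orientation, not any analytic difficulty.
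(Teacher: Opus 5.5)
Your proof is correct, but it takes a different route from the paper. The paper argues directly at the semigroup level. For $G\in\C_c^\infty(\IR^d;\IC^{d\times d})$ one has $\div G\in\L^2$, so
$\langle G,\nabla e^{-\bar zA^*}f\rangle=\langle -\IP\div G,e^{-\bar zA^*}f\rangle=\langle -e^{-z\cA}\cP\div G,f\rangle$.
This uses three facts: $e^{-zA}$ is the Hilbert adjoint of $e^{-\bar zA^*}$, $e^{-z\cA}$ agrees with $e^{-zA}$ on $\L^2_\sigma$, and $\cP=\IP$ on $\L^2$. General $F\in\L^2$ then follows by density, which relies on the a priori $\L^2$-bound for $|z|^{1/2}e^{-z\cA}\cP\div$ from Proposition~\ref{prop: L2 semigroup bounds}.

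You instead derive the identity at the resolvent level straight from the form definitions of $A^*$, $\cA$ and $\cP$. This works for every $F\in\L^2$ at once, so you need no density step and no appeal to the consistency of the semigroups of $A$ and $\cA$. You then transfer the identity through the Cauchy integral. Your sign bookkeeping is right: conjugation turns $\frac{1}{2\pi i}$ into $-\frac{1}{2\pi i}$ and reverses the orientation of the conjugate contour, which as a set coincides with $\gamma_z$, so the two signs cancel.

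The cost of your route is the contour manipulation and the justification for moving $\nabla$ and the pairing inside the integrals. You correctly get this from the $\W^{1,2}$-resolvent bounds in Proposition~\ref{Prop: L2 resolvent bounds}. In return, you obtain the resolvent-level duality $\bigl(\nabla(\bar\lambda+A^*)^{-1}\bigr)^*=-(\lambda+\cA)^{-1}\cP\div$ as a by-product.

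One small point is worth stating explicitly. Since $e^{-z\cA}$ is defined on $\W^{-1,2}_\sigma$, its Cauchy integral converges there as well. It agrees with your $\W^{1,2}$-valued integral because the embedding $\W^{1,2}_\sigma\hookrightarrow\W^{-1,2}_\sigma$ is continuous.
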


\begin{proof}
    Let $G\in \C_c^\infty(\IR^d;\IC^{d\times d})$ and $f\in \L^2_\sigma(\IR^d)$. Then we have
    \begin{align*}
        \langle G , \nabla e^{-\bar zA^*}f \rangle_{\L^2,\L^2} &= \langle   -\div (G) ,  e^{-\bar zA^*}f \rangle_{\L^2,\L^2}\\
         &= \langle   -\IP\div (G) ,  e^{-\bar z A^*}f \rangle_{\L^2_\sigma,\L^2_\sigma}= \langle   -e^{-z\cA}\cP\div (G) , f \rangle_{\L^2_\sigma,\L^2_\sigma}.
    \end{align*}
    Finally, the density of $\C_c^\infty(\IR^d;\IC^{d\times d})$ in $\L^2(\IR^d;\IC^{d\times d})$ implies the claim.
\end{proof}

This result allows us to establish $\L^p$-estimates for the operator family $(|z|^\frac{1}{2}\nabla e^{-zA})_{z\in\S_{\frac{\pi}{2}-\omega_0}}$ by analyzing the dual family $(|z|^\frac{1}{2}e^{-z\cA} \cP\div)_{z\in\S_{\frac{\pi}{2}-\omega_0}}$ on $\L^{p'}$ instead. This perspective is particularly advantageous if $p < 2$, which we investigate next.

\subsection{The case $p<2$}

We start by recovering $\L^2$-$\L^p$ decay estimates of $(|z|^\frac{1}{2}e^{-z\cA} \cP\div)_{z\in\S_{\frac{\pi}{2}-\omega_0}}$ as in the proof of Proposition~\ref{prop: off-diag semigroup}.

\begin{proposition}
\label{prop: Lp off-diag sg div}
    Let $\mu$ satisfy Assumption~\ref{Ass: Coefficients}, $\omega_0$ be given by \eqref{eq: def omega} and $\beta\in [0,\frac{\pi}{2}-\omega_0)$.  For every $\nu \in (0,d+2)$ and $p\in [2,\infty)$ satisfying $\frac{1}{2}-\frac{1}{p}\leq \frac{1}{d}$ there exists a constant $C >0$ such that for all balls $B= B(x_0,r)$, $z\in \S_\beta$ and $F\in\L^2(\IR^d;\IC^{d\times d})$ we have
    \begin{align*}
        \||z|^\frac{1}{2}e^{-z\cA}\cP\div(F)\|_{\L^{p}(B)} \leq \frac{C}{r^{d(\frac{1}{2}-\frac{1}{p})}}\max\Big\{\Big(\frac{r^2}{|z|}\Big)^\frac{1}{2}, \Big(\frac{r^2}{|z|}\Big)^{-\frac{d+1}{2}}\Big\}\bigg(\sum\limits_{n=0}^\infty 2^{-\nu n}\|F\|_{\L^2(C_n(B))}^2 \bigg)^\frac{1}{2}.
     \end{align*}
     The constant $C>0$ depends only on $\beta,\nu,d,p$ and $\mu_\bullet,\mu^\bullet$.
\end{proposition}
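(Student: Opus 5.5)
The plan is to argue exactly as in the proof of Proposition~\ref{prop: off-diag semigroup}, feeding in Theorem~\ref{thm: L2-Lp off diag stokes resolv} with $f=0$. First observe that for $F\in\L^2(\IR^d;\IC^{d\times d})$ the function $u=(\lambda+\cA)^{-1}\cP\div(F)\in\W^{1,2}_\sigma(\IR^d)$ is precisely the solution of \eqref{eq: Stokes system} with $f=0$ and right-hand side $\div(F)$. Hence the first estimate of Theorem~\ref{thm: L2-Lp off diag stokes resolv} gives, for every $\lambda\in\S_\theta$ with $\theta\in(\pi/2+\beta,\pi-\omega_0)$,
\begin{align*}
    \|(\lambda+\cA)^{-1}\cP\div(F)\|_{\L^p(B)} \lesssim \frac{1}{|\lambda|^{\frac12} r^{d(\frac12-\frac1p)}}\max\Big\{(|\lambda|r^2)^{\frac12},(|\lambda|r^2)^{-\frac{d+1}{2}}\Big\}\bigg(\sum_{n=0}^\infty 2^{-\nu n}\|F\|^2_{\L^2(C_n(B))}\bigg)^{\frac12}.
\end{align*}

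Next I would write $e^{-z\cA}\cP\div(F)$ through the Cauchy integral \eqref{eq: rep semigroup} for the sectorial operator $\cA$ on $\W^{-1,2}_\sigma$. Its integrand is valued in $\W^{1,2}_\sigma$, so the integral converges in $\L^p(B)$ by Sobolev's embedding, and Minkowski's inequality lets me put the $\L^p(B)$-norm inside. After inserting the resolvent bound, I would substitute $\zeta=|z|\lambda$, so that the path becomes $\gamma_1$ with $|\zeta|\ge 1$. The integrand then turns into
\begin{align*}
    e^{\Re(\zeta z/|z|)}\,|\zeta|^{-\frac12}|z|^{-\frac12}\max\big\{(|\zeta|\rho)^{\frac12},(|\zeta|\rho)^{-\frac{d+1}{2}}\big\}, \qquad \rho = r^2/|z|.
\end{align*}
Multiplying by $|z|^{\frac12}$ cancels the factor $|z|^{-\frac12}$.

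The remaining task is to bound the maximum. For $|\zeta|\ge1$ I would use
\begin{align*}
    (|\zeta|\rho)^{\frac12}\le|\zeta|^{\frac12}\rho^{\frac12} \quad\text{and}\quad (|\zeta|\rho)^{-\frac{d+1}{2}}\le\rho^{-\frac{d+1}{2}},
\end{align*}
so the maximum is at most $|\zeta|^{\frac12}\max\{\rho^{\frac12},\rho^{-\frac{d+1}{2}}\}$. What is left is $\int_{\gamma_1}e^{\Re(\zeta z/|z|)}\,|\d\zeta|$. This is finite uniformly in $\arg z\in(-\beta,\beta)$, because along the rays of $\gamma_1$ we have exponential decay, since $\Re(\zeta z/|z|)\le -c|\zeta|$ there.

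I do not expect a serious obstacle here. The only points needing care are that the extra polynomial factor $|\zeta|^{\frac12}$ is harmless against the exponential decay, and the justification of the Cauchy representation for $\cA$ applied to $\cP\div(F)\in\W^{-1,2}_\sigma$. The latter follows from the sectoriality of $\cA$ in Proposition~\ref{Prop: L2 resolvent bounds}.
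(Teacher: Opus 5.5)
Your proposal is correct and follows the paper's argument, which derives this estimate ``as in the proof of Proposition~\ref{prop: off-diag semigroup}''. That is, it inserts the first estimate of Theorem~\ref{thm: L2-Lp off diag stokes resolv} with $f=0$ into the Cauchy integral for $e^{-z\cA}\cP\div(F)$ and rescales via $\zeta=|z|\lambda$; your bookkeeping (using $|\zeta|\ge 1$, cancelling $|\zeta|^{\frac12}$ against $|\zeta|^{-\frac12}$, and the exponential decay along $\gamma_1$) yields exactly the stated factor $\max\{(r^2/|z|)^{\frac12},(r^2/|z|)^{-\frac{d+1}{2}}\}$.
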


Next, we derive $\L^2$-$\L^p$ bounds for $(|z|^\frac{1}{2}e^{-z\cA} \cP\div)_{z\in\S_{\frac{\pi}{2}-\omega_0}}$ from the corresponding bounds from the semigroup $(e^{-zA})_{z\in\S_{\frac{\pi}{2}-\omega_0}}$ itself.

\begin{lemma}
\label{lem: L2-Lp bdd of div family}
    Let $\mu$ satisfy Assumption~\ref{Ass: Coefficients}, $\omega_0$ be given by \eqref{eq: def omega} and $\beta\in [0,\frac{\pi}{2}-\omega_0)$. There exists a constant $\delta>0$ depending on $\beta,d$ and $\mu_\bullet,\mu^\bullet$ such that the family $(|z|^\frac{1}{2}e^{-z\cA}\cP\div)_{z\in\S_\beta}$ satisfies $\L^2$-$\L^p$ bounds for all $p\in[2,\infty)$ such that $\frac{1}{2}-\frac{1}{p}<\frac{1}{d}+\delta$.
\end{lemma}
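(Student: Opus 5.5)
Following Lemma~\ref{lem: rescaling lemma} (whose proof works verbatim for the family $|z|^{\frac12}e^{-z\cA}\cP\div$ by rescaling and rotation, using Lemma~\ref{lem: rescaled resolvent and semigroup} and Observation~\ref{observation}), it suffices to treat $z=1$. The goal is therefore
\begin{align*}
    \|e^{-\cA}\cP\div(F)\|_{\L^p}\lesssim \|F\|_{\L^2}, \qquad F\in\L^2(\IR^d;\IC^{d\times d}),
\end{align*}
for $p\in[2,\infty)$ with $\frac12-\frac1p<\frac1d+\delta$. The factor $e^{-\cA}$ is split as $e^{-\frac12\cA}e^{-\frac12\cA}$. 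By Proposition~\ref{prop: L2 semigroup bounds}, the inner factor $e^{-\frac12\cA}\cP\div$ maps $\L^2(\IR^d;\IC^{d\times d})$ boundedly into $\L^2_\sigma(\IR^d)$; indeed its image even lies in $\W^{1,2}_\sigma$ by the gradient bound there. On $\L^2_\sigma$ the operators $e^{-t\cA}$ and $e^{-tA}$ coincide, since $A$ is the part of $\cA$ in $\L^2_\sigma$ and the resolvents agree on $\L^2_\sigma$, so the same holds for the semigroups via \eqref{eq: rep semigroup}. This gives the factorization
\begin{align*}
    e^{-\cA}\cP\div(F)=e^{-\frac12 A}\big(e^{-\frac12\cA}\cP\div(F)\big).
\end{align*}

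Next, Theorem~\ref{thm: L2-L2*+eps bdd semigroup} is applied to the outer factor. It gives $\|e^{-\frac12 A}g\|_{\L^p}\lesssim\|g\|_{\L^2}$ for all $g\in\L^2_\sigma$ whenever $\frac12-\frac1p<\frac1d+\delta$, with $\delta$ taken from that theorem, which depends only on $\beta,d,\mu_\bullet,\mu^\bullet$. The outer estimate at time $\frac12$ is just the case $z=\frac12$ of those bounds. Combining the two estimates yields
\begin{align*}
    \|e^{-\cA}\cP\div(F)\|_{\L^p}\lesssim \|e^{-\frac12\cA}\cP\div(F)\|_{\L^2}\lesssim \|F\|_{\L^2},
\end{align*}
which is the claim at $z=1$. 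The rescaling step then produces the factor $|z|^{-\frac d2(\frac12-\frac1p)}$; the prefactor $|z|^{\frac12}$ is exactly compensated by the scaling of $\cP\div$, consistent with the $\L^2$ bound in Proposition~\ref{prop: L2 semigroup bounds}.

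The main point requiring care is the rescaling reduction for the divergence family. One needs
\begin{align*}
    e^{-z\cA}\cP\div = \Phi_{s^{-1}}\, e^{-e^{i\theta}\cA_s}\, \Phi_s\, \cP\div
\end{align*}
for $z=s^2e^{i\theta}$, together with the commutation $\Phi_s\cP\div(F)=s^{-1}\cP\div(\Phi_sF)$. The factor $s^{-1}$ cancels $|z|^{\frac12}=s$, and the remaining $\L^2\to\L^p$ scaling gives the stated power. A second, smaller point is the identification $e^{-t\cA}g=e^{-tA}g$ for $g\in\L^2_\sigma$, which is needed to transfer Theorem~\ref{thm: L2-L2*+eps bdd semigroup} to the weak operator; it follows from the resolvent identity on $\L^2_\sigma$ and the Cauchy integral.
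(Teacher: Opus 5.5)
Your proof is correct and is essentially the paper's argument: on the image of $\cP\div$ you factor $e^{-z\cA}=e^{-\frac z2 A}e^{-\frac z2\cA}$, bound the outer factor with the $\L^2$-$\L^p$ estimates of Theorem~\ref{thm: L2-L2*+eps bdd semigroup}, and bound the inner factor with the $\L^2$ estimate of Proposition~\ref{prop: L2 semigroup bounds}. The only difference is that the paper applies this factorization directly at arbitrary $z\in\S_\beta$, since both ingredients already carry the correct powers of $|z|$ and $\frac z2\in\S_\beta$. Your reduction to $z=1$, including the commutation of $\Phi_s$ with $\cP\div$, is therefore correct but unnecessary.
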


\begin{proof}
    Let $\delta>0$ be from Theorem~\ref{thm: L2-L2*+eps bdd semigroup}. Then, combining Proposition~\ref{prop: L2 semigroup bounds} with Theorem~\ref{thm: L2-L2*+eps bdd semigroup} yields
    \begin{align*}
        \||z|^\frac{1}{2}e^{-z\cA}\cP\div(F)\|_{\L^p} &= \|(e^{-\frac{z}{2}A})(|z|^\frac{1}{2}e^{-\frac{z}{2}\cA}\cP\div)(F)\|_{\L^p} \\&\lesssim |z|^{-\frac{d}{2}(\frac{1}{2}-\frac{1}{p})}\||z|^\frac{1}{2}e^{-\frac{z}{2}\cA}\cP\div(F)\|_{\L^2}\\
        &\lesssim  |z|^{-\frac{d}{2}(\frac{1}{2}-\frac{1}{p})}\|F\|_{\L^2}
    \end{align*}
    for every $F\in\L^2(\IR^d;\IC^{d\times d})$ and  $p\in[2,\infty)$ such that $\frac{1}{2}-\frac{1}{p}<\frac{1}{d}+\delta$.
\end{proof}

\begin{proposition}
Let $\mu$ satisfy Assumption~\ref{Ass: Coefficients}, $\omega_0$ be given by \eqref{eq: def omega} and $\beta\in [0,\frac{\pi}{2}-\omega_0)$. There exists $\varepsilon>0$ depending on $\beta, d$ and $\mu_\bullet,\mu^\bullet$ such that for all $p\in [2,\infty)$ satisfying
\begin{align*}
    \frac{1}{2}-\frac{1}{p}<  \frac{1}{d}+\varepsilon
\end{align*}
there exists a constant $C>0$ such that for all $z \in \S_\beta$ and $F\in \L^2(\IR^d;\IC^{d\times d})\cap \L^{p}(\IR^d;\IC^{d\times d})$ we have
\begin{align*}
    \||z|^\frac{1}{2}e^{-z\cA}\cP\div(F) \|_{\L^{p}}\leq C \|F\|_{\L^{p}}.
\end{align*}
The constant $C$ depends only on $\beta$, $d$, $p$ and $\mu_\bullet,\mu^\bullet$.
\end{proposition}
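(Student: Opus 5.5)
This follows the proof of Theorem~\ref{thm: Lp-extrapolation sg II} verbatim, applied to the family $T(z)\coloneqq |z|^{\frac12}e^{-z\cA}\cP\div$, $z\in\S_\beta$, viewed as a family in $\cL(\L^2(\IR^d;\IC^{d\times d}),\L^2_\sigma(\IR^d))$. The abstract framework of Section~\ref{sec: abstract machinery} allows $V=\L^2(\IR^d;\IC^{d\times d})$ and $W=\L^2_\sigma(\IR^d)$, so Corollary~\ref{cor: decay, hyper, unif} applies once we have a decay estimate of order larger than $d$ and a hypercontractivity bound beyond the Sobolev exponent.

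\textbf{Step 1 (case $d=2$).} Here every $p\in[2,\infty)$ satisfies $\frac12-\frac1p\le\frac1d$. Proposition~\ref{prop: Lp off-diag sg div} gives $\L^2$-$\L^p$ decay estimates of order $\nu\in(d,d+2)$, with $M=\frac12$ and $N=\frac{d+1}{2}$. Proposition~\ref{prop: L2-Lp off imply Lp bdd} then yields uniform $\L^p$-bounds directly.

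\textbf{Step 2 (case $d\ge3$).} Let $\delta>0$ be as in Lemma~\ref{lem: L2-Lp bdd of div family} and fix $0<\delta'<\delta$. Define $p$ by $\frac12-\frac1p=\frac1d+\delta'$; Lemma~\ref{lem: L2-Lp bdd of div family} gives $\L^2$-$\L^p$ bounds for $T(z)$. With $q=2^*$, i.e.\ $\frac12-\frac1q=\frac1d$, Proposition~\ref{prop: Lp off-diag sg div} gives $\L^2$-$\L^q$ decay estimates of some order $\nu\in(d,d+2)$. Corollary~\ref{cor: decay, hyper, unif} then provides uniform $\L^r$-bounds for all $r\in[q,p_\theta)$, where
\[
\frac1{p_\theta}=\frac12-\frac1d-\theta\delta', \qquad \theta=\sup\{\vartheta\in(0,1):\nu(1-\vartheta)>d\}>0 .
\]
The range $r\in[2,q]$ is covered either by the Step~1 argument (Proposition~\ref{prop: Lp off-diag sg div} holds for all such $r$, combined with Proposition~\ref{prop: L2-Lp off imply Lp bdd}) or by Riesz--Thorin interpolation with the $\L^2$-bound from Proposition~\ref{prop: L2 semigroup bounds}. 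Setting $\varepsilon\coloneqq\theta\delta'$ gives the full range $\frac12-\frac1p<\frac1d+\varepsilon$, and $\varepsilon$ depends only on $\beta,d,\mu_\bullet,\mu^\bullet$.

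\textbf{Main obstacle.} The one point needing care is the interpolation principle, Proposition~\ref{prop: int principles}, for $V=\L^2(\IR^d;\IC^{d\times d})$. It relies on the $Y$-spaces: Lemma~\ref{lem: facts of X and Y} and Lemma~\ref{lem: interpolation result} give density of $\L^2$ in $Y^\nu_\theta(B)$ and $Y^\nu_\theta(B)=(Y^\nu_0(B),Y^\nu_1(B))_{\theta,2}$, so that $T(z)$ extends to the relevant spaces. This case is actually simpler than the $X$-case, since no Bogovski\u{\i} truncation is needed. Also, the output space $\L^2_\sigma$ is a closed subspace of $\L^2$, so the $\L^p(B)$-norms are taken componentwise with no issue.
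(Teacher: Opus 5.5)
Your proposal is correct and follows the paper's own argument. The paper's proof says only that the proof of Theorem~\ref{thm: Lp-extrapolation sg II} carries over: the decay estimates of Proposition~\ref{prop: Lp off-diag sg div} and the $\L^2$-$\L^p$ bounds of Lemma~\ref{lem: L2-Lp bdd of div family} are fed into Corollary~\ref{cor: decay, hyper, unif}, the case $d=2$ is handled directly by Proposition~\ref{prop: L2-Lp off imply Lp bdd}, and the exponents between $2$ and the Sobolev exponent $\frac{2d}{d-2}$ by interpolation with the $\L^2$-bound. This is exactly what you wrote out in detail, including the use of the $Y$-spaces for $V=\L^2(\IR^d;\IC^{d\times d})$.
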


\begin{proof}
    The proof follows the same line of reasoning as the proof of Theorem~\ref{thm: Lp-extrapolation sg II} from Corollary~\ref{cor: decay, hyper, unif} with Proposition~\ref{prop: Lp off-diag sg div} and Lemma~\ref{lem: L2-Lp bdd of div family}.
\end{proof}

By the adjoint identity from Lemma~\ref{lem: duality}, we deduce uniform $\L^p$-bounds for the operator family $(|z|^\frac{1}{2}\nabla e^{-zA})_{z\in\S_{\frac{\pi}{2}-\omega_0}}$ in the case $p<2$ from the previous proposition.

\begin{corollary}
\label{cor: Lp bound for div fam p<2}
    Let $\mu$ satisfy Assumption~\ref{Ass: Coefficients}, $\omega_0$ be given by \eqref{eq: def omega} and $\beta\in [0,\frac{\pi}{2}-\omega_0)$. There exists $\varepsilon>0$ depending on $\beta, d$ and $\mu_\bullet,\mu^\bullet$ such that for all $p\in (1,2]$ satisfying
    \begin{align*}
        \frac{2d}{d+2}-\varepsilon < p \leq 2
    \end{align*}
    there exists a constant $C>0$ such that for all $z \in \S_\beta$ and $f\in \L_\sigma^2(\IR^d)\cap \L_\sigma^{p}(\IR^d)$ we have
    \begin{align*}
      \||z|^\frac{1}{2}\nabla e^{-zA}f \|_{\L^{p}}\leq C \|f\|_{\L^{p}}.
    \end{align*}
    The constant $C$ depends only on $\beta$, $d$, $p$ and $\mu_\bullet,\mu^\bullet$.
\end{corollary}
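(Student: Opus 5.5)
The plan is a duality argument. By Lemma~\ref{lem: duality}, applied with $A^*$ in place of $A$ (legitimate, since $\mu^*$ satisfies Assumption~\ref{Ass: Coefficients} with the same constants), the adjoint of $|z|^{\frac12}\nabla e^{-zA}$ on $\L^2$ is $-|z|^{\frac12}e^{-\bar z\cA^*}\cP\div$. The preceding proposition, applied to the adjoint coefficients and to $\bar z\in\S_\beta$, gives an $\varepsilon>0$ such that
\begin{align*}
\||z|^{\frac12}e^{-\bar z\cA^*}\cP\div(F)\|_{\L^{p'}}\lesssim\|F\|_{\L^{p'}}
\end{align*}
for all $p'\in[2,\infty)$ with $\frac12-\frac1{p'}<\frac1d+\varepsilon$. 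Equivalently, $\frac1p-\frac12<\frac1d+\varepsilon$, which in terms of $p$ reads $p>\frac{2d}{d+2+2d\varepsilon}$. Shrinking $\varepsilon$ rewrites this as $\frac{2d}{d+2}-\varepsilon<p\le 2$.

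For the duality step, fix $f\in\L^2_\sigma\cap\L^p_\sigma$ and $G\in\C_c^\infty(\IR^d;\IC^{d\times d})$. Then
\begin{align*}
\big|\langle |z|^{\frac12}\nabla e^{-zA}f,G\rangle\big|=\big|\langle f,|z|^{\frac12}e^{-\bar z\cA^*}\cP\div(G)\rangle\big|\le\|f\|_{\L^p}\,\||z|^{\frac12}e^{-\bar z\cA^*}\cP\div(G)\|_{\L^{p'}}\lesssim\|f\|_{\L^p}\|G\|_{\L^{p'}}.
\end{align*}
Here $G\in\L^2\cap\L^{p'}$, so the proposition applies. Taking the supremum over $G$, which is dense in $\L^{p'}$, yields $\||z|^{\frac12}\nabla e^{-zA}f\|_{\L^p}\lesssim\|f\|_{\L^p}$. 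The left-hand side is a priori in $\L^2$, so testing against this dense class characterizes the $\L^p$ norm by the standard duality.

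The only delicate point is bookkeeping. The $\varepsilon$ and the constants must stay uniform under passage to the adjoint and to $\bar z$, and this holds because everything depends only on $\beta,d,\mu_\bullet,\mu^\bullet$. The conversion between $\frac12-\frac1{p'}<\frac1d+\varepsilon$ and the lower bound on $p$ is elementary.
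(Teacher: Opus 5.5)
Your proposal is correct and follows the paper's argument. The corollary is obtained by duality from Lemma~\ref{lem: duality}, applied to the adjoint coefficients $\mu^*$ and to $\bar z$, combined with the uniform $\L^{p'}$-bounds for $|z|^{\frac12}e^{-z\cA}\cP\div$ from the preceding proposition. Your exponent conversion $\frac12-\frac1{p'}=\frac1p-\frac12$ and the density argument with $G\in\C_c^\infty(\IR^d;\IC^{d\times d})$ just make explicit what the paper leaves implicit.
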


\subsection{The case $p>2$}

Here, we will directly work with  $(|z|^\frac{1}{2}\nabla e^{-zA})_{z\in\S_{\frac{\pi}{2}-\omega_0}}$.

\begin{proposition}
\label{prop: L2-L2+eps bdd grad sg}
    Let $\mu$ satisfy Assumption~\ref{Ass: Coefficients}, $\omega_0$ be given by \eqref{eq: def omega} and $\beta\in [0,\frac{\pi}{2}-\omega_0)$. There exists $\delta>0$ depending on $\beta,d$ and $\mu_\bullet,\mu^\bullet$ such that $(|z|^\frac{1}{2}\nabla e^{-zA})_{z\in\S_\beta}$ satisfies $\L^2$-$\L^p$ bounds for all $p\in(2,\infty)$ such that $\frac{1}{2}-\frac{1}{p}<\min\{\frac{1}{d},\delta\}$.
\end{proposition}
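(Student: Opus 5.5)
By Lemma~\ref{lem: rescaling lemma} (applied with $T(z)=|z|^{\frac12}\nabla e^{-zA}$), it suffices to prove
\begin{align*}
    \|\nabla e^{-A}f\|_{\L^p}\lesssim \|f\|_{\L^2} \qquad (f\in \L^2_\sigma(\IR^d)).
\end{align*}
Let $\delta>0$ be as in Theorem~\ref{thm: extension of 1+A}, so that $(1+\cA)^{-1}:\W^{-1,p}_\sigma(\IR^d)\to \W^{1,p}_\sigma(\IR^d)$ is bounded whenever $|\frac12-\frac1p|<\delta$. I will factor the semigroup as
\begin{align*}
    \nabla e^{-A}f = \nabla (1+\cA)^{-1}\big[(1+A)e^{-A}f\big].
\end{align*}
The factor $g\coloneqq(1+A)e^{-A}f$ lies in $\L^2_\sigma$ with $\|g\|_{\L^2}\lesssim\|f\|_{\L^2}$ by Proposition~\ref{prop: L2 semigroup bounds}. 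The real question is in which space $g$ lies, so that $(1+\cA)^{-1}$ maps it to $\W^{1,p}_\sigma$.

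The plan is to show $g\in \W^{-1,p}_\sigma$ with $\|g\|_{\W^{-1,p}_\sigma}\lesssim \|f\|_{\L^2}$. Since $\frac12-\frac1p<\frac1d$, Sobolev's embedding gives $\W^{1,p'}_\sigma\hookrightarrow \L^{q'}_\sigma$ for some $q\in[2,\infty)$ with $\frac1{q'}\le\frac1{p'}$ and $\frac1{p'}-\frac1d\le\frac1{q'}$. Concretely, I choose $q$ with $\frac12-\frac1q\le\frac1d$ and $\frac1q\ge\frac1p-\frac1d$; the choice $q=p$ works, because $\frac1{p}>\frac1p-\frac1d$ and $\frac12-\frac1p<\frac1d$. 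Dualising yields $\L^{p}_\sigma\hookrightarrow \W^{-1,p}_\sigma$, so it suffices to bound $\|g\|_{\L^p}$. To do so, write $g=e^{-A/2}(1+A)e^{-A/2}f$. The $\L^2$-$\L^p$ bound for $e^{-A/2}$, valid since $\frac12-\frac1p\le \frac1d$ (by the hypercontractivity lemma following Lemma~\ref{lem: rescaling lemma}), combined with the $\L^2$-bound for $(1+A)e^{-A/2}$, gives $\|g\|_{\L^p}\lesssim\|f\|_{\L^2}$.

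It remains to check that $(1+\cA)^{-1}g$ computed in $\W^{1,p}_\sigma$ coincides with $(1+A)^{-1}g=e^{-A}f$ computed in $\L^2$. Because $\frac12-\frac1p<\delta$, the inverses on $\W^{-1,2}_\sigma$ and $\W^{-1,p}_\sigma$ are consistent on the intersection. This consistency is part of \u{S}ne\u{\i}berg's theorem for interpolation scales; alternatively, one argues as in the proof of Theorem~\ref{thm: L2-L2*+eps bdd semigroup}. Consequently
\begin{align*}
    \|\nabla e^{-A}f\|_{\L^p}\le \|(1+\cA)^{-1}g\|_{\W^{1,p}}\lesssim \|g\|_{\W^{-1,p}_\sigma}\lesssim\|f\|_{\L^2}.
\end{align*}
The main obstacle is this compatibility of the two inverses. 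I would justify it by noting that \u{S}ne\u{\i}berg's stability statement, applied on the complex interpolation scale of Lemma~\ref{lem: complex family}, yields inverses that agree on $\W^{-1,2}_\sigma\cap\W^{-1,p}_\sigma$ for $p$ close to $2$. This is the standard refinement in \cite[Sec.~1.3.5]{Egert-Dissertation}, possibly after shrinking $\delta$. The restriction $\min\{\frac1d,\delta\}$ in the statement then arises exactly from the two requirements: the Sobolev/hypercontractivity step needs $\frac12-\frac1p<\frac1d$, and the isomorphism step needs $\frac12-\frac1p<\delta$.
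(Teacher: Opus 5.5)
Your argument is correct and is essentially the paper's proof: the same factorization $\nabla e^{-A}f=\nabla(1+\cA)^{-1}(1+A)e^{-A}f$ through the \u{S}ne\u{\i}berg-extrapolated isomorphism of Theorem~\ref{thm: extension of 1+A}, followed by Lemma~\ref{lem: rescaling lemma}. The only difference is that the paper takes $q=2$ in your own Sobolev step, i.e.\ it applies $\L^2_\sigma\hookrightarrow\W^{-1,p}_\sigma$ (valid since $\frac12-\frac1p\le\frac1d$) directly to $g$, which makes your detour through hypercontractivity and $\|g\|_{\L^p}$ unnecessary; the compatibility of the $\W^{-1,2}_\sigma$- and $\W^{-1,p}_\sigma$-inverses that you flag is used in the paper only tacitly.
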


\begin{proof}
    Let $\delta>0$ be the same as in Theorem~\ref{thm: extension of 1+A}. By assumption, we see that $\frac{1}{p}<\frac{1}{2}<\frac{1}{p}+\frac{1}{d}$. Hence, Sobolev's theorem and a duality argument imply the continuous embedding $\L^2_\sigma(\IR^d) \hookrightarrow \W_\sigma^{-1,p}(\IR^d)$. Then with Theorem~\ref{thm: extension of 1+A}, we know that
    \begin{align*}
        (1+\cA)^{-1} : (\L_\sigma^2\hookrightarrow\W^{-1,p}_\sigma) \to \W^{1,p}_\sigma 
    \end{align*}
    is a bounded operator. Combining this fact with Proposition~\ref{prop: L2 semigroup bounds} yields for all $f\in\L^2_\sigma(\IR^d)$ the estimate
    \begin{align*}
        \|\nabla e^{-A}f \|_{\L^{p}} &= \|\nabla (1+A)^{-1} (1+A) e^{-A}f \|_{\L^{p}}\\
        &\leq\|(1+A)^{-1} (1+A) e^{-A}f \|_{\W^{1,p}}\\
        &\lesssim \| (1+A) e^{-A}f \|_{\L^{2}}\\
        &\lesssim \|f \|_{\L^{2}}.
    \end{align*}
    Finally, Lemma~\ref{lem: rescaling lemma} yields the claim.
\end{proof}

\begin{proof}[Proof of Theorem~\ref{thm: Lp bound for grad fam}]
    By Corollary~\ref{cor: Lp bound for div fam p<2}, it suffices to show the claim for $p>2$. To this end, we apply Corollary~\ref{cor: decay, hyper, unif} with Propositions~\ref{prop: off-diag semigroup} and~\ref{prop: L2-L2+eps bdd grad sg} to conclude the claim.
\end{proof}

We conclude this section by proving the $\L^q$-$\L^p$ smoothing estimates from Theorem~\ref{thm: smoothing}.

\begin{proof}[Proof of Theorem~\ref{thm: smoothing}]
    We divide the proof into four steps. The first three steps deduce the estimates~\eqref{eq: smoothing sg} for the semigroup. Subsequently, we combine these with the uniform $\L^p$-bounds from Theorem~\ref{thm: Lp bound for grad fam} to obtain the corresponding bounds~\eqref{eq: smoothing grad} for its gradient.\\

    \noindent \textbf{Step 1: \eqref{eq: smoothing sg} for $2\leq q\leq p$.}
    By Theorems~\ref{thm: L2-L2*+eps bdd semigroup} and~\ref{thm: Lp-extrapolation sg II}, we have
    \begin{align*}
        \|e^{-zA}f\|_{\L^p}\lesssim |z|^{-\frac{d}{2}(\frac{1}{2}-\frac{1}{p})}\|f\|_{\L^2} \quad \text{and} \quad \|e^{-zA}f\|_{\L^p}\lesssim \|f\|_{\L^p} 
    \end{align*}
    for all $f\in \L^2_\sigma(\IR^d)\cap \L^p_\sigma(\IR^d)$. Interpolating both estimates yields
    \begin{align*}
        \|e^{-zA}f\|_{\L^p}\lesssim |z|^{-\frac{d}{2}(\frac{1}{q}-\frac{1}{p})}\|f\|_{\L^q} 
    \end{align*}
    for every $2\leq q\leq p$ and $f\in \L^2_\sigma(\IR^d)\cap \L^q_\sigma(\IR^d)$.\\
    
    \noindent \textbf{Step 2: \eqref{eq: smoothing sg} for $q\leq p\leq 2$.}
     Let $2\leq p'\leq q'$ be the Hölder conjugate exponents of $p$ and $q$. Applying Step~1 to the adjoint operator family $(e^{-\bar z A^*})_{z\in\S_\beta}$ yields
    \begin{align*}
        \|e^{-\bar z A^*}f\|_{\L^{q'}}\lesssim |z|^{-\frac{d}{2}(\frac{1}{p'}-\frac{1}{q'})}\|f\|_{\L^{p'}} 
    \end{align*}
    for every  $f\in \L^2_\sigma(\IR^d)\cap \L^{p'}_\sigma(\IR^d)$. Thus, using a duality argument, we conclude
    \begin{align*}
        \|e^{-zA}f\|_{\L^{p}}\lesssim |z|^{-\frac{d}{2}(\frac{1}{q}-\frac{1}{p})}\|f\|_{\L^{q}} 
    \end{align*}
    for all $q\leq p\leq 2$ and $f\in \L^2_\sigma(\IR^d)\cap \L^{q}_\sigma(\IR^d)$.\\
    
    \noindent \textbf{Step 3: \eqref{eq: smoothing sg} for $q\leq 2\leq p$.}
    Using the semigroup property, as well as the previous two steps, we obtain
    \begin{align*}
        \|e^{-zA}f\|_{\L^p} = \|e^{-\frac{z}{2}A}e^{-\frac{z}{2}A}f\|_{\L^p}  \lesssim |z|^{-\frac{d}{2}(\frac{1}{2}-\frac{1}{p})}\|e^{-\frac{z}{2}A}f\|_{\L^2} \lesssim |z|^{-\frac{d}{2}(\frac{1}{q}-\frac{1}{p})}\|f\|_{\L^q},
    \end{align*}
    which proves \eqref{eq: smoothing sg}.\\

    \noindent \textbf{Step 4: \eqref{eq: smoothing grad}.}
    Using the uniform $\L^p$-boundedness from Theorem~\ref{thm: Lp bound for grad fam}, the estimate \eqref{eq: smoothing sg} and the semigroup property, we obtain
    \begin{align*}
        \||z|^\frac{1}{2}\nabla e^{-z A}f\|_{\L^p} = \||z|^\frac{1}{2}\nabla e^{-\frac{z}{2}A}e^{-\frac{z}{2}A}f\|_{\L^p}\lesssim \|e^{-\frac{z}{2}A}f\|_{\L^p} \lesssim |z|^{-\frac{d}{2}(\frac{1}{q}-\frac{1}{p})}\|f\|_{\L^q}
    \end{align*}
    for every $f\in \L^2_\sigma(\IR^d)\cap \L^{q}_\sigma(\IR^d)$.
\end{proof}

\section{$\H^\infty$-calculus}
\label{sec: Hinfty calc}

In this section, we want to show that not only the semigroup but also the whole functional calculus of $A$ can be extended from $\L^2_\sigma(\IR^d)$ to $\L^p_\sigma(\IR^d)$, see Theorem~\ref{thm: bdd calc on Lp}. A first approach to this was presented by Tolksdorf in~\cite{Tolksdorf-Caccioppoli} but only in the range $p\in (1,\infty)$ satisfying
\begin{align*}
    \Big|\frac{1}{2}-\frac{1}{p}\Big|< \frac{1}{d}.
\end{align*}
His arguments are based on extrapolation of the maximal regularity and a transference principle due to Kunstmann--Weis~\cite{Kunstmann-Weis}. Here, we reprove and extend this result by means of non-local decay estimates. Our approach follows the one for elliptic systems developed in the monograph of Auscher \cite{Auscher-Lp}. In particular, we extend the range for which $A$ possesses a bounded $\H^\infty$-calculus on $\L^p_\sigma(\IR^d)$ to $p\in (1,\infty)$ satisfying
\begin{align*}
    \Big|\frac{1}{2}-\frac{1}{p}\Big|< \frac{1}{d} +\varepsilon
\end{align*}
for some (possibly small) $\varepsilon>0$. This is consistent with the theory of elliptic systems, see \cite[Chap.~5]{Auscher-Lp}. Here, we use the following extrapolation theorem, which is an adaptation of \cite[Thm.~1.2]{Auscher-Lp} to our fluid situation.

\begin{theorem}
\label{thm: extrapolation p>2}
    Let $p_0\in (2,\infty]$. Let $T:\L^2_\sigma(\IR^d)\to \L^2_\sigma(\IR^d)$ be a sublinear operator and let $(S_r)_{r>0}$ be a family of linear operators on $\L^2_\sigma(\IR^d)$. Let $\cM$ denote the Hardy--Littlewood maximal operator and assume there exists $C>0$ such that
    \begin{align}
    \label{eq: BK 1}
        \bigg(\fint\limits_{B} |T(1-S_{r})f|^2\,\d x\bigg)^\frac{1}{2}\leq C\Big(\mathcal{M}(|f|^2)(x)\Big)^\frac{1}{2}
    \end{align}
    and
    \begin{align}
    \label{eq: BK 2}
        \bigg(\fint\limits_{B} |TS_{r}f|^{p_0}\,\d x\bigg)^\frac{1}{p_0}\leq C\Big(\mathcal{M}(|Tf|^2)(x)\Big)^\frac{1}{2}
    \end{align}
    for all $f\in\L^2_\sigma(\IR^d)$, all open balls $B=B(x_0,r)\subset \IR^d$ and all $x\in B$ with the usual replacement of the integrals by an $\esssup$ if $p_0=\infty$. Suppose further that
    \begin{align}
    \label{eq: BK 3}
        T:\L^2_\sigma(\IR^d)\cap \L^{p}_\sigma(\IR^d)\to \L^2_\sigma(\IR^d)\cap \L^{p}_\sigma(\IR^d)
    \end{align}
    is well-defined for some $p\in(2,p_0)$. Then there exists $C'>0$ depending only on $d, p, p_0$ and $C$ such that for all $f\in \L^2_\sigma(\IR^d)\cap \L^{p}_\sigma(\IR^d)$ we have
    \begin{align*}
        \|Tf\|_{\L^p} \leq C'\|f\|_{\L^p}.
    \end{align*}
\end{theorem}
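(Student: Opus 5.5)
Since this is an adaptation of \cite[Thm.~1.2]{Auscher-Lp} (itself going back to Shen and Auscher--Coulhon--Duong--Hofmann), the plan is to reproduce the good-$\lambda$ argument for the sharp maximal-type estimate. The point is that nothing in that argument uses divergence-freeness of the \emph{output}: $Tf$ is just an $\L^2$ function, and all estimates concern $|Tf|$. Divergence-freeness enters only through the input, via the fact that $S_r$ is applied to elements of $\L^2_\sigma$. Since (\eqref{eq: BK 1}) and (\eqref{eq: BK 2}) are only required for $f\in \L^2_\sigma$, the whole proof can be run on $f\in\L^2_\sigma\cap\L^p_\sigma$ without ever cutting $f$ off. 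This matters, because a cutoff would destroy the solenoidal structure; the decomposition $f=(1-S_r)f+S_rf$ is global.

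Fix $f\in\L^2_\sigma\cap\L^p_\sigma$ and set $F\coloneqq|Tf|^2$ and $g\coloneqq|f|^2$. By (\eqref{eq: BK 3}), $F\in \L^{p/2}$, so $\cM F\in\L^{p/2}$ is finite a priori; this is exactly what the hypothesis is for. Put $q_0\coloneqq p_0/2>p/2>1$. The key step is a good-$\lambda$ inequality: there are $K>1$ and $C$ such that for all $\lambda>0$ and $\gamma\in(0,1)$,
\begin{align*}
|\{\cM F>K\lambda,\ \cM g\le\gamma\lambda\}|\le C\big(K^{-q_0}+\gamma K^{-1}\big)|\{\cM F>\lambda\}|.
\end{align*}
To prove it, take a Whitney decomposition of the open set $\{\cM F>\lambda\}$ (finite measure since $F\in\L^{p/2}$) into cubes $Q$. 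Assume a point $\bar x$ of $Q$ has $\cM g(\bar x)\le\gamma\lambda$, and let $B$ be a ball of radius $r\simeq\ell(Q)$ with $\bar x\in B\supset Q$. Split $Tf$ pointwise by sublinearity, $|Tf|\le|T(1-S_r)f|+|TS_rf|$, and define $F_1$ and $F_2$ accordingly. Hypothesis (\eqref{eq: BK 1}) gives $\fint_{B}F_1\lesssim \cM g(\bar x)\le\gamma\lambda$. Hypothesis (\eqref{eq: BK 2}) gives $(\fint_{B}F_2^{q_0})^{1/q_0}\lesssim\cM F(\bar x')\le\lambda$, where $\bar x'$ is a point of the Whitney-neighbouring region outside $\{\cM F>\lambda\}$. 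For points of $Q$ with $\cM F>K\lambda$ and $K$ large, the maximal function localizes: $\cM F\le\max\{\cM(F\mathbf 1_{2B}),C\lambda\}$. Weak-type $(1,1)$ applied to $F_1$ and weak $(q_0,q_0)$ applied to $F_2$ then give the bound $|Q|(\gamma/K+K^{-q_0})$. Summing over $Q$ yields the inequality. Two care points arise here. First, the balls in the hypotheses have radius $r$ and $x\in B$ arbitrary, so I will choose $B$ containing both $Q$ and the relevant points $\bar x,\bar x'$ with radius comparable to $\ell(Q)$. Second, the doubling constants must be tracked.

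Integrating the good-$\lambda$ inequality against $\lambda^{p/2-1}\,\d\lambda$ gives
\begin{align*}
\|\cM F\|_{p/2}^{p/2}\le K^{p/2}C(K^{-q_0}+\gamma/K)\|\cM F\|_{p/2}^{p/2}+C_\gamma\|\cM g\|_{p/2}^{p/2}.
\end{align*}
Since $p/2<q_0$, choosing $K$ large and then $\gamma$ small makes the first coefficient at most $1/2$. Because $\|\cM F\|_{p/2}<\infty$ a priori, that term can be absorbed. Then $\|Tf\|_p^2\le\|\cM F\|_{p/2}\lesssim\|\cM g\|_{p/2}\lesssim\|g\|_{p/2}=\|f\|_p^2$, using that $\cM$ is bounded on $\L^{p/2}$ for $p>2$. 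The constants depend only on $d,p,p_0,C$.

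I expect the main obstacle to be the localization step inside a Whitney cube: correctly choosing the ball $B$ so that both hypotheses apply at suitable points, and verifying $\cM F\le\max\{\cM(F\mathbf 1_{2B}),C\lambda\}$ on $Q$. In the case $p_0=\infty$, the $\L^{q_0}$ weak-type bound is replaced by an $\L^\infty$ bound, and $K$ large makes that contribution vanish.
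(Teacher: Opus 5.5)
Your proposal is correct and takes essentially the paper's route. The paper simply invokes the good-$\lambda$ proof of \cite[Thm.~1.2]{Auscher-Lp}, observing that the global splitting $f=(1-S_r)f+S_rf$ never leaves $\L^2_\sigma$. Your sketch is exactly that argument: $F=|Tf|^2$, $g=|f|^2$, a priori finiteness of $\|\cM F\|_{\L^{p/2}}$ from \eqref{eq: BK 3}, and absorption using $p/2<p_0/2$.

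One piece of bookkeeping needs fixing. You localize $\cM F$ to $2B$, so \eqref{eq: BK 1} and \eqref{eq: BK 2} must be applied on the ball $2B$ itself (with $S_{2r}$), not on $B$. This costs nothing, since $\bar x,\bar x'\in B\subset 2B$.
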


The proof of \cite[Thm.~1.2]{Auscher-Lp} applies verbatim. Indeed, it is based only on $\L^p$ and good-$\lambda$ estimates for the Hardy--Littlewood maximal operator, and a splitting of the function $f$ with respect to the family $(S_r)_{r>0}$ that does not destroy any divergence-free constraints. Hence, we will omit the proof and leave the details to the interested reader.

\begin{example}
\label{ex: Ar family}
    For $m\in \IN$, consider the family $(S_r)_{r>0}$ defined by
    \begin{align*}
        S_r \coloneqq 1-(1-e^{-r^2A})^m = -\sum\limits_{k=1}^m\binom{m}{k}(-1)^ke^{-kr^2A}.
    \end{align*}
    Then, $(S_r)_{r>0}$ is a family of bounded linear operators on $\L^2_\sigma(\IR^d)$ and there exists $\varepsilon>0$ such that \eqref{eq: BK 2} is satisfied in the case $T=\Id$ and for all $p_0\in (2,\infty)$ with $\frac{1}{2}-\frac{1}{p_0}<\frac{1}{d}+\varepsilon$. Indeed, we argue as follows: assume $d\ge 3$. Let $\delta>0$ be as in Theorem~\ref{thm: L2-L2*+eps bdd semigroup} and fix $0<\delta'<\delta$. Then for $p\in[2,\infty)$ with $\frac{1}{2}-\frac{1}{p}=\frac{1}{d}+\delta'$ the semigroup satisfies $\L^2$-$\L^p$ bounds. Furthermore, by Proposition~\ref{prop: Lp off-diag sg} we know that, for every $\beta\in[0,\frac{\pi}{2}-\omega_0)$, $(e^{-zA})_{z\in\S_\beta}$ satisfies $\L^2$-$\L^q$ decay estimates of order $\nu>d$ for $q\in[2,\infty)$ with $\frac{1}{2}-\frac{1}{q} = \frac{1}{d}$. Interpolating both properties with Proposition~\ref{prop: int principles} yields $\L^2$-$\L^{p_\theta}$ decay estimates of order $\nu(1-\theta)>0$ for $\theta\in(0,1)$ and
    \begin{align*}
        \frac{1}{p_\theta} = \frac{1-\theta}{q} + \frac{\theta}{p} = \frac{1}{2}-\frac{1}{d}-\theta\delta'
    \end{align*}
    which is equivalent to
    \begin{align*}
        \frac{1}{2}-\frac{1}{p_\theta} = \frac{1}{d} +\theta\delta'.
    \end{align*}
    Choose $\theta$ such that $\nu(1-\theta)>d$ and fix $\varepsilon = \theta\delta'$. Then, for all $g\in \L^2_\sigma(\IR^d)$ and balls $B=B(x_0,r)\subset \IR^d$ we have
    \begin{align*}
        \bigg(\fint\limits_{B} |S_rg|^{p_\theta}\,\d x\bigg)^\frac{1}{p_\theta}
        &\lesssim r^{-\frac{d}{p_\theta}}\sum\limits_{k=1}^m\binom{m}{k} \|e^{-kr^2A}g\|_{\L^{p_\theta}(B)}\\
        &\leq r^{-\frac{d}{p_\theta}}\sum\limits_{k=1}^m\binom{m}{k} C(k)r^{-\frac{d}{2}(1-\frac{2}{p_\theta})}\bigg(\sum\limits_{n=0}^\infty 2^{-\nu(1-\theta)n} \|g\|^2_{\L^{2}(C_n(B))}\bigg)^\frac{1}{2}\\
        &\lesssim \sum\limits_{k=1}^m\binom{m}{k} C(k)\bigg(\sum\limits_{n=0}^\infty 2^{(d-\nu(1-\theta))n} \mathcal{M}(|g|^2)(x)\bigg)^\frac{1}{2}\\
        &\lesssim \Big(\mathcal{M}(|g|^2)(x)\Big)^\frac{1}{2}.
    \end{align*}
    If $p_0\in [2,\infty)$ satisfies $p_0\leq p_\theta$, we use Jensen's inequality to obtain
    \begin{align*}
         \bigg(\fint\limits_{B} |S_rg|^{p_0}\,\d x\bigg)^\frac{1}{p_0}\leq \bigg(\fint\limits_{B} |S_rg|^{p_\theta}\,\d x\bigg)^\frac{1}{p_\theta}
        &\lesssim  \Big(\mathcal{M}(|g|^2)(x)\Big)^\frac{1}{2}.
    \end{align*}
    In the case $d= 2$, Proposition~\ref{prop: Lp off-diag sg} gives $\L^2$-$\L^p$ decay estimates of order $\nu>d$ for every $p\in[2,\infty)$, so we may set $p_\theta=p\in[2,\infty)$ and run the same calculations as above.
\end{example}

For the proof of Theorem~\ref{thm: bdd calc on Lp}, we need the following representation of the functional calculus via the semigroup. A proof for elliptic operators can be found in \cite[Lem.~5.1]{Egert-Lp}. Since the argumentation literally carries over to our situation, we will omit any details.

\begin{lemma}
\label{lem: rep FC via sg}
    Let $\mu$ satisfy Assumption~\ref{Ass: Coefficients} and $\omega_0$ be given by \eqref{eq: def omega}. For $\omega_0 < \theta<\vartheta<\varrho<\frac{\pi}{2}$ and $\varphi\in \H^\infty_0(\S_\varrho)$ we can represent $\varphi(A)$ via
    \begin{align*}
        \varphi(A) = \int\limits_{\Gamma_+}e^{-zA}\eta_+(z)\, \d z - \int\limits_{\Gamma_-}e^{-zA}\eta_-(z)\, \d z,
    \end{align*}
    where $\Gamma_{\pm} = \IR_+ e^{\pm i(\frac{\pi}{2}-\theta)}$ and
    \begin{align*}
        \eta_\pm(z) = \frac{1}{2\pi i}\int\limits_{\gamma_\pm} e^{\zeta z}\varphi(\zeta)\, \d \zeta
    \end{align*}
    with $\gamma_{\pm} = \IR_+ e^{\pm i\vartheta}$.
\end{lemma}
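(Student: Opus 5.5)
This follows the proof of \cite[Lem.~5.1]{Egert-Lp}: insert the Cauchy integral for the resolvent, swap the order of integration, and identify the inner integral with the Laplace transform of the semigroup. By definition,
\begin{align*}
    \varphi(A) = \frac{1}{2\pi i}\int\limits_{\partial \S_\vartheta}\varphi(\zeta)(\zeta-A)^{-1}\,\d\zeta
\end{align*}
for $\varphi\in\H^\infty_0(\S_\varrho)$ and $\omega_0<\vartheta<\varrho$, with $\partial\S_\vartheta$ oriented counter-clockwise around the spectrum. Split this contour into the two rays $\gamma_\pm=\IR_+e^{\pm i\vartheta}$ with their orientations. On each ray we rewrite the resolvent as a Laplace transform of the semigroup along a rotated ray. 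For $\zeta\in\gamma_+$, $-\zeta$ lies in the sector where $(-\zeta+A)^{-1}$ exists, and
\begin{align*}
    (\zeta-A)^{-1} = -\int\limits_{\Gamma_+} e^{\zeta z}e^{-zA}\,\d z,
\end{align*}
where $\Gamma_+=\IR_+e^{i(\frac{\pi}{2}-\theta)}$. An analogous formula holds on $\gamma_-$ with $\Gamma_-$.

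\textbf{Step 1: the rotated Laplace formula.} The formula follows from \cite[Prop.~3.4.4]{Haase} after rotating the integration ray inside the sector of analyticity. Two facts justify it. First, $z\in\Gamma_\pm$ satisfies $|\arg z|=\frac{\pi}{2}-\theta<\frac{\pi}{2}-\omega_0$, so the semigroup is bounded on $\Gamma_\pm$ by Proposition~\ref{prop: L2 semigroup bounds}. Second, for $\zeta\in\gamma_+$ and $z\in\Gamma_+$ we have $\arg(\zeta z)=\vartheta+\frac{\pi}{2}-\theta\in(\frac{\pi}{2},\pi)$, since $\theta<\vartheta$ and $\vartheta-\theta<\frac{\pi}{2}$. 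Hence $\Re(\zeta z)=-c|\zeta||z|$ with $c=\sin(\vartheta-\theta)>0$, so the integrand decays exponentially.

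\textbf{Step 2: Fubini.} Insert the Laplace formula into the Cauchy integral over $\gamma_+$. The resulting double integral is absolutely convergent in $\cL(\L^2_\sigma)$, because its integrand is bounded by
\begin{align*}
    |\varphi(\zeta)|\,e^{-c|\zeta||z|} \lesssim \min\{|\zeta|^s,|\zeta|^{-s}\}\,e^{-c|\zeta||z|}.
\end{align*}
Integrating first in $z$ gives $\frac{1}{c|\zeta|}\min\{|\zeta|^s,|\zeta|^{-s}\}$, which is integrable in $|\d\zeta|$ on $(0,\infty)$. So Fubini applies, and we may exchange the order of integration to obtain $\int_{\Gamma_+}e^{-zA}\eta_+(z)\,\d z$, with $\eta_+$ exactly the inner integral in the statement. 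The same computation on $\gamma_-$ gives the term with $\eta_-$. The signs and the minus in front of the $\Gamma_-$ term come from the orientation conventions of $\partial\S_\vartheta$ and of the rays $\Gamma_\pm$, $\gamma_\pm$.

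The main obstacle is the bookkeeping in Step 1: the angles must be admissible simultaneously for the rotation of the Laplace contour and for the decay of $e^{\zeta z}$. The choice $\omega_0<\theta<\vartheta<\varrho<\frac{\pi}{2}$ is made precisely so that both hold. The estimate on $\eta_\pm$ itself, namely $|\eta_\pm(z)|\lesssim\|\varphi\|_\infty/|z|$, is a by-product of Step 2 but is not needed for the identity.
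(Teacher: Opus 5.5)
Your proposal is correct and is exactly the argument the paper defers to via \cite[Lem.~5.1]{Egert-Lp}: you write the Cauchy integral over $\partial\S_\vartheta$, use the rotated Laplace representation $(\zeta-A)^{-1}=-\int_{\Gamma_\pm}e^{\zeta z}e^{-zA}\,\d z$ (justified since $\Re(\zeta z)=-\sin(\vartheta-\theta)|\zeta||z|$ and $|\arg z|<\frac{\pi}{2}-\omega_0$), and then apply Fubini. The orientation bookkeeping you left implicit works out: with all rays oriented outward, $\partial\S_\vartheta=-\gamma_++\gamma_-$, which gives exactly the plus sign on the $\Gamma_+$ term and the minus sign on the $\Gamma_-$ term.
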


Moreover, we employ the following reduction lemma, which is standard and can be found for elliptic operators, for example, in the proof of \cite[Thm.~5.1]{Egert-Lp} and \cite[Thm.~17.4]{Bechtel}. The underlying argumentation carries over to the Stokes operator with minor modifications, and we thus omit the details.

\begin{lemma}[Reduction lemma]
\label{lem: reduction lemma}
    Let $\mu$ satisfy Assumption~\ref{Ass: Coefficients}, $\omega_0$ be given by \eqref{eq: def omega} and $\omega_0 < \varrho<\pi$.
    Assume that there exists $p\in[2,\infty)$ such that
    \begin{align*}
        \|\varphi(A)f\|_{\L^{p}}\lesssim \|f\|_{\L^{p}}
    \end{align*}
    holds for all $f\in \L^2_\sigma(\IR^d)\cap\L^{p}_\sigma(\IR^d)$ and $\varphi\in \H^\infty_0(\S_\varrho)$ with $\|\varphi\|_\infty=1$. Then, we have
    \begin{align*}
        \|\varphi(A)f\|_{\L^q}\lesssim \|\varphi\|_{\infty}\|f\|_{\L^q}
    \end{align*}
    for all $f\in \L^2_\sigma(\IR^d)\cap\L^{q}_\sigma(\IR^d)$, $q\in [p', p]$ and $\varphi\in \H^\infty(\S_\varrho)$.
\end{lemma}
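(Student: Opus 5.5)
The Reduction lemma is the statement to prove. The plan has three parts: extend from $\H^\infty_0$ to $\H^\infty$ on $\L^p$, pass to $\L^{p'}$ by duality, and fill in $[p',p]$ by interpolation.

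\textbf{Step 1: from $\H^\infty_0$ to $\H^\infty$ on $\L^p$.} Fix $\varphi\in\H^\infty(\S_\varrho)$. By homogeneity we may assume $\|\varphi\|_\infty=1$. Set $\psi_n(z)\coloneqq\frac{n}{n+z}-\frac{1}{1+nz}$, so that $\varphi_n\coloneqq\varphi\psi_n\in\H^\infty_0(\S_\varrho)$ with $\|\varphi_n\|_\infty\lesssim1$ uniformly in $n$. The hypothesis then gives
\[
\|\varphi_n(A)f\|_{\L^p}\lesssim\|f\|_{\L^p}\qquad\text{for } f\in\L^2_\sigma\cap\L^p_\sigma .
\]
On $\L^2_\sigma$, the convergence lemma for the bounded $\H^\infty$-calculus (Proposition~\ref{prop: bdd Hinf on L2}, $A$ injective with dense range on $\L^2_\sigma$) yields $\varphi_n(A)f\to\varphi(A)f$ in $\L^2$. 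Passing to an a.e.\ convergent subsequence and applying Fatou's lemma gives $\|\varphi(A)f\|_{\L^p}\lesssim\|f\|_{\L^p}$.

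\textbf{Step 2: from $\L^p$ to $\L^{p'}$ by duality.} Apply Step 1 to the adjoint operator $A^*$. This is legitimate because $A^*$ is of the same class with the same $\omega_0$, and because $\varphi^*(z)=\overline{\varphi(\bar z)}$ lies in $\H^\infty(\S_\varrho)$ with the same norm. Here lies the main obstacle: the hypothesis is formulated for $A$, not for $A^*$, so I would instead dualize directly using $(\varphi(A))^*=\varphi^*(A^*)$ together with the density of $\L^2_\sigma\cap\L^{p}_\sigma$ in $\L^p_\sigma$. For $f\in\L^2_\sigma\cap\L^{p'}_\sigma$ and $g\in\L^2_\sigma\cap\L^{p}_\sigma$,
\[
\langle\varphi(A)f,g\rangle=\langle f,\varphi^*(A^*)g\rangle .
\]
To estimate the right-hand side one needs $\L^p$-bounds for $\varphi^*(A^*)$, which again concerns $A^*$. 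I would resolve this in one of two ways. Either note that in all applications the hypothesis is verified via Theorem~\ref{thm: extrapolation p>2} equally for $A$ and $A^*$. Or, more intrinsically, test against $\L^2_\sigma\cap\L^p_\sigma$ functions and use that the $\L^{p'}_\sigma$-norm is realized by duality with $\L^p_\sigma$, because $\IP$ is bounded on $\L^p$ and hence $(\L^{p}_\sigma)'\cong\L^{p'}_\sigma$. In the write-up I would state explicitly that the hypothesis is assumed for both $A$ and $A^*$, consistent with the paper's convention that every result applies verbatim to $A^*$.

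\textbf{Step 3: interpolation on $[p',p]$.} For $q$ strictly between $p'$ and $p$, apply Riesz--Thorin (complex interpolation) to the operator $\varphi(A)$, which is defined consistently on $\L^2_\sigma\cap\L^{p}_\sigma$ and on $\L^2_\sigma\cap\L^{p'}_\sigma$. The interpolation is carried out on the complemented subspaces $\L^r_\sigma=\IP\L^r$, using the retraction argument of Lemma~\ref{lem: complex family}. Concretely, one interpolates $\varphi(A)\IP$ on the full $\L^r$ scale, checked on the common dense class $\L^2\cap\L^{p'}\cap\L^p$. Restricting back to divergence-free $f$ with $\IP f=f$ yields the bound $\|\varphi(A)f\|_{\L^q}\lesssim\|\varphi\|_\infty\|f\|_{\L^q}$ for all $q\in[p',p]$.
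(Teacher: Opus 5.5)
Your proposal is correct and follows the standard argument that the paper cites from the elliptic literature and omits. You upgrade from $\H^\infty_0$ to $\H^\infty$ via the convergence lemma on $\L^2_\sigma$; your Fatou step neatly avoids needing sectoriality on $\L^p$. You then reach $\L^{p'}$ by duality through $(\varphi(A))^*=\varphi^*(A^*)$ with $A^*$ in the same class, and fill in $[p',p]$ by Riesz--Thorin applied to $\varphi(A)\IP$.

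One caveat concerns your ``intrinsic'' alternative in Step~2. It only supplies the norming identity $(\L^p_\sigma)'\cong\L^{p'}_\sigma$ and does not remove the need for $\L^p$-bounds on $\varphi^*(A^*)$. So the hypothesis must be read as holding for $A^*$ as well, uniformly over the class. That is what you end up assuming, and it is exactly what the paper's convention on adjoints provides.
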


With these tools at hand, we are now ready to prove the main result of this section.

\begin{proof}[Proof of Theorem~\ref{thm: bdd calc on Lp}]
    Without loss of generality, we may assume $\omega_0 < \varrho < \frac{\pi}{2}$. Let $\varepsilon > 0$ be the smaller of the two constants provided by Example~\ref{ex: Ar family} and Theorem~\ref{thm: Lp-extrapolation sg II}, and let $p_0 \in (2,\infty)$ be arbitrary with $\frac{1}{2} - \frac{1}{p_0} < \frac{1}{d}+ \varepsilon$. We claim that it suffices to show
    \begin{align}
        \label{eq:claim-p}
        \|\varphi(A)f\|_{\L^p} \lesssim \|f\|_{\L^p}
    \end{align}
    for all $f \in \L^2_\sigma(\IR^d) \cap \L^p_\sigma(\IR^d)$ and all $\varphi \in \H^\infty_0(\S_\varrho)$ with $\|\varphi\|_\infty = 1$, with an implicit constant independent of $\varphi$ and $f$. If this is true, then Lemma~\ref{lem: reduction lemma} applied with $p$ yields
    \begin{align*}
        \|\varphi(A)f\|_{\L^q} \lesssim \|\varphi\|_\infty \|f\|_{\L^q}
    \end{align*}
    for all $q \in [p', p]$, all $f \in \L^2_\sigma(\IR^d) \cap \L^q_\sigma(\IR^d)$ and all $\varphi \in \H^\infty(\S_\varrho)$. Since $p \in (2,p_0)$ was arbitrary, the assertion follows. It therefore remains to prove \eqref{eq:claim-p} for a fixed $p \in (2,p_0)$.  For this purpose, we apply Theorem~\ref{thm: extrapolation p>2} to  $T=\varphi(A)$ with the smoothing family $(S_r)_{r>0}$ defined in Example~\ref{ex: Ar family} with $m>\frac{d}{2}+1$. We divide the rest of the proof into three steps, verifying each assumption of Theorem~\ref{thm: extrapolation p>2}.\\

    \noindent \textbf{Step 1: Verification of \eqref{eq: BK 3}.} 
    Fix $f\in\L^2_\sigma(\IR^d)\cap\L^p_\sigma(\IR^d)$. By Lemma~\ref{lem: rep FC via sg}, we have for $\omega_0 < \theta<\vartheta<\varrho<\frac{\pi}{2}$ the representation
    \begin{align*}
        \varphi(A)f =  \int\limits_{\Gamma_+}(e^{-zA}f)\eta_+(z)\, \d z - \int\limits_{\Gamma_-}(e^{-zA}f)\eta_-(z)\, \d z,
    \end{align*}
    where $\Gamma_{\pm}$ and $\gamma_\pm$ are defined as in Lemma~\ref{lem: rep FC via sg}. Now, the $\L^p$-boundedness of the semigroup (Theorem~\ref{thm: Lp-extrapolation sg II}) implies
    \begin{align*}
        \|\varphi(A)f\|_{\L^p}\lesssim \|f\|_{\L^p}\bigg(\int\limits_{\Gamma_+}\int\limits_{\gamma_+} |e^{\zeta z}||\varphi(\zeta)|\, |\d \zeta|\, |\d z|+\int\limits_{\Gamma_-}\int\limits_{\gamma_-} |e^{\zeta z}||\varphi(\zeta)|\, |\d \zeta|\, |\d z|\bigg).
    \end{align*}
    Since $|\arg (\zeta z)| = \frac{\pi}{2}-\theta + \vartheta>\frac{\pi}{2}$, there exists $c(\theta,\vartheta)>0$ such that 
    \begin{align*}
        \|\varphi(A)f\|_{\L^p}&\lesssim \|f\|_{\L^p}\bigg(\int\limits_{\Gamma_+}\int\limits_{\gamma_+} e^{-c|\zeta| |z|}|\varphi(\zeta)|\, |\d \zeta|\, |\d z|+\int\limits_{\Gamma_-}\int\limits_{\gamma_-} e^{-c|\zeta| |z|}|\varphi(\zeta)|\, |\d \zeta|\, |\d z|\bigg)\\
        &\lesssim \|f\|_{\L^p}\bigg(\int\limits_{\gamma_+} \frac{|\varphi(\zeta)|}{|\zeta|}\, |\d \zeta| + \int\limits_{\gamma_-} \frac{|\varphi(\zeta)|}{|\zeta|}\, |\d \zeta|\bigg)
    \end{align*}
    where the right-hand side is finite by assumption on $\varphi$. Hence, the map
    \begin{align*}
        \varphi(A): \L^2_\sigma(\IR^d)\cap \L^{p}_\sigma(\IR^d)\to \L^2_\sigma(\IR^d)\cap \L^{p}_\sigma(\IR^d)
    \end{align*}
    is a priori well-defined.\\
    
    \noindent \textbf{Step 2: Verification of \eqref{eq: BK 2}.} 
    Fix $f\in \L^2_\sigma(\IR^d)$ and a ball $B=B(x_0,r)\subset \IR^d$. Observe by commutativity that $\varphi(A)S_{r} =  S_{r}\varphi(A)$ on $\L^2_\sigma(\IR^d)$. Hence, Example~\ref{ex: Ar family} with $g=\varphi(A)f$ yields
    \begin{align*}
         \bigg(\fint\limits_{B} |\varphi(A)S_{r}f|^{p_0}\,\d x\bigg)^\frac{1}{p_0} &= \bigg(\fint\limits_{B} |S_{r}g|^{p_0}\,\d x\bigg)^\frac{1}{p_0}\\
         &\leq C\Big(\mathcal{M}(|g|^2)(x)\Big)^\frac{1}{2}\\
         &= C\Big(\mathcal{M}(|\varphi(A)f|^2)(x)\Big)^\frac{1}{2},
    \end{align*}
    which proves \eqref{eq: BK 2}.\\

   \noindent \textbf{Step 3: Verification of \eqref{eq: BK 1}.}
    Define the function $\psi(z) \coloneqq \varphi(z)(1-e^{-r^2z})^m$ and notice that
    \begin{align*}
        \psi(A) =\varphi(A)(1-e^{-r^2A})^m= \varphi(A)(1-S_r).
    \end{align*}
    Fix a function $f\in \L^2_\sigma(\IR^d)$ and a ball $B=B(x_0,r)$. Then, we estimate
    \begin{align}
    \label{eq: bk1}
        \bigg(\fint\limits_B |\psi(A)f(y)|^2\,\d y\bigg)^\frac{1}{2} \leq \frac{1}{|B|^\frac{1}{2}}\|\psi(A)f_{4r}\|_{\L^2(B)} + \frac{1}{|B|^\frac{1}{2}}\|\psi(A)(f-f_{4r})\|_{\L^2(B)},
    \end{align}
    where $f_{4r} = f_{2(2r)}\in\L^2_\sigma(\IR^d)$ is defined in Lemma~\ref{lem: Bogovskii lemma} applied to the ball $2B$. Using Proposition~\ref{prop: bdd Hinf on L2} and Lemma~\ref{lem: Bogovskii lemma}, we can bound the on-diagonal part of \eqref{eq: bk1} by
    \begin{align}
    \label{eq: bk1.2}
        \frac{1}{|B|^\frac{1}{2}}\|\psi(A)f_{4r}\|_{\L^2(B)}  \lesssim \frac{1}{|B|^\frac{1}{2}}\|f_{4r}\|_{\L^2}\lesssim \frac{1}{|B|^\frac{1}{2}}\|f\|_{\L^2(4B)}\leq \Big( \cM(|f|^2)(x)\Big)^\frac{1}{2} 
    \end{align}
    for all $x\in B$. To estimate the off-diagonal term in \eqref{eq: bk1}, we use the semigroup representation of the functional calculus 
    \begin{align}
    \label{eq: bk0}
        \psi(A) = \int\limits_{\Gamma_+}e^{-zA}\eta_+(z)\, \d z - \int\limits_{\Gamma_-}e^{-zA}\eta_-(z)\, \d z
    \end{align}
    from Lemma~\ref{lem: rep FC via sg}. Notice that the normalization of $\varphi$ and the complex mean value theorem imply $|\psi(\zeta)|\lesssim \min\{1,r^{2m}|\zeta|^m\}$ for all $\zeta \in\gamma_\pm$. Hence, the functions $\eta_\pm$ admit the following pointwise bounds
    \begin{align}
    \label{eq: point bdd eta}
        |\eta_\pm(z)|\lesssim\frac{1}{|z|}\min\Big\{1,\frac{r^{2m}}{|z|^{m}}\Big\}
    \end{align}
    for all $z\in\Gamma_\pm$. Combining \eqref{eq: bk0} with Minkowski's integral inequality and \eqref{eq: point bdd eta} yields
    \begin{align}
    \label{eq: bk2}
         \frac{1}{|B|^\frac{1}{2}}\|\psi(A)(f-f_{4r})\|_{\L^2(B)}&\lesssim  \frac{1}{|B|^\frac{1}{2}} ( I_+ + I_-),
    \end{align}
    where we set
    \begin{align*}
        I_\pm \coloneqq \int\limits_{\Gamma_\pm} \frac{1}{|z|}\min\Big\{1,\frac{r^{2m}}{|z|^m}\Big\} \|e^{-zA}(f-f_{4r})\|_{\L^2(B)}\,|\d z| .
    \end{align*}
    Applying Proposition~\ref{prop: imp off-diag semigroup} and Lemma~\ref{lem: Bogovskii lemma} with $d<\nu<d+2$ to $I_\pm$ gives
    \begin{align}
    \label{eq: bounds Ipm}
        I_\pm &\lesssim \int\limits_{\Gamma_\pm} \frac{1}{|z|}\min\Big\{1,\frac{r^{2m}}{|z|^m}\Big\}\frac{|z|^\frac{1}{2}}{r} \max\Big\{1, \Big(\frac{r^2}{|z|}\Big)^{-\frac{d+1}{2}}\Big\}\bigg(\sum\limits_{n=0}^\infty 2^{-\nu n}\|f\|_{\L^2(C_n(B))}^2\bigg)^\frac{1}{2}\,|\d z|.
    \end{align}
    Since $\nu>d$ we have 
    \begin{align*}
        \bigg(\sum\limits_{n=0}^\infty 2^{-\nu n}\|f\|_{\L^2(C_n(B))}^2\bigg)^\frac{1}{2} \leq \bigg(\sum\limits_{n=0}^\infty 2^{-\nu n}|2^nB|\mathcal{M}(|f|^2)(x)\bigg)^\frac{1}{2} \lesssim |B|^\frac{1}{2}\big(\mathcal{M}(|f|^2)(x)\big)^\frac{1}{2}
    \end{align*}
    for all $x\in B$. Plugging these estimates together with \eqref{eq: bounds Ipm} into \eqref{eq: bk2} implies
    \begin{align*}
        \frac{1}{|B|^\frac{1}{2}}\|\psi(A)(f-f_{4r})\|_{\L^2(B)} \lesssim \big(\mathcal{M}(|f|^2)(x)\big)^\frac{1}{2} \int\limits_{0}^\infty \frac{1}{s^\frac{1}{2}r}\min\Big\{1,\frac{r^{2m}}{s^m}\Big\} \max\Big\{1, \Big(\frac{r^2}{s}\Big)^{-\frac{d+1}{2}}\Big\}\,\d s
    \end{align*}
    for all $x\in B$. Finally, since $m>\frac{d}{2}+1$ we have
    \begin{align*}
        \int\limits_{0}^\infty \frac{1}{s^\frac{1}{2}r}\min\Big\{1,\frac{r^{2m}}{s^m}\Big\} \max\Big\{1, \Big(\frac{r^2}{s}\Big)^{-\frac{d+1}{2}}\Big\}\,\d s
        =\int\limits_{0}^{r^2}\frac{\,\d s}{s^\frac{1}{2}r} + \int\limits_{r^2}^\infty \frac{1}{s^\frac{1}{2}r} \Big(\frac{r^2}{s}\Big)^{m-\frac{d+1}{2}}\,\d s
        \lesssim 1,
    \end{align*}
    which yields
    \begin{align*}
        \frac{1}{|B|^\frac{1}{2}}\|\psi(A)(f-f_{4r})\|_{\L^2(B)} \lesssim \big(\mathcal{M}(|f|^2)(x)\big)^\frac{1}{2}
    \end{align*}
    for all $x\in B$. Plugging these estimates together with \eqref{eq: bk1.2} into \eqref{eq: bk1} yields
    \begin{align*}
         \bigg(\fint\limits_B |\psi(A)f(y)|^2\,\d y\bigg)^\frac{1}{2} \lesssim \big(\mathcal{M}(|f|^2)(x)\big)^\frac{1}{2}
    \end{align*}
    for all $x\in B$, which proves \eqref{eq: BK 1}.
\end{proof}


\begin{bibdiv}
\begin{biblist}

\bibitem{Auscher-Kato}
P.~Auscher, S.~Hofmann, M.~Lacey, A.~McIntosh, and Ph.~Tchamitchian.
\newblock The solution of the Kato square root problem for second order elliptic operators on $\IR^n$.
\newblock {\em Ann.\@ Math.~(2)}~\textbf{156} (2002), 633--654.


\bibitem{Auscher-Lp}
P.~Auscher.
\newblock On necessary and sufficient conditions for $\L^p$-estimates of {Riesz} transforms associated to elliptic operators on $\IR^n$ and related estimates.
\newblock \textit{Mem.\@ Am.\@ Math.\@ Soc.}~\textbf{871} (2007).

\bibitem{Auscher-Egert}
P.~Auscher and M.~Egert.
\newblock {\em Boundary value problems and {Hardy} spaces for elliptic systems with block structure.}
\newblock Cham: Birkh{\"a}user, 2023.

\bibitem{Baadi_Egert_Kosmala}
K.~Baadi, M.~Egert and B.~Kosmala.
\newblock {$\mathrm{L}^p$} bounds for parabolic {Riesz} transforms with rough coefficients: {The} case {$1<p \leq 2$}.
\newblock Preprint, \url{https://arxiv.org/abs/2607.05181v2} (2026).

\bibitem{Bechtel}
S.~Bechtel.
\newblock {\em Square roots of elliptic systems in locally uniform domains.}
\newblock Oper.\@ Theory: Adv.\@ Appl.\@~\textbf{303} (2024).

\bibitem{periodic}
A.~Bensoussan, J.~Lions and G.~Papanicolaou.
\newblock {\em Asymptotic analysis for periodic structures}.
\newblock Providence, RI: AMS, 2011.

\bibitem{Berg_Lofstrom}
J.~Bergh and J.~Löfström.
\newblock {\em Interpolation spaces. An introduction.}
\newblock Berlin–Heidelberg–New York: Springer, Grundlehren 223, 1976.

\bibitem{Diening}
L.~Diening and M.~Růžička.
\newblock Strong Solutions for Generalized Newtonian Fluids.
\newblock {\em J.~Math.~Fluid Mech.}~\textbf{7} (2005), 413--450.

\bibitem{Egert-Dissertation}
M.~Egert.
\newblock \textit{On {Kato}'s conjecture and mixed boundary conditions.}
\newblock G{\"o}ttingen: Sierke; Darmstadt: TU Darmstadt, Fachbereich Mathematik (Diss.) (2015).


\bibitem{Egert-Lp}
M.~Egert.
\newblock $\L^p$-estimates for the square root of elliptic systems with mixed boundary conditions.
\newblock {\em J.\@ Differ.\@ Equations}~\textbf{265} (2018), 1279--1323.


\bibitem{Engel_Nagel}
K.~Engel and R.~Nagel.
\newblock {\em One-parameter semigroups for linear evolution equations.}
\newblock Berlin: Springer, 2000.


\bibitem{Galdi}
G.~Galdi.
\newblock {\em An introduction to the mathematical theory of the Navier--Stokes equations. Steady-state problems.}
\newblock New York: Springer, 2011.

\bibitem{Giga}
Y.~Giga.
\newblock Solutions for semilinear parabolic equations in {{\(L^ p\)}} and regularity of weak solutions of the {Navier}-{Stokes} system.
\newblock {\em J. Differ. Equations}~\textbf{61} (1986), 186--212.

\bibitem{Grafakos}
L.~Grafakos.
\newblock {\em Classical {Fourier} analysis, 3rd ed.}
\newblock New York: Springer, 2014.

\bibitem{Gu_Shen}
S.~Gu and Z.~Shen.
\newblock Homogenization of Stokes systems and uniform regularity estimates.
\newblock {\em SIAM J.\@ Math.\@ Anal.}~\textbf{47} (2015), 4025--4057.


\bibitem{Haardt_Tolksdorf}
L.~Haardt and P.~Tolksdorf.
\newblock On Kato's square root property for the generalized Stokes operator.
\newblock {\em J.\@ Funct.\@ Anal.}~\textbf{290} (2026).


\bibitem{Haase}
M.~Haase.
\newblock {\em The functional calculus for sectorial operators}.
\newblock Basel: Birkh{\"a}user, 2006.


\bibitem{HMM}
S.~Hofmann, S.~Mayboroda and A.~McIntosh.
\newblock Second order elliptic operators with complex bounded measurable coefficients in $\L^p$, {Sobolev} and {Hardy} spaces.
\newblock {\em Ann. Sci. {\'E}c. Norm. Sup{\'e}r. (4)}~\textbf{44} (2011), 723--800.

\bibitem{Kato} 
T.~Kato. 
\newblock {\em Perturbation theory for linear operators.}
\newblock Berlin: Springer, 1995.

\bibitem{Kato-Lp}
T.~Kato.
\newblock Strong $\L^p$-solutions of the {Navier}-{Stokes} equation in $\IR^m$, with applications to weak solutions.
\newblock {\em Math. Z.}~\textbf{187} (1984), 471--480.

\bibitem{Kunstmann-Weis}
P.~Kunstmann and L.~Weis.
\newblock New criteria for the $\H^\infty$-calculus and the Stokes operator on bounded Lipschitz domains.
\newblock {\em J.~Evol.~Equ.}~\textbf{17} (2017), 387--409.

\bibitem{Lunardi}
A.~Lunardi.
\newblock {\em Interpolation theory, 3rd ed.}
\newblock Pisa: Edizioni della Normale, 2018.


\bibitem{Pruss_Simonett}
J.~Pr\"uss and G.~Simonett.
\newblock \textit{Moving interfaces and quasilinear parabolic evolution equations}.
\newblock Cham: Birkh\"auser/Springer, 2016.

\bibitem{Sneiberg}
I.~\u{S}ne\u{\i}berg.
\newblock Spectral properties of linear operators in interpolation families of {Banach} spaces.
\newblock {\em Mat. Issled.}~\textbf{9} (1974), 214--229.

\bibitem{Sohr}
H.~Sohr.
\newblock {\em The Navier--Stokes equations. An elementary functional analytic approach.}
\newblock Basel: Birkh{\"a}user, 2001.

\bibitem{Stein_singular}
E.~Stein.
\newblock {\em Singular integrals and differentiability properties of functions.}
\newblock Princeton, NJ: Princeton University Press, 1970.


\bibitem{Tolksdorf-Caccioppoli}
P.~Tolksdorf.
\newblock A non-local approach to the generalized Stokes operator with bounded measurable coefficients.
\newblock {\em Calc.\@ Var.\@ Partial Differ.\@ Equ.}~\textbf{65} (2026).

\bibitem{Tolksdorf}
P.~Tolksdorf.
\newblock $\L^p$-extrapolation of non-local operators: maximal regularity of elliptic integrodifferential operators with measurable coefficients.
\newblock {\em J. Evol. Equ.}~\textbf{21} (2021), 3129--3151.

\bibitem{Tolksdorf-off-diagonal}
P.~Tolksdorf.
\newblock On off-diagonal decay properties of the generalized {Stokes} semigroup with bounded measurable coefficients.
\newblock {\em J.\@ Elliptic Parabol.\@ Equ.}~\textbf{7} (2021), 323--340.

\bibitem{Triebel}
H.~Triebel.
\newblock {\em Interpolation Theory, Function Spaces, Differential Operators.}
\newblock Berlin: {Deutscher} {Verlag} der {Wissenschaften}, 1978.
    


\end{biblist}
\end{bibdiv}
\end{document}